\documentclass{article}
\usepackage{graphicx,color} 
\usepackage{amsmath,amssymb,amsthm,enumitem,mathtools,bm}
\usepackage{mathrsfs}
\usepackage[labelfont=bf,textfont=sf]{caption}
\usepackage{geometry}
\usepackage{marvosym}
\usepackage{tikz}
\usepackage{authblk}
\newcommand{\N}{\mathbb{N}}
\newcommand{\Z}{\mathbb{Z}}
\newcommand{\R}{\mathbb{R}}
\newcommand{\Q}{\mathbb{Q}}
\newcommand{\T}{\mathbb{T}}

\newcommand{\editfull}{\mathrm{d}_E}
\newcommand{\eps}{\varepsilon}

\newcommand{\lowdisc}{\mathcal{D}_{\textrm{low}}}
\newcommand{\leb}{\mathcal{L}}
\newcommand{\goodsettypical}{G_{\alpha}^\dagger}

\newcommand{\den}[3]{d_{#1}(#2,#3)}
\newcommand{\modone}[1]{\widehat{#1}} 

\newcommand{\expolimsup}{\overline{\Gamma}}
\newcommand{\expoliminf}{\underline{\Gamma}}
\newcommand{\orbitexpolimsup}{\overline{\Gamma}_{\mathcal{L}}}
\newcommand{\orbitexpoliminf}{\underline{\Gamma}_{\mathcal{L}}}
\newcommand{\ddimorbitexpolimsup}{\overline{\Gamma}_{\mathcal{L}^d}}
\newcommand{\ddimorbitexpoliminf}{\underline{\Gamma}_{\mathcal{L}^d}}
\newtheorem{introtheorem}{Theorem}
\newtheorem{introproposition}[introtheorem]{Proposition}
\newtheorem{introcorollary}[introtheorem]{Corollary}
\newtheorem{theorem}{Theorem}[section]
\newtheorem{proposition}[theorem]{Proposition}

\newtheorem{lemma}[theorem]{Lemma}
\theoremstyle{definition}
\newtheorem{definition}[theorem]{Definition}
\newtheorem{remark}[theorem]{Remark}
\newtheorem*{remarknonumber}{Remark}

\newtheorem{notation}[theorem]{Notation}
\newcommand{\notationstatementonedim}{Recall the notation in Section~\ref{subsec: notation conventions}. }

\newcommand{\warningtriangle}[4]{
	\fill[black!15, draw=black, thick]
	(#1, {#2 + 0.577*#3}) --
	({#1 - #3/2}, {#2 - 0.289*#3}) --
	({#1 + #3/2}, {#2 - 0.289*#3}) -- cycle;
	\node at (#1,#2) {\fontsize{#4}{0pt}\selectfont\Lightning};
}

\title{Edit distance exponents for irrational rotations}
\author[1]{Andrew Best}
\author[1]{Yuval Peres}
\affil[1]{Beijing Institute of Mathematical Sciences and Applications}
\date{\today}

\begin{document}
	\maketitle
	\begin{abstract}
		
		We study quantitative edit-distance asymptotics for symbolic codings of irrational rotations $x \mapsto x+\alpha$ on $\mathbb{T}$ in terms of the irrationality exponent $\mu(\alpha)$, the supremum of $\mu \in \R$ for which the inequality
		$
		0< |\alpha - p/q| < q^{-\mu}
		$
		has infinitely many solutions.  
		For the binary coding determined by an interval $[0,\beta)$, let $\mathcal{W}_N$ be the set of length-$N$ words arising from all initial points $x$ under $x \mapsto x+\alpha$. 
		We develop new techniques for estimating edit distance and compute the growth exponents of   the edit-distance diameter $\mathrm{diam}_E(\mathcal{W}_N)$. For every  $\alpha \notin \Q $ and almost every $\beta \in (0,1)$, we show that 
		
		\smallskip

		\noindent $ \displaystyle (*) \quad  \limsup_{N\to\infty}\frac{\log \mathrm{diam}_E(\mathcal{W}_N)}{\log N}
		= \frac{\mu(\alpha)-1}{\mu(\alpha)},
		$
		and the corresponding $\liminf$ equals $1/2$. 
		
		\smallskip
		
		\noindent When $\mu(\alpha)-1$ is at most the golden mean $\varphi$,  the asymptotics $(*)$ hold for all $\beta$. However, for $\mu>1+\varphi$, there is an uncountable set of $\alpha$ with $\mu(\alpha)=\mu$ for which the edit-distance exponents are strictly smaller than $(*)$ for uncountably many $\beta$.
		
		We also derive consequences for aperiodic circle homeomorphisms and Sturmian sequences. For rotations of $\mathbb{T}^d$ coded by boxes, we prove that for almost every rotation vector, the common edit-distance exponent is $d/(d+1)$. Finally, we raise the question of estimating edit-distance exponents for more general dynamical systems.
	\end{abstract}
	
	\section{Introduction}
	
	Edit distance has been fundamental in computer science and automata theory since its introduction by Levenshtein in 1965 \cite{levenshtein}. Feldman \cite{feldman} and Katok \cite{katok} introduced it into ergodic theory in the 1970s.
	
	Roughly speaking, an ergodic system is \textbf{loosely Kronecker}\footnote{Using terminology of Ratner \cite{ratner84}; Feldman called these systems ``loosely Bernoulli of zero entropy''.} if most orbits, when mapped to a subshift via a generating partition, are relatively close to each other in edit distance. The prototypical example of a loosely Kronecker system is an irrational rotation of a torus. Feldman and Katok showed loosely Kronecker systems are precisely those which are Kakutani equivalent to an irrational rotation. Our goal is to relate more precise edit-distance asymptotics to Diophantine properties of the rotation and to the torus dimension.
	
	Given an alphabet $\mathcal{A}$, let $\mathcal{A}^+ = \cup_{N\geq 1} \mathcal{A}^N$ be the set of all finite words in the symbols of $\mathcal{A}$. We define the \textbf{edit distance} $\editfull(w,\tilde{w})$ between two words $w,\tilde{w} \in \mathcal{A}^+$ to be half the minimal number of operations required to transform $w$ into $\tilde{w}$, where an operation is either the deletion or the insertion of a single letter at an arbitrary position. For example,  $\editfull(010,101) = 1$. There are other definitions of edit distance, which differ from this one by a bounded factor. With the definition here, if $w,\tilde{w} \in \mathcal{A}^N$, then $N-\editfull(w,\tilde{w})$ is the length of a longest common subsequence of $w$ and $\tilde{w}$.
	
	We study the edit distance between words generated by orbit codings of an irrational rotation on the circle $\T \coloneq \R/\Z$. We usually represent $\T$ as the interval $[0,1)$ with addition modulo 1. Let $\alpha \in \R\smallsetminus\Q$ and $\beta \in (0,1)$. Given $x \in \R$ or $\T$, define $x \bmod 1$ to be the unique element of $x + \Z$ in $[0,1)$. Let
	\begin{equation} \label{definition of xi} \xi_n(x) \ \coloneq \ \mathbf{1}_{[0,\beta)}(x+n\alpha \bmod 1)  \text{ for all } n \in \Z   \,.
	\end{equation}
	For integers $i < j$, write $\xi_{[i,j)}(x)$ for the word $(\xi_\nu(x))_{\nu=i}^{j-1}  \in \{0,1\}^{j-i}$. For $N \in \N$ and   $x,y\in \T$,   
	write $\den N x y \coloneq \editfull(\xi_{[0,N)}(x),\xi_{[0,N)}(y))$  and $\mathcal{W}_N \coloneq \{ \xi_{[0,N)}(x) : x \in \T \}$. Define
	\[ \mathrm{diam}_E(\mathcal{W}_N) \ \coloneq \ \max\{ \editfull(w,\tilde{w}) : w,\tilde{w} \in \mathcal{W}_N\} \ = \ \max_{x,y \in \T} \den N x y \, .
	\]
	We want to compute the edit-distance exponents of the diameter:
	\begin{equation}\label{gamma defn}
		\expolimsup(\alpha,\beta) \ \coloneq \ \limsup_{N\to\infty}  \frac{\log \mathrm{diam}_E(\mathcal{W}_N)}{\log N} \quad \text{ and } \quad  \expoliminf(\alpha,\beta) \ \coloneq \ \liminf_{N\to\infty}  \frac{\log \mathrm{diam}_E(\mathcal{W}_N)}{\log N} \, .
	\end{equation}
	We are also interested in edit-distance exponents for typical pairs of orbits. The function $\displaystyle (x,y) \mapsto \limsup_{N\to\infty} \frac{\log \den N x y}{\log N}$ is invariant under the ergodic $\Z^2$-action defined, for each $(n,n') \in \Z^2$, by $(x,y) \mapsto (x+n\alpha \bmod 1, y + n'\alpha \bmod 1)$ (see, e.g., Exercise 8.1.1 in \cite{einsiedlerward}), so this function is almost everywhere constant with respect to Lebesgue measure $\leb^2=\leb \times \leb$ on $\T^2$, and the same applies when we replace $\limsup$ by $\liminf$. We denote
	\begin{equation} \label{orbit defn} \orbitexpolimsup (\alpha,\beta) \ := \ \limsup_{N\to\infty} \frac{\log \den N x y}{\log N} \, \text{ a.e. \, and } \; \, \orbitexpoliminf(\alpha,\beta) \ := \ \liminf_{N\to\infty} \frac{\log \den N x y}{\log N} \,  \text{ a.e.}
	\end{equation}
	
	Our main result relates the edit-distance exponents to the \textbf{irrationality exponent} $\mu(\alpha)$ of $\alpha$, which is the supremum of $\mu \in \R$ for which the inequality
	\[
	0 \ < \ \left|\alpha - \frac{p}{q}\right| \ < \ \frac{1}{q^\mu}
	\]
	has infinitely many solutions in $p \in \Z$ and $q \in \N$. See, e.g.,  \cite{bugeaud}. A classical result of Dirichlet ensures that $\mu(\alpha) \geq 2$ for $\alpha \in \R\smallsetminus\Q$, and a fundamental theorem of Khintchine on Diophantine approximation (\cite{khintchine}, Theorem 32) implies $\mu(\alpha) = 2$ for almost all $\alpha$.
	\begin{introtheorem}\label{main thm 1}
		Let $\alpha \in \R\smallsetminus \Q$. Then, there is a full-measure set $G_\alpha \subset (0,1)$ such that for every $\beta \in G_\alpha$, 
		\begin{equation}\label{display for thm 1}
			\expolimsup(\alpha,\beta) \ = \ \orbitexpolimsup(\alpha,\beta) \ = \ \frac{\mu(\alpha)-1}{\mu(\alpha)}    \, , 
		\end{equation}
		\begin{equation} \label{display for thm 2}
			\expoliminf(\alpha,\beta) \ = \ \frac{1}{2} \, , \quad \text{ and } \quad \orbitexpoliminf(\alpha,\beta) \ = \ \min\bigl\{\frac{1}{2},\frac{1}{\mu(\alpha)-1}\bigr\} \, ,
		\end{equation}
		where we understand $\frac{\mu(\alpha)-1}{\mu(\alpha)} = 1$ and $\frac{1}{\mu(\alpha)-1} = 0$ when $\mu(\alpha) = \infty$.
	\end{introtheorem}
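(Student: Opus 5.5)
The plan is to reduce everything to a single combinatorial estimate of $\den N x y$ in terms of the discrepancy-type quantity
\[
D_N(x,y) \;=\; \max_{n\le N}\,\Bigl|\textstyle\sum_{\nu<n}\bigl(\xi_\nu(x)-\xi_\nu(y)\bigr)\Bigr|,
\]
together with a "shift" parameter: the distance $\|x-y-m\alpha\|$ for the best small $m$. Two bounds are needed. For the \emph{upper bound}, pick $m$ with $|m|$ small and $\|x+m\alpha - y\|$ small. Comparing $\xi_{[0,N)}(x)$ to the shifted word costs $O(|m|)$ edits. The remaining words $\xi_{[0,N)}(x')$ and $\xi_{[0,N)}(y)$ with $x'$ close to $y$ differ only at the times $\nu$ where $x'+\nu\alpha$ and $y+\nu\alpha$ straddle $0$ or $\beta$. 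Consecutive such mismatches have opposite "sign", meaning the letter $1$ moves one place. We then match greedily, deleting and reinserting symbols within blocks, and bound the cost by the Birkhoff-sum fluctuation of the indicator of a short interval of length $\|x'-y\|$. For the \emph{lower bound}, any alignment of length-$N$ words with at most $k$ edits matches position $\nu$ to $\nu+s(\nu)$ with $|s|\le k$. Hence the partial counts of $1$s differ by at most $O(k)$ plus the number of $1$s in a window of length $k$. This gives $\den N x y \gtrsim \min_{|s|\le k}$ of the discrepancy of the difference of sums, so $\den N x y$ is bounded below by roughly $\sup_n |S_n(\mathbf 1_{[0,\beta)})(x) - S_n(\mathbf 1_{[0,\beta)})(y+s\alpha)|$ minimized over small shifts $s$.

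Both bounds are then evaluated with continued fractions $q_k$ of $\alpha$. By Denjoy--Koksma, the Birkhoff sum of $\mathbf 1_{[0,\beta)}-\beta$ over $q_k$ steps is bounded by $2$. For a.e.\ $\beta$, the quantities $\|q_k\beta\|$ are not too small, by a Borel--Cantelli argument over $k$; this is how the full-measure set $G_\alpha$ is obtained. Under that condition, sums over length $q_{k+1}$ of the difference between $x$ and a nearby point oscillate with amplitude comparable to $\min(q_{k+1}/q_k,\, \text{stuff})$ times the spacing. Via the Ostrowski expansion of $N$, we then get the following two-sided bounds.

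\emph{Diameter.} For $N$ with $q_k\le N<q_{k+1}$, the diameter is governed by $\max$ over $x,y$ of $\min_{|m|\le d}\bigl(d + N\|x-y-m\alpha\|\bigr)$-type quantities. The points $\{m\alpha\}_{|m|\le d}$ leave gaps of size about $1/d$ when $d\approx\sqrt N$ generically. This gives the exponent $1/2$ for $\liminf$, taking $N\approx q_k^2$-scale choices where gaps are balanced. When $q_{k+1}\approx q_k^{\mu-1}$, the choice $N\approx q_{k+1}$ instead has gaps of size $1/q_k$ persisting for all $d<q_{k+1}$. Hence $\mathrm{diam}\approx \min_d \max(d,\, N/q_k\cdot\ldots)$, and optimizing at $N = q_k^{\mu-1}$ yields $N^{(\mu-1)/\mu}$. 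The matching upper bounds follow by taking $d$ as a suitable $q_j$. For typical pairs $x,y$, $\|x-y\|$ relative to the orbit is generic, which gives the same $\limsup$. For the $\liminf$, one must choose $N$ adversarially small relative to $q_{k+1}$. When $\mu>3$, the time $N\approx q_k^{\,}$ range makes typical pairs' distance at most $N^{1/(\mu-1)}$, since a huge $q_{k+1}$ makes the orbit nearly periodic with period $q_k$.

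The main obstacle is the lower bound on edit distance: converting Birkhoff-sum discrepancy into a genuine edit-distance lower bound uniformly over all alignments, including arbitrary shifts $s(\nu)$ varying along the word. The first approach I would try is a pigeonhole on blocks of length $q_k$. Within each block the alignment shift is nearly constant, up to the edits spent there. On each block with constant shift $s$, the discrepancy from a.e.\ $\beta$ of $\|q_k\beta\|$-type then forces mismatches proportional to $q_{k+1}/q_k$ per shift. Summing over blocks and optimizing over the number of edits allocated gives the exponents. The a.e.\ condition on $\beta$ is exactly what prevents cancellations, as with $\beta\in\Z\alpha+\Z$, that would make this lower bound fail.
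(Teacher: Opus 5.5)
\textbf{The lower-bound half has a genuine gap.} Your upper bounds (approach-and-follow for the diameter, near-$q_k$-periodicity for the typical liminf) match the paper's. Neither of your two lower-bound mechanisms can reach the exponents in the theorem.

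\emph{The Birkhoff-sum reduction is too weak.} The bound $\den{N}{x}{y}\gtrsim\min_{|s|\le k}\sup_{n\le N}|S_n(x)-S_n(y+s\alpha)|$ is correct. But Denjoy--Koksma along the Ostrowski expansion of $n$ gives $|S_n(\mathbf 1_{[0,\beta)})(z)-n\beta|\le 2\sum_j b_j$ with digits $b_j\le a_{j+1}$. For badly approximable $\alpha$ this is $O(\log N)$ uniformly in $z$. Yet Proposition~\ref{prop: badly approximable edit distance} gives $\mathrm{diam}_E(\mathcal W_N)\ge c_1\sqrt N$ there. More generally, for finite $\rho=\mu(\alpha)-1$ these sums are at most $N^{1-1/\rho+o(1)}$, and $1-1/\rho<\rho/(\rho+1)$. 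So counting ones never sees the exponents $1/2$ or $\frac{\mu(\alpha)-1}{\mu(\alpha)}$.

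\emph{The block refinement only works at scales with a large partial quotient.} Your refined version uses a constant shift on each $q_k$-block; the number of ones in such a block is $\lfloor q_k\beta\rfloor$ or one more according to a phase, and $\|q_k\beta\|$ is bounded below for a.e.\ $\beta$. This is exactly Lemma~\ref{lem: blocks}, and it yields only $\min\{\|q(x-y)\|,\|q\beta\|\}\le 14(Lq/N+L/Q+q/Q)$ with $q=q_k$, $Q=q_{k+1}$. The error term $q/Q$ is intrinsic: inside a $q_k$-block the orbit deviates from the grid $\frac1{q_k}\Z$ by up to $1/q_{k+1}$, which is $q_k/q_{k+1}$ in phase. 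Also, block counts only detect $x-y-s\alpha$ modulo $\frac1{q_k}\Z$. So the bound is vacuous whenever consecutive denominators are comparable. That rules out:
\begin{itemize}
\item the case $\mu(\alpha)=2$, e.g.\ badly approximable $\alpha$;
\item $\expoliminf\ge\frac12$ at scales $N$ with $q_m\approx\sqrt N$ and $q_{m+1}/q_m$ bounded;
\item the range $q_m\ge N^{\theta-\eps/2}$ in the typical-orbit liminf (Case~1 of Theorem~\ref{thm: orbitexpoliminf lower bound}).
\end{itemize}

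\textbf{The missing idea is a window-level statistic, not a count.}
\begin{itemize}
\item By Lemma~\ref{lem: same code iff same interval}, for $k\ge k_0$ a length-$k$ window of the coding pins the point down to a cell of $\T\smallsetminus\Lambda^*(k)$. By the three-gap structure, this cell has length at most $2/q_m$ when $q_m<k\le q_{m+1}$.
\item By Lemma~\ref{lem: bound on number of l,k-bad indices}, $\den{N}{x}{y}\le\ell$ forces all but $3k\ell$ windows to match exactly with some shift $|s|\le\ell$.
\item Hence if $N>3k\ell$, then $\|x-y-s\alpha\|<2/q_m$ for some $|s|\le\ell$ (for $x,y$ off a null set). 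This is the rigorous lower-bound half of your heuristic $\min_{|m|\le d}(d+N\|x-y-m\alpha\|)$. It gives Lemma~\ref{lem: utility lemma on diameter}, which covers $\mu(\alpha)=2$ and the balanced scales.
\item When $k$ falls inside a large gap ($q_m\ll k$), these cells are too coarse. The paper refines them using the interlaced families $\Lambda(q_m)$ and $\Lambda(q_m)+\beta$, under the typicality condition $\mathrm{dist}_\T(\beta,\Lambda(q_m))>q_m^{-1-\eps}$ (Lemma~\ref{lem: refactored lower bound lemma}). Only at such scales is your block argument a workable substitute.
\end{itemize}

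\textbf{Two smaller errors.}
\begin{itemize}
\item The repair cost after approaching is the number of visits to the two danger zones, of order $N\|x'-y\|$. It is not the fluctuation of a Birkhoff sum; that would force $\mathrm{diam}_E(\mathcal W_N)=O(\log N)$ for badly approximable $\alpha$, contradicting the $c_1\sqrt N$ lower bound.
\item The extremal scale for $\expolimsup$ is $N\approx q_kq_{k+1}$, where $q_{k+1}$ and $N/q_k$ balance. At your choice $N\approx q_{k+1}$, the approach-and-follow bound is generally below $N^{\rho/(\rho+1)}$.
\end{itemize}
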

	\noindent

	Let $\dim$ denote Hausdorff dimension. A refinement of the Jarn\'{i}k-Besicovitch theorem due to G\"{u}ting \cite{gueting} states that $\dim \{\alpha \in (0,1) : \mu(\alpha) = r\} = 2/r$ for every $r \geq 2$. Combining Theorem~\ref{main thm 1} with this refinement yields the following multifractal statement for $\gamma < 1$. 
	
	\begin{introcorollary}
		For every $\gamma \in [1/2,1]$,
		\[
		\dim\{\alpha \in (0,1): \, \expolimsup(\alpha,\beta) = \gamma \,  \text{ for almost every } \, \beta \in (0,1)\}=2-2\gamma \, .
		\]
	\end{introcorollary}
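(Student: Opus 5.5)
We deduce the corollary from Theorem~\ref{main thm 1} together with G\"uting's refinement of the Jarn\'ik--Besicovitch theorem. Let
\[ E_\gamma \coloneq \{\alpha \in (0,1): \expolimsup(\alpha,\beta) = \gamma \text{ for almost every } \beta\in(0,1)\}. \]

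The first step is to identify $E_\gamma$ with a level set of $\mu$. Take an irrational $\alpha$. By Theorem~\ref{main thm 1}, $\expolimsup(\alpha,\beta) = (\mu(\alpha)-1)/\mu(\alpha)$ for every $\beta$ in the full-measure set $G_\alpha$. So for irrational $\alpha$ the property ``$\expolimsup(\alpha,\beta)=\gamma$ for a.e. $\beta$'' holds exactly when $(\mu(\alpha)-1)/\mu(\alpha) = \gamma$. Here we use that a full-measure set meets every full-measure set, so the a.e. value is uniquely determined.

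The map $r \mapsto (r-1)/r = 1-1/r$ is a strictly increasing bijection from $[2,\infty]$ onto $[1/2,1]$, with the paper's convention that $\infty \mapsto 1$. Its inverse is $\gamma \mapsto 1/(1-\gamma)$. Since $\mu(\alpha)\ge 2$ for irrational $\alpha$, we get, up to rational $\alpha$,
\[ E_\gamma \cap (\R\smallsetminus\Q) = \{\alpha \in (0,1)\smallsetminus \Q : \mu(\alpha) = 1/(1-\gamma)\}. \]

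Rational $\alpha$ form a countable set, so they do not affect Hausdorff dimension. (The theorem is not stated for rational $\alpha$, so we just discard them; whether they lie in $E_\gamma$ is irrelevant.) Also, $\mu(\alpha)=r$ for finite $r\ge2$ forces $\alpha$ irrational, under the convention $0<|\alpha-p/q|$, because for rational $\alpha$ only finitely many $q$ satisfy the inequality and so $\mu$ is not a finite $r \ge 2$ in the relevant sense.

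The second step handles $\gamma\in[1/2,1)$. Put $r = 1/(1-\gamma)\in[2,\infty)$. G\"uting's theorem gives $\dim E_\gamma = 2/r = 2(1-\gamma) = 2-2\gamma$.

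The third step handles the endpoint $\gamma = 1$, which corresponds to the Liouville-type set $\{\mu(\alpha)=\infty\}$. For every finite $r$ this set is contained in $\{\mu(\alpha)\ge r\}$, which has dimension $2/r$ by Jarn\'ik--Besicovitch. Letting $r\to\infty$ gives dimension $0 = 2-2\cdot 1$.

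There is no real obstacle here. The only points needing care are that the a.e.\ value in the definition of $E_\gamma$ is well defined, that the countable set of rationals is discarded, and that the endpoint $\gamma=1$ is covered by the Jarn\'ik--Besicovitch upper bound rather than by G\"uting's exact-order statement.
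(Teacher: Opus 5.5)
Your proof is correct and is essentially the paper's argument: Theorem~\ref{main thm 1} identifies the set, up to a countable set of rationals, with $\{\alpha\in(0,1) : \mu(\alpha) = 1/(1-\gamma)\}$, and G\"uting's theorem then gives dimension $2-2\gamma$ for $\gamma<1$. At $\gamma = 1$, your bound $\dim\{\mu(\alpha)\ge r\}\le 2/r$ for every $r$ simply proves the classical fact the paper cites, namely that the Liouville numbers have Hausdorff dimension zero; a small cleanup is that a rational $\alpha$ has $\mu(\alpha)=1$, which is the clean reason it cannot lie in $\{\mu(\alpha)=r\}$ for $r\ge 2$.
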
 
	
	\noindent When $\gamma = 1$, this equality follows from the classical fact (see, e.g., \cite[Theorem 2.4]{oxtoby}) that the set of Liouville numbers has Hausdorff dimension zero.
	
	Our next two results give optimal lower bounds for edit-distance exponents that apply for all $\beta$, and pinpoint which values of $\mu(\alpha)$ require exceptional $\beta$ in \eqref{display for thm 1} and \eqref{display for thm 2}. The threshold is the golden mean $\varphi \coloneq\frac{1+\sqrt{5}}{2}$.
	\begin{introtheorem}\label{main thm golden less}
		Let $\alpha \in \R\smallsetminus \Q$ with $\mu(\alpha)-1 \le \varphi$. Then, for all $\beta \in (0,1)$, the asymptotics \eqref{display for thm 1} and \eqref{display for thm 2} hold.
	\end{introtheorem}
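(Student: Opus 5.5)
Since Theorem~\ref{main thm 1} covers almost every $\beta$, the task is to locate where genericity of $\beta$ is used and to replace it by an argument that works for every $\beta$ once $\mu(\alpha)-1\le\varphi$. I expect the structure of the proof of Theorem~\ref{main thm 1} to be a general part, valid for all $\beta$, and a $\beta$-sensitive part. The general part consists of upper bounds on $\mathrm{diam}_E(\mathcal{W}_N)$ and on $\den N x y$. These come from matching orbit segments along shifts by denominators $q_k$ of the continued fraction convergents of $\alpha$, with the cost governed by discrepancy/Denjoy--Koksma bounds for $\mathbf{1}_{[0,\beta)}$. Such bounds hold uniformly in $\beta$. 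Only the lower bounds need $\beta$ to avoid being too well approximated by multiples of $\alpha$ modulo $1$. So the plan is to reprove the lower bounds $\expolimsup\ge\frac{\mu-1}{\mu}$, $\expoliminf\ge \frac12$, $\orbitexpolimsup\ge\frac{\mu-1}{\mu}$ and $\orbitexpoliminf\ge\min\{\frac12,\frac1{\mu-1}\}$ for an arbitrary $\beta$.

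The lower-bound mechanism I would use is as follows. Take two points $x,y$ whose separation is comparable to a scale $\|q_k\alpha\|$. Along an orbit segment of length $N$, the codings differ on a set of times whose pattern can only be absorbed by shifts $m$ with $\|m\alpha\|$ small. The edit distance is then bounded below by the number of discrepancies that no single admissible shift can cover. A discrepancy is created each time the orbit crosses one of the two endpoints $0$ and $\beta$. Alignment costs are therefore controlled jointly by the Ostrowski expansion of $\beta$ and the continued fraction of $\alpha$. Trouble arises only if $\beta$ lies extremely close to some $n\alpha \bmod 1$ with $|n|$ of size comparable to $q_k$, because then the two endpoint contributions can cancel. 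For a fixed $\beta$, I would quantify this through the inhomogeneous approximation exponent $\hat\mu(\beta)$, defined by $\|n\alpha-\beta\|<|n|^{-\hat\mu}$ infinitely often. I would then show that the lower bounds survive whenever these resonances are sparse relative to the scales $q_k$ where $\|q_k\alpha\|\approx q_k^{-(\mu-1)}$.

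The key step is a dichotomy for consecutive scales. Suppose $\beta$ is resonant at scale $q_k$, meaning $\|n\alpha-\beta\|\le q_k^{-s}$ with $|n|\lesssim q_k$. Then by the three-distance theorem and the triangle inequality, at the next relevant scales $\beta$ cannot also be resonant unless $q_{k+1}$ is huge. If it were, combining the two approximations would give $\|(n-n')\alpha\|$ too small, which contradicts the growth pattern $q_{k+1}\le q_k^{\mu-1+\epsilon}$. Choosing the exponents optimally should produce a recursion of the form $t_{j+1}\ge t_j(\mu-1)^{-1}\cdots$. This recursion closes, so that a nonresonant scale always exists with only polynomial loss that does not alter the exponent, precisely when $(\mu-1)^2\le(\mu-1)+1$, that is, $\mu-1\le\varphi$. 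I would then run the lower-bound argument of Theorem~\ref{main thm 1} at the nonresonant scale, for both the $\limsup$ (diameter and typical pairs) and the $\liminf$ statements.

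The main obstacle is this resonance-avoidance step: making the Fibonacci-type recursion explicit and verifying that the loss from passing to a neighbouring nonresonant scale is only $N^{o(1)}$. I would first do this for the $\limsup$ exponent along $N\approx q_{k+1}$, and then handle the $\liminf$ for intermediate $N$ by interpolation.
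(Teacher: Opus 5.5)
There is a genuine gap at the resonance-avoidance step, and the dichotomy it relies on is false. Your treatment of the upper bounds (approach-and-follow, uniform in $\beta$) is fine. Now take $\beta=1-\alpha$ with $\alpha\in(0,1)$ (the Sturmian case), or any $\beta=-n_0\alpha \bmod 1$. Then $\beta\in\Lambda(q_m)$, i.e.\ $\mathrm{dist}_{\T}(\beta,\Lambda(q_m))=0$, for every $m$ with $q_m>n_0$. The same $n=n'=n_0$ witnesses resonance at every scale, and $\|(n-n')\alpha\|=0$ gives no contradiction because $n-n'=0$. So no nonresonant scale exists at which to run the generic argument; that argument, as in Lemma~\ref{lem: refactored lower bound lemma}, needs $\mathrm{dist}_{\T}(\beta,\Lambda(q_m))>q_m^{-1-\eps}$ at the scale used.

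This cannot be patched inside your framework. By Theorem~\ref{main thm golden more}(b) and its proof, there are $\alpha$ with $\rho>\varphi$ whose exponents genuinely drop for every $\beta\in B^*(\alpha)$, a set containing these orbit points. For $\rho\le\varphi$, the full exponents hold for the same kind of $\beta$. So the golden mean cannot come from sparsity of resonances, and the recursion you sketch is never derived. Separately, even for nonresonant $\beta$, moving to a neighbouring scale is not an $N^{o(1)}$ loss. The exponent $\frac{\rho}{\rho+1}$ is visible only at scales with $q_{m+1}\approx q_m^{\rho}$, and an adjacent scale may have a bounded partial quotient, which gives only $\frac12$.

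The missing idea is a lower bound that ignores $\beta$ and uses only the endpoint $0$, as in Lemma~\ref{lem: variable separated block lower}.
\begin{enumerate}
\item With $q=q_m$ and $Q=q_{m+1}$, cut $[0,BQ)$ into $q$-blocks grouped into $Q$-blocks, and fix a longest common subsequence.
\item Give each clean $q$-block with shift $s$ the sign of $x-y-s\alpha$. If $\|x+j\alpha-y\|>u$ for $|j|\le L$, this quantity lies in $(u,2/q)$ or in $(1-2/q,1-u)$; the bound $2/q$ holds because matched points share a cell $I_q(\cdot)$.
\item If opposite signs occur inside one $Q$-block, then $\|(s-s')\alpha\|<4/q$, hence $|s-s'|\gtrsim q^{1/\sigma}$ when $q_{k+1}\le q_k^{\sigma}$.
\item If only one sign occurs, every block whose orbit visits the corresponding arc of length $u/2$ adjacent to $0$ is non-clean. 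There are at least $uQ/2-8$ such blocks per $Q$-block.
\item Lemma~\ref{lem: lcs basics} then gives $d_{BQ}(x,y)\gtrsim\min\{Q,BQ/q,Bq^{1/\sigma}\}$ with probability at least $1/10$, for every $\beta$ (Proposition~\ref{prop: golden separated block lower bound}).
\end{enumerate}
With $B=q_m$ along scales where $q_{m+1}\approx q_m^{\rho}$, this minimum is $\min\{q_{m+1},q_m^{1+1/\sigma}\}$. It equals $q_{m+1}\approx N^{\rho/(\rho+1)}$ precisely when $\rho\le 1+1/\rho$, i.e.\ $\rho\le\varphi$; that comparison is the real source of the threshold. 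An annular version over all $N$ (Lemma~\ref{lem: small edit distance unlikely}) gives $\orbitexpoliminf(\alpha,\beta)\ge\min\{\frac12,\frac{\rho}{\rho^2+\rho-1}\}=\frac12$ for $\rho\le\varphi$. The diameter exponents are then squeezed between these typical-orbit lower bounds and the universal upper bounds of Section~\ref{sec: upper bounds}. The case $\mu(\alpha)=2$ needs no information about $\beta$ at all (Lemma~\ref{lem: utility lemma on diameter}).
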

	\begin{remarknonumber}
		For $\alpha$ with $\mu(\alpha) = 2$ (which holds for almost every $\alpha$) and all $\beta \in (0,1)$, we thus have $\expoliminf(\alpha,\beta) = \expolimsup(\alpha,\beta) = \orbitexpoliminf(\alpha,\beta) = \orbitexpolimsup(\alpha,\beta) = \frac{1}{2}$.
		See Proposition \ref{prop: badly approximable edit distance} for more precise bounds when $\alpha$ is badly approximable. 
	\end{remarknonumber}

	\begin{introtheorem}\label{main thm golden more}
		Let $\alpha \in \R\smallsetminus \Q$ with $\rho := \mu(\alpha) - 1 > \varphi$.
		\begin{enumerate}[label=\textup{(\alph*)}]
			\item For all $\beta \in (0,1)$, we have
			\begin{equation} \label{goldenexcept}
				\expolimsup(\alpha,\beta) \ \geq \ \orbitexpolimsup(\alpha,\beta) \ \geq \ \frac{\rho ^2}{2\rho^2-1}   \quad \text{ and } \quad 
				\expoliminf(\alpha,\beta) \ \geq \ \orbitexpoliminf(\alpha,\beta) \ \geq \ \frac{\rho }{\rho ^2+\rho-1}   \, ,
			\end{equation}
			understanding the right-hand sides to be respectively $1/2$ and $0$ when $\rho = \infty$.
			\item The lower bounds in \eqref{goldenexcept} are sharp: For every $\rho>\varphi$, there is an uncountable set of $\alpha$ with $\mu(\alpha)=1+\rho$  such that equality holds in \eqref{goldenexcept} for uncountably many $\beta$. 
		\end{enumerate}
	\end{introtheorem}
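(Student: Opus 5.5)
The plan is to work scale by scale with the continued fraction convergents $p_n/q_n$ of $\alpha$. Since $\mu(\alpha)=1+\rho$, infinitely often $q_{n+1}\approx q_n^{\rho}$, and $\|q_n\alpha\|\asymp 1/q_{n+1}$. For $q_n\le N\le q_{n+1}$, the word $\xi_{[0,N)}(x)$ is a concatenation of nearly identical $q_n$-blocks. Shifting by $q_n$ moves every orbit point by $\|q_n\alpha\|$. So the word differs from a $q_n$-periodic word only at \emph{defect} times, namely when an orbit point crosses one of the two discontinuities $0$ or $\beta$. There are about $N/(q_n q_{n+1})\cdot q_n$ crossings per discontinuity, and each crossing shifts the phase of the pattern by one.

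The first step is a two-sided estimate for $\den N x y$ in terms of these defects. The edit distance is comparable to the minimal total cost of aligning the two phase-slip patterns. A crossing of $0$ and a crossing of $\beta$ produce opposite local changes (the letter $1$ moves from one position to another). If these happen at nearby times, they cancel and cost little. If they are unmatched, each contributes an edit cost comparable to the time elapsed until a compensating slip. Taking $y$ so that its defects are out of phase with those of $x$ gives $\mathrm{diam}_E(\mathcal W_N)$, and a typical pair behaves the same way up to constants. For generic $\beta$ this heuristic reproduces Theorem~\ref{main thm 1}, which I take as the model computation.

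The second step encodes how well $\beta$ lets the $0$-defects and $\beta$-defects cancel. The relevant quantity is the inhomogeneous approximation $\min_{|k|<q_{n+1}}\|\beta-k\alpha\|$, read off from the Ostrowski expansion $\beta=\sum_n b_n(q_{n-1}\alpha-p_{n-1})$ up to an integer. If $\beta\approx k\alpha$ with tiny error, then the crossings of $\beta$ are the crossings of $0$ delayed by $k$. This cancels most of the defect cost at that scale, with a residual cost governed by $k$ and by the error. The key constraint, and the source of the golden mean, is the following: if $\beta$ is very close to $k_n\alpha$ at scale $n$ and to $k_{n+1}\alpha$ at scale $n+1$, then $(k_{n+1}-k_n)\alpha$ is very close to an integer. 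This forces $k_{n+1}-k_n$ to be a multiple of $q_n$ or $q_{n+1}$, which in turn limits how small both errors can be. Writing each error as $q_n^{-s_n}$ gives linear inequalities linking $s_n$ and $s_{n+1}$ with coefficient $\rho$. For part (a), I will prove the bound $\den N x y\gtrsim N^{\gamma-\epsilon}$ along suitable $N$, where $\gamma$ is the worst case of the resulting linear program over the scales. The optimum equalizes the cost at two consecutive scales. Solving gives $\rho^2/(2\rho^2-1)$ for the $\limsup$ (taking $N$ near the best scale) and $\rho/(\rho^2+\rho-1)$ for the $\liminf$ (taking $N$ in between). The program has a nontrivial optimum exactly when $\rho^2-\rho-1>0$, which also explains Theorem~\ref{main thm golden less}.

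For part (b), I will build $\alpha$ with $q_{n+1}\approx q_n^{\rho}$ for every $n$, choosing partial quotients inside prescribed ranges with binary freedom so that the set of such $\alpha$ is uncountable. For each such $\alpha$ I will choose $\beta$ by specifying Ostrowski digits that realize the extremal solution of the linear program, again with binary freedom, which gives uncountably many $\beta$. The matching upper bound comes from an explicit alignment: at each scale, match $0$-defects with the $\beta$-defects delayed by $k_n$, and bound the unmatched cost by the residual.

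I expect step two to be the main obstacle, specifically proving that the combinatorial lower bound in terms of unmatched defects is valid uniformly for every $\beta$. Edit distance allows global realignments that could in principle exploit structure across several scales at once. I would first try a localization lemma. Cut $[0,N)$ into windows of length comparable to $q_nq_{n+1}/N$-many blocks. Show that any alignment's cost within a window is bounded below by the local unmatched-defect count minus boundary terms, using that each $q_n$-block has only $O(1)$ valid alignments with another.
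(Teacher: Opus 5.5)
\textbf{There is a genuine gap in part (a).} The lower bound in (a) is the substance of the theorem, and your proposal does not prove it. The comparison of $d_N(x,y)$ with a ``defect-matching'' cost is only asserted. The linear program is never written down or solved. The lower-bound direction is left as the main obstacle.

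More seriously, your cost model points at the wrong parameter. Part (a) must hold for $\beta=-k\alpha \bmod 1$ with $k$ fixed. These $\beta$ lie in $B^*(\alpha)$, and for the $\alpha$ of part (b) they attain equality. For them $\min_{|j|<q_{n+1}}\|\beta-j\alpha\|=0$ at every scale, so no Ostrowski-type constraint linking consecutive scales is active. Suppose, as you describe, the residual after matching $0$-crossings with $\beta$-crossings delayed by $k$ were governed only by $k$ and the approximation error. Then your alignment at the scale $q_m\le\sqrt N<q_{m+1}$ would give $\mathrm{diam}_E(\mathcal W_N)=O(q_m+kN/q_{m+1})=O(k\sqrt N)$. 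That is a limsup exponent at most $1/2<\rho^2/(2\rho^2-1)$, contradicting (a).

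What your model omits is that coincident $0$- and $\beta$-crossings of one orbit do not cancel locally. They mark the moment the orbit has drifted by one grid step $1/q_m$ relative to the rounded orbit $n\mapsto x+np_m/q_m$. Since $p_{m-1}q_m-p_mq_{m-1}=\pm1$, such a grid step is a time shift of the $q_m$-periodic coding by $q_{m-1}$. This resynchronization cost, about $q_{m-1}$ per $q_{m+1}$ steps, depends only on $\alpha$. Comparing $q_{m-1}\approx q_m^{1/\rho}$ with $q_{m+1}/q_m\approx q_m^{\rho-1}$ is where $\rho^2=\rho+1$ enters.

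\textbf{How the paper gets (a) instead.} The paper's lower bound does not depend on how $\beta$ sits relative to the orbit of $\alpha$; it works with $x-y$. Cut $[0,N)$, with $N=Bq_{m+1}$, into $q_m$-blocks. Take a clean block (matched wholesale by a longest common subsequence) with shift $s$. Its starting points $x+n\alpha$ and $y+(n+s)\alpha$ lie in one cell of $\T\smallsetminus\Lambda^*(q_m)$. Hence $\delta=x-y-s\alpha$ satisfies $u<\|\delta\|<2/q_m$, where $u$ lower-bounds $\|x-y-j\alpha\|$ for $|j|\le L$.
\begin{itemize}
\item A block whose $x$-orbit visits $(0,u/2)$ cannot have $\delta\in(u,2/q_m)$. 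A block visiting $(1-u/2,1)$ cannot have $\delta\in(1-2/q_m,1-u)$.
\item Each $q_{m+1}$-block has about $uq_{m+1}/2$ sub-blocks visiting each arc. So either that many sub-blocks are non-clean, or two clean ones have opposite signs.
\item In the second case their shifts differ by some $h$ with $\|h\alpha\|<4/q_m$, hence $|h|\gtrsim q_m^{1/\sigma}$ for any $\sigma>\rho$.
\item At most $2\Delta$ blocks are non-clean and the shift has total variation at most $2\Delta$. This gives $\Delta\gtrsim\min\{q_{m+1},\,Buq_{m+1},\,Bq_m^{1/\sigma}\}$.
\end{itemize}

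\textbf{The typical-pair statements need probabilistic input.} The claims about $\orbitexpolimsup$ and $\orbitexpoliminf$ require more than ``a typical pair behaves the same way.'' For the limsup, the paper shows $u\ge 1/(5q_m)$ with probability at least $1/10$, takes $B=\lfloor q_{m+1}/q_m^{1/\sigma}\rfloor$, and applies Fatou. For the liminf it needs $\mathbb P\bigl(d_N(x,y)\le N^{\theta-\varepsilon}\bigr)=O(N^{-\varepsilon})$ at \emph{every} large $N$, with $\theta$ just below $\rho/(\rho^2+\rho-1)$. This comes from splitting $\T^2$ into annuli according to $\min_{|r|<q_m}\|x+r\alpha-y\|$, followed by Borel--Cantelli along dyadic $N$.

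\textbf{Part (b).} Your construction of $\alpha$ matches the paper's. The explicit alignment, however, must carry the $q_{m-1}$ term: $\mathrm{diam}_E(\mathcal W_N)\le q_m+C(q_{m-1}+b)\lceil N/q_{m+1}\rceil$ whenever $\mathrm{dist}_{\T}(\beta,\Lambda(q_m)\smallsetminus\{0\})\le b/q_{m+1}$. Optimizing this over the scale $m$, and taking $N\approx q_mq_{m+1}/q_{m-1}$ for the liminf, gives the two exponents. A Cantor construction inside $B^*(\alpha)$ gives uncountably many $\beta$. Equality in (b) still needs the lower bounds of (a), so (b) inherits the gap.
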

	
	Note that the functions of $\rho$ in \eqref{goldenexcept} are strictly smaller than $\frac{\rho}{\rho+1}=\frac{\mu(\alpha)-1}{\mu(\alpha)}$ and $1/2$, respectively, precisely when $\rho > \varphi$. Thus, for the uncountable set of $\alpha$ in Theorem~\ref{main thm golden more}(b), the full-measure set $G_\alpha$ in Theorem \ref{main thm 1} has an uncountable complement. 
	
	We can make a stronger statement if $\mu(\alpha) > 3$:
	For every such $\alpha$, it is necessary to exclude an uncountable set of $\beta$ in Theorem~\ref{main thm 1}.
	\begin{introproposition}\label{main prop high mu} 
		Let $\alpha \in \R\smallsetminus \Q$ with $\mu(\alpha) > 3$.
		\begin{enumerate}[label=\textup{(\alph*)}]
			\item The set $\{\beta \in (0,1) : \expolimsup(\alpha,\beta) \leq 2/3 \}$ is dense in $(0,1)$ and contains a perfect set.
			\item The set $\{\beta \in (0,1) : \expoliminf(\alpha,\beta) \leq \frac{1}{\mu(\alpha)-1}\} $ contains a dense $G_\delta $ subset of $(0,1)$, where we understand $\frac{1}{\mu(\alpha)-1} = 0$ when $\mu(\alpha) = \infty$.
		\end{enumerate}
	\end{introproposition}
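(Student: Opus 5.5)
The plan is to exploit the fact that when $\mu(\alpha)>3$ there are convergents $p_n/q_n$ with $\|q_n\alpha\| \le q_n^{-(\mu(\alpha)-1)+o(1)}$ and $q_{n+1} \ge q_n^{\mu(\alpha)-1-o(1)} \gg q_n^2$. We choose $\beta$ extremely close to a point of the orbit segment $\{j\alpha \bmod 1 : |j|\le q_n\}$. The heuristic is Kesten's phenomenon: if $\beta \equiv j\alpha$, then $\mathbf{1}_{[0,\beta)} - \beta$ is a bounded coboundary. Concretely, $\mathbf{1}_{[0,j\alpha)}(x)=\sum_{i<j}\bigl(\mathbf{1}_{[0,(i+1)\alpha)}-\mathbf{1}_{[0,i\alpha)}\bigr)$ telescopes along the orbit. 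Hence any two codings $\xi_{[0,N)}(x),\xi_{[0,N)}(y)$ have block counts of $1$'s differing by $O(|j|)$ on every window. Combined with the near-periodicity of period $q_n$ of the orbit at scales $N\le q_{n+1}$, this should let us align the two words block-by-block with $O(|j|)$ edits per "phase slip".

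\textbf{Step 1 (basic estimate).} We first prove an estimate of the following form. Suppose $|\beta - (j\alpha \bmod 1)|\le \delta$ with $|j|\le q_n$. Then for all $x,y$ and all $N$,
\[
\den N x y \ \lesssim \ |j| + N\delta + (\text{number of times } x+\nu\alpha,\ y+\nu\alpha \text{ pass within } \delta \text{ of } 0 \text{ or } \beta) + \min\{N,\ \text{scale-}q_n \text{ error}\}.
\]
The scale-$q_n$ error itself would be estimated as follows. We write both words as concatenations of length-$q_n$ blocks. Each block is a cyclic shift of essentially one Sturmian block, up to at most one letter change, and that change occurs only when an orbit point crosses $0$ or $\beta$. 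This happens $O(N q_n\|q_n\alpha\|)$ times in $[0,N)$. With $\beta$ at the orbit point $j\alpha$, the crossings at $0$ and at $\beta$ occur in matched pairs at offset $j$, so each pair costs $O(|j|)$ edits rather than accumulating drift. This gives roughly $\den N x y \lesssim |j|\,(1+N q_n\|q_n\alpha\|) + N\delta$ for $N\le q_{n+1}$.

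\textbf{Step 2 (part (b)).} For $m\in\N$ let $U_m=\bigcup_{n\ge m}\bigcup_{|j|\le q_n^{\epsilon}}\bigl(j\alpha-\delta_n,\,j\alpha+\delta_n\bigr)\bmod 1$, where $\delta_n = q_{n+1}^{-1}$. Each $U_m$ is open, and it is dense since $\{j\alpha\}$ is dense. Hence $\bigcap_m U_m$ is a dense $G_\delta$. For $\beta$ in this set there are infinitely many $n$ for which Step 1 applies with $|j|\le q_n^{\epsilon}$ and $\delta\le q_{n+1}^{-1}$. Taking $N = q_n^{\epsilon}/\|q_n\alpha\| \approx q_n^{\mu(\alpha)-1+\epsilon}$, capped at $q_{n+1}$, the bound gives $\mathrm{diam}_E(\mathcal W_N)\lesssim q_n^{1+2\epsilon}\approx N^{1/(\mu(\alpha)-1)+O(\epsilon)}$. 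Letting $\epsilon\to0$ along a countable sequence of sets (intersecting the $G_\delta$'s) yields $\expoliminf(\alpha,\beta)\le 1/(\mu(\alpha)-1)$. The case $\mu(\alpha)=\infty$ works the same way.

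\textbf{Step 3 (part (a)).} Here every $N$ must be controlled, so we need approximations at all scales simultaneously. We pick a sparse subsequence $n_1<n_2<\dots$ of indices with $\|q_{n_k}\alpha\|\le q_{n_k}^{-2-\eta}$, which exists since $\mu(\alpha)>3$. We then build $\beta$ by nested closed intervals: at stage $k$ the interval $I_k$ is centred at some $j_k\alpha \bmod 1$, with $|j_k - j_{k-1}|$ a small multiple of $q_{n_{k-1}}$ chosen so that $j_k\alpha$ stays in $I_{k-1}$, and $I_k$ has length $\approx q_{n_{k+1}}^{-1}$. Having two admissible choices of $j_k$ at each stage gives a Cantor (perfect) set, and starting inside an arbitrary interval gives density.

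\textbf{Main obstacle.} The hard part is the interpolation bound for $q_{n_k}\le N\le q_{n_{k+1}}$. There the two competing costs are $|j_k|\,N q_{n_k}\|q_{n_k}\alpha\|$ (phase slips) and the trivial-at-small-scale cost $\approx \sqrt{N}$-type alignment. Balancing them is what should produce the exponent $2/3$: the worst $N$ is about $q^{3/2}$ relative to the phase-slip scale $q^{2}$, which gives $N^{2/3}$. The hypothesis $\mu(\alpha)>3$ is exactly what leaves room for this.
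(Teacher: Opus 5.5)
\textbf{There is a genuine gap.} Your Step~1 estimate is false as stated, and the mechanism behind it does not work. Take $\beta=-\alpha \bmod 1$, so $j=-1$ and $\delta=0$. Take $N=q_n$ for one of the infinitely many $n$ with $q_{n+1}\ge q_n^2$. Your bound then gives $|j|(1+Nq_n\|q_n\alpha\|)+N\delta\le 1+q_n^2/q_{n+1}\le 2$, i.e.\ $\mathrm{diam}_E(\mathcal W_{q_n})=O(1)$. But Lemma~\ref{lem: utility lemma on diameter}(b) gives $\mathrm{diam}_E(\mathcal W_{q_n})\ge q_k/13$ whenever $q_k^2\le q_n$ and $q_k+1>k_0$. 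Choosing $k$ maximal, this tends to infinity.

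Two costs are missing from Step~1:
\begin{enumerate}[label=\textup{(\roman*)}]
\item The words of $x$ and of $y$ follow \emph{different} cyclic shifts of the $q_n$-periodic pattern, and aligning them costs up to $q_n$. This is the term $q_m$ in Lemma~\ref{lem: general bound on diameter in terms of previous q_m}. Kesten's coboundary (bounded discrepancy of the number of $1$'s in windows) says nothing about it, since equal letter counts do not bound edit distance.
\item A phase slip is not a local $O(|j|)$ correction. After it, the reference pattern becomes its cyclic shift by $\pm q_{n-1}$ (because $p_{n-1}q_n-p_nq_{n-1}=\pm1$). That shift differs from the old pattern in two letters, $|j|$ apart, in \emph{every} later period, so the mismatch persists. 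The paper's round-and-synchronize lemma absorbs it by deleting $q_{n-1}$ symbols per slip.
\end{enumerate}
The correct replacement for Step~1 is the bound $\mathrm{diam}_E(\mathcal W_N)\le q_m+C(q_{m-1}+b)\lceil N/q_{m+1}\rceil$. It holds for \emph{all} $N$ whenever $\mathrm{dist}_\T(\beta,\Lambda(q_m)\smallsetminus\{0\})\le b/q_{m+1}$, and it needs no smallness of $|j|$.

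\textbf{Part (b).} With that bound, part (b) goes through at $N=q_{n+1}$ and gives $O(q_n)$. (Your $N$ is always capped at $q_{n+1}$ anyway, since $1/\|q_n\alpha\|>q_{n+1}$.) However, your sets $U_m$ must use only indices $n$ with $q_{n+1}\ge q_n^{\rho}$, for $\rho<\mu(\alpha)-1$. Otherwise a $\beta\in\bigcap_m U_m$ may be captured only at indices where $\log q_n/\log q_{n+1}$ is close to $1$, and the exponent estimate fails.

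\textbf{Part (a).} Part (a) is not proved. The interpolation you call the main obstacle is the whole content. Your balancing (slip cost $|j_k|Nq_{n_k}\|q_{n_k}\alpha\|$ against $\sqrt N$, at sparse levels $n_k$, with Step~1 restricted to $N\le q_{n+1}$) does not produce it.

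What is needed is $\mathrm{dist}_\T(\beta,\Lambda(q_m))\le 2/q_{m+1}$ at \emph{every} large level $m$. Your nested intervals would give this up to constants and the sign convention in $\Lambda$: for $n_k\le m<n_{k+1}$, the point $\beta$ lies within $O(1/q_{n_{k+1}})$ of $j_k\alpha$ with $|j_k|<q_m$. Then, for each $N$, choose $m$ with $q_m<N^{1/3}\le q_{m+1}$ and write $q_{m+1}=N^\theta$.
\begin{itemize}
\item Approach-and-follow at level $m+1$ (Lemma~\ref{lem: q + N/q bound}) gives $O(N^{\max\{\theta,1-\theta\}})$.
\item Round-and-synchronize at level $m$ gives $O(q_m\lceil N/q_{m+1}\rceil)\le O(N^{\max\{1/3,\,4/3-\theta\}})$. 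Note this is used for $N$ up to $q_{m+1}^3$, well beyond the range $N\le q_{m+1}$ of your Step~1.
\end{itemize}
The minimum of the two is $O(N^{2/3})$ for every $\theta\ge 1/3$. The hypothesis $\mu(\alpha)>3$ plays no role in this bound; it only makes $2/3<(\mu(\alpha)-1)/\mu(\alpha)$. The perfect set only needs infinitely many $m$ with $q_{m+2}\ge 7q_{m+1}$.

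\textbf{Branching rule.} Your branching rule also needs repair. Stepping $j$ by multiples of $q_{n_{k-1}}$ moves $j\alpha$ by multiples of $\|q_{n_{k-1}}\alpha\|\approx 1/q_{n_{k-1}+1}$, which overshoots $I_{k-1}$ once the subsequence is sparse. The new centres must instead be orbit points of index below $q_{n_k}$ lying inside $I_{k-1}$, located via the gap structure of Lemma~\ref{lem: description of three gaps by size and count}.
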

	
	We also obtain the following corollary for circle maps. Given an orientation-preserving aperiodic homeomorphism $f : \T \to \T$, write $\omega(f)$ for its (irrational) rotation number (see Section~\ref{sec: circle maps} for the definition). 
	Such a homeomorphism is conjugate to an irrational circle rotation if and only if it has a dense orbit. We may deduce the following analogue of part of Theorem~\ref{main thm 1}. A similar argument yields analogues of all the preceding results, replacing $\mathcal L$ in $\orbitexpoliminf$ and $\orbitexpolimsup$ by the $f$-invariant measure $\nu$. 
	
	Given $x \in \T$, an integer $n$, and $\eta \in (0,1)$, define $\xi_n^f(x) \coloneq \mathbf{1}_{[0,\eta)}(f^nx)$, where $f^n$ denotes iterated composition. For each $N \in \N$, define $\mathcal{W}_N^f \coloneq \{ \xi^f_{[0,N)}(x) : x \in \T \}$. 
	
	\begin{introcorollary}\label{intro corollary for circle maps}
		Let $f : \T \to \T$ be an orientation-preserving aperiodic homeomorphism and let $\nu$ be the unique $f$-invariant Borel probability measure on $\T$. If $f$ has a dense orbit, then for $\nu$-almost every $\eta \in (0,1)$,
		\begin{equation} \label{result for circle maps}
			\limsup_{N\to\infty}  \frac{\log \mathrm{diam}_E(\mathcal{W}_N^f)}{\log N} \ = \ \frac{\mu(\omega(f))-1}{\mu(\omega(f))} \quad \text{ and } \quad
			\liminf_{N\to\infty}  \frac{\log \mathrm{diam}_E(\mathcal{W}_N^f)}{\log N} \ = \ \frac{1}{2}   \, .
		\end{equation}
		In fact, the $\limsup$ and $\liminf$ in \eqref{result for circle maps} are bounded above by $\frac{\mu(\omega(f))-1}{\mu(\omega(f))}$ and $1/2$, respectively, for all $\eta \in (0,1)$.
		Moreover, if $f$ and $f^{-1}$ are diffeomorphisms of class $C^r$ with $r>\mu(\omega(f))$, then \eqref{result for circle maps} holds for Lebesgue-almost every $\eta \in \T$.
	\end{introcorollary}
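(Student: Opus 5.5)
The plan is to reduce everything to the rotation case via the semiconjugacy given by Poincaré's theorem. Write $\omega=\omega(f)\notin\Q$. Since $f$ has a dense orbit, Poincaré's classification gives a homeomorphism $h:\T\to\T$ with $h\circ f=R_\omega\circ h$, where $R_\omega x=x+\omega$. We may normalize $h(0)=0$ by composing with a rotation, which commutes with $R_\omega$. Uniqueness of the invariant measure forces $\nu=h^{-1}_*\leb$, so $h(x)=\nu([0,x))$ for $x\in[0,1)$, and $h$ is increasing on $[0,1)$. Then for $\eta\in(0,1)$,
\[
\xi^f_n(x)\;=\;\mathbf{1}_{[0,\eta)}(f^nx)\;=\;\mathbf{1}_{[0,h(\eta))}(h(x)+n\omega \bmod 1)\;=\;\xi_n(h(x)),
\]
where $\xi_n$ is the rotation coding \eqref{definition of xi} with $\alpha=\omega$ and $\beta=h(\eta)$. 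Because $h$ is a bijection of $\T$, we get $\mathcal W_N^f=\mathcal W_N$ exactly, for the pair $(\alpha,\beta)=(\omega,h(\eta))$. Hence both exponents in \eqref{result for circle maps} equal $\expolimsup(\omega,h(\eta))$ and $\expoliminf(\omega,h(\eta))$.

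The three claims then follow in turn.

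\textbf{(i) Almost every $\eta$.} The map $\eta\mapsto h(\eta)$ pushes $\nu$ forward to $\leb$. So $\{\eta: h(\eta)\in G_\omega\}$ has full $\nu$-measure, and Theorem~\ref{main thm 1} gives \eqref{result for circle maps} on this set.

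\textbf{(ii) Upper bounds for every $\eta$.} These follow from the upper-bound halves proved for rotations for every $\beta$. Concretely, these are the estimates $\expolimsup(\alpha,\beta)\le\frac{\mu(\alpha)-1}{\mu(\alpha)}$ and $\expoliminf(\alpha,\beta)\le 1/2$ for all $\beta$. Theorem~\ref{main thm golden more}(a) and Proposition~\ref{main prop high mu} show that only the lower bounds can fail for exceptional $\beta$, so I will cite the paper's all-$\beta$ upper-bound proposition from the body. If it is stated only implicitly, I will extract it from the proof of Theorem~\ref{main thm 1}, whose upper bound presumably uses only continued-fraction data of $\alpha$.

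\textbf{(iii) Lebesgue-a.e.\ $\eta$ in the smooth case.} This is the main step. Since $f,f^{-1}\in C^r$ with $r>\mu(\omega)$, I will invoke Herman–Yoccoz linearization; the most relevant form is Yoccoz's theorem for such $\omega$ and $r$. It says $h$ is at least $C^1$ with $C^1$ inverse (in fact $C^{r-\mu-\epsilon}$ for $r$ large relative to $\mu$), or at least absolutely continuous with absolutely continuous inverse. Then $h^{-1}(G_\omega)$ has full Lebesgue measure, so (i) holds Lebesgue-a.e.

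The expected obstacle is quoting the correct regularity theorem for the hypothesis $r>\mu(\omega)$. Katznelson–Ornstein give absolute continuity of $h$ when $f\in C^{r}$ with $r>\mu(\omega)$, or more precisely $r>\beta+1$ with $\beta=\mu-1$. This is exactly what is needed, since only mutual absolute continuity of $\nu$ and $\leb$ is used. If their hypothesis were slightly stronger, I would fall back on Yoccoz's $C^{r-1-(\mu-1)-\epsilon}$ smoothness, which is $C^1$ once $r>\mu+1$.
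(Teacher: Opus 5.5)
Your proof is correct and follows the paper's argument. You conjugate $f$ to $R_{\omega(f)}$ with $h(0)=0$, observe that $\mathcal{W}_N^f=\mathcal{W}_N$ for $\beta=h(\eta)$, pull back the Lebesgue-null exceptional set of Theorem~\ref{main thm 1} through $h$ (its preimage is $\nu$-null), and in the smooth case use the Katznelson--Ornstein/Khanin--Sinai $C^1$-conjugacy for $r>\mu(\omega(f))$.

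Two small corrections. The all-$\eta$ upper bounds come directly from Theorem~\ref{thm: upper bounds on expoliminf and expolimsup}, which holds for every $\beta$; Theorem~\ref{main thm golden more}(a) and Proposition~\ref{main prop high mu} do not establish that, so that part of your justification is a non sequitur, though the conclusion is right. Your Yoccoz fallback would only reach $r>\mu+1$, so the Katznelson--Ornstein citation is the one that matches the hypothesis $r>\mu(\omega(f))$.
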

	Our results also apply to Sturmian sequences, which are aperiodic sequences with minimal word complexity, as they can be represented using irrational rotations; see	Section~\ref{sec: sturmian}.
	
	Although the focus of this paper is on $\T$, we state one result for rotations of tori $\T^d$ with $d > 1$. For $y \in \R^d$ or $\T^d$, write $\modone{y}$ for the unique element of $y + \Z^d$ in $[0,1)^d$. We restrict attention to the rotation $x \mapsto \modone{x+\alpha}$ for typical $\alpha \in \R^d$. The interval used to code orbits in \eqref{definition of xi} is replaced by a box $\prod_{i=1}^d [0, \beta_i)$. 
	
	\begin{introtheorem}\label{main thm 4}
		Let $d \in \N$. For almost every $\alpha \in \R^d$ and every $\beta\in(0,1)^d$,
		\[
		\expoliminf(\alpha,\beta) \ = \ \expolimsup(\alpha,\beta) \ = \ \ddimorbitexpoliminf(\alpha,\beta) \ = \ \ddimorbitexpolimsup(\alpha,\beta) \ = \ \frac{d}{d+1} \, .
		\]
	\end{introtheorem}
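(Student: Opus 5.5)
Since $\expoliminf \le \expolimsup$ and $\ddimorbitexpoliminf \le \ddimorbitexpolimsup$, and trivially $\ddimorbitexpolimsup(\alpha,\beta)\le\expolimsup(\alpha,\beta)$ and $\ddimorbitexpoliminf(\alpha,\beta)\le \expoliminf(\alpha,\beta)$ (the orbit distance is at most the diameter), the plan reduces to two estimates: an upper bound $\mathrm{diam}_E(\mathcal{W}_N)\le N^{d/(d+1)+o(1)}$ for all large $N$, and a lower bound $\den N x y \ge N^{d/(d+1)-o(1)}$ for $\leb^d\times\leb^d$-a.e.\ $(x,y)$ and all large $N$. Both should hold for every $\beta$, so I need a full-measure set of $\alpha$ chosen independently of $\beta$. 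By Khintchine--Groshev type results I will take $\alpha$ such that, for every $\eps>0$ and all large $Q$, the simultaneous approximation $\min_{1\le q\le Q}\|q\alpha\|\ge Q^{-1/d-\eps}$ holds, and the discrepancy of $\{n\alpha\}_{n<N}$ with respect to boxes is $O(N^{-1+\eps})$ (Schmidt/Khintchine: for a.e.\ $\alpha$ the box discrepancy is $O(N^{-1}(\log N)^{d+1+\eps})$).

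\emph{Upper bound.} Given $x,y$, choose $q\le N^{d/(d+1)}$ with $\|q\alpha\|\le N^{-1/(d+1)}$ (Dirichlet) and, by density of the orbit, a shift $m\le M$ with $x+m\alpha$ within $\delta$ of $y$, where $\delta$ and $M$ are linked through the discrepancy estimate: a ball of radius $\delta$ is hit by some $m\le \delta^{-d}N^{\eps}$. I match $\xi_{[m,N)}(x)$ with $\xi_{[0,N-m)}(y)$ letter by letter; the mismatches occur at indices $n$ where $y+n\alpha$ falls within $\delta$ of the boundary of the box, and there are about $N\delta+N^{\eps}$ such $n$ by the discrepancy bound applied to the thin slabs around the $2d$ faces. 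The cost is therefore $m + N\delta \lesssim \delta^{-d}+N\delta$, which I optimize at $\delta=N^{-1/(d+1)}$, giving $N^{d/(d+1)+\eps}$ uniformly in $x,y$ and $\beta$. This is a direct generalization of the one-dimensional argument, with $d/(d+1)$ replacing $1/2$, and I do not expect trouble here.

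\emph{Lower bound.} This is the main obstacle. I would take an optimal alignment between $\xi_{[0,N)}(x)$ and $\xi_{[0,N)}(y)$ with $k=\den N x y$ unmatched letters, and decompose it into at most $2k+1$ maximal blocks on which index $n$ of one word is matched to $n+s$ of the other with constant shift $s$, $|s|\le k$. On a block of length $L$ with shift $s$, the codings of $x+n\alpha$ and of $y+(n+s)\alpha$ must agree. Setting $z=y+s\alpha-x$, this forces that no point $x+n\alpha$ of the block lies in the symmetric difference $B\,\triangle\,(B-z)$ of the box $B$ and its translate. Unless $\|z\|\lesssim L^{-1/d}$, this symmetric difference has volume $\gtrsim\|z\|$, so by the discrepancy estimate it contains orbit points in any window of length $\gg \|z\|^{-1}N^{\eps}$. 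Hence every long block has $\mathrm{dist}(y-x+s\alpha,\Z^d)\le L^{-1}N^{\eps}$. Some block has length $L\ge N/(2k+1)$, so there is $|s|\le k$ with $\|y-x+s\alpha\|\le kN^{-1+\eps}$.

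For a.e.\ $(x,y)$, a Borel--Cantelli argument over dyadic $N$ (the measure of such $y-x$ is $\le k\cdot (kN^{-1})^d$) shows this is impossible when $k^{d+1}\le N^{d-\eps'}$. Hence $k\ge N^{d/(d+1)-o(1)}$ for every large $N$, which gives the lower bound for $\ddimorbitexpoliminf$ and thus for all four exponents. The delicate points are the following. First, the volume estimate for $B\,\triangle\,(B-z)$ must be uniform in $\beta$; it is, since it is at least $c(\beta)\|z\|$ for small $z$ and the constant does not affect exponents, and this is where $\beta$ being a genuine box matters. Second, the Borel--Cantelli must be taken over $s$ jointly; the first-moment count above handles this.
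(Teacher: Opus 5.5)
Your proof is correct, and its overall structure (upper bound for $\expolimsup$, lower bound for $\ddimorbitexpoliminf$, then sandwich) matches the paper's. The upper bound is the paper's approach-and-follow argument (Theorem~\ref{thm: multidim expolimsup upper bound}). You apply box discrepancy directly to the $2d$ slabs $\{\|w_i\|\le\delta\}$, $\{\|w_i-\beta_i\|\le\delta\}$ over all of $[0,N)$; this is slightly cleaner than the paper's covering of the slabs by $\eps^{1-d}$ cubes inside windows of length $q$ (Lemma~\ref{lem: multidim disagreement count}). The Dirichlet choice of $q$ and the Khintchine--Groshev condition are never used; the only input is $\Delta_N(\alpha)=N^{o(1)}$.

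The lower bound is organized differently. The paper fixes a window length $k\approx N^{1/(d+1)}$. It combines Lemma~\ref{lem: bound on number of l,k-bad indices} with Markov's inequality and invariance of $\leb^{2d}$ under the diagonal rotation to get $\mathbb P(\den Nxy\le\ell)\le 2(2\ell+1)\,\mathbb P\bigl(\xi_{[0,k)}(x)=\xi_{[0,k)}(y)\bigr)$. It then bounds the last probability by Lemma~\ref{lem: multidim same coding bound}: equal codings of length $k$ force $\|x-y\|_\infty\le C_3\Delta_k/k$. You instead pigeonhole one constant-shift block of length $L\ge (N-k)/(2k+1)$ out of an optimal alignment and apply the same localization fact to it; your $B\mathbin{\triangle}(B-z)$ argument is essentially the proof of Lemma~\ref{lem: multidim same coding bound}. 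This gives the deterministic inclusion $\{\den Nxy\le k\}\subseteq\bigcup_{|s|\le k}\{\|y-x+s\alpha\|_\infty\lesssim kN^{-1+\eps}\}$, followed by a union bound over $s$.

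Both routes balance at the same scale and give the same exponent, in fact the same bound $d(1-\delta(\alpha))/(1+d(1-\delta(\alpha)))$ as Theorem~\ref{thm: ddimorbitexpoliminf lower bound}. Yours is more direct and yields a pointwise structural statement: small edit distance forces $y$ very near some $x-s\alpha$ with $|s|\le k$, with no averaging over time. The paper's is modular, reusing the concordance lemma and Markov step from the one-dimensional sections, and it needs discrepancy only at one prescribed scale $k$.

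One phrase should be removed: the caveat ``unless $\|z\|\lesssim L^{-1/d}$''. For every $z$, let $i$ maximize $\|z_i\|$. Then $B\mathbin{\triangle}(B-z)$ contains the single box $\prod_{j\ne i}[0,\beta_j)\times\bigl([0,\beta_i)\smallsetminus([0,\beta_i)-z_i)\bigr)$. Its volume is $\min\{\|z_i\|,\beta_i,1-\beta_i\}\prod_{j\ne i}\beta_j\ge c(\beta)\|z\|_\infty$, and box discrepancy handles such thin boxes with no lower threshold on $\|z\|$. If the caveat were genuine, you would only get $\|z\|\lesssim (k/N)^{1/d}$, the union bound would give measure about $k^2/N$, and for $d\ge 2$ you would be stuck at exponent $1/2$. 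Similarly, the volume bound is not uniform in $\beta$, but it need not be. What matters is that the full-measure set of $\alpha$ is defined through discrepancy alone, so it does not depend on $\beta$.
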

	The notation in the statement is defined analogously to \eqref{gamma defn} and \eqref{orbit defn}; see Section~\ref{sec: multidim}. The set of full measure in this theorem consists of rotations $\alpha$ for which the orbits have low discrepancy; see \eqref{eq:dlow} for the precise definition. In Section~\ref{sec: multidim}, we prove more general lower and upper bounds than those asserted by Theorem~\ref{main thm 4}.
	\begin{remarknonumber}
		In Section~\ref{sec: multidim} we also prove that  $\expoliminf(\alpha,\beta) \le \frac{d}{d+1}$  and $\ddimorbitexpolimsup(\alpha,\beta) \geq 1/2$ for all $\beta \in (0,1)^d$ and all \textbf{non-singular} $\alpha \in \R^d$,
		i.e., $\alpha$ such that the orbit segments $\{\modone{j\alpha}\}_{0 \le j <q}$ are $cq^{-1/d}$-separated for some $c>0$ and infinitely many $q$. (The set of  singular vectors in $\R^d$ has Hausdorff dimension  strictly less than $d$; see Section~\ref{subsec: nonsing} for  definitions and references.)
	\end{remarknonumber}

	\subsection{Organization}
	Regarding Theorem~\ref{main thm 1}, the upper bounds on $\expoliminf(\alpha,\beta)$, $\expolimsup(\alpha,\beta)$, and $\orbitexpoliminf(\alpha,\beta)$ are proved in Section~\ref{sec: upper bounds} and hold for all $\beta \in (0,1)$. The method used is \emph{approach and follow}, explained in the next subsection. Section~\ref{sec: lower bounds when mu alpha > 2} is devoted to lower bounds for $\expoliminf(\alpha,\beta)$ and $\expolimsup(\alpha,\beta)$, and Section~\ref{sec: orbitexpoliminf lower bound for most beta} to the lower bound for $\orbitexpoliminf(\alpha,\beta)$; these results are for almost every~$\beta$, as values of $\beta$ that are approached quickly by multiples of $\alpha$ are excluded.

	Regarding Theorem~\ref{main thm golden less}, the upper bounds from Section~\ref{sec: upper bounds} apply, and the corresponding lower bounds on $\orbitexpoliminf(\alpha,\beta)$ and $\orbitexpolimsup(\alpha,\beta)$ that hold for all $\beta$ are proved in Section~\ref{sec: universal lower bounds}.
	
	Theorem~\ref{main thm golden more} and Proposition~\ref{main prop high mu} are proved in Section~\ref{sec: exceptional sets}, using the method of \emph{round and synchronize}. The golden mean appears there as a threshold exponent separating the approach-and-follow method from the round-and-synchronize method for bounding edit distance. 
	
	Corollary~\ref{intro corollary for circle maps} is proved in Section~\ref{sec: circle maps}, and the results for Sturmian sequences in Section~\ref{sec: sturmian}. Theorem~\ref{main thm 4} is proved in Section~\ref{sec: multidim}.
	
	Finally, in Section~\ref{sec: questions}, we formulate some natural open questions.
	
	\subsection{Our notation conventions}\label{subsec: notation conventions}
	
	Suppose $\alpha \in \R\smallsetminus \Q$ and $\beta \in (0,1)$ are given. 
	For all $k \in \N$, define 
	\[ \Lambda(k)   \coloneq   \{-n\alpha \bmod 1  : n \in [0,k) \cap \Z \} \, \,  
	\text{ and } \, \, 
	\Lambda^*(k)   \coloneq  \Lambda(k) \cup \{\beta -n\alpha \bmod 1 : n \in [0,k)\cap \Z\} . \]
	Consider the partition of $[0,1) \smallsetminus \Lambda^*(k)$ by open intervals with endpoints in $\Lambda^*(k)$. Given $x \in [0,1) \smallsetminus \Lambda^*(k)$, write $I_k(x)$ for the open interval in this partition that contains $x$.
	
	Finally, given $x,y \in \R$ and $\ell,k\in \N$, we say that $(x,y)$ are $(\ell; k)$-\textbf{concordant} if there exists $s \in [-\ell,\ell]$  such that $ \xi_{[0,k)}(x \bmod 1) = \xi_{[s,s+k)}(y \bmod 1)$.
	Otherwise, we say that $(x,y)$ are $(\ell; k)$-\textbf{discordant}.
	
	\subsection{Overview of proof ideas}
	Consider the following method for bounding the edit distance $\den N x y$     from above,  which we call \textbf{approach and follow}.
	
	Since $\alpha$ is irrational, there exists $t(\eps) \in \N$ so that $\{n\alpha \bmod 1 : 0 \leq n < t(\eps)\}$ is $\eps$-dense, so take $0 \leq \ell < t(\eps)$ such that $\tilde{x} \coloneq x+\ell\alpha \bmod 1$ and $y$ are $\eps$-close. Since rotation by $\alpha$ is an isometry, the codings of $\tilde{x}$ and $y$ should mostly agree. Indeed, $\xi_{n}(\tilde{x}) = \xi_{n}(y)$ except possibly when $\tilde{x}+n\alpha \bmod 1$ lands in either of two ``danger zones'' (intervals of length $\eps$ adjacent to $0$ and $\beta$); see Figure~\ref{fig: simple danger zones}. We therefore expect, for some $C > 0$,
	\begin{equation*}
		\den N x y \ \leq \ t(\eps) + C\eps N \, ,
	\end{equation*}
	obtained by first paying the edit cost to get from $x$ to $\tilde{x}$ (i.e., to edit $\xi_{[0,N)}(x)$ into $\xi_{[\ell,N+\ell)}(x) = \xi_{[0,N)}(\tilde{x})$) and then correcting the coding mismatches that might arise from $\tilde{x}+n\alpha$ landing in a danger zone. 
	
	\begin{figure} 
		\begin{tikzpicture}[scale=1.25]
			\draw[thick] (0,0) -- (10.0,0);
			
			\node at (9.99,0) {)};
			\node[below=0.3cm] at (10,0) {$1$};
			
			\draw[thick] (0,-0.15) -- (0,0.15);
			\node[below=0.3cm] at (0,0) {$0$};
			
			\warningtriangle{0.5}{0.55}{0.7}{15pt}
			
			\draw[thick] (1,-0.15) -- (1,0.15);
			\node[below=0.37cm] at (1,0) {$\eps$};
			
			\draw[thick] (5.5,-0.15) -- (5.5,0.15);
			\node[below=0.3cm] at (5.5,0) {$\beta$};
			
			\warningtriangle{6.0}{0.55}{0.7}{15pt}
			
			\draw[thick] (6.5,-0.15) -- (6.5,0.15);
			\node[below=0.3cm] at (6.5,0) {$\beta+\eps$};
			
			\filldraw[black] (3.0,0) circle (0.08);
			\node[below=0.39cm] at (3.0,0) {$y$};
			
			\filldraw[black] (3.5,0) circle (0.08);
			\node[below=0.3cm] at (3.5,0) {$\tilde{x}$};
			
			\draw[thick] (3.0,0.4) -- (3.0,0.6) -- (4.0,0.6) -- (4.0,0.4);
			\node[above=0.05cm] at (3.5,0.6) {$\eps$};				
		\end{tikzpicture}
		\captionsetup{singlelinecheck=off}
		\caption{Choose $0 \leq \ell < t(\eps)$ such that $\tilde{x}\coloneq x+\ell \alpha \bmod 1$ is within $\eps$ of $y$.  A coding mismatch between $\tilde{x}+n\alpha$ and $y + n\alpha$ could only happen when $\tilde{x} + n\alpha \bmod 1$ belongs to either of the danger zones $[0,\eps)$ and $[\beta,\beta+\eps)$. If $\tilde{x}$ is on the other side of $y$, then the danger zones are shifted left by $\eps$.}
		\label{fig: simple danger zones}
	\end{figure}

	To help us choose a suitable $\eps$, let $(p_m/q_m)$ be the convergents of $\alpha$. By basic properties of continued fraction expansions, we have $t(\eps) \approx q_m$ when $\eps = 2/q_m$ and $t(\eps) \approx q_{m+1}$ when $\frac{2}{q_{m+1}} < \eps < \frac{1}{2q_m}$ (see Figure~\ref{fig: how does the orbit of alpha fill the interval}). Therefore,
	\begin{equation*}
		\den N x y \ \leq \ \min_{j} \bigl(q_j + C\frac{N}{q_j} \bigr) \, .
	\end{equation*}
	
	To minimize $q_j + CN/q_j$ over $j \in \N$, the only candidates are $j = m$ or $j = m+1$, where $m$ is such that $q_m^2 < N \leq q_{m+1}^2$, hence
	\begin{equation}\label{af} 
		\den N x y \ \leq \ \min \bigl \{ C\frac{N}{q_m},q_{m+1}\bigr\} \, . 
	\end{equation}
	Let
	\begin{equation} \label{rhodef}
		\rho \ \coloneq \ \mu(\alpha) - 1 \ = \ \limsup_{m\to\infty} \frac{\log q_{m+1}}{\log q_{m}} \, ;
	\end{equation}
	see \eqref{definition of mu}. The worst case in \eqref{af} is when the two terms in the minimum are roughly equal and $q_{m+1} \approx q_m^{\rho}$. Then $q_m \approx N^{1/(\rho+1)}$, hence
	\begin{equation} \label{upperh}
		\den N x y \ \leq \ CN^{\frac{\rho}{\rho + 1}} \, ,
	\end{equation}
	which yields the upper bound on the $\limsup$ exponent in Theorem~\ref{main thm 1}.
	
	Is this upper bound tight? When $\rho = 1$, the denominators $q_m$ of the convergents grow slowly, and the first $q_m$ elements of a rotation orbit are very uniformly distributed; this yields a matching lower bound in this case.  
	
	\bigskip
	
	When $\rho$ is large and $\beta$ is in the orbit of $\alpha$ (or unusually close to an initial segment of the orbit), another method, which we call \textbf{round and synchronize}, yields better upper bounds for edit distance.
	We first round $\alpha$ to $p_m/q_m$ and $\beta$ to a multiple of $1/q_m$. Given $x \in \T$, the orbit segment $\{x+k\alpha \bmod 1 : 0 \leq k < K \}$ has the same coding as the (periodic) coding of the orbit segment $\{x+kp_m/q_m \bmod 1 : 0 \leq k < K \}$ until the first $K$ such that the points $x + K\alpha \bmod 1$ and $x + Kp_m/q_m \bmod 1$ are separated by a point in $\frac{1}{q_m} \Z$. Then we synchronize the orbits by deleting $q_{m-1}$ consecutive elements from the periodic orbit,    since $p_{m-1}q_m - p_mq_{m-1}  =  (-1)^m$. The classical inequality $|\alpha-p_m/q_m|<\frac{1}{q_mq_{m+1}}$ (see \eqref{exact formula for qnalpha - pn} and \eqref{helpful inequality on q_n alpha}) implies that such synchronizations are at least $q_{m+1}$ time steps apart. Clearly the edit distance between a $q_m$-periodic word and each of its cyclic shifts is at most $q_m$. Applying the same process to the orbit of $y \in \T$ we obtain,   via the triangle inequality, that
	\begin{equation} \label{round1}
		\den N x y \ \le \ q_m+Cq_{m-1}\frac{N}{q_{m+1}} \,. 
	\end{equation}
	
	Assume, for simplicity, that $N=q_mq_{m+1}$ and $q_{m+1} \approx q_m^{\rho} \approx q_{m-1}^{\rho^2}$, so that we are in the worst case for the alternative definition \eqref{rhodef} of $\rho$ and for \eqref{af}. Comparing \eqref{round1} and \eqref{af}, we see that \eqref{round1} gives a better upper bound than \eqref{af} when $q_{m+1} > q_mq_{m-1}$, i.e., when $q_{m-1}^{\rho^2} > q_{m-1}^{1+\rho}$. This holds exactly when $\rho$
	exceeds the golden mean $\varphi$ and partly explains the appearance of $\varphi$ in Theorems~\ref{main thm golden less} and \ref{main thm golden more}.
	
	For large $\rho$, tightness of \eqref{upperh} depends on the length $\beta$ of the interval used to code the orbits of $x$ and $y$. For typical $\beta$, synchronizations with respect to the left and right boundaries of $[0,\beta)$ clash, and only the approach-and-follow method is available. The hard part of Theorem~\ref{main thm 1} consists of showing that the bound \eqref{af}  obtained via this method is sharp for such $\beta$.
	
	Bounding edit distance from below is hard. Our main tool is the notion of concordance, defined in Section~\ref{subsec: notation conventions}.
	Lemma~\ref{lem: bound on number of l,k-bad indices} ensures that if $N >6k\ell$ and $d_N(x,y) \le \ell$, then for most $j \le N$ the points $(x+j\alpha,y+j\alpha)$ are $(\ell;k)$-concordant, i.e., their length $k$ codings have
	a perfect match with shift at most $\ell$. 
	For small $\ell$, the key Lemma~\ref{lem: refactored lower bound lemma} shows this is impossible. It uses two interlaced families of coding boundaries, an orbit segment $\Lambda(q_m)$ and its translate by $\beta$. Because $\beta$ typically  stays away from the first orbit segment, one can choose two sizable regions for $x$ and $y$ that imply discordance:   For each $s \in [-\ell, \ell]$, either the orbit segment or its shift places a coding boundary between $x$ and $y+s\alpha$.
	
	Our other lower bound arguments are more intricate and are explained in the relevant sections.
	
	\begin{figure}
		\begin{tikzpicture}[scale=1.25]
			\draw[thick] (0,0) -- (10.0,0);
			
			\node at (9.99,0) {)};
			\node[below=0.3cm] at (10,0) {$1$};
			
			\draw[thick] (0, -0.15) -- (0, 0.15);
			\node[above=0.3cm] at (0, 0) {$0$};
			
			\draw[thick] (0.1, -0.15) -- (0.1, 0.15);
			\node[below=0.25cm] at (0.6, 0) {{\small $q_m\alpha \bmod 1$}};
			
			\foreach \k in {1,2} {
				\pgfmathsetmacro{\xpos}{10*\k/6}
				\draw[thick] (\xpos, -0.15) -- (\xpos, 0.15);
				\node[below=0.15cm] at (\xpos, 0) {$\frac{\k}{q_m}$};
			}
			
			\node[below=0.15cm] at (5.85, 0) {$\dots$};
			
			\foreach \k in {3,4} {
				\pgfmathsetmacro{\xpos}{10*\k/6}
				\draw[thick] (\xpos, -0.15) -- (\xpos, 0.15);
			}
			
			\pgfmathsetmacro{\xpos}{10*5/6}
			\draw[thick] (\xpos, -0.15) -- (\xpos, 0.15);
			\node[below=0.15cm] at (\xpos, 0) {$\frac{q_m-1}{q_m}$};
		\end{tikzpicture} \medskip \\
		\begin{tikzpicture}[scale=1.25]
			\draw[thick] (0,0) -- (10.0,0);
			
			\node at (9.99,0) {)};
			\node[below=0.3cm] at (10,0) {$1$};
			
			\draw[thick] (0, -0.15) -- (0, 0.15);
			\node[below=0.3cm] at (0, 0) {$0$};
			
			\foreach \k in {1,2} {
				\pgfmathsetmacro{\xpos}{10*\k/6}
				\draw[thick] (\xpos, -0.15) -- (\xpos, 0.15);
				\node[below=0.15cm] at (\xpos, 0) {$\frac{\k}{q_m}$};
			}
			
			\node[below=0.15cm] at (6.1, 0) {$\dots$};
			
			\foreach \k in {3,4} {
				\pgfmathsetmacro{\xpos}{10*\k/6}
				\draw[thick] (\xpos, -0.15) -- (\xpos, 0.15);
			}
			
			\pgfmathsetmacro{\xpos}{10*5/6}
			\draw[thick] (\xpos, -0.15) -- (\xpos, 0.15);
			\node[below=0.15cm] at (\xpos, 0) {$\frac{q_m-1}{q_m}$};
			
			\foreach \k in {0,1,2,3,4,5} {
				\foreach \x in {0.1,0.2,...,0.8} {
					\draw[thick] (10*\k/6+\x,-0.15) -- (10*\k/6+\x,0.15);
				}
			}		
		\end{tikzpicture}
		\captionsetup{singlelinecheck=off}
		\caption{Since $|q_m\alpha -p_m|<q_{m+1}^{-1}$, each point of the orbit $\{n\alpha \bmod 1 : 0 \leq n < q_m\}$ is within  $q_{m+1}^{-1}$ of one of the points $\{j/q_m : 0 \leq j < q_m\}$; see Lemma~\ref{lem: ialpha almost uniform}.    In the top diagram, we emphasize that $\|q_m\alpha\|<q_{m+1}^{-1}$  (see also \eqref{helpful inequality on q_n alpha}). The bottom diagram thus approximately illustrates the orbit $\{n\alpha \bmod 1 : 0 \leq n < q_{m+1}/2\}$,   which implies that the claimed relation between $\eps$ and $t(\eps)$ holds, i.e., the density of an orbit segment under $\alpha$ is strictly controlled by the denominators of the convergents of $\alpha$. (In both diagrams, $m$ is even. If $m$ were odd, $q_m\alpha \bmod 1$ would be on the other side of zero, which would change the bottom diagram accordingly.)}
		\label{fig: how does the orbit of alpha fill the interval}
	\end{figure}

	\section{Background material}
	
	\subsection{Continued fractions}
	Let $\alpha \in \R\smallsetminus\Q$. The continued fraction of $\alpha$ is defined by the following algorithm: write $t_0 \coloneq \alpha$ and
	define, for $m = 0,1,2,\ldots$,
	\begin{equation*}
		a_m \coloneq \lfloor t_m\rfloor, \quad t_{m+1} \coloneq \frac{1}{\{t_m\}}.
	\end{equation*}
	The continued fraction representation of $\alpha$ is $[a_0;a_1,a_2,\ldots] = \lim_{ m \to \infty} [a_0;a_1,\ldots,a_m]$, where 
	\begin{equation*}
		[a_0;a_1,\ldots,a_m] \ \coloneq \ a_0 +\cfrac{1}{a_1 + \cfrac{1}{a_2 + \cfrac{\ddots}{\quad\ddots\quad \frac{1}{a_{m-1} + \frac{1}{a_m}}}}}
	\end{equation*}
	is the $m$th \textbf{convergent}. The $m$th convergent is expressible as the irreducible fraction $p_m/q_m$, with values defined as follows. Set $(p_0,q_0) \coloneq (a_0,1)$ and $(p_1,q_1) \coloneq (  a_0a_1+1,a_1)$. For $m \in \N$, define
	\begin{equation*}
		p_{m+1} \ \coloneq \ p_{m-1} + a_{m+1}p_{m} \quad \text{ and } \quad q_{m+1} \ \coloneq \ q_{m-1} + a_{m+1}q_{m} \, .
	\end{equation*}
	
	For $x \in \R$, write $\|x\| = \mathrm{dist}_\R(x,\Z) = \min_{k\in \Z} |x - k|$. It is well known that (see, e.g., \cite{khintchine} or \cite{bugeaud}, Theorem D.1)
	\begin{equation} \label{pq identity}
		p_{m-1}q_m - p_mq_{m-1} \ = \ (-1)^m
	\end{equation}
	and
	\begin{equation} \label{exact formula for qnalpha - pn}
		\| q_m\alpha \| \ = \ (-1)^m(q_m\alpha - p_m )\ = \ \frac{1}{t_{m+1}q_m + q_{m-1}} \, . 
	\end{equation}
	Since $1+a_{m+1} > t_{m+1} \ge a_{m+1}$, it follows that
	\begin{equation} \label{helpful inequality on q_n alpha}
		\frac{1}{2q_{m+1}} \ < \ \frac{1}{q_m+q_{m+1}} \ < \ \|q_m\alpha\| \ < \ \frac{1}{q_{m+1}}  \, .
	\end{equation}

	\begin{definition}
		The \textbf{irrationality exponent} $\mu(\alpha)$ of $\alpha \in \R$ is the supremum of $\mu \in \R$ for which the inequality
		\[
		0 \ < \ \left|\alpha - \frac{p}{q}\right| \ < \ \frac{1}{q^\mu}
		\]
		has infinitely many solutions in $p \in \Z$ and $q \in \N$.
	\end{definition}
	\noindent If $p_m/q_m$, $m \in \N$, are the convergents of $\alpha$, then
	\begin{equation} \label{definition of mu}
		\mu(\alpha) \ = \ 1 + \limsup_{m\to\infty} \frac{\log q_{m+1}}{\log q_{m}} \, ;
	\end{equation}
	this is Exercise E.1 in \cite{bugeaud} and a proof can be found in \cite{sondow}. 
	
	\subsection{Information on orbit gaps}
	The following material is classical; see, e.g., \cite[V.7-8]{herman}. \begin{lemma}\label{lem: ialpha almost uniform}
		Let $\alpha \in \R\smallsetminus\Q$ with convergents $(p_m/q_m)$. Fix $m \in \N$.
		\begin{enumerate}[label=\textup{(\alph*)}]
			\item For all $j \in [0,q_m) \cap \Z$, there exists $z \in \Lambda(q_m)$ such that $|z - j/q_m| < 1/q_{m+1}$.
			\item Suppose $k \in \N$ satisfies $q_{m} \leq k \leq q_{m+1}$. Then $[0,1)  \smallsetminus \{n\alpha \bmod 1 : 0 \leq n < k \}$ is a disjoint union of $k$ open intervals, each of length at most $\frac{1}{q_{m}} + \frac{1}{q_{m+1}}$.  
			\item For all $x \in \R$, there exists $\ell \in [0,q_m) \cap \Z$ with $\|x +\ell\alpha \| \leq \frac{1}{2q_m} + \frac{1}{2q_{m+1}} < \frac{1}{q_m}$.

		\end{enumerate}
	\end{lemma}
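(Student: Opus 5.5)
We prove the three parts in order, using only \eqref{pq identity}, \eqref{exact formula for qnalpha - pn} and \eqref{helpful inequality on q_n alpha}.

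\emph{Part (a).} Write $\delta_m \coloneq q_m\alpha - p_m$, so $|\delta_m| = \|q_m\alpha\| < 1/q_{m+1}$. For $0 \le n < q_m$ we have
\[
n\alpha \;=\; \frac{np_m}{q_m} + \frac{n\delta_m}{q_m},
\qquad
\Bigl|\frac{n\delta_m}{q_m}\Bigr| \;<\; \frac{1}{q_{m+1}} .
\]
Since $\gcd(p_m,q_m)=1$, the residues $-np_m \bmod q_m$, $0\le n<q_m$, run over all of $\{0,\dots,q_m-1\}$. So for each $j\in[0,q_m)\cap\Z$ choose $n$ with $-np_m \equiv j \pmod{q_m}$. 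Then $-n\alpha \bmod 1$ lies within $|n\delta_m|/q_m < 1/q_{m+1}$ of $j/q_m$, measured in circle distance. One should also note that all the displacements $-n\delta_m/q_m$ have the same sign.

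\emph{Part (b).} The count is immediate: $k$ distinct points on the circle cut it into $k$ arcs. Read in $[0,1)$, the point $0$ is one of the removed points, so the complement is exactly $k$ open intervals. For the lengths, use the three-gap structure.
\begin{itemize}
\item[(i)] For $k=q_m$: by (a) applied to $+n\alpha$ (symmetric), the points are a perturbation of the lattice $\frac1{q_m}\Z$. Each point moves by $n\delta_m/q_m$, which lies in $[0,\delta_m)$ after the sign is fixed. Hence consecutive gaps differ from $1/q_m$ by less than $|\delta_m| < 1/q_{m+1}$.
\item[(ii)] For larger $k$: adding points only subdivides gaps, so the maximal gap is non-increasing in $k$. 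The bound for $k=q_m$ therefore persists for all $k\ge q_m$.
\end{itemize}
The hypothesis $k \le q_{m+1}$ is needed only for the count and the form of the statement. The one delicate point is to check that the shift-by-$j$ pairing preserves cyclic order: points at $j/q_m$ and $(j+1)/q_m$ receive perturbations in $[0,|\delta_m|)$ of the same sign, so no reordering occurs.

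\emph{Part (c).} Apply (b) with $k=q_m$ to the points $\{-\ell\alpha \bmod 1 : 0 \le \ell < q_m\}$; this is the same configuration reflected, and reflection preserves gap lengths. Every $x$ lies in a closed gap of length at most $\frac1{q_m}+\frac1{q_{m+1}}$, so it is within half of that of an endpoint $-\ell\alpha$. This gives
\[
\|x+\ell\alpha\| \;\le\; \frac{1}{2q_m}+\frac{1}{2q_{m+1}} .
\]
The strict inequality $\frac{1}{2q_m}+\frac{1}{2q_{m+1}} < \frac1{q_m}$ follows from $q_{m+1} > q_m$, valid for $m \ge 1$. The degenerate case $q_1 = q_0 = 1$ is harmless since $m \in \N$.

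The only step needing care is the gap estimate in (b): the same-sign observation is what makes the error $|\delta_m|$ rather than $2|\delta_m|$. If that fails somewhere, the classical three-distance theorem gives gap lengths $\|q_{m-1}\alpha\|$, $\|q_m\alpha\|$ and their sum, and $\|q_{m-1}\alpha\|+\|q_m\alpha\| \le \frac1{q_m}+\frac1{q_{m+1}}$ by \eqref{helpful inequality on q_n alpha}.
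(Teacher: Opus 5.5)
Your proof is correct and follows the paper's route: the same-sign estimate $0\le(-1)^m\bigl(n\alpha-np_m/q_m\bigr)<1/q_{m+1}$ for $0\le n<q_m$, together with the bijection $n\mapsto np_m \bmod q_m$, gives (a) and the $k=q_m$ case of (b); monotonicity of the maximal gap under adding points then gives the rest of (b), and reflection gives (c). One small detail to add: in (a) your circle distance is also the absolute value $|z-j/q_m|$ the statement asks for, since $j=0$ forces $z=0$, and for $1\le j<q_m$ the point $j/q_m$ lies at distance at least $1/q_m>1/q_{m+1}$ from both $0$ and $1$.
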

	\begin{proof}
		Multiply \eqref{helpful inequality on q_n alpha} by $k/q_m$ and use \eqref{exact formula for qnalpha - pn} to obtain
		\begin{equation}\label{kalpha approximation}
			0 \ \leq \ (-1)^m \bigl(k\alpha - \frac{kp_m}{q_m} \bigr) \ < \ \frac{1}{q_{m+1}} \, .
		\end{equation}
		Parts (a) and (b) follow from \eqref{kalpha approximation} on observing that the map $n \mapsto np_m \bmod q_m$ bijects $[0,q_m)\cap\Z$ to itself. Part (c) follows from (b).
	\end{proof}

	\begin{lemma}\label{lem: description of three gaps by size and count}
		Fix $\alpha \in \R \smallsetminus\Q$ with convergents $(p_m/q_m)$.
		\begin{enumerate}[label=\textup{(\alph*)}]
			\item For every $m \in \N$, the set $[0,1) \smallsetminus \{n\alpha \bmod 1 : 0 \leq n < q_m \}$ is a disjoint union of $q_m - q_{m-1}$ intervals of length $\|q_{m-1}\alpha\|$ and $q_{m-1}$ intervals of length $\|q_{m-1}\alpha\| + \|q_{m}\alpha\|$.
			\item For every $m \in \N$, the set $[0,1) \smallsetminus \{n\alpha \bmod 1 : 0 \leq n \leq q_m \}$ is a disjoint union of $q_m - q_{m-1} + 1$ intervals of length $\|q_{m-1}\alpha\|>\frac{1}{2q_m}$, one interval of length $\|q_m\alpha\|$, and $q_{m-1}-1$ intervals of length $\|q_{m-1}\alpha\| + \|q_{m}\alpha\|$.
		\end{enumerate}
	\end{lemma}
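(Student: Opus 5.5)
This is the classical three-gap theorem at the denominators of the convergents, and I would prove it by induction on $m$ using the lattice structure of the orbit. Write $\delta_j \coloneq \|q_j\alpha\|$, so that $\delta_{m-1} > \delta_m$ by \eqref{exact formula for qnalpha - pn}, and recall the recursion $\delta_{m-1} = a_{m+1}\delta_m + \delta_{m+1}$, which follows from \eqref{exact formula for qnalpha - pn} together with $q_{m+1}=a_{m+1}q_m+q_{m-1}$ and the alternating signs of $q_j\alpha - p_j$.

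For part (a), the cleanest route avoids a full induction. Consider the points $\Lambda_m \coloneq \{n\alpha \bmod 1 : 0\le n<q_m\}$. The key observation is that two points $n\alpha$ and $n'\alpha$ with $0\le n,n'<q_m$ are adjacent on the circle only if $n'-n \in \{q_{m-1}, -(q_m-q_{m-1})\}$ (or the negatives). This follows from the best-approximation property: for $0<|k|<q_m$ we have $\|k\alpha\|\ge \delta_{m-1}$, with equality only at $k=\pm q_{m-1}$.

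Concretely, I would show that for each point $n\alpha$ its right neighbour is $(n+q_{m-1})\alpha$ when $n+q_{m-1}<q_m$. Otherwise it is $(n-(q_m-q_{m-1}))\alpha$, with the direction depending on the parity of $m$. The corresponding gaps are $\|q_{m-1}\alpha\|=\delta_{m-1}$ and $\|(q_m-q_{m-1})\alpha\|=\delta_{m-1}+\delta_m$, since $q_{m-1}\alpha$ and $q_m\alpha$ lie on opposite sides of $0$. Counting indices gives $q_m-q_{m-1}$ values of $n$ of the first kind and $q_{m-1}$ of the second. As a consistency check, the gaps must sum to $1$:
\[
(q_m-q_{m-1})\delta_{m-1}+q_{m-1}(\delta_{m-1}+\delta_m)=q_m\delta_{m-1}+q_{m-1}\delta_m=1 .
\]
The last equality is the classical identity, which I would verify from \eqref{exact formula for qnalpha - pn} and \eqref{pq identity}.

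The counting identity also closes the argument. Each point has a unique right neighbour, and the candidate gaps already exhaust total length $1$. So the claimed neighbour is genuinely adjacent, since no other point can lie strictly between $n\alpha$ and the candidate. This reduces everything to showing the candidate lies on the correct side at the claimed distance, which is the sign bookkeeping.

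For part (b), I add the single point $q_m\alpha$, which lies at distance $\delta_m$ from $0$ on the side determined by $(-1)^m$. This side is the side of the long gap adjacent to $0$: the gap starting at $0$ has length $\delta_{m-1}+\delta_m$ in that direction. Indeed, the neighbour of $0$ on that side is $(q_m-q_{m-1})\alpha$, using $q_m\alpha = q_{m-1}\alpha + (q_m-q_{m-1})\alpha$ and the sign pattern. Since $\delta_m<\delta_{m-1}+\delta_m$, the point $q_m\alpha$ splits this long gap into one gap of length $\delta_m$ and one of length $\delta_{m-1}$. The resulting counts are $q_m-q_{m-1}+1$ gaps of length $\delta_{m-1}$, one of length $\delta_m$, and $q_{m-1}-1$ of length $\delta_{m-1}+\delta_m$.

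The inequality $\delta_{m-1}>\frac1{2q_m}$ is \eqref{helpful inequality on q_n alpha} with the index shifted.

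The main obstacle I expect is the orientation and sign bookkeeping: checking that the neighbour relations above hold with the correct direction for both parities of $m$, and that $q_m\alpha$ falls into the long gap at $0$ rather than a short one. I would handle this by fixing $m$ even (so $q_m\alpha\in(0,\delta_m)$ and $q_{m-1}\alpha \equiv -\delta_{m-1}$), then invoking the symmetry $\alpha\mapsto -\alpha$ for $m$ odd.
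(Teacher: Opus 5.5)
Your proof is correct, and it takes a different route from the paper. The paper gives no argument of its own: it cites Theorems 2.2 and 3.3 of van Ravenstein, i.e.\ the general three-gap theorem, which describes the gap structure for every orbit length $N$.

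Your argument is a direct certificate. For each $n\in[0,q_m)$ you exhibit a point of the orbit segment on the side $(-1)^{m-1}$, at directed distance $g_n\in\{\delta_{m-1},\delta_{m-1}+\delta_m\}$. Hence the true gap $h_n$ on that side satisfies $h_n\le g_n$. Since $\sum_n h_n=1$, and $\sum_n g_n=q_m\delta_{m-1}+q_{m-1}\delta_m=1$ by \eqref{exact formula for qnalpha - pn} and \eqref{pq identity}, equality $h_n=g_n$ is forced for every $n$. Part (b) then follows by placing $q_m\alpha$, at signed distance $(-1)^m\delta_m$ from $0$, inside the long gap between $0$ and $(q_m-q_{m-1})\alpha$.

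What your route buys is a short self-contained proof that uses only identities already displayed in the paper. What it gives up is generality: it treats only orbit lengths $q_m$ and $q_m+1$, though that is all the paper ever uses.

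Three small points of hygiene.
\begin{enumerate}
\item The sentence invoking the best-approximation property does not prove your adjacency claim. It only shows that every gap is at least $\delta_{m-1}$; it does not say which index differences are adjacent. Neither it nor the recursion $\delta_{m-1}=a_{m+1}\delta_m+\delta_{m+1}$ is needed, since the length count does all the work.
\item Do the bookkeeping with signed displacements, setting $\delta_j:=(-1)^j(q_j\alpha-p_j)$, rather than with $\|\cdot\|$. The long gap $\delta_{m-1}+\delta_m$ can exceed $1/2$: for instance, when $a_1=2$ and $m=1$ it equals $1-\{\alpha\}>1/2$. So it is not $\|(q_m-q_{m-1})\alpha\|$ in general.
\item When $a_1=1$ one has $\|q_0\alpha\|=1-\{\alpha\}\neq q_0\alpha-p_0$. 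So at $m=1$, both your identity $q_1\delta_0+q_0\delta_1=1$ and the lemma itself hold only if $\|q_0\alpha\|$ is read in the signed sense. This is the same convention the paper uses implicitly in \eqref{exact formula for qnalpha - pn}.
\end{enumerate}
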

	\begin{proof} Each part follows from Theorems 2.2 and 3.3 in \cite{van ravenstein}. One may also consult \cite{slater}.
	\end{proof}

	\subsection{Edit distance}
	We record three basic facts here and below prove new results about edit distance as needed. First, edit distance is subadditive, i.e., for all $w_1,w_2,\tilde{w}_1,\tilde{w}_2 \in \mathcal{A}^+$,
	\begin{equation}\label{edit distance is subadditive}
		\editfull (w_1w_2,\tilde{w}_1\tilde{w}_2) \ \leq \ \editfull(w_1,\tilde{w}_1) + \editfull(w_2,\tilde{w}_2) \, .
	\end{equation}
	
	Second, given a bi-infinite word $w = (w_n)_{n=-\infty}^{\infty} \in \mathcal{A}^\Z$ and integers $i < j$, write $w_{[i,j)}$ for $(w_n)_{n=i}^{j-1}$. Then, for two bi-infinite words $w,\tilde{w} \in \mathcal{A}^{\Z}$, the function $N \mapsto \editfull(w_{[0,N)},\tilde{w}_{[0,N)})$ is nondecreasing.
	
	Third, the edit distance between finite $w, \tilde{w} \in \mathcal{A}^N$ is at most their Hamming distance:\begin{equation*}
		\editfull(w,\tilde{w}) \ \leq \ \#\{n \in [0,N) : w_{n} \neq \tilde{w}_n \} \, .
	\end{equation*}
	Combining this with the triangle inequality and recalling the notation in Section~\ref{subsec: notation conventions}, we see that, for every $x,y \in \T$ and $0 \leq \ell < N$,
	\begin{equation}\label{hamming bound}
		\den N x y \ \leq \ \ell + \#\{n \in [0,N) : \xi_n(x+\ell\alpha) \neq \xi_n(y) \} \, .
	\end{equation}

	\section{Universal upper bounds via approach-and-follow} \label{sec: upper bounds}
	Let $\alpha \in \R\smallsetminus \Q$ and $\beta \in (0,1)$. Given $N \in \N$ and $x,y \in \T$, to obtain an upper bound on
	\begin{equation*}
		\den N x y \ = \ \editfull (\xi_{[0,N)}(x),\xi_{[0,N)}(y)) \, ,
	\end{equation*}
	first find a point $x + \ell \alpha $ that is close to $y$ modulo 1, then observe that $\xi_{\ell + n}(x) = \xi_n(y)$ for many $n$. Lemma~\ref{lem: ialpha almost uniform}(c) makes the first part precise, while the following lemma handles the second part. These are combined in Lemma~\ref{lem: q + N/q bound}.
	\begin{lemma}\label{lem: frequent following}
		Let $\alpha \in \R\smallsetminus\Q$ with convergents $(p_m/q_m)$ and let $\beta \in (0,1)$. Let $x,y \in \R$ and suppose $\|x - y\| \leq \eps$. Then, for all $m \in \N$,
		\begin{equation*}
			\#\bigl\{ n \in [0,q_m]  : \xi_n(x) \neq \xi_n(y) \} \ \leq \ 8\eps q_m + 3 \, .
		\end{equation*}
	\end{lemma}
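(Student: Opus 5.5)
A mismatch $\xi_n(x)\neq\xi_n(y)$ can only happen when a boundary point of the coding interval, $0$ or $\beta$, lies (mod 1) between $x+n\alpha$ and $y+n\alpha$. So the plan is to reduce the count to counting orbit points in two short arcs, the ``danger zones'', and then bound those counts using the gap structure of $\{n\alpha \bmod 1 : 0\le n\le q_m\}$.

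First, the reduction. Write $y=x+\delta$ modulo $1$ with $|\delta|\le\eps$; we may assume $\eps<1/2$, since otherwise the bound $8\eps q_m+3\ge 4q_m+3$ is trivial. Assume $\delta\ge0$; the other case is symmetric. Then $\xi_n(x)\ne\xi_n(y)$ forces $x+n\alpha \bmod 1$ to lie in one of the half-open arcs $J_0:=[-\delta,0)$ or $J_\beta:=[\beta-\delta,\beta)$ (mod $1$). Indeed, the arc from $x+n\alpha$ to $x+n\alpha+\delta$, of length $\delta<1/2$, must cross $0$ or $\beta$. Each of these arcs has length at most $\eps$.

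Second, it suffices to show that any half-open arc $J$ of length at most $\eps$ contains at most $4\eps q_m+1$ points of $\{x+n\alpha : 0\le n\le q_m\}$; summing over $J_0$ and $J_\beta$ then gives at most $8\eps q_m+2\le 8\eps q_m+3$. Translating by $-x$, this is a statement about the points $n\alpha \bmod 1$, $0\le n\le q_m$. By Lemma~\ref{lem: description of three gaps by size and count}(b), these $q_m+1$ points cut the circle into gaps, all of length at least $\|q_m\alpha\|$ and all but one of length at least $\|q_{m-1}\alpha\|>\frac{1}{2q_m}$.

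Third, the count. Suppose $J$ contains $r$ consecutive orbit points. They span $r-1$ consecutive gaps, at most one of which is the exceptional short gap. So the span is at least $(r-2)\frac{1}{2q_m}$, and this is at most $\eps$. Hence $r\le 2\eps q_m+2$, which is at most $4\eps q_m+1$ once $\eps q_m\ge 1/2$. If instead $\eps q_m<1/2$, then $J$ has length less than $\frac{1}{2q_m}<\|q_{m-1}\alpha\|$, so it spans at most one gap and $r\le 2$.

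The expected obstacle is that $r\le2$ per arc gives a total of $4$, exceeding the allowed $3$ when $\eps$ is tiny. To close this gap, I will observe that $r=2$ in an arc of length less than $\|q_{m-1}\alpha\|$ forces the two points to be the endpoints of the unique short gap of length $\|q_m\alpha\|$, namely $0$ and $q_m\alpha$. Both arcs are translates of this same configuration, the shifted points $x+n\alpha$. So $r=2$ would require both $J_0$ and $J_\beta$ to contain the same pair of points $x$ and $x+q_m\alpha$. That is impossible for disjoint arcs, and if the arcs overlap the relevant counts are shared anyway. This gives a total of at most $3$.

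Alternatively, I will simply use the cruder combined estimate $2(2\eps q_m+2)$ and absorb the constants into the form $8\eps q_m+3$, splitting into the cases $\eps q_m\ge 1/2$ and $\eps q_m<1/2$ as above.
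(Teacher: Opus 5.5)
Your main argument is correct and is essentially the paper's proof. Both proofs bound mismatches by visits of $x+n\alpha$, $0\le n\le q_m$, to two short zones at $0$ and $\beta$. Both then use Lemma~\ref{lem: description of three gaps by size and count}(b): every gap except the single one of length $\|q_m\alpha\|$ exceeds $\frac{1}{2q_m}$, so the short gap yields at most one extra point in total. The paper uses the symmetric zones $[-\eps,\eps]$ and $[\beta-\eps,\beta+\eps]$, getting at most $4\eps q_m+1$ points per zone plus one for the short gap, with no case analysis. Your one-sided zones of length $|\delta|\le\eps$ are sharper.

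Two corrections to the write-up.

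First, your fallback (``absorb the constants'') fails. If $\eps q_m<1/8$ then $2+2=4>8\eps q_m+3$, and the additive constant $3$ is fixed, so there is nothing to absorb into. The shared-short-gap observation is indispensable in that regime. For the same reason, the per-arc claim in your second step (at most $4\eps q_m+1$ points per arc) is false when $\eps q_m<1/4$ and the arc contains both $x$ and $x+q_m\alpha$.

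Second, the case split is unnecessary. An arc of length at most $\eps$ contains at most $2\eps q_m+1$ of the points unless it contains both endpoints $x,\,x+q_m\alpha$ of the short gap, in which case it contains at most $2\eps q_m+2$. If both arcs contain that pair, the indices $0,q_m$ are counted once. So the total is at most $4\eps q_m+3\le 8\eps q_m+3$ in every case.
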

	\begin{proof}
		Let $S \coloneq [-\eps,\eps]\cup [\beta-\eps,\beta+\eps] \bmod 1$ and observe $x \in [0,1) \smallsetminus S$ implies $\xi_0(x) = \xi_0(y)$. Therefore, it suffices to show that
		\begin{equation} \label{eqn in lem: frequent following}
			\#\{ n \in [0,q_m]\cap\Z : x+n\alpha \bmod 1 \in S \} \ \leq \ 8\eps q_m + 3 \, .
		\end{equation}
		
		By Lemma~\ref{lem: description of three gaps by size and count}(b), $\T \smallsetminus \{x + n\alpha \bmod 1 : n \in [0,q_m] \cap \Z\}$ is a disjoint union of open arcs, one with length $\|q_m\alpha\|$, the rest each with length at least $\|q_{m-1}\alpha\|>\frac{1}{2q_m}$ (using \eqref{helpful inequality on q_n alpha}). An interval of length $2\eps$ contains at most $4\eps q_m$ pairwise disjoint arcs of length greater than $\frac{1}{2q_m}$, hence at most $4\eps q_m + 1$ endpoints of such arcs. Therefore, since the short arc contributes at most one more endpoint to $S$, we deduce \eqref{eqn in lem: frequent following}.
	\end{proof}
	
	\begin{lemma}\label{lem: q + N/q bound}
		Let $\alpha \in \R\smallsetminus\Q$ with convergents $(p_m/q_m)$ and let $\beta \in (0,1)$. For all $N \in \N$ and $m \in \N$,
		\begin{equation*}
			\mathrm{diam}_E(\mathcal{W}_N) \ \leq \ q_m + 11 \frac{N}{q_m} + 11 \, .
		\end{equation*}
	\end{lemma}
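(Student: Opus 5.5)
Fix $N,m\in\N$ and $x,y\in\T$; it suffices to show $\den N x y \le q_m + 11N/q_m + 11$. The plan is approach-and-follow. First, Lemma~\ref{lem: ialpha almost uniform}(c) applied to $x-y$ gives $\ell\in[0,q_m)\cap\Z$ with $\|x+\ell\alpha-y\|<1/q_m$. If $\ell\ge N$ the bound is trivial, because $\den N x y\le N\le \ell<q_m$. So assume $\ell<N$. Then \eqref{hamming bound} gives
\[
\den N x y \ \le \ \ell + \#\{n\in[0,N): \xi_n(\tilde x)\neq \xi_n(y)\},\qquad \tilde x \coloneq x+\ell\alpha,
\]
and we must bound the mismatch count by $11N/q_m+11$.

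To do this, I would split $[0,N)$ into consecutive blocks of length $q_m+1$, namely $[j(q_m+1),(j+1)(q_m+1))$ for $0\le j<J\coloneq\lceil N/(q_m+1)\rceil$. On block $j$, apply Lemma~\ref{lem: frequent following} to the shifted pair $\tilde x+j(q_m+1)\alpha$ and $y+j(q_m+1)\alpha$. Rotation is an isometry, so these are still within $\eps=1/q_m$ of each other, and each block contributes at most $8\eps q_m+3=11$ mismatches. Summing over blocks gives at most $11J$ mismatches. Since $J\le N/(q_m+1)+1\le N/q_m+1$, the total is at most $11N/q_m+11$. Together with $\ell<q_m$ this yields the claim.

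The only delicate points are bookkeeping. One is checking that Lemma~\ref{lem: frequent following} tolerates $\|x-y\|$ measured mod 1 after translation, which holds since $\xi_n$ depends only on the point mod 1. The other is that a last partial block is covered, since counting over a subset of $[0,q_m]$ only decreases the count. One also needs Lemma~\ref{lem: ialpha almost uniform}(c) with the strict bound $<1/q_m$, which it provides. I do not expect a real obstacle.
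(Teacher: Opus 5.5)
Your proposal is correct and is essentially the paper's proof. You approach via Lemma~\ref{lem: ialpha almost uniform}(c) to get $\ell<q_m$ with $\|x+\ell\alpha-y\|\le 1/q_m$, then follow by applying Lemma~\ref{lem: frequent following} on consecutive blocks of length $q_m+1$ (at most $11$ mismatches each, $\lceil N/(q_m+1)\rceil\le N/q_m+1$ blocks), and conclude with \eqref{hamming bound}. Your separate treatment of the case $\ell\ge N$ is extra care the paper leaves implicit: there \eqref{hamming bound} as stated does not literally apply, but $\den N x y\le N$ settles it.
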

	\begin{proof}
		Fix $x,y \in \T$ and write $q \coloneq q_m$. By Lemma~\ref{lem: ialpha almost uniform}(c), there exists $\ell \in [0,q) \cap \Z$ such that $\tilde{x} \coloneq x + \ell\alpha$ satisfies $\|\tilde{x}  - y \| \leq \eps(q)$, where $\eps(q) = 1/q$. Let $k \coloneq q +1$. For each $i \in [0, \lceil N/k\rceil - 1 ] \cap \Z$, Lemma~\ref{lem: frequent following} applied to $\tilde{x} + ik\alpha$ and $y + ik\alpha$ yields
		\begin{equation*} \#\bigl\{ n \in [0,q] : \xi_{ik + n}(\tilde{x}) \neq \xi_{ik+n}(y) \} \ \leq \ 8\eps q + 3 \, .
		\end{equation*}
		Therefore,
		\begin{equation*} 
			\#\{n \in [0, N) : \xi_{n}(\tilde{x}) \neq \xi_n(y) \} \ \leq \ (8\eps q+3)\left\lceil \frac{N}{k}\right\rceil \ \leq \ 11 \frac{N}{q} + 11\, ,
		\end{equation*}
		where the latter inequality follows from $\eps q \leq 1$ and $\left\lceil \frac{N}{k}\right\rceil \leq \frac{N}{q} + 1$. By \eqref{hamming bound},
		\begin{equation*}
			\den N x y \ \leq \ q + 11 \frac{N}{q} + 11 \, . \qedhere
		\end{equation*} 
	\end{proof}
	
	The following upper bounds follow from Lemma~\ref{lem: q + N/q bound}.
	\begin{theorem}\label{thm: upper bounds on expoliminf and expolimsup} For $\alpha \in \R \smallsetminus \Q$ and $\beta \in (0,1)$,
		\begin{align} \expoliminf (\alpha,\beta) \  & \leq \ 1/2  \label{expoliminf upper bound of 1/2} \\ \text{and } \quad \expolimsup (\alpha,\beta) \ & \leq \ \frac{\mu(\alpha)-1}{\mu(\alpha)} \, \label{expolimsup upper bound of mu-1/mu}.
		\end{align}
	\end{theorem}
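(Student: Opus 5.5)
Both bounds come from optimizing the estimate of Lemma~\ref{lem: q + N/q bound}, $\mathrm{diam}_E(\mathcal{W}_N) \le q_m + 11N/q_m + 11$, over the choice of $m$ for each $N$.

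\emph{Proof of \eqref{expoliminf upper bound of 1/2}.} We evaluate along the subsequence $N = q_m^2$. Lemma~\ref{lem: q + N/q bound} with this $m$ gives $\mathrm{diam}_E(\mathcal{W}_{q_m^2}) \le 12 q_m + 11$. Since $q_m \to \infty$, we get
\[
\frac{\log \mathrm{diam}_E(\mathcal{W}_{q_m^2})}{\log q_m^2} \ \le \ \frac{\log(12q_m+11)}{2\log q_m} \ \to \ \frac12 ,
\]
and so the $\liminf$ is at most $1/2$.

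\emph{Proof of \eqref{expolimsup upper bound of mu-1/mu}.} If $\mu(\alpha)=\infty$ the right side is $1$, and $\mathrm{diam}_E(\mathcal{W}_N)\le N$ trivially, so assume $\rho := \mu(\alpha)-1 < \infty$. Fix $\delta>0$. By \eqref{definition of mu}, $q_{m+1} \le q_m^{\rho+\delta}$ for all $m \ge m_0$. Given large $N$, let $m$ be the index with $q_m^{\rho+1} \le N < q_{m+1}^{\rho+1}$; this exists and tends to infinity with $N$ because $q_m$ increases to infinity. We then bound $\mathrm{diam}_E(\mathcal{W}_N)$ in two ways.

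\begin{enumerate}[label=\textup{(\roman*)}]
\item \textbf{Using $q_m$.} Lemma~\ref{lem: q + N/q bound} gives $q_m + 11N/q_m + 11$. Since $q_m \le N^{1/(\rho+1)}$, we have $q_m \le N^{\rho/(\rho+1)}$ (as $\rho \ge 1$), and therefore
\[
\mathrm{diam}_E(\mathcal{W}_N) \ \le \ N^{\rho/(\rho+1)} + 11N/q_m + 11 .
\]
\item \textbf{Using $q_{m+1}$.} Lemma~\ref{lem: q + N/q bound} gives $q_{m+1} + 11N/q_{m+1} + 11$. Here $N/q_{m+1} < q_{m+1}^{\rho}$.
\end{enumerate}

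It remains to choose between the two. If $q_{m+1} \le N^{\rho/(\rho+1)}$, bound (ii) gives
\[
\mathrm{diam}_E(\mathcal{W}_N) \ \le \ N^{\rho/(\rho+1)} + 11N^{1/(\rho+1)} + 11 \ \le \ 23\,N^{\rho/(\rho+1)}
\]
for large $N$, since $q_{m+1} \ge N^{1/(\rho+1)}$. Otherwise $q_{m+1} > N^{\rho/(\rho+1)}$, and then
\[
q_m \ \ge \ q_{m+1}^{1/(\rho+\delta)} \ > \ N^{\rho/((\rho+1)(\rho+\delta))},
\]
so bound (i) gives
\[
\mathrm{diam}_E(\mathcal{W}_N) \ \le \ N^{\rho/(\rho+1)} + 11\,N^{1-\rho/((\rho+1)(\rho+\delta))} + 11 .
\]
As $\delta\to0$, the exponent $1 - \rho/((\rho+1)(\rho+\delta))$ tends to $1 - 1/(\rho+1) = \rho/(\rho+1)$. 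Hence in both cases
\[
\limsup_{N\to\infty} \frac{\log \mathrm{diam}_E(\mathcal{W}_N)}{\log N} \ \le \ \max\Bigl\{\frac{\rho}{\rho+1},\, 1-\frac{\rho}{(\rho+1)(\rho+\delta)}\Bigr\}.
\]
Letting $\delta \to 0$ gives $\rho/(\rho+1) = (\mu(\alpha)-1)/\mu(\alpha)$.

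The only delicate step is this case split between the threshold $q_{m+1}$ versus $N^{\rho/(\rho+1)}$. Its purpose is to use the Diophantine growth bound $q_{m+1}\le q_m^{\rho+\delta}$ only in the regime where $q_m$ alone is too small to control $N/q_m$.
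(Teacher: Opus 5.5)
Your proof is correct and takes essentially the paper's route: Lemma~\ref{lem: q + N/q bound} at $N=q_m^2$ for the $\liminf$, and for the $\limsup$ the same lemma at two consecutive convergents combined with $q_{m+1}\le q_m^{\rho+\delta}$. The paper brackets $\sqrt{N}$ between $q_m$ and $q_{m+1}$ and splits on whether $N\le q_mq_{m+1}$, whereas you bracket $N^{1/(\rho+1)}$ and split on $q_{m+1}$ versus $N^{\rho/(\rho+1)}$, which is only a bookkeeping difference. There is one small slip in your first case: since $q_{m+1}\ge N^{1/(\rho+1)}$, the middle term is $11N/q_{m+1}\le 11N^{\rho/(\rho+1)}$, not $11N^{1/(\rho+1)}$, but your stated bound $23N^{\rho/(\rho+1)}$ still holds.
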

	\begin{proof} Suppose $\alpha$ has convergents $(p_m/q_m)$. Let $N \coloneq q_m^2$ with $q_m \geq 11$. By Lemma~\ref{lem: q + N/q bound}, $\mathrm{diam}_E(\mathcal{W}_N) \leq 13\sqrt{N}$, whence \eqref{expoliminf upper bound of 1/2}.
		
		Regarding \eqref{expolimsup upper bound of mu-1/mu}, the inequality $\expolimsup(\alpha,\beta) \leq 1$ is trivial, so assume $\mu(\alpha) < \infty$. To prove \eqref{expolimsup upper bound of mu-1/mu}, it suffices to show that, for every $\rho_+ > \mu(\alpha) - 1$,
		\begin{equation}\label{sufficient statement for expolimsup upper bound of mu-1/mu}
			\expolimsup(\alpha,\beta) \ \leq \ \frac{\rho_+}{1+\rho_+} \, .
		\end{equation}
		
		\noindent By \eqref{definition of mu}, there exists $m_0 \geq 1$ such that $q_{m+1} \leq q_{m}^{\rho_+}$ for all $m \geq m_0$. Fix $N \in \N$ such that $\sqrt{N} > q_{m_0}$, and let $m \in \N$ be such that $q_{m} < \sqrt{N} \leq q_{m+1}$. By Lemma~\ref{lem: q + N/q bound},
		\begin{equation*}
			\mathrm{diam}_E(\mathcal{W}_N) \ \leq \ \min \left\{ q_{m} + 11 \frac{N}{q_{m}}, q_{m+1} + 11 \frac{N}{q_{m+1}} \right\} + 11 \, .
		\end{equation*}
		Now, since $q_{m} < \sqrt{N} < N/{q_{m}}$ and $N/q_{m+1} \leq \sqrt{N} \leq q_{m+1}$, we deduce
		\begin{equation*}
			\mathrm{diam}_E(\mathcal{W}_N) \ \leq \ 23 \min \left\{\frac{N}{q_{m}},q_{m+1}\right\}.
		\end{equation*}
		Either $N \leq q_{m}q_{m+1}$ or not. In the first case, $q_{m+1} \leq q_{m}^{\rho_+}$ implies $q_{m} \geq N^{\frac{1}{1+\rho_+}}$, so
		\begin{equation*}
			\frac{N}{q_{m}} \ \leq \ N^{\frac{\rho_+}{1+\rho_+}} \, .
		\end{equation*}
		In the second case, $q_{m} \geq q_{m+1}^{1/\rho_+}$ implies $N > q_{m+1}^{1+1/\rho_+}$, so
		\begin{equation*}
			q_{m+1} \ \leq \ N^{\frac{\rho_+}{1+\rho_+}} \, .
		\end{equation*}
		In either case, we conclude
		\[
		\frac{\log \mathrm{diam}_E(\mathcal{W}_N)}{\log N} \ \leq \ \frac{\rho_+}{1+\rho_+} + \frac{\log 23}{\log N} \, .
		\]	
		The previous argument holds for all sufficiently large $N$, hence \eqref{sufficient statement for expolimsup upper bound of mu-1/mu} follows.
	\end{proof}

	\subsection{Upper bound on the liminf exponent for typical orbits}
	\begin{lemma}\label{lem: useful upper bound for orbitexpoliminf} Fix $\alpha \in \R \smallsetminus \Q$ with convergents $(p_m/q_m)$ and let $\beta \in (0,1)$. Suppose $q_{m+1} \geq 16q_m$ for some $m \in \N$ and $N := \lfloor q_{m+1}/8\rfloor$. Then there is a set $A_m \subset \T^2$ such that $\leb^2(A_m) \geq \frac {9}{64}$ and $\den N x y \leq q_m$ for all $(x,y) \in A_m$.
	\end{lemma}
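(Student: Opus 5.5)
The plan is to use approach-and-follow with the shift $\ell$ restricted to $[0,q_m)$, and to arrange that the Hamming term in \eqref{hamming bound} is exactly zero. Since $\ell<q_m<N$ (because $q_{m+1}\ge 16q_m$), \eqref{hamming bound} then gives $\den N x y\le \ell<q_m$. Fix a constant $c>0$, to be chosen as $3/16$. Define
\[
A_m \coloneq \bigl\{(x,y)\in\T^2 : \exists\, \ell\in[0,q_m)\cap\Z,\ u\coloneq y-x-\ell\alpha \bmod 1 \in [-c/q_m,c/q_m],\ \xi_n(x+\ell\alpha)=\xi_n(y)\ \forall n<N\bigr\},
\]
with $u$ read in $(-1/2,1/2]$. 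I will bound $\leb^2(A_m)$ from below by Fubini, parametrizing by $(x,\ell,u)$.

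\textbf{Step 1: the parameter set for $y$.} By Lemma~\ref{lem: description of three gaps by size and count}(a), the points $\ell\alpha \bmod 1$ for $\ell\in[0,q_m)$ are pairwise separated by at least $\|q_{m-1}\alpha\|>\frac1{2q_m}$. Since $2c/q_m=\frac{3}{8q_m}<\frac1{2q_m}$, the arcs $x+\ell\alpha+[-c/q_m,c/q_m]$ for $\ell<q_m$ are pairwise disjoint. So for each fixed $x$, the possible $y$ form a set of measure $2c$, and each such $y$ determines a unique pair $(\ell,u)$.

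\textbf{Step 2: few bad $\tilde x$ for fixed $(\ell,u)$.} Write $\tilde x\coloneq x+\ell\alpha$. A mismatch $\xi_n(\tilde x)\ne\xi_n(y)$ can occur only if $0$ or $\beta$ lies in the arc between $\tilde x+n\alpha$ and $\tilde x+n\alpha+u$. I will split $n\in[0,N)$ into residue classes $r$ modulo $q_m$, writing $n=r+kq_m$ with $0\le k<\lceil N/q_m\rceil$.
\begin{itemize}[label=--]
\item Within one class, the points $\tilde x+r\alpha+kq_m\alpha$ move monotonically, each step by $\|q_m\alpha\|<1/q_{m+1}$ (using \eqref{exact formula for qnalpha - pn} and \eqref{helpful inequality on q_n alpha}).
\item Their total spread is therefore at most $(\lceil N/q_m\rceil-1)/q_{m+1}\le \frac{N}{q_mq_{m+1}}\le\frac1{8q_m}$.
\item Hence the union over this class of the arcs of length $|u|$ is contained in a single arc of length at most $\frac{c}{q_m}+\frac1{8q_m}$, translated by $\tilde x+r\alpha$.
\end{itemize}
A mismatch in class $r$ requires $-\tilde x$ or $\beta-\tilde x$ to lie in such an arc, shifted by $r\alpha$. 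Summing over the $q_m$ classes and the two boundary points, the set of bad $\tilde x$ has measure at most $2q_m\bigl(\frac c{q_m}+\frac1{8q_m}\bigr)=2c+\frac14$. Since $x\mapsto\tilde x$ preserves measure, the good $x$ have measure at least $\frac34-2c$ for every $(\ell,u)$.

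\textbf{Step 3: combine by Fubini.} Integrate over $y$, equivalently over $(\ell,u)$, using the disjointness from Step 1. This gives $\leb^2(A_m)\ge 2c\bigl(\frac34-2c\bigr)$. With $c=3/16$ the right-hand side equals $\frac38\cdot\frac38=\frac9{64}$. On $A_m$ the Hamming term in \eqref{hamming bound} vanishes, so $\den N x y\le\ell\le q_m$.

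The main obstacle is Step 2. The naive per-$n$ union bound would give a bad measure of order $N|u|\approx q_{m+1}/q_m$, which is useless. The argument instead relies on the clustering of the orbit into $q_m$ slowly drifting residue classes, and this is exactly where the hypothesis $N\le q_{m+1}/8$ enters. One point needs care: the count of $k$ per class must be bounded with $\lceil N/q_m\rceil-1$ rather than $N/q_m+1$, so that the drift really is at most $\frac1{8q_m}$ and the constant $9/64$ comes out exactly.
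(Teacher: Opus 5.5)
Your proof is correct, but it takes a different route from the paper.

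\textbf{The paper's route.} The paper never approximates $y$ by the orbit of $x$. It writes $q_m(z+n\alpha)=\lfloor q_mz\rfloor+np_m+u+n\eta$, with $u=q_mz \bmod 1$ and $\eta=q_m\alpha-p_m$. It then observes that $\xi_n(z)$ is determined by two things: the residue $\lfloor q_mz\rfloor+np_m \bmod q_m$, and the side of $r=q_m\beta-\lfloor q_m\beta\rfloor$ on which $u+n\eta$ lies. That side cannot change for $n<N$ when $u$ stays at distance $h=(N-1)|\eta|<1/8$ from the relevant endpoint. Consequently, every coding of a point of $E_i=\{z: q_mz \bmod 1\in S_i\}$ is a window of one fixed $q_m$-periodic word. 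Any two points of the same $E_i$ then have codings that agree exactly after a shift $\ell<q_m$. This yields the explicit set $A_m=E_1\times E_1\cup E_2\times E_2$ with $\leb^2(A_m)\ge(1/2-h)^2>9/64$. No union bound, disjointness argument, or Fubini parametrization by $(\ell,u)$ is needed.

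\textbf{Your route.} You stay inside the approach-and-follow framework of Section 3. You fix a small displacement $u$ and aim for zero Hamming error. Your residue-class drift estimate uses the same geometric input as the paper, namely $N\|q_m\alpha\|<1/8$, to avoid the naive $N|u|$ loss. A union bound over $2q_m$ danger arcs, together with the three-gap disjointness of the arcs $x+\ell\alpha+[-c/q_m,c/q_m]$, then finishes the argument.

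\textbf{What each buys.} The paper's version is more structural: it is the ``rounding'' half of round-and-synchronize. Its set also contains pairs whose displacement $y-x-\ell\alpha$ is comparable to $1/q_m$, which your set excludes. Your version needs only \eqref{hamming bound}, a counting bound and Fubini, and should adapt easily to codings by finite unions of intervals. The price is that you must optimize $c=3/16$ to reach $9/64$, whereas the paper's margin $h<1/8$ gives the constant with room to spare.
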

	
	\begin{proof}
		Write $\eta \coloneq q_m\alpha-p_m$ and define $b \coloneq \lfloor q_m\beta\rfloor$ and $r \coloneq q_m\beta - b \in [0,1)$. By \eqref{helpful inequality on q_n alpha}, we have $h \coloneq (N-1)|\eta| < 1/8$. Using the convention that $(c,d)$ is empty if $c \geq d$, let $I_1 \coloneq (0,r)$ and $I_2 \coloneq (r,1)$, then define
		\[
		S_1 \coloneq \begin{cases} (0,r-h) & \text{ if } \eta > 0 \, , \\ (h,r) & \text{ if } \eta < 0 \, , \end{cases} \qquad S_2 \coloneq \begin{cases} (r,1-h) & \text{ if } \eta > 0 \, , \\ (r+h,1) & \text{ if } \eta < 0 \, . \end{cases}
		\]
		
		Now, for $z \in [0,1)$, let $a \coloneq \lfloor q_m z \rfloor$ and $u \coloneq q_mz \bmod 1$, assume $u \in S_i$, and put $j_z(n) \coloneq a+np_m \bmod q_m \in \{0,\ldots, q_m-1\}$. Then $q_m(z+n\alpha) = a + np_m + u + n\eta$ and $u + n\eta \in [0,1)$ for each $n \in [0,N-1]$. Using these facts and $q_m\beta = b+r$, we derive, for each such $n$,
		\begin{equation}\label{periodic code characterization}
			\xi_n(z) = 1
			\quad\Longleftrightarrow\quad
			j_z(n) < b \ \text{ or } \ \bigl(j_z(n) = b \text{ and } u + n\eta < r \bigr).
		\end{equation}
		Moreover, if $u \in S_1$, then $u + n\eta < r$ for all $n \in [0,N-1]$, and if $u \in S_2$, then $u + n\eta > r$ for all $n \in [0,N-1]$.
		
		Suppose $x,y \in [0,1)$ are such that $q_mx \bmod 1$ and $q_my \bmod 1$ lie in the same $S_i$, and let $\ell \in [0,q_m)$ be such that $\lfloor q_mx \rfloor +\ell p_m\equiv \lfloor q_my \rfloor \bmod q_m$. By \eqref{periodic code characterization} and the comment below it, $\xi_{n+\ell}(x)=\xi_n(y)$ for $n \in [0,N-\ell)$, hence $\den N x y \leq \ell < q_m$.
		
		Finally, we observe that the set of such $(x,y)$, namely, $A_m \coloneq E_1 \times E_1 \cup E_2 \times E_2$ (where $E_i \coloneq \{ z \in \T : q_m z \bmod 1 \in S_i \}$) has measure $\sum_i \leb(S_i)^2 \geq \bigl(\frac{1}{2}-h\bigr)^2 > \frac{9}{64}$ since $\leb(I_i) \geq 1/2$ for at least one $i$.
	\end{proof}

	\begin{theorem}\label{thm: orbitexpoliminf upper bound}  Let $\alpha \in \R \smallsetminus \Q$ satisfy $\mu(\alpha) > 2$, and let $\beta \in (0,1)$. Then
		\[
		\orbitexpoliminf(\alpha,\beta)
		\ \leq \ \min\left\{\frac 12, \frac{1}{\mu(\alpha)-1} \right\}
		\, ,
		\]
		taking $\frac{1}{\mu(\alpha)-1} = 0$ if $\mu(\alpha) = \infty$.
	\end{theorem}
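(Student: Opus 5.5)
The bound splits into two parts. The bound $\orbitexpoliminf \le \tfrac12$ comes for free from Theorem~\ref{thm: upper bounds on expoliminf and expolimsup}, since $\den N x y \le \mathrm{diam}_E(\mathcal{W}_N)$ for every pair $(x,y)$, and therefore the liminf along $N=q_m^2$ is at most $1/2$ for every $(x,y)$. So it remains to show $\orbitexpoliminf(\alpha,\beta) \le \frac{1}{\mu(\alpha)-1}$. For this we use Lemma~\ref{lem: useful upper bound for orbitexpoliminf} along a subsequence of indices $m$ with $q_{m+1}$ much larger than $q_m$.

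Fix any $\rho_- < \mu(\alpha)-1$ with $\rho_- > 1$; this is possible since $\mu(\alpha) > 2$, and when $\mu(\alpha)=\infty$ we take $\rho_-$ arbitrarily large. By \eqref{definition of mu} there are infinitely many $m$ with $q_{m+1} \ge q_m^{\rho_-}$, and for large such $m$ also $q_{m+1} \ge 16 q_m$. For each such $m$ we set $N_m := \lfloor q_{m+1}/8 \rfloor$ and take the set $A_m$ from Lemma~\ref{lem: useful upper bound for orbitexpoliminf}. On $A_m$ we have
\[
\den {N_m} x y \le q_m \le q_{m+1}^{1/\rho_-} \le (8N_m+8)^{1/\rho_-}.
\]
Consequently
\[
\frac{\log \den{N_m}xy}{\log N_m} \le \frac{1}{\rho_-} + o(1).
\]

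Next we pass from "measure at least $9/64$ for each $m$" to "almost every $(x,y)$". Let $A := \limsup_m A_m$, taken over the infinite subsequence. By the reverse Fatou inequality, $\leb^2(A) \ge \limsup_m \leb^2(A_m) \ge 9/64 > 0$. Every $(x,y) \in A$ lies in infinitely many $A_m$, so $\liminf_N \frac{\log \den N x y}{\log N} \le 1/\rho_-$ on a set of positive measure. The text before \eqref{orbit defn} shows that this liminf is $\leb^2$-a.e.\ constant, since it is invariant under the ergodic $\Z^2$-action. Hence the constant $\orbitexpoliminf(\alpha,\beta)$ is at most $1/\rho_-$. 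Letting $\rho_- \uparrow \mu(\alpha)-1$ gives the claim, including the value $0$ when $\mu(\alpha)=\infty$.

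The only delicate point is the positive-measure-to-a.e.\ step. The sets $A_m$ are not nested, so the argument genuinely needs the limsup-set bound rather than a Borel–Cantelli argument, and it needs the invariance/ergodicity fact, which the paper has already taken as given. Everything else is direct substitution into Lemma~\ref{lem: useful upper bound for orbitexpoliminf}.
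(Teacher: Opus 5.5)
Your proof is correct and follows the paper's argument. The $1/2$ bound is inherited from $\expoliminf(\alpha,\beta)\le 1/2$, and the $1/(\mu(\alpha)-1)$ bound comes from Lemma~\ref{lem: useful upper bound for orbitexpoliminf} along indices where $q_{m+1}$ is large relative to $q_m$, then Fatou's lemma to get a limsup set of measure at least $9/64$, then the a.e.\ constancy of the liminf; the only cosmetic difference is that you fix $\rho_-<\mu(\alpha)-1$ and let $\rho_-\uparrow\mu(\alpha)-1$, whereas the paper picks a subsequence $m_j$ with $\log q_{m_j+1}/\log q_{m_j}\to\mu(\alpha)-1$.
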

	\begin{proof}
		First, we have $\orbitexpoliminf(\alpha,\beta) \leq \expoliminf(\alpha,\beta) \leq 1/2$ by \eqref{expoliminf upper bound of 1/2}. 
		
		\noindent Then, suppose $\alpha$ has convergents $(p_m/q_m)$. By \eqref{definition of mu}, choose $m_j \to \infty$ with
		\begin{equation}\label{a growth rate}
			\lim_{j\to\infty} \frac{\log q_{m_j+1}}{\log q_{m_j}} \ = \  \mu(\alpha) - 1 \, , 
		\end{equation}
		taking the right-hand side to be $\infty$ if $\mu(\alpha) = \infty$. Then $q_{m_j+1}/q_{m_j} \to \infty$, so Lemma~\ref{lem: useful upper bound for orbitexpoliminf} applies for large $j$, hence Fatou's lemma gives $\leb^2(\limsup_j A_{m_j}) \geq \frac {9}{64}$.
		For every $(x,y)$ in this limsup set, Lemma~\ref{lem: useful upper bound for orbitexpoliminf} and \eqref{a growth rate} imply
		\begin{equation*}
			\liminf_{N\to\infty} \frac{\log \den N x y}{\log N} \ \leq \ \frac {1}{\mu(\alpha)-1} \, , \quad \text{ hence } \ \orbitexpoliminf(\alpha,\beta) \leq \frac{1}{\mu(\alpha)-1} \, . \qedhere 
		\end{equation*}
	\end{proof}

	\section{Useful lemmas for lower bounds}
	
	Define $R_\alpha : \T \to \T$ by $R_\alpha(x) \coloneq x + \alpha \bmod 1$. Lemma~\ref{lem: same code iff same interval} characterizes when (long enough) orbit codings of two points agree. 
	
	\begin{lemma}\label{lem: same code iff same interval} Let $\alpha \in \R\smallsetminus\Q$ and $\beta \in (0,1)$. \notationstatementonedim Then there exists $k_0 = k_0(\alpha,\beta) > 0$ such that, for all $k \geq k_0$ and all $x,y \in [0,1) \smallsetminus \Lambda^*(k)$, if $\xi_{[0,k)}(x) = \xi_{[0,k)}(y)$, then $I_k(x) = I_k(y)$.
	\end{lemma}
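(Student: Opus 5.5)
We argue by contraposition: if $x,y\in[0,1)\smallsetminus\Lambda^*(k)$ lie in different intervals $I_k(x)\neq I_k(y)$, we exhibit an index $n\in[0,k)$ with $\xi_n(x)\neq\xi_n(y)$. The key observation is that every point of $\Lambda^*(k)$ is a place where some coordinate $\xi_n$ can switch value. Indeed, $\xi_n(z)=\mathbf{1}_{[0,\beta)}(z+n\alpha \bmod 1)$ changes as $z$ crosses $-n\alpha \bmod 1$ or $\beta-n\alpha\bmod 1$. So the partition $\{I_k\}$ is exactly the common refinement of the $2k$ arcs $\{z: z+n\alpha\bmod 1\in[0,\beta)\}$ and their complements, for $0\le n<k$.

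This has to be handled with care on the circle versus $[0,1)$. Since $0\in\Lambda(k)$, the intervals $I_k(\cdot)$ never wrap around, so each $I_k$ is a genuine subinterval of $(0,1)$. The plan is as follows.

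\textbf{Step 1.} Each $\xi_n$, $0\le n<k$, is constant on each $I_k(z)$. This holds because the set $\{z\in\T: \xi_n(z)=1\}$ is the arc $[-n\alpha,\beta-n\alpha)\bmod 1$, whose endpoints lie in $\Lambda^*(k)$. Hence the word $\xi_{[0,k)}$ is a function of the interval.

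\textbf{Step 2.} We show distinct intervals carry distinct words once $k\ge k_0$. Suppose $x<y$ lie in different intervals. Then some point $p\in\Lambda^*(k)$ lies in $(x,y)$. If $p=-n\alpha\bmod 1$ or $p=\beta-n\alpha\bmod 1$ with $n<k$, the arc $A_n=[-n\alpha,\beta-n\alpha)$ has an endpoint between $x$ and $y$. The danger is that both endpoints of $A_n$ lie in $(x,y)$, or that $A_n$ wraps so that $x$ and $y$ are both inside or both outside it. On the circle, $x$ and $y$ have different $\xi_n$ values iff an odd number of endpoints of $A_n$ separate them along the segment $[x,y]\subset[0,1)$. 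Since $A_n$ has exactly two endpoints, $\xi_n(x)=\xi_n(y)$ can only fail to detect separation when both endpoints $-n\alpha$ and $\beta-n\alpha$ lie in $(x,y)$. It is also possible that one of them is the point $0$, which is not an interior separation; we account for this in the choice of $k_0$.

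\textbf{Step 3.} We use the density of orbits to choose $k_0$. Take $k_0$ so large that $\Lambda(k_0)$ is $\delta$-dense with $\delta<\min(\beta,1-\beta)/2$, using Lemma~\ref{lem: ialpha almost uniform}(b). Suppose both endpoints of $A_n$ lie in $(x,y)$. Then $y-x>\min(\beta,1-\beta)$, depending on whether the arc wraps. By density, the gap $(x,y)$ then also contains a point $-n'\alpha$ with $n'<k$ such that $-n'\alpha$ lies in $(x,y)$ but $\beta-n'\alpha$ does not. We refine this choice as follows. Pick the point of $\Lambda^*(k)\cap(x,y)$ closest to $x$, and let $n$ be the index attached to it. If the partner endpoint of $A_n$ is also in $(x,y)$, the interval $(x,y)$ has length at least $\min(\beta,1-\beta)$. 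Using $\delta$-density of both $\Lambda(k)$ and its translate $\beta+\Lambda(k)$, we find inside $(x,y)$ an orbit point whose partner, at distance $\beta$ or $1-\beta$, falls outside $[x,y]$ and is not $0$. Its index $n'$ then gives $\xi_{n'}(x)\ne\xi_{n'}(y)$.

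The main obstacle is this last combinatorial step: producing a single coordinate $n$ that separates $x$ and $y$, rather than one whose two arc endpoints cancel. Our first attempt is to take the separating point of $\Lambda^*(k)$ nearest to an endpoint $x$ or $y$ and argue by lengths as above.
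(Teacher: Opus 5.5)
Your Steps 1 and 2 are sound. The parity observation correctly reduces the problem to the case where some arc $A_n$ has both endpoints in $(x,y)$, which forces $y-x>\min(\beta,1-\beta)$. The gap is Step 3. You leave it unresolved, and density alone cannot resolve it: when the complementary arc $C=[y,1)\cup[0,x]$ is shorter than every gap of $\Lambda(k)$, density at a fixed scale finds nothing.

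Here is a concrete example. Take $\beta\notin\{n\alpha \bmod 1 : n\in\Z\}$ and fix $k$. Choose $\eps>0$ smaller than $\|n\alpha\|$ for $1\le n<k$ and smaller than $\|\beta-n\alpha\|$ for $0\le n<k$. Set $x=\eps/2$ and $y=1-\eps/2$. Then $x,y\notin\Lambda^*(k)$ and $I_k(x)\neq I_k(y)$. For every $1\le n<k$, both $-n\alpha$ and $\beta-n\alpha$ lie in $(x,y)$. So the only separating coordinate is $n=0$, where $0\notin(x,y)$ and $\beta\in(x,y)$.

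Hence your claim that $(x,y)$ contains some $-n'\alpha$ whose partner $\beta-n'\alpha$ lies outside is false here. Your refined version (an orbit point in $(x,y)$ whose partner lies outside $[x,y]$ \emph{and is not $0$}) explicitly discards the one index that works. Your concern about $0$ in Step 2 is also misplaced. Since $0\in\Lambda(k)$ forces $x>0$, the point $0$ never lies in $(x,y)$. Far from being a nuisance, it is the separating point in the long case.

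The repair is a split on $L\coloneq y-x$, with $b\coloneq\min(\beta,1-\beta)$ and $k_0$ chosen via Lemma~\ref{lem: ialpha almost uniform}(b) so that all gaps of $\Lambda(k)$ are shorter than $b$ for $k\ge k_0$.
\begin{enumerate}[label=\textup{(\roman*)}]
\item If $L<b$, your Step 2 finishes the argument.
\item If $L>1-b$, then $C$ has length less than $b$ and contains $0$, so it cannot contain $\beta$. Hence $\xi_0(x)\neq\xi_0(y)$.
\item If $b\le L\le 1-b$ and $\beta\le 1/2$, pick $-n'\alpha\in(y-\beta,y)$. Since $y+\beta\le x+1$, its partner $\beta-n'\alpha$ lies in $C$. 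For $\beta>1/2$, use the arc $(x,x+1-\beta)$ instead.
\end{enumerate}

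For comparison, the paper avoids this case analysis by arguing in the direct direction. It chooses $k_0$ so that the arcs $R_\alpha^{-m}(0,\beta)$, $m<k_0$, cover $\T$, so equal codings force $\|x-y\|<\beta$ when $\beta\le 1/2$. Each translate $J_n$ of the shortest arc $J_0$ joining $x$ and $y$ then has length less than $\beta$, so it lies in $(0,\beta)$ or $(\beta,1)$. Therefore $J_0$ misses $\Lambda^*(k)$, including the point $0$, which is exactly what disposes of your long-segment case. The case $\beta>1/2$ is reduced to $1-\beta$ by translating by $-\beta$.
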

	\begin{remark}
		Fix $k \ge 1$. If $I_k(x) = I_k(y)$, then for all $j \in [0,k)$,  the points $x+j\alpha \bmod 1$ and $y+j\alpha \bmod 1$ are in the  same arc of $\T\smallsetminus\{0,\beta\}$, so   $\xi_0(x+j\alpha \bmod 1)=\xi_0(y+j\alpha \bmod 1)$, whence  
		$\xi_{[0,k)}(x) = \xi_{[0,k)}(y)$.  
	\end{remark}
	\begin{proof}
		First assume $\beta \leq 1/2$.
		
		By density of $(n\alpha \bmod 1)_{n \ge 1}$, we can choose $k_0=k_0(\alpha,\beta)$ so that $\T = \cup_{m = 0}^{k_0-1} R_{\alpha}^{-m}(0,\beta)$. Now suppose $k \geq k_0$ and $x,y \in [0,1) \smallsetminus \Lambda^*(k)$ satisfy $\xi_{[0,k)}(x) = \xi_{[0,k)}(y)$. There exists an integer $m \in [0,k_0)$ such that $x +m\alpha \bmod 1 \in (0,\beta)$, so $y + m\alpha \bmod 1 \in (0,\beta)$ as well, hence $\|x - y\| < \beta \leq 1/2$ and there is a unique shortest closed arc $J_0 \subset \T$ with endpoints $x$ and $y$. Fix $n \in [0,k)\cap\Z$ and let $J_n \subset \T$ denote the shortest closed arc with endpoints $x +n\alpha \bmod 1$ and $y + n\alpha \bmod 1$. By assumption, $x + n\alpha \bmod 1$ and $y + n\alpha \bmod 1$ both belong either to $(0,\beta)$ or to $(\beta,1)$. In the first case, $J_n \subset (0,\beta)$, hence $J_n \cap \{0,\beta\} = \varnothing$, hence $J_0 \cap \{-n\alpha \bmod 1, -n\alpha + \beta \bmod 1\} = \varnothing$. In the second case, $J_n \subset (\beta,1)$, hence the same conclusion follows. Therefore, $J_0$ does not intersect $\Lambda^*(k)$, so $I_k(x) = I_k(y)$.
		
		Now assume $\beta \in (1/2,1)$. We write $\tilde{\beta} \coloneq 1-\beta$ and  $\tilde{\xi}_n(x)  \coloneq \mathbf{1}_{[0,\tilde{\beta})}(x+n\alpha  \bmod 1)$. Suppose that $k \ge  k_0(\alpha,\beta)\coloneq k_0(\alpha, \tilde{\beta})$. Write $\tilde{\Lambda}^*(k)\coloneq \Lambda(k) \cup (\Lambda(k)+\tilde{\beta} \bmod 1)$. For $x \in [0,1) \smallsetminus \tilde{\Lambda}^*(k)$, denote by $\tilde{I}_k(x)$ the open interval containing $x$ in the partition of $[0,1) \smallsetminus \tilde{\Lambda}^*(k)$ by open intervals with endpoints in $\tilde{\Lambda}^*(k)$. Suppose $x,y \in [0,1) \smallsetminus \Lambda^*(k)$ satisfy $\xi_{[0,k)}(x) = \xi_{[0,k)}(y)$. Then $x-\beta \bmod 1 ,y-\beta \bmod 1 \notin \tilde{\Lambda}^*(k)$ and 
		\begin{equation}\label{tilde equality} 
			\tilde{\xi}_{[0,k)}(x-\beta) \ = \ \tilde{\xi}_{[0,k)}(y-\beta) \, .
		\end{equation}
		Now, $\tilde{\beta} \leq 1/2$ and \eqref{tilde equality} imply $\tilde{I}_k(x-\beta ) = \tilde{I}_k(y -\beta)$, which is equivalent to $I_k(x) = I_k(y)$.
	\end{proof}
	
	The next lemma is about edit distance, rather than irrational rotation. It states that if two words $w, \tilde{w}$ are close in edit distance, then most (sufficiently short) subwords\footnote{Our subwords are contiguous; these are also known as factors.} of $w$ must match a subword of $\tilde{w}$ after a small shift. For convenience, in part (b) we also state the result in the form that will be useful below. Recall that given $w = (w_n)_{n=-\infty}^{\infty} \in \mathcal{A}^\Z$ and integers $i < j$, we write $w_{[i,j)}$ for $(w_n)_{n=i}^{j-1}$. 
	\begin{lemma}\label{lem: bound on number of l,k-bad indices}
		Fix $N,\ell,k \in \N$ and an alphabet $\mathcal{A}$.
		\begin{enumerate}[label=\textup{(\alph*)}]
			\item Let $w, \tilde{w} \in \mathcal{A}^\Z$. If $\Delta \coloneq \editfull(w_{[0,N)},\tilde{w}_{[0,N)})  \leq \ell$, then
			\begin{equation}\label{3kl ineq}
				\# \{ n \in [0,N) \cap \Z : \forall s \in [-\ell,\ell], \quad w_{[n,n+k)} \neq \tilde{w}_{[n+s,n+s+k)}\} \ \leq \ 3k\ell \, .
			\end{equation}
			
			\item Let $\alpha \in \R\smallsetminus \Q$, \, $\beta \in (0,1)$, and $x,y\in\T$. \notationstatementonedim If $ \den N x y \leq \ell$, then
			\begin{equation}\label{3kl ineq2}
				\# \{ n \in [0,N)\cap\Z  : (x+n\alpha ,y+n\alpha ) \text{ \rm  are $(\ell;k)$-discordant} \} \ \leq \ 3k\ell \,.
			\end{equation}
		\end{enumerate} 
	\end{lemma}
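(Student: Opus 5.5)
The plan is to prove part (a) from an optimal alignment and obtain (b) as a direct specialization.

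Since $\Delta \le \ell$, the words $w_{[0,N)}$ and $\tilde{w}_{[0,N)}$ have a common subsequence of length $N-\Delta$. Fix such a longest common subsequence and record it as a strictly increasing matching $\pi$ from a set $M \subset [0,N)$ of size $N-\Delta$ into $[0,N)$, with $w_i = \tilde{w}_{\pi(i)}$ for $i \in M$. The positions in $[0,N)\smallsetminus M$ are the $\Delta$ deleted indices of $w$. The positions of $[0,N)$ outside $\pi(M)$ are the $\Delta$ inserted indices of $\tilde{w}$.

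The key observation concerns the offset $s(i) := \pi(i)-i$ for $i \in M$. Because $\pi$ is increasing, $|s(i)|$ is at most the number of deletions before $i$ plus the number of insertions before $\pi(i)$. In fact $s(i)$ equals (deletions before $i$) minus (insertions before $\pi(i)$), so $|s(i)| \le \Delta \le \ell$. Moreover, when $i, i+1 \in M$ and $\pi(i+1) = \pi(i)+1$, the offset does not change.

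Now consider a window $[n,n+k)$ with $n \in [0,N)$ satisfying the following condition (*): the window contains no deleted index of $w$ and the image window contains no inserted index. In that case all of $[n,n+k)\cap[0,N)$ lies in $M$, and $\pi$ acts there as the translation by $s = s(n)$. Hence $w_{[n,n+k)} = \tilde{w}_{[n+s,n+s+k)}$, at least on the part inside $[0,N)$.

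The boundary needs care. The windows near $N$ extend past $N$, where the finite alignment says nothing. I would handle this by counting windows meeting $[N-k,N)$ as bad as well. This adds at most $k \le k\ell$ indices, which is acceptable since $\ell \ge 1$.

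The bad $n$ then split into three groups. The first group consists of $n$ whose window $[n,n+k)$ contains a deleted index; there are at most $k\Delta$ of these, since each deleted index lies in at most $k$ windows. The second group consists of $n$ for which a matched window would meet an inserted index of $\tilde{w}$. Each insertion blocks at most $k$ starting positions in $\tilde{w}$, and these pull back to at most $k$ values of $n$ via the translation, giving at most $k\Delta$. The third group is the $k$ boundary indices. The total is at most $2k\ell + k \le 3k\ell$, which is \eqref{3kl ineq}.

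The main obstacle is the second group. To make the pullback rigorous, I would argue with the first matched index at or after $n$. If $n \notin M$, the window already falls in the first group. If $n \in M$ with offset $s$, and the window fails only because of insertions, then some inserted position $p \in (\pi(n), \pi(n)+k)$ exists. The map $n \mapsto \pi(n)$ is injective, so for each $p$ the admissible $\pi(n)$ lie in $(p-k,p)$, which gives at most $k$ values of $n$. This yields the $k\Delta$ count cleanly.

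For part (b), apply (a) to $w = \xi(x)$ and $\tilde{w} = \xi(y)$, viewed as bi-infinite words indexed by $\Z$. For each $n$ we have $w_{[n,n+k)} = \xi_{[0,k)}(x+n\alpha)$ and $\tilde{w}_{[n+s,n+s+k)} = \xi_{[s,s+k)}(y+n\alpha)$. So the index set in \eqref{3kl ineq} is exactly the set of $n$ for which $(x+n\alpha, y+n\alpha)$ are $(\ell;k)$-discordant, and \eqref{3kl ineq2} follows.
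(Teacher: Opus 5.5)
Your argument is correct and is essentially the paper's proof. The paper also fixes a longest common subsequence and notes that the matching shift satisfies $|s|\le\Delta\le\ell$. It then observes that a length-$k$ window is matched by a pure translation unless it meets one of the at most $2\Delta$ gaps on either side of the alignment (its sets $S_\pi$ and $S_\tau$), each gap spoiling at most $k$ windows, plus an $O(k)$ boundary term. The only difference is that the paper counts good indices $j$ in the domain of the subsequence and pushes them forward injectively by $\pi$, whereas you count bad $n$ directly.

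There are two small slips to fix. First, the boundary group should be the $n$ with $n+k>N$, of which there are at most $k-1$. As written ("windows meeting $[N-k,N)$") there can be up to $2k-1$ of them, and your bookkeeping would then give $2k\ell+2k-1>3k\ell$ when $\ell=1$ and $k\ge 2$. Second, $\pi(i)-i$ equals the number of insertions before $\pi(i)$ minus the number of deletions before $i$, so your sign is reversed. This is harmless, since only $|s(i)|\le\Delta$ is used.
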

	\begin{proof} We prove (a). A longest common subsequence gives strictly increasing maps
		\[
		\pi,\tau : [1,N-\Delta]\cap \Z \to [0,N) \cap \Z
		\]
		such that $w_{\pi(i)} = \tilde{w}_{\tau(i)}$ for each $i$.
		Let
		\begin{equation*}
			S_{\pi} \ \coloneq \ \{i \in[1,N-\Delta) \cap \Z  : \pi(i+1) > \pi(i) + 1\}
		\end{equation*}
		and define $S_\tau$ analogously. Note that $|S_\pi| \leq \Delta$, so $S \coloneq S_\pi \cup S_\tau$ satisfies $|S| \leq  2\ell$.  Define
		\[ S_* \ \coloneq\ \bigcup_{j=0}^{k-2} (S-j) \cap [1,N-\Delta] \, ,
		\] 
		so $|S_*| \leq 2(k-1)\ell$.  
		Let $j \in  \{1,\ldots, N-\Delta-k+1\} \smallsetminus S_*$. 
		Since $\pi$ is strictly increasing,
		\begin{align*} j - 1 \leq \pi(j)  \leq j + \Delta - 1 \leq j + \ell - 1 \quad \text{and} \nonumber \\
			\pi(j+r) \  = \ \pi(j) + r, \qquad r \in[1,k) \,,
		\end{align*}
		and the same for $\tau$. Hence
		$s \coloneq \tau(j) - \pi(j)$ satisfies
		$w_{[\pi(j),\pi(j)+k)} \ = \  \tilde{w}_{[\pi(j)+s,\pi(j)+s+k)}  $
		and $|s| \leq \ell$, so $\pi(j)$ does not belong to the set on the left-hand side of \eqref{3kl ineq}. When $N - \Delta \geq k$, the result follows since there are at least $N-\Delta-k+1-|S_*| \geq N-3k\ell$ such $j$; in the other case $N < \Delta + k \leq 3k\ell$ directly implies \eqref{3kl ineq}.
	\end{proof}
	We immediately derive the following estimate.
	\begin{lemma}\label{lem: small diameter means small discordance}
		Let $\alpha \in \R$, \, $\beta \in (0,1)$, and $N,k,\ell \in \N$.
		If $\mathrm{diam}_E(\mathcal{W}_N) \leq \ell$, then for uniform and independent $x,y \in \T$,  
		\begin{equation*}
			\mathbb{P}\Bigl( (x,y) \text{ are $(\ell;k)$-discordant} \Bigr) \ \leq \ \frac{3k\ell}{N} \, .
		\end{equation*}
	\end{lemma}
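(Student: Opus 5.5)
The plan is to average the counting bound of Lemma~\ref{lem: bound on number of l,k-bad indices}(b) over all pairs $(x,y)$ and then use invariance of Lebesgue measure $\leb^2$ under the diagonal rotation. Since $\mathrm{diam}_E(\mathcal{W}_N)\le \ell$, every pair $(x,y)\in\T^2$ satisfies $\den N x y \le \ell$. Lemma~\ref{lem: bound on number of l,k-bad indices}(b) therefore gives, for every such pair,
\[
\sum_{n=0}^{N-1} \mathbf{1}_D(x+n\alpha \bmod 1,\, y+n\alpha \bmod 1) \ \le \ 3k\ell,
\]
where $D\subset\T^2$ denotes the set of $(\ell;k)$-discordant pairs.

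Next I integrate this inequality over $(x,y)\in\T^2$ with respect to $\leb^2$. The map $(x,y)\mapsto(x+n\alpha \bmod 1, y+n\alpha \bmod 1)$ preserves $\leb^2$, so each summand integrates to $\leb^2(D)$. This yields $N\,\leb^2(D)\le 3k\ell$, which is the claimed bound $\mathbb{P}((x,y)\in D)\le 3k\ell/N$.

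The one point needing care is measurability of $D$. Concordance depends only on the finitely many coding values $\xi_{[0,k)}(x)$ and $\xi_{[s,s+k)}(y)$ for $|s|\le\ell$. Each of these is an indicator of a finite union of intervals in $x$ or in $y$. Hence $D$ is a finite Boolean combination of products of intervals, so it is Borel, and Tonelli's theorem justifies exchanging the sum and the integral. No other obstacle is expected, and the argument works for any $\alpha$, rational or not, as stated.
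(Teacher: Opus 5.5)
Your proposal is correct and is essentially the paper's proof: take the expectation of the counting bound in Lemma~\ref{lem: bound on number of l,k-bad indices}(b) over $\leb^2$, and use that each diagonal rotation preserves $\leb^2$, which gives $N\,\mathbb{P}(\text{discordant}) \le 3k\ell$. Your measurability remark is a correct extra detail. For rational $\alpha$, the needed counting bound follows directly from part (a), since part (b) is just part (a) applied to the two coding sequences.
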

	\begin{proof}
		Taking the expectation of \eqref{3kl ineq2}, we see that 
		\begin{multline*}
			N \cdot\mathbb{P}\bigl( (x,y) \text{ are $(\ell;k)$-discordant} \bigr) \\ = \  \sum_{n =0}^{N-1} \mathbb{P}\bigl((x+n\alpha ,y+n\alpha ) \text{ are $(\ell;k)$-discordant} \bigr) \ \leq \  3k\ell \, . \qedhere
		\end{multline*}
	\end{proof}
	
	The next lemma uses the upper bound  on the lengths of the intervals $I_k(x)$ to derive a bound on concordance probability. 
	\begin{lemma}\label{lem: bound on concordance}
		Let $\alpha \in \R\smallsetminus\Q$ with convergents $(p_m/q_m)$ and let $\beta \in (0,1)$.
		Suppose $k, m\in \N$ satisfy $ \max\{k_0,q_m\} <k \le q_{m+1}$, where $k_0$ is as in Lemma~\ref{lem: same code iff same interval}. Let $x,y$ be independent and uniform random variables in $\T$. For every $\ell \in \N$,
		\begin{equation*}
			\mathbb{P} \Bigl( (x,y) \text{ are $(\ell;k)$-concordant}\Bigr) \ \leq \ \frac{6\ell}{q_{m}} \, .
		\end{equation*}
	\end{lemma}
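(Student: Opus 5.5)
Our plan is to use a union bound over the shifts $s \in [-\ell,\ell]$, reducing the event to finitely many events of the form ``two points lie in the same small interval''. Concordance means that some $s \in [-\ell,\ell]$ satisfies
\[
\xi_{[0,k)}(x) \ = \ \xi_{[s,s+k)}(y) \ = \ \xi_{[0,k)}(y+s\alpha \bmod 1).
\]
So
\[
\mathbb{P}\bigl((x,y)\text{ are $(\ell;k)$-concordant}\bigr) \ \le \ \sum_{s=-\ell}^{\ell} \mathbb{P}\bigl(\xi_{[0,k)}(x) = \xi_{[0,k)}(y_s)\bigr), \qquad y_s \coloneq y+s\alpha \bmod 1 .
\]
For each fixed $s$, the pair $(x,y_s)$ is again a pair of independent uniform points of $\T$, since rotation preserves Lebesgue measure. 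Each summand is therefore equal to $\mathbb{P}(\xi_{[0,k)}(x)=\xi_{[0,k)}(y))$, and it suffices to show
\[
\mathbb{P}\bigl(\xi_{[0,k)}(x)=\xi_{[0,k)}(y)\bigr) \ \le \ \frac{2}{q_m}.
\]
With $2\ell+1 \le 3\ell$ summands, this gives the bound $6\ell/q_m$.

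To bound the single-shift probability we will use Lemma~\ref{lem: same code iff same interval}. Since $k \ge k_0$ and $\Lambda^*(k)$ is finite (hence null), almost surely equality of the codings forces $I_k(x)=I_k(y)$. Conditioning on $x$, the probability that $y$ lands in $I_k(x)$ is $\leb(I_k(x))$. Consequently
\[
\mathbb{P}\bigl(I_k(x)=I_k(y)\bigr) \ = \ \sum_{I} \leb(I)^2 \ \le \ \max_I \leb(I) \sum_I \leb(I) \ = \ \max_I \leb(I),
\]
where $I$ ranges over the intervals of the partition of $[0,1)\smallsetminus\Lambda^*(k)$.

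The only point needing care is bounding the interval lengths. The partition by $\Lambda^*(k)$ refines the partition by $\Lambda(k) = \{-n\alpha \bmod 1 : 0\le n<k\}$. That set is the reflection $z\mapsto -z$ of $\{n\alpha \bmod 1: 0\le n<k\}$, so its complementary gaps have the same lengths. Since $q_m < k \le q_{m+1}$, Lemma~\ref{lem: ialpha almost uniform}(b) applies and gives gaps of length at most $\frac{1}{q_m}+\frac{1}{q_{m+1}} \le \frac{2}{q_m}$. Hence every $I_k(x)$ has length at most $2/q_m$, which yields the single-shift bound above.

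The main obstacle is simply checking that the hypotheses line up. The condition $k>k_0$ is needed to invoke Lemma~\ref{lem: same code iff same interval}, and $q_m<k\le q_{m+1}$ is needed for the gap estimate. Beyond that, the constant only requires $2(2\ell+1) \le 6\ell$, which holds for $\ell \ge 1$.
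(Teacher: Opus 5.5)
Your proposal is correct and follows essentially the same route as the paper: a union bound over the $2\ell+1$ shifts, rotation invariance to reduce each term to $\mathbb{P}(I_k(x)=I_k(y))$ via Lemma~\ref{lem: same code iff same interval}, and the gap bound $\frac{1}{q_m}+\frac{1}{q_{m+1}}\le \frac{2}{q_m}$ from Lemma~\ref{lem: ialpha almost uniform}(b). Your treatment of the reflection relating $\Lambda(k)$ to $\{n\alpha \bmod 1 : 0 \le n < k\}$, and the identity $\mathbb{P}(I_k(x)=I_k(y))=\sum_I \leb(I)^2 \le \max_I \leb(I)$, simply make explicit steps that the paper leaves implicit.
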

	\begin{proof}
		Since $k \geq k_0$,
		\begin{equation*}
			\mathbb{P} \Bigl((x,y) \text{ are $(\ell;k)$-concordant}\Bigr) \ = \ \mathbb{P}\Bigl( \exists s \in [-\ell,\ell], \quad I_k(x) = I_k(y + s\alpha \bmod 1)\Bigr) \, ,
		\end{equation*}
		whose right-hand side is at most
		\begin{equation*}
			\sum_{s = -\ell}^{\ell} \mathbb{P}\Bigl( I_k(x) = I_k(y + s\alpha \bmod 1)\Bigr) \ = \ (2\ell+1)\mathbb{P}\Bigl( I_k(x) = I_k(y)\Bigr) \, .
		\end{equation*}
		By \eqref{helpful inequality on q_n alpha} and Lemma~\ref{lem: ialpha almost uniform}(b), $[0,1) \smallsetminus \Lambda(k)$ is a disjoint union of $k$ intervals, each of length at most $2/q_{m}$. Thus, for all $z \in [0,1) \smallsetminus \Lambda^*(k)$, the interval $I_k(z)$ has length at most $2/q_{m}$, so
		\begin{equation*}
			\mathbb{P} \Bigl( (x,y) \text{ are $(\ell;k)$-concordant}\Bigr) \ \leq \ \frac{4\ell + 2}{q_{m}} \ \leq \ \frac{6\ell}{q_{m}} \,. \qedhere
		\end{equation*}
		
	\end{proof}
	The preceding bound on concordance probability, together with Lemma~\ref{lem: bound on number of l,k-bad indices}, implies that two random points in $\T$ are unlikely to yield orbit codings that are close in edit distance.
	\begin{lemma}\label{lem: utility lemma on diameter}
		Let $\alpha \in \R\smallsetminus\Q$ with convergents $(p_m/q_m)$ and let $\beta \in (0,1)$. Suppose $k,m \in \N$ satisfy $k \coloneq q_m+1 > k_0$, where $k_0$ is as in Lemma~\ref{lem: same code iff same interval}. Then the following hold.
		\begin{enumerate}[label=\textup{(\alph*)}]
			\item If $\ell,N \in \N$ satisfy $6k\ell \leq N$, then for uniform and independent $x,y \in \T$,
			\begin{equation*}
				\mathbb{P}\Bigl( \den N x y  \leq \ell \Bigr) \ \le \ 2 \mathbb{P} \Bigl( (x,y) \text{ are $(\ell;k)$-concordant}\Bigr) \ \leq \ \frac{12 \ell}{q_m} \, .
			\end{equation*}
			\item Consequently, if $N \in \N$ is such that $q_m^2 \leq N$, then
			\begin{equation*} 
				\mathrm{diam}_E(\mathcal{W}_N) \ \geq \ q_m/13 \,. 
			\end{equation*}
		\end{enumerate}
	\end{lemma}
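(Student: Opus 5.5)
For part (a), I would combine Lemma~\ref{lem: bound on number of l,k-bad indices}(b) with the $\Z^2$-invariance of Lebesgue measure. The goal is to show that the event $\{\den N x y \le \ell\}$ forces concordance of $(x+n\alpha, y+n\alpha)$ for most $n \in [0,N)$.

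On the event $\den N x y \le \ell$, inequality \eqref{3kl ineq2} says that at most $3k\ell$ indices $n \in [0,N)$ are discordant. So at least $N - 3k\ell \ge N/2$ indices are concordant, using $6k\ell \le N$. Hence
\[
\mathbf{1}\{\den N x y \le \ell\} \ \le \ \frac{2}{N}\sum_{n=0}^{N-1} \mathbf{1}\{(x+n\alpha,y+n\alpha) \text{ are $(\ell;k)$-concordant}\}\,.
\]
Taking expectations, and using that $(x+n\alpha \bmod 1, y+n\alpha \bmod 1)$ is again uniform on $\T^2$ for each $n$, the right-hand side has expectation $2\,\mathbb{P}((x,y)\text{ are $(\ell;k)$-concordant})$. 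This gives the first inequality. For the second, Lemma~\ref{lem: bound on concordance} applies with $k = q_m+1$: indeed $k > \max\{k_0, q_m\}$ and $k \le q_{m+1}$ (valid as long as $q_{m+1} > q_m$, true for $m\ge1$). It bounds the concordance probability by $6\ell/q_m$, so we get $12\ell/q_m$.

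For part (b), I would argue by contradiction. Suppose $\mathrm{diam}_E(\mathcal{W}_N) < q_m/13$. Let $\ell \coloneq \lfloor q_m/13\rfloor$. By monotonicity of $N\mapsto d_N$ (the second basic fact about edit distance), we may reduce $N$, so it is enough to treat $N \ge q_m^2$ and check $6k\ell \le N$. We have $6k\ell \le 6(q_m+1)q_m/13 \le q_m^2 \le N$, which holds once $q_m \ge 1$ (indeed $6(q_m+1)\le 13 q_m$). Then every pair satisfies $\den N x y \le \ell$, so the probability in (a) equals $1$. But (a) bounds it by $12\ell/q_m \le 12/13 < 1$, a contradiction.

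The only delicate points are edge cases: $\ell = 0$ when $q_m < 13$ (then the claim $\mathrm{diam}_E \ge q_m/13$ needs $\mathrm{diam}_E \ge 1$, which holds since distinct words exist because $\alpha$ is irrational and $N \ge 1$), and the requirement $k \le q_{m+1}$ in Lemma~\ref{lem: bound on concordance}. Otherwise the argument is a direct averaging, and I expect no serious obstacle.
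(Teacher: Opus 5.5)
Your proposal is correct and follows the paper's proof. Lemma~\ref{lem: bound on number of l,k-bad indices}(b) with $6k\ell\le N$ gives at least $N/2$ concordant indices; averaging (Markov) with rotation invariance gives the factor $2$; Lemma~\ref{lem: bound on concordance} with $k=q_m+1\le q_{m+1}$ gives $12\ell/q_m$; and (b) takes $\ell=\lfloor q_m/13\rfloor$, where integrality of edit distance between equal-length words turns $\mathrm{diam}_E(\mathcal{W}_N)<q_m/13$ into $\mathrm{diam}_E(\mathcal{W}_N)\le\ell$.

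Your separate treatment of $q_m<13$ is a detail the paper leaves implicit. There $\ell=0$ and the bound in (a) genuinely fails, so it does need separate handling. In that case $\mathrm{diam}_E(\mathcal{W}_N)\ge 1$ holds because $\beta\in(0,1)$ makes both letters occur as $\xi_0$, not because $\alpha$ is irrational.
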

	\begin{proof}
		Let $\mathcal C(\ell;k)$ denote the set of $(\ell;k)$-concordant pairs $(x,y) \in \T^2$.
		By Lemma~\ref{lem: bound on number of l,k-bad indices} and the assumption $3k\ell \le N/2$, every pair $(x,y) \in \T^2$ such that $\den N x y \leq \ell$ also satisfies
		\begin{equation*}
			S_N \ \coloneq \ \sum_{n=0}^{N-1} \mathbf{1}_{\mathcal C(\ell;k)}(x+n\alpha \bmod 1,y+n\alpha \bmod 1) \ \geq \ N/2 \, .
		\end{equation*}
		Therefore, by Markov's inequality and rotation invariance of Lebesgue measure, 
		\begin{equation*}
			\mathbb{P}\Bigl(  \den N x y \leq \ell \Bigr) \  \le \ \mathbb{P}(S_N \ge N/2) \  \le \  \frac{\mathbb E(S_N)}{N/2} \ = \ 
			2 \mathbb{P} \bigl( \mathcal{C}(\ell;k) \bigr) \, .
		\end{equation*}
		Part (a) then follows in view of Lemma~\ref{lem: bound on concordance}. For (b), take $\ell \coloneq \lfloor q_m/13 \rfloor$.
	\end{proof}
	
	\section{The case $\mu(\alpha) = 2$}\label{sec: the case mu alpha equals 2}
	We first give a simple lower bound for the liminf exponent of the edit-distance diameter.
	\begin{proposition}
		Let $\alpha \in \R \smallsetminus \Q$ with $\mu(\alpha) = 2$ and $\beta \in (0,1)$. Then $\expoliminf(\alpha,\beta)  \geq 1/2$. 
	\end{proposition}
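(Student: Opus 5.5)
The plan is to combine Lemma~\ref{lem: utility lemma on diameter}(b) with the slow growth of the convergent denominators that $\mu(\alpha)=2$ provides. Fix $\eps>0$. By \eqref{definition of mu} with $\mu(\alpha)=2$, we have $\limsup_m \log q_{m+1}/\log q_m = 1$. Hence there is $m_0$ such that $q_{m+1}\le q_m^{1+\eps}$ for all $m\ge m_0$. Enlarging $m_0$, we may also assume $q_m+1>k_0(\alpha,\beta)$ for $m \ge m_0$, so that Lemma~\ref{lem: utility lemma on diameter} applies with $k=q_m+1$.

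Given large $N$, choose $m$ maximal with $q_m^2\le N$, so that $q_{m+1}^2>N$. Lemma~\ref{lem: utility lemma on diameter}(b) then gives $\mathrm{diam}_E(\mathcal W_N)\ge q_m/13$. Since
\[
q_m \ \ge \ q_{m+1}^{1/(1+\eps)} \ > \ N^{1/(2(1+\eps))},
\]
we obtain $\log\mathrm{diam}_E(\mathcal W_N)/\log N\ge \frac{1}{2(1+\eps)}-\frac{\log 13}{\log N}$. Taking $\liminf$ in $N$ and then letting $\eps\to0$ yields $\expoliminf(\alpha,\beta)\ge 1/2$.

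There is no real obstacle, since the heavy lifting is already in Lemma~\ref{lem: utility lemma on diameter}. The only points needing care are the following.
\begin{enumerate}[label=\textup{(\roman*)}]
\item The maximal $m$ with $q_m^2\le N$ must be at least $m_0$ once $N$ is large, which is immediate because $q_m\to\infty$.
\item The hypothesis $k>k_0$ in Lemma~\ref{lem: utility lemma on diameter} holds for all large $m$.
\end{enumerate}
No condition on $\beta$ is used, so the bound holds for every $\beta\in(0,1)$.
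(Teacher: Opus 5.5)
Your proposal is correct and is essentially the paper's own proof. The paper also fixes $\eps$ and uses $q_{m+1}\le q_m^{1+\eps}$ to place every large $N$ in some window $[q_m^2,q_m^{2+2\eps}]$ with $q_m+1>k_0$. It then applies Lemma~\ref{lem: utility lemma on diameter}(b) to get $\mathrm{diam}_E(\mathcal{W}_N)\ge q_m/13\ge \frac{1}{13}N^{1/(2+2\eps)}$; taking the maximal $m$ with $q_m^2\le N$ is just your explicit way of choosing that window.
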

	\begin{proof}			
		Let $k_0$ be as in Lemma~\ref{lem: same code iff same interval} and fix $\eps \in (0,1/2)$. The hypothesis $\mu(\alpha)=2$ and \eqref{definition of mu} imply
		$q_{m+1} \leq q_m^{1+\eps}$ 
		for all large $m$. Thus, if $N \in \N$ is sufficiently large, then
		\begin{equation*} 
			N \in [q_m^2,  q_m^{2+2\eps}] \, 
		\end{equation*}
		for some $m$ with $q_m + 1 > k_0$. Therefore, by Lemma~\ref{lem: utility lemma on diameter}(b),   
		\begin{equation*}
			\frac{\log \mathrm{diam}_E(\mathcal{W}_N)}{\log N} \ \geq \ \frac{\log (q_m/13)}{\log N} \ \geq \ \frac{\log (\frac{1}{13} N^{\frac{1}{2+2\eps}}) }{\log N} \ = \ \frac{1}{2+2\eps}-\frac{\log 13}{\log N}\, .
		\end{equation*}
		Taking $N \to \infty$ and $\eps \downarrow 0$ completes the proof.
	\end{proof}
	
	The following theorem bounds the liminf exponent for typical orbits using a probabilistic argument. 
	\begin{theorem}\label{thm: orbitexpoliminf lower bound when mu=2} Let $\alpha \in \R\smallsetminus\Q$ with $\mu(\alpha) = 2$ and $\beta \in (0,1)$. Then
		\begin{equation*}
			\orbitexpoliminf(\alpha,\beta) \ \geq \ \frac{1}{2} \, . 
		\end{equation*}
	\end{theorem}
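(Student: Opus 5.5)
Fix $\eps \in (0,1/2)$; it suffices to show that for $\leb^2$-a.e.\ $(x,y)$ we have $\den N x y \geq N^{\frac{1}{2+2\eps}-\eps}$ for all large $N$. The plan is a Borel--Cantelli argument along a geometric sequence of lengths, using Lemma~\ref{lem: utility lemma on diameter}(a) to control the probability of small edit distance. Monotonicity of $N \mapsto \den N x y$ lets us restrict to $N_j \coloneq 2^j$. Indeed, for $N_j \le N < N_{j+1}$ we have $\den N x y \geq \den {N_j} x y$, and $\log N_{j+1}/\log N_j \to 1$, so exponents along $(N_j)$ transfer to all $N$.

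Next, for each large $j$ we choose the scale. Since $\mu(\alpha)=2$, \eqref{definition of mu} gives $q_{m+1} \le q_m^{1+\eps}$ for large $m$. We take $m = m(j)$ maximal with $q_m^2 \le N_j$. Then $N_j < q_{m+1}^2 \le q_m^{2+2\eps}$, so $q_m \geq N_j^{1/(2+2\eps)}$. Put $k \coloneq q_m+1$, which exceeds $k_0$ for large $j$. We set $\ell_j \coloneq \lfloor q_m N_j^{-\eps'}\rfloor$ for a small $\eps'>0$. Then $6k\ell_j \le 12 q_m^2 N_j^{-\eps'} \le N_j$ for large $j$. Lemma~\ref{lem: utility lemma on diameter}(a) then gives
\[
\mathbb{P}\bigl(\den {N_j} x y \le \ell_j\bigr) \ \le \ \frac{12\ell_j}{q_m} \ \le \ 12 N_j^{-\eps'} \ = \ 12\cdot 2^{-j\eps'} ,
\]
and this is summable in $j$.

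By Borel--Cantelli, for a.e.\ $(x,y)$ we get $\den {N_j} x y > \ell_j \geq \tfrac12 N_j^{\frac{1}{2+2\eps}-\eps'}$ for all large $j$. Combining this with the monotonicity interpolation gives $\liminf_N \log\den N x y/\log N \ge \frac{1}{2+2\eps}-\eps'$ almost everywhere. Letting $\eps,\eps'\downarrow 0$ along countable sequences yields $\orbitexpoliminf(\alpha,\beta)\ge 1/2$.

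The main obstacle is ensuring the scale $q_m$ attached to $N_j$ is not too small relative to $\sqrt{N_j}$, and this is exactly where $\mu(\alpha)=2$ enters. Without it, $q_m$ could be much smaller than $\sqrt{N_j}$, and the bound $12\ell/q_m$ would be useless. The probability bound must also decay polynomially in $N_j$ to be summable, which is why we sacrifice the factor $N_j^{-\eps'}$ in $\ell_j$ and use geometric $N_j$ rather than all $N$.
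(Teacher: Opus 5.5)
Your proposal is correct and follows essentially the same route as the paper: a Borel--Cantelli argument along geometric lengths ($4^n$ in the paper, $2^j$ for you), with $m$ chosen so that $q_m^2\le N<q_{m+1}^2$, the hypothesis $\mu(\alpha)=2$ used to get $q_m\ge N^{1/(2+2\eps)}$, then Lemma~\ref{lem: utility lemma on diameter}(a), then interpolation via monotonicity of $N\mapsto \den N x y$. The only cosmetic difference is that you take $\ell\approx q_mN^{-\eps'}$, whereas the paper takes $\ell=\lfloor N^{1/2-\eps}\rfloor$ directly.
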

	\begin{proof}
		
		\noindent Let $x,y \in \T$ be independent and uniform, and fix $\eps > 0$. For $n \in \N$, define the event
		\begin{equation*}
			A_n \ \coloneq \ \{ \exists N \in [4^n,4^{n+1}) : \den N x y  \leq 2^{n(1-2\eps)}\} \ = \ \{ \den {4^n} x y \leq 2^{n(1-2\eps)}\} \, ,
		\end{equation*}
		where equality holds by the comment below \eqref{edit distance is subadditive}. Write $(p_m/q_m)$ for the convergents of $\alpha$. By \eqref{definition of mu}, for all large enough $m$, we have $q_{m+1} \leq q_m^{1+\eps}$. Fix $n \in \N$ and let $m \in \N$ be such that $q_m \leq 2^n < q_{m+1}$. If $n$ is sufficiently large, then $2^{n(1-\eps)} \leq 2^{\frac{n}{1+\eps}} \leq q_{m+1}^{\frac{1}{1+\eps}} \leq q_m$, hence $6(q_m+1)\ell \leq 4^n$ for $\ell \coloneq \lfloor 2^{n(1-2\eps)} \rfloor$, so Lemma~\ref{lem: utility lemma on diameter}(a) implies
		\begin{equation*}
			\mathbb{P}(A_n) \ \leq \ \frac{12 \ell}{2^{n(1-\eps)}} \ \leq \ 12 \cdot 2^{-n\eps} \, . 
		\end{equation*} 
		By the Borel--Cantelli lemma, for almost every $(x,y) \in \T^2$, for all sufficiently large $N$, we have $\den N x y > 2^{n(1-2\eps)}$, where $N \in [4^n,4^{n+1})$. For such $N$,
		\begin{equation*}
			\frac{\log \den N x y}{\log N} \ > \ \frac{\log 2^{n(1-2\eps)}}{\log 4^{n+1}} \ = \ \frac{1}{2} - \frac{1+2\eps n}{2n+2} \, ,
		\end{equation*}
		so
		\begin{equation*}
			\liminf_{N\to\infty} \frac{\log \den N x y}{\log N} \ \geq \ \frac{1}{2} - \eps \, .
		\end{equation*}
		Taking $\eps \downarrow 0$ completes the proof.
	\end{proof}
	
	\begin{proof}[Proof of Theorems~\ref{main thm 1}~and~\ref{main thm golden less} when $\mu(\alpha) = 2$]
		
		Let $\beta \in (0,1)$. Then, by Theorem~\ref{thm: upper bounds on expoliminf and expolimsup}, we have $\orbitexpolimsup (\alpha,\beta) \leq \expolimsup(\alpha,\beta) \leq 1/2$. By Theorem~\ref{thm: orbitexpoliminf lower bound when mu=2}, we have $1/2 \leq \orbitexpoliminf(\alpha,\beta) \leq \expoliminf(\alpha,\beta)$.
	\end{proof}	
	
	For badly approximable $\alpha$, the edit-distance diameter can be estimated more precisely. 
	\begin{proposition}\label{prop: badly approximable edit distance}
		Let $\alpha \in \R\smallsetminus\Q$ with convergents $(p_m/q_m)$ and partial quotients $(a_m)$ and let $\beta \in (0,1)$. Suppose that $\alpha$ is \textbf{badly approximable}, i.e., $\Upsilon(\alpha)\coloneq\max_{m \ge 1} a_m <\infty$. Then there exist constants $c_1,C_2 > 0$ depending only on $\alpha,\beta$ such that, for all $N \in \N$,
		\begin{equation*}
			c_1\sqrt{N} \ \leq \ \mathrm{diam}_E(\mathcal{W}_N) \ \leq \ C_2\sqrt{N} \, . 
		\end{equation*}
	\end{proposition}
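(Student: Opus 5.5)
For bounded partial quotients, $q_{m+1} = a_{m+1}q_m + q_{m-1} \le (\Upsilon+1)q_m$ for every $m$. The plan is to feed this ratio bound into the two diameter estimates already proved: Lemma~\ref{lem: q + N/q bound} for the upper bound and Lemma~\ref{lem: utility lemma on diameter}(b) for the lower bound. Write $\Upsilon = \Upsilon(\alpha)$ and $K \coloneq \Upsilon + 1$.

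\emph{Upper bound.} Fix $N$. If $N < q_1^2$, then $\mathrm{diam}_E(\mathcal{W}_N) \le N \le q_1\sqrt{N}$ trivially. Otherwise choose $m$ with $q_m \le \sqrt{N} < q_{m+1}$. Then $q_m > \sqrt{N}/K$, and Lemma~\ref{lem: q + N/q bound} gives
\[
\mathrm{diam}_E(\mathcal{W}_N) \ \le \ q_m + 11\frac{N}{q_m} + 11 \ \le \ \sqrt{N} + 11K\sqrt{N} + 11\sqrt{N} \, .
\]
So $C_2 \coloneq \max\{q_1,\, 12 + 11K\}$ works.

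\emph{Lower bound.} Let $k_0 = k_0(\alpha,\beta)$ be as in Lemma~\ref{lem: same code iff same interval}, and let $m_0$ be the least index with $q_{m_0} + 1 > k_0$.

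For $N \ge q_{m_0}^2$, choose the largest $m \ge m_0$ with $q_m^2 \le N$. Then $N < q_{m+1}^2 \le K^2 q_m^2$, so $q_m > \sqrt{N}/K$. Lemma~\ref{lem: utility lemma on diameter}(b) then yields
\[
\mathrm{diam}_E(\mathcal{W}_N) \ \ge \ \frac{q_m}{13} \ > \ \frac{\sqrt{N}}{13K} \, .
\]

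For the finitely many $N < q_{m_0}^2$, it suffices that $\mathrm{diam}_E(\mathcal{W}_N) \ge 1$ for every $N \ge 1$. This holds because $\beta \in (0,1)$, so the points $x = 0$ and $x = \beta$ give $\xi_0 = 1$ and $\xi_0 = 0$ respectively. Hence $\mathcal{W}_N$ contains two distinct words of the same length, and since insertions and deletions come in equal numbers for equal-length words, their edit distance is at least $1$. Consequently $\mathrm{diam}_E(\mathcal{W}_N) \ge \sqrt{N}/q_{m_0}$ in this range.

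Taking $c_1 \coloneq \min\{1/(13K),\, 1/q_{m_0}\}$ completes the argument, with $c_1, C_2$ depending only on $\alpha$ and $\beta$ (the latter through $k_0$).

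There is no real obstacle here. The only points needing care are the small-$N$ cases, where $k_0$ introduces the dependence on $\beta$, and checking that the index $m$ chosen in each case satisfies the hypotheses of the lemma being invoked.
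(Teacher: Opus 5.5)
Your proposal is correct and takes essentially the same route as the paper. It plugs the ratio bound $q_{m+1}\le(\Upsilon+1)q_m$ (the paper uses $q_{m+1}\le 2\Upsilon q_m$) into Lemma~\ref{lem: q + N/q bound} for the upper bound and into Lemma~\ref{lem: utility lemma on diameter}(b) for the lower bound. Your explicit handling of the finitely many small $N$, via $1\le\mathrm{diam}_E(\mathcal{W}_N)\le N$, fills in what the paper leaves implicit when it says it may assume $N$ is large enough.
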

	
	\begin{proof}
		Let $k_0$ be as in Lemma~\ref{lem: same code iff same interval}.	
		For all $m \in \N$, we have $q_{m+1} \leq 2a_{m+1}q_m \leq 2\Upsilon(\alpha)q_m$.
		
		Given $N>q_1^2$, find $m \in \N$ such that $q_{m}^2 < N \leq q_{m+1}^2$. We may assume $N$ is large enough that $q_m + 1 > k_0$. By Lemma~\ref{lem: utility lemma on diameter}(b),
		\begin{equation*}
			\mathrm{diam}_E(\mathcal{W}_N) \ \geq \ \frac{q_m}{13}  \ \geq \  \frac{q_{m+1}}{26 \Upsilon(\alpha)}\ \geq \ \frac{\sqrt{N}}{26 \Upsilon(\alpha)}  \, . 
		\end{equation*}
		By Lemma~\ref{lem: q + N/q bound}, 
		$\mathrm{diam}_E(\mathcal{W}_N) \leq q_m + 11 \frac{N}{q_m} + 11 \leq 34\Upsilon(\alpha) \sqrt{N} .$
		The result follows.
	\end{proof}

	\section{Lower bounds on diameter exponents when $\mu(\alpha) > 2$} \label{sec: lower bounds when mu alpha > 2}
	
	We start with an easy general lower bound.
	
	\begin{proposition}\label{prop: orbitexpolimsup at least 1/2}
		Let $\alpha \in \R\smallsetminus \Q$ with convergents $(p_m/q_m)$ and let $\beta \in (0,1)$. Then $\orbitexpolimsup(\alpha,\beta) \geq 1/2$, hence $\expolimsup(\alpha,\beta) \geq 1/2$. 
	\end{proposition}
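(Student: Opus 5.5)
The plan is to imitate the Borel--Cantelli argument of Theorem~\ref{thm: orbitexpoliminf lower bound when mu=2}, now only along the subsequence $N_m \coloneq q_m^2$. A $\limsup$ lower bound needs only infinitely many good $N$, and this holds for every $\alpha$. Fix $\eps\in(0,1/2)$ and let $k\coloneq q_m+1$. We restrict to $m$ large enough that $k>k_0$, where $k_0$ is as in Lemma~\ref{lem: same code iff same interval}. Set $\ell_m \coloneq \lfloor q_m^{1-\eps}\rfloor$. For large $m$ we have $6k\ell_m \le 12 q_m^{2-\eps} \le q_m^2 = N_m$, so Lemma~\ref{lem: utility lemma on diameter}(a) gives
\[
\mathbb{P}\bigl(\den {N_m} x y \le \ell_m\bigr) \ \le \ \frac{12\ell_m}{q_m} \ \le \ 12\, q_m^{-\eps}.
\]

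Next we check that these probabilities are summable in $m$. The denominators grow at least like Fibonacci numbers: $q_{m+2}\ge 2q_m$, so $q_m\ge 2^{(m-1)/2}$. Hence $\sum_m q_m^{-\eps}<\infty$. By Borel--Cantelli, for $\leb^2$-a.e.\ $(x,y)$ we have $\den{N_m}xy>\ell_m$ for all large $m$, and therefore
\[
\frac{\log \den{N_m}xy}{\log N_m} \ \ge \ \frac{\log(q_m^{1-\eps}-1)}{2\log q_m} \ \longrightarrow \ \frac{1-\eps}{2}.
\]
It follows that $\limsup_N \frac{\log \den Nxy}{\log N}\ge \frac{1-\eps}{2}$ almost everywhere, so $\orbitexpolimsup(\alpha,\beta)\ge(1-\eps)/2$. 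Letting $\eps\downarrow0$ gives $\orbitexpolimsup(\alpha,\beta)\ge 1/2$.

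The second claim is immediate, since $\den Nxy\le \mathrm{diam}_E(\mathcal{W}_N)$ for every pair, so $\expolimsup(\alpha,\beta)\ge\orbitexpolimsup(\alpha,\beta)$. Alternatively, Lemma~\ref{lem: utility lemma on diameter}(b) with $N=q_m^2$ gives this directly.

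There is no real obstacle here. The only points to check are the summability of $q_m^{-\eps}$, which follows from the exponential growth of the $q_m$, and the condition $6k\ell\le N$ needed in Lemma~\ref{lem: utility lemma on diameter}(a).
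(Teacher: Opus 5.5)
Your proposal is correct and follows essentially the same argument as the paper: the subsequence $N_m = q_m^2$ with $\ell_m = \lfloor q_m^{1-\eps}\rfloor$ and $k = q_m+1$, Lemma~\ref{lem: utility lemma on diameter}(a) to get $\mathbb{P}(\den{N_m}xy \le \ell_m) \le 12q_m^{-\eps}$, summability from the Fibonacci-type growth of the $q_m$, and Borel--Cantelli. The deduction $\expolimsup \ge \orbitexpolimsup$ from $\den Nxy \le \mathrm{diam}_E(\mathcal{W}_N)$ is also how the paper obtains the second claim.
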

	\begin{proof}
		Let $k_0$ be as in Lemma~\ref{lem: same code iff same interval} and fix $\eps \in (0,1/2)$. Let $A \subset \N$ be the infinite set of $m \in \N$ such that $q_m + 1 > k_0$ holds and $N_m \coloneq q_m^2$ and $\ell_m \coloneq \lfloor q_m^{1-\eps} \rfloor$ satisfy $6(q_m + 1)\ell_m \leq N_m$.
		
		Let $m \in A$. Then Lemma~\ref{lem: utility lemma on diameter}(a) implies, for $x,y \in \T$ independent and uniform,  
		\begin{equation*}
			\mathbb{P}\Bigl( \den {N_m} x y  \leq \ell_m \Bigr) \ \le \ \frac{12}{q_m^\eps} \, .
		\end{equation*}
		Since $q_{n+1} \geq q_n + q_{n-1}$ for all $n \in \N$, it follows that $\sum_{n\in \N} 12q_n^{-\eps} < \infty$, so by the Borel--Cantelli lemma, for almost every $(x,y) \in \T^2$, for all large enough $m \in A$, we have $\den {N_m} x y \geq \ell_m + 1 \geq N_m^{(1-\eps)/2}$. The result follows on taking $m \to \infty$ and $\eps \downarrow 0$.
	\end{proof}
	
	\medskip
	
	The lower bounds on $\expoliminf(\alpha,\beta)$ and $\expolimsup(\alpha,\beta)$ in Theorem \ref{main thm 1} require an assumption on $\beta$. We need the following lemma.
	
	Heuristically, if $\beta$ stays a distance $D$ from the length-$q_m$ orbit partition, then a set of pairs $(x,y)$ of measure comparable to $q_mD$ remains discordant even after allowing shifts of size at most $\ell$. To see this, require that $x$ is in  an   interval that begins in   $\Lambda(q_m)$ and  ends in   $\Lambda(q_m) +\beta \bmod 1$ (but not near its boundary) and that $y$ is in one of the complementary intervals. This will ensure there is   a coding boundary between $x$ and $y+s\alpha \bmod 1$, provided $|s| \le \ell$.

	\begin{lemma}\label{lem: refactored lower bound lemma} Fix $\alpha \in \R\smallsetminus\Q$ with convergents $(p_m/q_m)$ and $\eps > 0$. \notationstatementonedim   Suppose that $m \ge 1$  and $\beta \in (0,1)$ satisfy $q_{m}^{\eps} \geq 500$   and 
		\begin{equation}\label{bound on D}
			D \ \coloneq \ \mathrm{dist}_{\T}(\beta,\Lambda(q_m)) \ > \ q_m^{-1-\eps} \,.
		\end{equation} 
		Let $k_0$ be as in Lemma~\ref{lem: same code iff same interval}, and suppose that
		$k, N \in \N$ satisfy  \begin{equation}\label{assumption: N at most kq}
			k \ \ge \ \max(k_0,q_m) \quad \text{and} \quad N \ \leq \ kq_{m+1} \,.
		\end{equation}  	
		\begin{equation} \label{assumption: big a_i}
			\text{Define} \quad  \ell \ \coloneq \ 
			\lfloor N^{1-\eps}/k\rfloor \quad \text{and assume that} \quad 
			q_{m} \ < \ 7\ell \,.
		\end{equation}
		Then, for independent and uniform $x,y \in \T$,
		\begin{equation*}
			\mathbb{P}\Bigl( (x,y) \text{ are $(\ell;k)$-discordant} \Bigr) \ > \ \frac{q_mD}{24} \ > \ 3N^{-\eps} \,.
		\end{equation*}
	\end{lemma}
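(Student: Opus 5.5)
The plan is to exhibit explicit sets $X, Y \subset \T$ with $\leb(X)\gtrsim q_mD$ and $\leb(Y)\gtrsim 1$ such that every $(x,y)\in X\times Y$ is $(\ell;k)$-discordant. By independence this gives the probability bound, and the final inequality $q_mD/24 > 3N^{-\eps}$ is a separate computation from \eqref{bound on D} and \eqref{assumption: big a_i}.

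\emph{Reduction to a boundary between two points.} Since $k\ge k_0$, Lemma~\ref{lem: same code iff same interval} says $(x,y)$ are $(\ell;k)$-concordant only if $I_k(x)=I_k(y+s\alpha \bmod 1)$ for some $|s|\le\ell$. As $k\ge q_m$, we have $\Lambda^*(q_m)\subset\Lambda^*(k)$. So it suffices to show that for each $|s|\le \ell$ some point of $\Lambda^*(q_m)$ separates $x$ from $y+s\alpha$.

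\emph{Lattice model of the partition.} By Lemma~\ref{lem: ialpha almost uniform}(a), $\Lambda(q_m)$ lies within $1/q_{m+1}$ of the lattice $\frac1{q_m}\Z$, bijectively. The translate $\{\beta-n\alpha\}$ lies within $1/q_{m+1}$ of $\beta+\frac1{q_m}\Z$. Hence each lattice gap of length $1/q_m$ is cut by one translate point into a piece of length $\approx D'$ adjacent to a $\Lambda(q_m)$-point and a complementary piece, where $D'=\min(D,\,1/q_m-D)$ up to $O(1/q_{m+1})$ errors. Here I will use that $\mathrm{dist}(\beta,\Lambda(q_m))=D$ pins the offset of $\beta$ relative to the lattice to within $1/q_{m+1}$.

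Let $X$ be the union of the short pieces shrunk by a margin $D/4$ at each end, and $Y$ the union of the long pieces shrunk the same way. Then $\leb(X)\ge q_m D/4$. Since $D\le 1/(2q_m)$, essentially each long piece has length $\ge 1/(2q_m)$, so $\leb(Y)\ge 1/6$, say. The product then exceeds $q_mD/24$.

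\emph{Stability under shifts $y\mapsto y+s\alpha$.} Both lattice configurations are invariant under rotation by $p_m/q_m$, and $s\alpha = sp_m/q_m + s\eta/q_m$ with $|s\eta|/q_m\le \ell/(q_mq_{m+1})$ by \eqref{helpful inequality on q_n alpha}. So $y+s\alpha$ lies in a long piece, up to a displacement of at most $\ell/(q_mq_{m+1}) + O(1/q_{m+1})$. This displacement must be smaller than the margin $D/4$, which is the step I expect to be the main obstacle.

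Since $\ell\le N^{1-\eps}/k$ and $N\le kq_{m+1}$, we get $\ell/(q_mq_{m+1})\le N^{-\eps}/q_m$. Since $q_m<7\ell\le 7N^{1-\eps}/k\le 7N^{1-\eps}/q_m$, we get $N\ge (q_m^2/7)^{1/(1-\eps)}$, so $N^{-\eps}\le 7^{\eps}q_m^{-2\eps}$. The displacement is therefore about $7q_m^{-1-2\eps}$, which is much smaller than $D/4$ because $D>q_m^{-1-\eps}$ and $q_m^\eps\ge 500$. The $1/q_{m+1}$ errors are handled the same way, using $q_{m+1}\ge N/k \ge N^{\eps}\ell\gg q_m^{1+\eps}$. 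With this, the short piece containing $x$ and the long piece containing $y+s\alpha$ are separated by a genuine point of $\Lambda^*(q_m)$, giving discordance for all $|s|\le\ell$.

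\emph{Final inequality.} We have $q_mD/24> q_m^{-\eps}/24$. The bound $N^{-\eps}\le 7^\eps q_m^{-2\eps}$ then gives $q_m^{-\eps}/24>3N^{-\eps}$, provided $q_m^{\eps}$ exceeds roughly $72\cdot 7^{\eps}$, which $q_m^\eps\ge 500$ covers after tracking the constants carefully.
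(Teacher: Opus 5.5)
Your proposal is correct and is essentially the paper's proof. The paper takes $x$ with $q_mx\bmod 1\in(2\delta,\beta^*-2\delta)$ and $y$ with $q_my\bmod 1\in(\beta^*+2\delta,1-2\delta)$, where $\beta^*=q_m\beta\bmod 1$ and $\delta=q_mD/48$; these are your two types of sub-gaps with margins. It then uses $\|q_ms\alpha\|\le \ell/q_{m+1}\le N^{-\eps}\le\delta$ to keep $y+s\alpha$ on its side, and finds a genuine point of $\Lambda(q_m)$ or $\Lambda(q_m)+\beta$ between $x$ and $y+s\alpha$ via Lemma~\ref{lem: ialpha almost uniform}(a).

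For the constants, it is cleaner to note, as the paper does, that $q_m^2\le 7\ell k\le 7N^{1-\eps}\le N$. Then $3N^{-\eps}\le 3q_m^{-2\eps}<q_m^{-\eps}/24$ needs only $q_m^\eps>72$. Your cruder bound $N^{-\eps}\le 7^{\eps}q_m^{-2\eps}$ would require $q_m^\eps>72\cdot 7^{\eps}$, which is slightly more than $q_m^\eps\ge 500$ guarantees when $\eps$ is close to $1$. You should also discard the null set where $x$ or some $y+s\alpha$ lies in $\Lambda^*(k)$ before invoking Lemma~\ref{lem: same code iff same interval}.
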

	\begin{proof}
		
		First, by \eqref{assumption: N at most kq} and \eqref{assumption: big a_i}, 
		\begin{equation}\label{bound on qm and qm+1}
			q_m^2 \ \le \ 7 \ell k \ \le \ 7N^{1-\eps} \ \le \ N, \, \text{ so } \, q_{m+1} \ \ge N^{\eps} \ell \ \ge \ N^\eps \frac{q_m}{7} \ \ge \ \frac{q_m^{1+2\eps}} {7} \,.
		\end{equation}
		
		Therefore, by \eqref{bound on D}, 
		\begin{equation}\label{adjusted bound on D}
			D \ > \ \frac{q_m^{\eps }}{7q_{m+1}}  \,. 
		\end{equation}

		\bigskip

		The hypothesis \eqref{bound on D} together with \eqref{adjusted bound on D} and Lemma \ref{lem: ialpha almost uniform}(a) imply
		\begin{equation*}
			\mathrm{dist}_{\T}(\beta, \frac{1}{q_m}\Z ) \ > \ D/2 \ > \ q_m^{-1-\eps}/2 \,.
		\end{equation*}

		Let $\beta^*\coloneq q_m\beta \bmod 1$ and $\delta \coloneq \frac{q_m D}{48}$. Consider the sets   
		\begin{eqnarray*}
			A_1 & \coloneq &    \{x \in (0,1): \,  q_m x   \bmod 1  \in (2\delta,  \beta^*-2\delta)\} \, , \\ 
			A_2  & \coloneq &  \{y \in (0,1):\,  q_m y \bmod 1 \in (  \beta^*+2\delta , 1-2\delta)\} \,. 
		\end{eqnarray*}
		
		Since $\leb(A_1) + \leb(A_2)  = 1-8\delta$  and $\leb(A_i) \ge 4\delta$, 
		\begin{equation*}
			\leb(A_1) \leb(A_2) \ \geq \ 4\delta (1-12\delta) \ \ge \ 2\delta  \,.
		\end{equation*}
		Thus establishing the following claim will prove the lemma:
		\medskip
		
		\noindent\textbf{Claim:} Let $x \in {A}_1\smallsetminus\Lambda^*(k)$ and $y \in A_2$. If $y+s\alpha \bmod 1 \notin \Lambda^*(k)$ for all $s \in [-\ell, \ell] \cap \Z$,
		then $(x,y)$   are $(\ell;k)$-discordant.  
		
		\medskip
		
		Since $k \geq k_0$, by Lemma~\ref{lem: same code iff same interval} it suffices to verify that such $x,y$ satisfy
		\begin{equation*} 
			\forall s \in [-\ell, \ell]\cap\Z, \quad I_k(x) \neq I_k(y+s\alpha)  \,.
		\end{equation*}
		Fix $s \in [-\ell, \ell]\cap\Z$ and write $\tilde{y}:=y+s\alpha \bmod 1$. By \eqref{helpful inequality on q_n alpha} and \eqref{bound on qm and qm+1},
		\[
		\|q_m s\alpha\| \ \le \ \ell/q_{m+1} \ \le \ N^{-\eps} \ \le \ q_m^{-2\eps} \ \le \ \delta \,,
		\]
		so
		$q_m\tilde{y}\bmod 1\in(\beta^*+\delta,1-\delta)$. Let $a:=\lfloor q_mx\rfloor$ and $d:=\lfloor q_m\tilde{y}\rfloor$, so
		\begin{equation}  \label{qmx ineq} 
			a+2\delta \ < \ q_mx \ < \ a+\beta^*-2\delta \, ,
		\end{equation}
		and
		\begin{equation} \label{qmy ineq}
			d+\delta +\beta^* \ < \ q_m\tilde{y} \ < \ d+1-\delta \, .
		\end{equation}
		To finish  the proof, we separate two cases:
		
		\noindent\textbf{Case 1}: $\tilde{y}\le x$. Then \eqref{qmx ineq} and \eqref{qmy ineq} imply $d+3\delta<a$, so $d+1\le a$. By Lemma~\ref{lem: ialpha almost uniform}(a) there exists
		$z \in \Lambda(q_m)\subseteq\Lambda(k)$ with
		\begin{equation*}
			\Bigl|\frac{a}{q_m}-z \Bigr| \ < \ \frac{1}{q_{m+1}} \ < \ \frac{\delta}{q_m} \, ,
		\end{equation*}
		the latter inequality holding by \eqref{adjusted bound on D}. We deduce that
		\begin{equation*}
			\tilde{y} \ < \ \frac{a-\delta}{q_m} \ < \ z \ < \ \frac{a+\delta}{q_m} \ < \ x \, , \quad \text { hence } I_k(\tilde{y}) \neq I_k(x) \, .
		\end{equation*}
		
		\noindent\textbf{Case 2}: $x<\tilde{y}$. In this case $a\le d$.
		Let $b:=\lfloor q_m \beta \rfloor$. Then \eqref{qmx ineq} and \eqref{qmy ineq} imply
		\begin{equation} \label{xyabd} q_mx \ < \ a+q_m\beta-b-2\delta \ < \ a+q_m\beta-b+\delta \ < \ q_m\tilde{y} \,.
		\end{equation}
		It follows from Lemma~\ref{lem: ialpha almost uniform}(a) that there exists $z^*\in (\Lambda(q_m)+\beta)\bmod 1$ satisfying
		\begin{equation} \label{normclose}  \Bigl\|\frac{a-b}{q_m}+\beta-z^* \Bigr\| \ < \ \frac{1}{q_{m+1}} \ < \ \frac{\delta}{q_m} \, .
		\end{equation}
		Now $\frac{a-b}{q_m}+\beta \in (\frac{2\delta}{q_m}, 1-\frac{\delta}{q_m})$ by \eqref{xyabd}, so
		\eqref{normclose} holds with $\|\cdot\|$ replaced by $| \cdot |$.
		Thus, by \eqref{xyabd},
		\[ x \ < \ \frac{a-b}{q_m}+\beta-\frac{\delta}{q_m} \ < \ z^* \ < \ \frac{a-b}{q_m}+\beta+\frac{\delta}{q_m} \ < \ \tilde{y} \,, \quad \text { hence } I_k(x) \neq I_k(\tilde{y}) \, . \qedhere \]
	\end{proof}

	\subsection{Lower bound on the liminf diameter exponent when $\mu(\alpha)>2$}
	
	Suppose $\alpha \in \R \smallsetminus \Q$ has convergents $(p_m/q_m)$. Given $\eps > 0$ and $m \in \N$, define
	\[
	B^{\eps}_m(\alpha) \ \coloneq \ \{ \beta \in (0,1) : \mathrm{dist}_\T(\beta,\Lambda(q_m)) \ \leq \ q_m^{-1-\eps}\} \, . 
	\]
	This set has Lebesgue measure at most $2q_m^{-\eps}$. Since $\sum_{m\in\N} 2q_m^{-\eps} < \infty$, the Borel--Cantelli lemma implies the set
	\[ B^\eps(\alpha) \ \coloneq \ \bigcap_{M \in \N} \bigcup_{m > M} B^{\eps}_m(\alpha) \ = \ \limsup_{m\to\infty} B_m^{\eps}(\alpha)\]
	has measure zero.
	
	If $\eps_1 < \eps_2$, then $B_m^{\eps_1}(\alpha) \supseteq B_m^{\eps_2}(\alpha)$; hence
	\begin{equation}\label{B alpha definition}
		B(\alpha) \ \coloneq \ \bigcup_{\eps > 0} B^{\eps}(\alpha) \ = \ \bigcup_{0 < \eps \in \Q} B^{\eps}(\alpha)
	\end{equation}
	has measure zero.
	
	\begin{theorem}\label{thm: expoliminf lower bound of 1/2}
		Let $\alpha \in \R\smallsetminus\Q$ with $\mu(\alpha) > 2$. For every $\beta \in (0,1) \smallsetminus B(\alpha)$,
		\[
		\expoliminf(\alpha,\beta) \ \geq \ 1/2.
		\]
	\end{theorem}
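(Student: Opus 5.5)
Since $\beta\notin B(\alpha)$, for every rational $\eps>0$ we have $\beta\notin B^{\eps}(\alpha)$. So there is $M_\eps$ such that $D_m:=\mathrm{dist}_\T(\beta,\Lambda(q_m))>q_m^{-1-\eps}$ for all $m>M_\eps$. The plan is to show that for every large $N$ we have $\mathrm{diam}_E(\mathcal W_N)\gtrsim N^{1/2-C\eps}$, and then let $\eps\downarrow 0$. Fix $\eps$ small and $N$ large, and let $m$ be the index with $q_m^2< N\le q_{m+1}^2$. The argument splits into two regimes according to the position of $N$ relative to $q_m$.

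\textbf{Regime 1: $N\le q_m^{2+c\eps}$.} Here I would apply Lemma~\ref{lem: utility lemma on diameter}(b) directly. It gives $\mathrm{diam}_E(\mathcal W_N)\ge q_m/13\ge N^{1/(2+c\eps)}/13$. This regime is routine, and it is where the bound follows without using the hypothesis on $\beta$.

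\textbf{Regime 2: $q_m^{2+c\eps}<N\le q_{m+1}^2$.} Here I would use Lemma~\ref{lem: refactored lower bound lemma} combined with Lemma~\ref{lem: small diameter means small discordance}. The steps are as follows.
\begin{enumerate}[label=\textup{(\roman*)}]
\item Choose $k$ so that $N\le kq_{m+1}$. I take $k:=\max(k_0,q_m,\lceil N/q_{m+1}\rceil)$, and since $N\le q_{m+1}^2$ this gives $k\le\max(q_m,\sqrt N)+k_0$.
\item Set $\ell=\lfloor N^{1-\eps}/k\rfloor$. Check that $q_m<7\ell$: when $k\approx q_m$ this needs $q_m^2\lesssim N^{1-\eps}$, which holds in this regime once $c$ is chosen suitably, for instance $c=3$. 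When $k\approx N/q_{m+1}$, we get $\ell\approx q_{m+1}N^{-\eps}\ge q_{m+1}^{1-2\eps}$, and this exceeds $q_m$ because $N>q_m^{2+c\eps}$ and $q_{m+1}\ge\sqrt N$.
\item Lemma~\ref{lem: refactored lower bound lemma} then shows that the discordance probability exceeds $3N^{-\eps}$. Suppose instead that $\mathrm{diam}_E(\mathcal W_N)\le\ell$. Then Lemma~\ref{lem: small diameter means small discordance} bounds that probability by $3k\ell/N\le 3N^{-\eps}$, a contradiction.
\item Hence $\mathrm{diam}_E(\mathcal W_N)>\ell\approx N^{1-\eps}/k$. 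When $k\lesssim\sqrt N$ this is at least roughly $N^{1/2-\eps}$.
\end{enumerate}

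The remaining detail is the case $k=q_m>\sqrt N$. This cannot occur in Regime 2, since there $q_m<\sqrt N$. So in Regime 2 we always have $k\le\sqrt N+k_0$, which yields $\mathrm{diam}_E\gtrsim N^{1/2-\eps}$.

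Combining the two regimes gives $\liminf_N\log\mathrm{diam}_E(\mathcal W_N)/\log N\ge 1/2-O(\eps)$, and letting $\eps\downarrow 0$ along the rationals finishes the proof.

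\textbf{Main obstacle.} The step I expect to be hardest is the bookkeeping in Regime 2. One must verify all the hypotheses of Lemma~\ref{lem: refactored lower bound lemma} uniformly in $N$: $q_m^\eps\ge 500$, $k\ge\max(k_0,q_m)$, $N\le kq_{m+1}$, and $q_m<7\ell$. One must also handle the boundary between the regimes, where $q_m^2$ is comparable to $N$ up to $N^{O(\eps)}$ factors. If the condition $q_m<7\ell$ fails near that boundary, I would enlarge Regime 1 by taking $c$ larger. The price is only an $O(\eps)$ loss in the exponent of Regime 1.
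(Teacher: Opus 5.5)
Your argument is correct and is essentially the paper's proof. It uses the same dichotomy: when $q_m$ is comparable to $\sqrt N$ you rely on the concordance bound (here via Lemma~\ref{lem: utility lemma on diameter}(b)), and when $q_m$ is polynomially smaller than $\sqrt N$ you play Lemma~\ref{lem: refactored lower bound lemma} against Lemma~\ref{lem: small diameter means small discordance}. The paper instead fixes $k=\lceil N^{1/2}\rceil$ and splits directly on $q_m<7\ell$ versus $q_m\ge 7\ell$, which removes your regime bookkeeping. The only slip is the constant $c$: in the subcase $k=\lceil N/q_{m+1}\rceil$ you are guaranteed only $\ell\gtrsim N^{1/2-\eps}$, while $q_m$ may be as large as $N^{1/(2+c\eps)}$. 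So you need $c>4$ (e.g.\ $c=5$ for small $\eps$) rather than $c=3$, which is exactly the enlargement you anticipated.
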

	\begin{proof}
		It suffices to show that, for each small $\eps >0$, every $\beta \in (0,1) \smallsetminus B^{\eps}(\alpha)$ satisfies  
		\begin{equation}\label{liminf lower bound : to show}
			\expoliminf(\alpha,\beta) \ \geq \ 1/2 - 2\eps \, .
		\end{equation}
		
		Write $(p_m/q_m)$ for the convergents of $\alpha$. Given $\eps > 0$, fix $m_0 \in \N$ such that
		\begin{equation}\label{liminf lower bound proof : lower bound on q_m eps/2}
			q_{m_0}^{\eps} \ \geq \ 500 \, .
		\end{equation}
		
		Let $\beta \in (0,1) \smallsetminus B^{\eps}(\alpha)$, so there exists $M$ such that $\beta \in (0,1) \smallsetminus B_m^\eps(\alpha)$ for all $m \geq M$.
		
		Let $k_0$ be as in Lemma~\ref{lem: same code iff same interval} and consider
		$N \in \N$ such that
		\begin{equation}\label{liminf lower bound proof : definition of k}
			k \ \coloneq \ \lceil N^{1/2}\rceil \ > \ \max\{k_0,q_{m_0},q_{M}\} \quad \text{ and } \quad \ell \ \coloneq \ \left\lfloor \frac{N^{1-\eps}}{k} \right\rfloor \ \geq \ 1 \,.
		\end{equation}
		
		Let $m \in \N$ be such that $q_{m} < k \leq q_{m+1}$, so $m \geq \max\{m_0,M\}$.
		
		\medskip 
		
		We claim that $\mathrm{diam}_E(\mathcal{W}_N) > \ell$. Suppose to the contrary that $\mathrm{diam}_E(\mathcal{W}_N) \leq \ell$. Lemma~\ref{lem: small diameter means small discordance} would then imply
		\begin{equation}\label{ineq to contradict}
			\mathbb{P}\Bigl( (x,y) \text{ are $(\ell;k)$-discordant} \Bigr) \ \leq \ \frac{3k\ell}{N} \ \leq \ 3N^{-\eps}.
		\end{equation}
		To contradict \eqref{ineq to contradict}, 
		apply Lemma~\ref{lem: refactored lower bound lemma} if $q_m < 7\ell$; otherwise, apply Lemma~\ref{lem: bound on concordance} and observe that $q_m \geq 7\ell$ implies
		\begin{equation*}
			\mathbb{P} \Bigl( (x,y) \text{ are $(\ell;k)$-concordant}\Bigr) \ \leq \ \frac{6\ell}{q_{m}} \ \leq \ \frac{6}{7} \, ,
		\end{equation*}
		hence $3N^{-\eps} < 1/7$ (which holds by \eqref{liminf lower bound proof : lower bound on q_m eps/2} and \eqref{liminf lower bound proof : definition of k})  yields the contradiction.
		
		We conclude $\mathrm{diam}_E(\mathcal{W}_N) > \ell$, hence
		\begin{equation*}
			\frac{\log \mathrm{diam}_E(\mathcal{W}_N)}{\log N} \ \geq \ \frac{\log (\ell+1)}{\log N} \, ,
		\end{equation*}
		which implies \eqref{liminf lower bound : to show} since $\ell + 1\geq N^{1/2-2\eps}$.
	\end{proof}

	\subsection{Lower bound on the limsup exponents when $\mu(\alpha) > 2$}
	
	For the next theorem, we will use the same exceptional set $B(\alpha)$ defined in \eqref{B alpha definition}.
	\begin{theorem}\label{thm: expolimsup lower bound of mu-1/mu}
		Let $\alpha \in \R\smallsetminus\Q$ with $\mu(\alpha) > 2$. For every $\beta \in (0,1)\smallsetminus B(\alpha)$,
		\[
		\expolimsup(\alpha,\beta) \ \geq \ \frac{\mu(\alpha)-1}{\mu(\alpha)} \,.
		\]
	\end{theorem}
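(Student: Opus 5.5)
We follow the same contradiction scheme as in the proof of Theorem~\ref{thm: expoliminf lower bound of 1/2}, but evaluate it along a sparse sequence of lengths $N$ adapted to the large gaps $q_m \ll q_{m+1}$. Write $\rho \coloneq \mu(\alpha)-1>1$, and take $\rho_- \in (1,\rho)$ with $\rho_-$ finite. By \eqref{definition of mu} there are infinitely many $m$ with $q_{m+1} \ge q_m^{\rho_-}$. For such $m$ we set
\[
N \ \coloneq \ \lfloor q_m^{1+\rho_-}\rfloor \ \approx \ q_m q_{m+1}^{\text{(worst case)}}, \qquad k \ \coloneq \ q_m \quad (\text{or } q_m+1),
\]
and aim to show that $\mathrm{diam}_E(\mathcal{W}_N) > \ell \coloneq \lfloor N^{1-\eps}/k\rfloor \approx q_m^{\rho_- -\eps(1+\rho_-)}$. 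This gives $\log \mathrm{diam}_E(\mathcal{W}_N)/\log N \gtrsim \rho_-/(1+\rho_-) - O(\eps)$. Letting $m\to\infty$ along the sequence, then $\eps\downarrow 0$ and $\rho_-\uparrow\rho$, yields the claim. When $\mu(\alpha)=\infty$ we take $\rho_-$ arbitrarily large, which gives the limit $1$.

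Fix a small rational $\eps>0$ and $\beta\notin B(\alpha)$, so $\beta\notin B^\eps_m(\alpha)$ for all $m\ge M$. Then \eqref{bound on D} holds for large $m$. We now check the hypotheses of Lemma~\ref{lem: refactored lower bound lemma}. First, $k=q_m\ge \max(k_0,q_m)$ for large $m$. Second, $N\le q_m^{1+\rho_-}\le q_mq_{m+1}=kq_{m+1}$, so \eqref{assumption: N at most kq} holds. Third, we need $q_m<7\ell$. Since $\ell\ge N^{1-\eps}/q_m - 1 \approx q_m^{\rho_- - \eps(1+\rho_-)}$, this holds once $\eps$ is small enough that $\rho_- -\eps(1+\rho_-)>1$, which is where $\rho_->1$ is used. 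Finally, $q_m^\eps\ge 500$ holds for large $m$. Lemma~\ref{lem: refactored lower bound lemma} then gives $\mathbb{P}((x,y)\text{ are }(\ell;k)\text{-discordant})>3N^{-\eps}$.

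Suppose instead that $\mathrm{diam}_E(\mathcal{W}_N)\le \ell$. Lemma~\ref{lem: small diameter means small discordance} would give the bound $3k\ell/N\le 3N^{-\eps}$ for the same probability, a contradiction. Hence $\mathrm{diam}_E(\mathcal{W}_N)\ge \ell+1 \ge N^{1-\eps}/q_m \ge N^{1-\eps-1/(1+\rho_-)}$, which is the exponent claimed above.

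The main obstacle is the bookkeeping of the hypotheses of Lemma~\ref{lem: refactored lower bound lemma}, namely that $N\le kq_{m+1}$ and $q_m<7\ell$ hold simultaneously. The choice $k=q_m$, $N\approx q_m^{1+\rho_-}$ is exactly what reconciles them. The integer-part rounding and the uniformity of $m_0$ in $\eps$ are routine.
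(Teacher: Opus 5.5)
Your proof is correct and is essentially the paper's argument. You use the same $k=q_m$ and $\ell=\lfloor N^{1-\eps}/k\rfloor$, the same contradiction between Lemma~\ref{lem: small diameter means small discordance} and Lemma~\ref{lem: refactored lower bound lemma}, and the same constraint $\eps<(\rho_--1)/(\rho_-+1)$ to secure $q_m<7\ell$. The only difference is that the paper takes $N=q_mq_{m+1}$ instead of $\lfloor q_m^{1+\rho_-}\rfloor$, which gives $q_m\le N^{1/(1+\rho_-)}$ directly. With your choice $N\le q_m^{1+\rho_-}$, so your last inequality $N^{1-\eps}/q_m\ge N^{1-\eps-1/(1+\rho_-)}$ holds only up to a harmless constant factor.
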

	
	\begin{proof}
		It suffices to show that, for each $\rho \in (1,\mu(\alpha)-1)$ and $\eps \in (0, \frac{\rho - 1}{\rho+1})$,
		\begin{equation}\label{limsup lower bound : to show}
			\expolimsup(\alpha,\beta) \ \geq \ \frac{\rho}{1+\rho} -\eps
		\end{equation}
		for every $\beta \in (0,1) \smallsetminus B(\alpha)$. Fix such $\rho$, $\eps$, and $\beta$, and write $(p_m/q_m)$ for the convergents of $\alpha$.
		By \eqref{definition of mu}, there exists infinite $A \subset \N$ such that $q_{m+1} \geq q_m^{\rho}$ for all $m \in A$. 
		Fix $m_0 \in \N$ with $q_{m_0}^{\eps} \geq 500$.
		There exists $M \in \N$ such that $\beta \in (0,1)\smallsetminus B_m^\eps(\alpha)$ for all $m \geq M$. Let $k_0$ be as in Lemma~\ref{lem: same code iff same interval}.
		Let $m \in A$ be such that
		\begin{equation*}
			k \ \coloneq \ q_m \ > \ \max\{k_0,q_{m_0},q_M\} \,.
		\end{equation*}
		Put $N \coloneq q_mq_{m+1}$ and
		\begin{equation*}
			\ell \ \coloneq \ \left\lfloor \frac{N^{1-\eps}}{ q_m } \right\rfloor \ \geq \ 1 \, .
		\end{equation*}
		We claim that $\mathrm{diam}_E(\mathcal{W}_N) > \ell$. Suppose to the contrary that $\mathrm{diam}_E(\mathcal{W}_N) \leq \ell$. Lemma~\ref{lem: small diameter means small discordance} would then imply 
		\begin{equation}\label{ineq 1 to contradict}
			\mathbb{P}\Bigl( (x,y) \text{ are $(\ell;k)$-discordant} \Bigr) \ \leq \ \frac{3k \ell}{N} \ \leq \ 3N^{-\eps}.
		\end{equation}
		Note that $m \in A$ and $\eps \leq \frac{\rho - 1}{\rho+1}$ imply
		\begin{equation*}
			q_{m+1}^{1-\eps} \ \geq \ q_m^{\rho(1-\eps)} \ \geq \ q_m^{1+\eps},
		\end{equation*}
		hence $q_m \leq N^{1-\eps}/q_m <7\ell$.   Applying Lemma~\ref{lem: refactored lower bound lemma} then contradicts \eqref{ineq 1 to contradict}, proving the claim.
		
		By the claim,
		\begin{equation*}
			\frac{\log \mathrm{diam}_E(\mathcal{W}_N)}{\log N} \ \geq \ \frac{\log (\ell + 1)}{\log N} \, ,
		\end{equation*}
		which implies \eqref{limsup lower bound : to show} since
		\[ \ell + 1 \ \geq \ \frac{N^{1-\eps}}{q_m} \ \geq \ \frac{N^{1-\eps}}{N^{1/(1+\rho)}} \, . \qedhere
		\]
	\end{proof}
	
	\begin{theorem}\label{thm: orbitexpolimsup lower bound mu-1/mu}
		Fix $\alpha \in \R\smallsetminus \Q$ with convergents $(p_m/q_m)$ and $\mu(\alpha) > 2$. There exists a set $G_\alpha^* \subset (0,1)$ with full measure such that $\orbitexpolimsup(\alpha,\beta) \geq \frac{\mu(\alpha)-1}{\mu(\alpha)}$ for all $\beta \in G_\alpha^*$.
	\end{theorem}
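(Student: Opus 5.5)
The plan is to upgrade the diameter argument of Theorem~\ref{thm: expolimsup lower bound of mu-1/mu} to typical pairs, using a Borel--Cantelli argument over $m \in A$ as in Proposition~\ref{prop: orbitexpolimsup at least 1/2}. I will take $G_\alpha^* \coloneq (0,1)\smallsetminus B(\alpha)$, possibly intersected with a further null-complement set if one is needed. It suffices to fix $\rho \in (1,\mu(\alpha)-1)$ and small $\eps>0$, and show that for a.e.\ $(x,y)$ we have $\den {N_m} x y > \ell_m$ for infinitely many $m \in A$. Here $A$ is the infinite set with $q_{m+1}\ge q_m^\rho$, $N_m \coloneq q_mq_{m+1}$, and $\ell_m \coloneq \lfloor N_m^{1-\eps}/q_m\rfloor$. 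Since $\ell_m \ge N_m^{\rho/(1+\rho)-\eps}$ up to constants, this gives $\orbitexpolimsup \ge \rho/(1+\rho)-\eps$.

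The key step is to show that $\mathbb{P}(\den {N_m} x y \le \ell_m)$ is bounded away from $1$, or better, tends to $0$. The diameter argument only yielded the existence of one bad pair, because Lemma~\ref{lem: small diameter means small discordance} assumed $\den N x y\le \ell$ for \emph{all} pairs. Instead, I will use a Markov-type averaging as in Lemma~\ref{lem: utility lemma on diameter}(a), with $k=q_m$. On the event $E_m=\{\den {N_m} x y\le \ell_m\}$, Lemma~\ref{lem: bound on number of l,k-bad indices}(b) gives at most $3k\ell_m \le 3N_m^{1-\eps}$ discordant indices $n<N_m$. Let $T \coloneq \sum_{n<N_m}\mathbf 1_{\text{discordant}}(x+n\alpha,y+n\alpha)$. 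Lemma~\ref{lem: refactored lower bound lemma} gives $\mathbb E T > N_m q_mD_m/24$. Since $T\le N_m$ always,
\[
\mathbb{P}(E_m^c)\,N_m \ \ge \ \mathbb E T - 3N_m^{1-\eps} \ \ge \ N_m\bigl(q_mD_m/24 - 3N_m^{-\eps}\bigr),
\]
so $\mathbb{P}(E_m^c)\ge q_mD_m/48$ once $q_mD_m \ge 144 N_m^{-\eps}$. For $\beta\notin B^{\eps}(\alpha)$ this only gives $\mathbb P(E_m^c)\gtrsim q_m^{-\eps}$, which is too weak for a zero-one argument by itself.

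The main obstacle is therefore getting a probability bounded below uniformly, so that $\limsup_m E_m^c$ has positive measure. Then ergodicity of the $\Z^2$-action, which makes $\limsup \log\den N x y/\log N$ a.e.\ constant, finishes the argument. For this I will strengthen the requirement on $\beta$. Let $G_\alpha^*$ be the set of $\beta$ for which $D_m \coloneq \mathrm{dist}_\T(\beta,\Lambda(q_m)) \ge c/q_m$ holds for infinitely many $m\in A$, for some $c>0$. This is full measure: for a uniform $\beta$, the event $q_mD_m \ge c$ has probability at least $1-2c$ for every $m$. By Fatou's lemma (reverse Fatou), $\limsup_{m\in A}\{q_mD_m\ge c\}$ then has measure $\ge 1-2c$, and letting $c\downarrow 0$ gives full measure.

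For such $\beta$ and such $m$, the computation above yields $\mathbb P(E_m^c)\ge c/48$. Lemma~\ref{lem: refactored lower bound lemma} applies here since its hypotheses ($q_m<7\ell_m$, $N_m\le kq_{m+1}$, $D>q_m^{-1-\eps}$) hold as in the proof of Theorem~\ref{thm: expolimsup lower bound of mu-1/mu}. Fatou again gives $\leb^2(\limsup_m E_m^c)\ge c/48>0$. On that set, $\log \den {N_m}xy/\log N_m \ge \rho/(1+\rho)-2\eps$ infinitely often, so the a.e.\ constant $\orbitexpolimsup(\alpha,\beta)$ is at least this value. Letting $\rho\uparrow\mu(\alpha)-1$ and $\eps\downarrow0$ along countable sequences completes the proof.
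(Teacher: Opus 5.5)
Your proposal is correct and follows the paper's proof essentially step by step. Both apply Lemma~\ref{lem: refactored lower bound lemma} with $k=q_m$, $N_m=q_mq_{m+1}$ and $\ell_m=\lfloor N_m^{1-\eps}/q_m\rfloor$, average the discordance count against the bound $3q_m\ell_m$ from Lemma~\ref{lem: bound on number of l,k-bad indices}(b), and finish with reverse Fatou and a.e.\ constancy of the exponent.

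The only variation is how you get a full-measure set of $\beta$ with $q_mD_m$ bounded below infinitely often. You use the elementary estimate $\leb\{\beta: q_mD_m<c\}\le 2c$ and reverse Fatou in $\beta$, whereas the paper uses Weyl equidistribution of $(q_m\beta)$ along a subsequence. Since your index set $A$ depends on $\rho$, $G_\alpha^*$ should be the intersection of these sets over a countable sequence $\rho_j\uparrow\mu(\alpha)-1$.
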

	\begin{remark}
		We always have $\expolimsup(\alpha,\beta) \geq \orbitexpolimsup(\alpha,\beta)$, so for the purpose of proving the lower bound in \eqref{display for thm 1} it will suffice to quote Theorem~\ref{thm: orbitexpolimsup lower bound mu-1/mu} instead of Theorem~\ref{thm: expolimsup lower bound of mu-1/mu}. However, Theorem~\ref{thm: expolimsup lower bound of mu-1/mu} also implies that the exponent $\expolimsup(\alpha,\beta)$ may be realized along a subsequence of $N$'s that only depends on $\alpha$, rather than $\alpha$ and $\beta$, as is the case in Theorem~\ref{thm: orbitexpolimsup lower bound mu-1/mu}.
	\end{remark}
	
	\begin{proof}[Proof of Theorem~\ref{thm: orbitexpolimsup lower bound mu-1/mu}]
		First assume $\mu(\alpha) < \infty$. Let $\rho \coloneq \mu(\alpha)-1$, and fix $\eps \in (0,\frac{\rho-1}{2\rho+2})$. By \eqref{definition of mu}, there is an infinite set $\mathcal M \subset \N$ such that $q_{m+1} = q_m^{\rho+o(1)}$ as $m \to \infty$ in $\mathcal M$. By a theorem of Weyl (see \cite[Chapter 1, Theorem 4.1]{KNunif}) there is a set of full measure $G_\alpha^* \subset [0,1]$, such that for $\beta \in G_\alpha^*$, the sequence $(q_m \beta)_{m \in \mathcal{M}} $ is equidistributed modulo 1. 
		
		Let $\beta \in G_\alpha^*$. Then there is an infinite set $\mathcal{M}_\beta \subset \mathcal{M}$ such that $q_m\beta \bmod 1 \in (1/3,2/3)$ for all $m \in \mathcal{M}_\beta$.  
		Let $k_0$ be as in Lemma~\ref{lem: same code iff same interval}. Let $m \in \mathcal{M}_\beta$ be large enough that, for $N_m \coloneq q_mq_{m+1}$ and $\ell_m \coloneq \lfloor N_m^{1-\eps}/q_m\rfloor$, we have $q_m \geq \max\{ k_0, 500^{1/\eps}\}$,
		\begin{equation}\label{bound on D_m in application}
			D_m \ \coloneq \ \mathrm{dist}_\T(\beta, \Lambda(q_m)) \ > \ \frac{1}{4q_m} \ > \ q_m^{-1-\eps} \, ,
		\end{equation}
		and $q_m < 7\ell_m$.
		
		Then, taking $k  \coloneq q_m$, by Lemma~\ref{lem: refactored lower bound lemma}, for independent and uniform $x,y \in \T$,
		\begin{equation}\label{consequence of lowerboundlemma} 
			\mathbb{P}\Bigl( (x,y) \text{ are $(\ell_m;q_m)$-discordant} \Bigr) \ > \ \frac{q_mD_m}{24} \ > \ \frac{1}{100}  \, ,
		\end{equation}
		the latter inequality holding by \eqref{bound on D_m in application}. Let $A_m \coloneq \{ (x,y) \in \T^2 : \den {N_m} x y \leq \ell_m\}$ and
		\[ f_m(x,y) \ \coloneq \ \sum_{n=0}^{N_m-1} \mathbf{1}\{ (x+n\alpha,y+n\alpha) \text{ are } (\ell_m;q_m)\text{-discordant}\} \ \leq \ N_m \, . 
		\]
		By \eqref{consequence of lowerboundlemma}, $\int f_m \, \mathrm{d} \leb^2 \geq N_m/100$, where $\leb^2$ is Lebesgue measure on $\T^2$.
		
		By Lemma~\ref{lem: bound on number of l,k-bad indices}(b), $1_{A_m} \cdot f_m \leq 3q_m\ell_m \leq 3N_m^{1-\eps}$.  Thus,
		\begin{equation*}
			N_m \leb^2(A_m^c) \ \geq \ \int 1_{A_m^c} \cdot f_m \, \mathrm{d}\leb^2 \ \geq \ \frac{N_m}{100} - 3N_m^{1-\eps} \ \geq \ \frac{N_m}{200} \, ,
		\end{equation*}
		hence
		\begin{equation*}
			\mathbb{P} ( \den {N_m} x y >  \ell_m) = \leb^2(A_m^c)\ \geq \ \frac{1}{200} \, .
		\end{equation*}
		Therefore, by Fatou's lemma,
		\begin{equation*}
			\mathbb{P}\Bigl ( \sum_{m \in \mathcal{M}_\beta} \mathbf {1}_{A_m^c}(x,y) = \infty \Bigr) \ \geq \ \frac{1}{200} \, . 
		\end{equation*}
		Since $\ell_m > N_m^{1-2\eps}/q_m = q_{m+1}N_m^{-2\eps} = N_m^{\frac{\rho}{1+\rho} - 2\eps + o(1)} $, we conclude
		\begin{equation*}
			\mathbb{P} \Bigl( \limsup_{N_m\to\infty} \frac{\log \den {N_m} x y}{\log N_m} \geq \frac{\rho}{\rho+1} - 3\eps \Bigr) \ \geq \ \frac{1}{200} \, .
		\end{equation*}
		Taking $\eps \downarrow 0$, we see that $\orbitexpolimsup(\alpha,\beta) \geq \frac{\rho}{\rho+1}$. When $\mu(\alpha) = \infty$, apply a similar argument after taking $\mathcal{M} \subset \N$ such that $\displaystyle \lim_{m \to \infty, m \in \mathcal{M}} \frac{\log q_{m+1}}{\log q_m}  = \infty$.
	\end{proof}
	
	\section{Lower bound on liminf exponent for typical orbits}\label{sec: orbitexpoliminf lower bound for most beta}
	Where indicated, we will use the following notation convention and definitions:
	
	\begin{notation}\label{notation block for clean and shift}
		Let $w,\widetilde w\in \mathcal{A}^N$, and write $\Delta \coloneq \editfull(w,\widetilde w)$. Choose a longest common subsequence, represented by strictly increasing maps
		\[
		\pi,\tau: [1,N-\Delta] \cap \Z \to [0,N) \cap \Z
		\]
		with $w_{\pi(i)} = \widetilde w_{\tau(i)}$. Put $s(i):=\tau(i)-\pi(i)$. Call an interval $I \subset [0,N) \cap \Z$ \textbf{clean} if there is an interval $I^\dagger \subset [1,N-\Delta] \cap\Z$ such that $\pi(I^\dagger)=I$ and $\tau(I^\dagger)$ is also an interval, and define the \textbf{shift} of $I$ to be the common value of $s$ on $I^\dagger$, so $\tau(I^\dagger) = I + s(I^\dagger)$.
	\end{notation}

	\begin{lemma}\label{lem: lcs basics}
		Assume the setup of Notation~\ref{notation block for clean and shift}. Then
		\begin{equation}\label{eqn: lcs basics}
			|s(i)| \ \le \  \Delta\quad\text{for all }i \, ,
			\qquad
			\sum_{i=1}^{N-\Delta-1}|s(i+1)-s(i)| \ \le \ 2\Delta \, .
		\end{equation}
		Moreover, for any family $\mathcal I$ of pairwise disjoint intervals contained in $[0,N)\cap\Z$, at most $2\Delta$ intervals in $\mathcal I$ are not clean.
	\end{lemma}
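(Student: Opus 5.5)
The plan is to read all three claims directly off the monotonicity of $\pi$ and $\tau$, together with the fact that each of them omits exactly $\Delta$ points of $[0,N)\cap\Z$.

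\textbf{The bound on $|s(i)|$.} Since $\pi$ is strictly increasing from $[1,N-\Delta]$ into $[0,N)$, we have $i-1\le \pi(i)$. The $N-\Delta-i$ values $\pi(i+1),\dots,\pi(N-\Delta)$ exceed $\pi(i)$ and lie below $N$, so $\pi(i)\le i-1+\Delta$. The same two inequalities hold for $\tau$. Hence $s(i)=\tau(i)-\pi(i)\in[-\Delta,\Delta]$.

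\textbf{The total variation of $s$.} Write $g_\pi(i):=\pi(i+1)-\pi(i)-1\ge 0$, and define $g_\tau$ analogously. Then
\[
s(i+1)-s(i) \;=\; g_\tau(i)-g_\pi(i), \quad\text{so}\quad |s(i+1)-s(i)|\;\le\; g_\tau(i)+g_\pi(i).
\]
Summing over $i$ telescopes:
\[
\sum_{i=1}^{N-\Delta-1} g_\pi(i) \;=\; \pi(N-\Delta)-\pi(1)-(N-\Delta-1) \;\le\; \Delta,
\]
using $\pi(1)\ge 0$ and $\pi(N-\Delta)\le N-1$. The same bound holds for $g_\tau$, which gives the total $2\Delta$.

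\textbf{Clean intervals.} This is the main step. Let $I=[u,v]$ be an interval in $\mathcal I$ that is not clean. Then one of the following holds.
\begin{enumerate}[label=(\roman*)]
\item Some point of $I$ is not in the image of $\pi$. We charge $I$ to that point. There are exactly $\Delta$ such points in total, and the intervals are disjoint, so at most $\Delta$ intervals are charged this way.
\item $I\subset\pi([1,N-\Delta])$. Then $I^\dagger:=\pi^{-1}(I)$ is an interval, since $\pi$ is increasing and $I$ consists of consecutive integers. Being not clean then means that $\tau(I^\dagger)$ has a gap, i.e.\ $g_\tau(i)\ge1$ for some $i$ with $i,i+1\in I^\dagger$. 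We charge $I$ to such an $i$.
\end{enumerate}
The sets $I^\dagger$ in case (ii) are disjoint. Hence the number of intervals charged in case (ii) is at most $\#\{i: g_\tau(i)\ge 1\}\le \sum_i g_\tau(i)\le \Delta$. The two cases together give at most $2\Delta$ intervals that are not clean.

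The only subtlety is making sure that a single gap of $\tau$ is not counted for two different intervals. This is handled by requiring both $i$ and $i+1$ to lie inside the same $I^\dagger$, combined with the disjointness of the $I^\dagger$.
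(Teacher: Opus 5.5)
Your proof is correct and follows the paper's argument: the same gap quantities $\pi(i+1)-\pi(i)-1$ and $\tau(i+1)-\tau(i)-1$ give the variation bound, and the same two-case charging scheme handles the non-clean intervals. The only cosmetic difference is that the paper charges a case-(ii) interval to the omitted coordinate $\tau(i)+1$, which lies outside the image of $\tau$, rather than to the gap index $i$; this is equivalent, since each gap index accounts for at least one omitted coordinate.
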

	
	\begin{proof}
		The first inequality in \eqref{eqn: lcs basics} follows because at most $\Delta$ coordinates are deleted from either $w$ or $\tilde{w}$. If $a_i \coloneq \pi(i+1)-\pi(i)-1$ and $b_i \coloneq \tau(i+1)-\tau(i)-1$, then $s(i+1)-s(i)=b_i-a_i$, while $\sum_i a_i \le \Delta$ and $\sum_i b_i \le \Delta$. This proves the variation bound in \eqref{eqn: lcs basics}. Finally, put
		\[
		D_\pi \ := \ ([0,N)\cap\Z) \smallsetminus \operatorname{im}\pi \, ,
		\qquad
		D_\tau \ := \ ([0,N)\cap\Z) \smallsetminus \operatorname{im}\tau \, .
		\]
		We show that each non-clean interval $I\in\mathcal I$ is mapped to a distinct element of
		\[
		D_\pi\times\{1\} \ \cup \ D_\tau\times\{2\} \, .
		\]
		If $I \cap D_\pi \neq \varnothing$, map $I$ to $(\min(I\cap D_\pi),1)$.  These images are distinct because the intervals in $\mathcal I$ are pairwise disjoint. Otherwise every coordinate of $I$ belongs to $\operatorname{im}\pi$. Then $I^\dagger:=\pi^{-1}(I)$ is an interval and $\pi(I^\dagger)=I$. Since $I$ is not clean, $\tau(I^\dagger)$ is not an interval. Thus there are consecutive $i,i+1\in I^\dagger$ for which $\tau(i+1)\ge\tau(i)+2$. Choose the least such $i$, and map $I$ to $(\tau(i)+1,2)$. The chosen coordinate lies in $D_\tau$. If $I$ and $I'$ are disjoint intervals of this second type, monotonicity of $\pi$ makes $I^\dagger$ and $(I')^\dagger$ disjoint and ordered, and monotonicity of $\tau$ then makes the corresponding chosen coordinates distinct. The second coordinate also separates the two types. We have therefore obtained the asserted injection, so the number of non-clean intervals is at most $|D_\pi|+|D_\tau|=2\Delta$.
	\end{proof}
	
	The next lemma estimates how often an arithmetic progression enters a union of arcs.
	
	\begin{lemma}\label{lem: finite circle progression count}
		Let $M,k \in \N$, let $\delta \in (0,1)$ satisfy $(M-1)\delta < 1$, and let $\sigma \in \{-1,1\}$. Given $v_0 \in \T$, put $v_i \coloneq v_0+\sigma i \delta \bmod 1$ for $i \in [0,M) \cap \Z$. Let $I$ be the arc traversed from $v_0$ to $v_{M-1}$ in the direction $\sigma$, and put $c \coloneq \leb(\T \smallsetminus I) = 1-(M-1)\delta$. Let $A \subset \T$ be a union of at most $k$ arcs. Then
		\begin{equation*}
			\#\{i \in [0,M) \cap \Z : v_i \in A \} \ \geq \ \frac{\leb(A)}{\delta}-\bigl(\frac{c}{\delta} + k + 1 \bigr) \, .
		\end{equation*}
	\end{lemma}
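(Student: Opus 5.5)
The plan is to reduce to a statement about a single arc and then sum over the at most $k$ arcs making up $A$. The points $v_0,\dots,v_{M-1}$ are distinct, because $(M-1)\delta<1$. They lie on the arc $I$ in order, with consecutive points at distance $\delta$ along $I$.

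First I would handle the part of $A$ lying inside $I$. The set $A\cap I$ is a union of at most $k+1$ arcs: intersecting $I$ with each of the $k$ arcs of $A$ can split one arc of $A$ into two pieces when that arc wraps around and covers both ends of $I$. Since $\leb(\T\smallsetminus I)=c$, we have
\[
\leb(A\cap I)\ \ge\ \leb(A)-c\,.
\]

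Next I would count grid points in a single subarc $J\subset I$. Parametrize $I$ by $[0,(M-1)\delta]$, starting at $v_0$ and moving in direction $\sigma$, so that $v_i$ corresponds to $i\delta$. A closed or open subinterval $J$ of $[0,(M-1)\delta]$ of length $L$ contains at least $L/\delta-1$ points of the lattice $\delta\Z\cap[0,(M-1)\delta]$. The reason is that the points $i\delta$ lying in $J$ are consecutive, and their $\delta$-neighbourhoods $[i\delta,(i+1)\delta)$, together with one extra length-$\delta$ piece, cover $J$.

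Summing over the at most $k+1$ pieces of $A\cap I$ then gives
\[
\#\{i: v_i\in A\}\ \ge\ \frac{\leb(A\cap I)}{\delta}-(k+1)\ \ge\ \frac{\leb(A)}{\delta}-\frac{c}{\delta}-k-1\,,
\]
which is the claimed bound. The pieces are disjoint, since the arcs of $A$ may be assumed disjoint after merging overlapping ones, which only decreases their number. So the points they contain are distinct and the counts add.

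The only delicate point is the bookkeeping for how many pieces $A\cap I$ has. An arc of $A$ that contains $\T\smallsetminus I$ meets $I$ in two pieces, and at most one such arc exists among disjoint arcs unless $I$ is the whole circle minus a point; in that degenerate case the count is still at most $k+1$. This is exactly what produces the $+1$ in $k+1$. Endpoint conventions (open versus closed arcs) cost nothing, because the single-arc estimate $L/\delta-1$ holds for either convention.
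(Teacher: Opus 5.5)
Your proposal is correct and is essentially the paper's proof. Both merge overlapping arcs so that $A$ has at most $k$ components, observe that $A\cap I$ then has at most $k+1$ pieces, note that each piece of length $\ell_j$ contains at least $\ell_j/\delta-1$ of the points $v_i$, and finish with $\leb(A\cap I)\ge\leb(A)-c$. Your ``degenerate case'' caveat is vacuous: $c=1-(M-1)\delta>0$, so $\T\smallsetminus I$ is a nonempty open arc, and at most one of the disjoint arcs of $A$ can contain it.
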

	
	\begin{proof}
		After combining overlapping arcs, $A$ has at most $k$ connected components,
		so $A \cap I$ is a union of at most $k+1$ arcs. If their lengths are $\ell_1,\ldots,\ell_s$, where $s \leq k+1$, then each contains at least $\ell_j/\delta-1$ of the points $v_i$, so
		\begin{equation*}
			\#\{ i \in [0,M)\cap\Z : v_i\in A\} \ \geq  \ \sum_{j=1}^s\left(\frac{\ell_j}{\delta}-1\right) \ \geq \ \frac{\leb(A\cap I)}{\delta}-(k+1) \, ,
		\end{equation*}
		and the result follows since $\leb(A\cap I)\geq\leb(A)-c$.
	\end{proof}
	
	We now turn to the key technical lemma in this section. The key idea is to compare the number of ones in blocks of length $q_m$. If neither $q_m(x-y)$ nor $q_m\beta$ is close to an integer, then the associated phase progression forces too many disjoint $q_m$-blocks to be non-clean.

	\begin{lemma}\label{lem: blocks} Let $\alpha \in \R \smallsetminus \Q$ with convergents $(p_m/q_m)$, and let $\beta \in (0,1)$. Put $q \coloneq q_m$ and $Q \coloneq q_{m+1}$ for some fixed $m \in \N$. Let $L,N \in \Z$ with $L \geq 0$ and $N \geq Q$. For all $x,y \in \T$, if $\den N x y\leq L$, then
		\begin{equation*}
			\min\{\|q(x-y)\|,\|q\beta\|\} \ 
			\leq \ 14\left(\frac{Lq}{N}+\frac LQ+\frac qQ\right).
		\end{equation*}
	\end{lemma}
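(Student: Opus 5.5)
The plan is to compare the number of ones in disjoint blocks of length $q$. We reduce everything to a phase variable modulo $1/q$ and count how many blocks must be non-clean.

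\textbf{Step 1: block counts as a function of phase.} Write $\eta \coloneq q\alpha - p_m$, so $|\eta| < 1/Q$ by \eqref{helpful inequality on q_n alpha}. Write $b \coloneq \lfloor q\beta\rfloor$ and $r \coloneq q\beta \bmod 1$. For $z \in \T$ let $u(z) \coloneq qz \bmod 1$. As in the proof of Lemma~\ref{lem: useful upper bound for orbitexpoliminf}, the points $z + j\alpha$ for $0 \le j < q$ are the grid $\frac1q\Z$ shifted by $u(z)/q$, up to errors $j|\eta|/q < 1/Q$. Hence the number of ones in $\xi_{[0,q)}(z)$ equals $b+1$ if $u(z) \in (0,r)$ and $b$ if $u(z) \in (r,1)$. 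This holds provided $u(z)$ stays at distance at least $q/Q$ from $\{0,r\}$. The phase of the block starting at $n$ is $u(z+n\alpha) = u(z) + n\eta \bmod 1$.

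\textbf{Step 2: clean blocks force matching phases.} Take the disjoint blocks $I_j \coloneq [jq,(j+1)q)$ for $0 \le j < M$, with $M \le N/q - 1$ chosen below. Apply Notation~\ref{notation block for clean and shift} to $w = \xi_{[0,N)}(x)$ and $\tilde w = \xi_{[0,N)}(y)$. If $I_j$ is clean with shift $s$, then $|s| \le L$ by Lemma~\ref{lem: lcs basics}, and the counts of ones in $\xi_{I_j}(x)$ and $\xi_{I_j+s}(y)$ agree. The phase of $y$ at $jq+s$ equals the phase of $x$ at $jq$ plus $\theta - s\eta$, where $\theta \coloneq q(y-x) \bmod 1$, and $|s\eta| \le L/Q$.

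Let $A \subset \T$ be the set of phases $u$ such that exactly one of $u$ and $u+\theta$ lies in $(0,r)$, with each of its boundary arcs trimmed by a margin $\kappa \coloneq C(L/Q + q/Q)$. Then every block whose $x$-phase $v_j \coloneq u(x) + jq\eta$ lies in $A$ is non-clean. By Lemma~\ref{lem: lcs basics}, at most $2L$ such blocks exist.

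The set $A$ is (up to the margins) the symmetric difference of $(0,r)$ and $(0,r)-\theta$. It is a union of at most $4$ arcs, of total measure at least $2\min\{\|\theta\|, \|q\beta\|\} - 8\kappa$, and each of its two "halves" has length of order $\min\{\|\theta\|,r,1-r\} - 2\kappa$.

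\textbf{Step 3: count visits of the phase progression.} Apply Lemma~\ref{lem: finite circle progression count} with $\delta \coloneq q|\eta| < q/Q$ and $\sigma \coloneq \operatorname{sign}\eta$. This gives $2L \ge \leb(A)/\delta - c/\delta - 5$. If $N/q \gtrsim 1/\delta$, choose $M$ with $(M-1)\delta$ just below $1$. Then $c \le \delta$, and we get
\[
\min\{\|\theta\|,\|q\beta\|\} \ \lesssim \ L\delta + \kappa + \delta \ \lesssim \ \frac{Lq}{Q} + \frac{L}{Q} + \frac{q}{Q}.
\]
Here the term $Lq/Q$ is at most $Lq/N$ once we also use $N \ge Q$ appropriately, or the $Lq/N$ term absorbs it in the other regime. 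If instead $N/q < 1/\delta$, take $M \approx N/q$. The progression then covers an arc of length about $N|\eta|$, which is only guaranteed to be at least about $N/(2Q) \ge 1/2$ by \eqref{helpful inequality on q_n alpha}.

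\textbf{Main obstacle.} The main obstacle is the partial-coverage case just described, where $c/\delta$ is not small. I would first try to exploit the structure of $A$: it contains two arcs of comparable length, namely $(-\theta,0)$ and $(r-\theta,r)$ for $\theta$ small, centred at antipodal-type positions $0$ and $r$. An arc of length about $N|\eta|$ then meets one of them substantially, unless that arc is very short. In the short case, I would switch to blocks spaced by a different step, using starting points $jq'$ so that the phase step is larger; in that regime the bound $Lq/N$ should come out. Alternatively, I would apply the count separately with $\theta$ and with $r$ as the controlling gap. The constant $14$ would be tracked at the end from the margins $\kappa$, the $+5$ in Lemma~\ref{lem: finite circle progression count}, and the factor $2$ in the non-clean count.
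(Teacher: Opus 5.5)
\textbf{There is a genuine gap in Step 3.} Your count only ever uses one pass of the phase progression around the circle, and one pass cannot produce the term $Lq/N$.

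With $(M-1)\delta<1$ and $\delta=q|\eta|\approx q/Q$, you have only about $Q/q$ blocks. The inequality $2L\ge \leb(A)/\delta-O(1)$ then gives only
\[
\min\{\|q(x-y)\|,\|q\beta\|\}\lesssim \frac{Lq}{Q}+\frac{L}{Q}+\frac{q}{Q}.
\]
Your remark that $Lq/Q$ ``is at most $Lq/N$ once we use $N\ge Q$'' is backwards: $N\ge Q$ gives $Lq/N\le Lq/Q$. So you prove a strictly weaker statement when $N\gg Q$. That is exactly the regime where the lemma is used. In Case~2 of the proof of Theorem~\ref{thm: orbitexpoliminf lower bound}, $q<N^{\theta-\eps/2}$, $Q>N^\theta$ and $L\approx N^{\theta-\eps}$. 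There $Lq/Q$ can be as large as $N^{\theta-3\eps/2}$, while $Lq/N$ is tiny.

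\textbf{The missing step is to use all of $[0,N)$ by restarting the progression.}
\begin{enumerate}
\item Split $[0,N)$ into $B=\lfloor N/Q\rfloor$ disjoint windows $[hQ,(h+1)Q)$. In each window take the $R_0=\lfloor Q/q\rfloor$ consecutive $q$-blocks starting at $hQ+iq$, for $0\le i<R_0$.
\item Within a window the phases $u(x)+(hQ+iq)\eta$ form a progression of step $\delta$ with $(R_0-1)\delta<1$. So Lemma~\ref{lem: finite circle progression count} applies window by window and gives at least $R_0\leb(A)-10$ blocks whose phase lies in $A$, hence non-clean.
\item All $BR_0$ blocks are pairwise disjoint, so Lemma~\ref{lem: lcs basics} bounds the total number of non-clean ones by $2L$. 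Thus $2L\ge B(R_0\leb(A)-10)$.
\item Since $BR_0\ge N/(4q)$, this gives $\leb(A)\le 8Lq/N+20q/Q$, which supplies the $Lq/N$ term.
\end{enumerate}

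\textbf{Your ``main obstacle'' is not actually an obstacle.} Use the sharper bound $|\eta|=\|q\alpha\|>1/(q+Q)$ from \eqref{helpful inequality on q_n alpha} instead of $1/(2Q)$. Then $(R_0-1)\delta>(Q-2q)/(Q+q)$, so a single window of length $Q$ already covers all of $\T$ except an arc of length less than $3\delta$. Since $N\ge Q$, there is always at least one full window, and no partial-coverage case arises. The alternatives you sketch (different block spacings, the antipodal arcs) are not needed.

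Steps 1 and 2, including the margin $\kappa$ and the measure bound on $A$, match the paper's Claims~1 and~2 and are fine.
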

	
	\begin{proof}
		Put $p \coloneq p_m$ and $\eta \coloneq q\alpha-p$, and define $ b \coloneq \lfloor q\beta\rfloor$ and $r \coloneq q\beta - b \in [0,1)$. Let $J = [0,r)$, which is empty if $r = 0$. For each $z \in \T$ and $n \in \Z$, define
		\[
		F_n(z) \ \coloneq \ \sum_{j=0}^{q-1}\mathbf{1}_{[0,\beta)}
		(z+(n+j)\alpha \bmod 1) \, ,
		\qquad
		u_n(z) \ \coloneq \ qz+n\eta \bmod 1 \, .
		\]
		
		\noindent \textbf{Claim 1.} For each $z \in \T$ and $n \in \Z$, we have
		\begin{equation*}
			F_n(z) \ = \ b + \mathbf{1}_J(u_n(z)) \quad \text{ if } \quad \mathrm{dist}_\T(u_n(z),\{0,r\}) \ > \ \frac qQ \, .
		\end{equation*}
		Indeed, let us first compute $|G \cap [0,\beta)|$, where
		\begin{equation*}
			G \ \coloneq \ \{ z + n\alpha + \frac{jp}{q} \bmod 1 : j \in [0,q) \cap \Z \} \ = \ \left\{\frac{u_n(z)+k}{q} : 0 \leq k < q \right\} \, .
		\end{equation*}
		Now, $\frac{u_n(z) + k}{q} < \beta$ holds if and only if either $k \leq b - 1$ or both $k = b$ and $u_n(z) \in J$, so
		\begin{equation}\label{computation of grid size}
			|G \cap [0,\beta)| \ = \ b + \mathbf{1}_J(u_n(z)) \, .
		\end{equation} 
		
		Fix $j \in [0,q) \cap \Z$ and observe that $z+(n+j)\alpha - (z+n\alpha + jp/q) = j\eta/q$ and $\|j\eta/q\| < 1/Q$ by \eqref{helpful inequality on q_n alpha}. Thus, to each point of the form $z + (n+j)\alpha \bmod 1$ with $j \in [0,q) \cap \Z$, there corresponds a point of $G$ that is $1/Q$-close to it in $\|\cdot\|$-distance, namely $z + n\alpha + jp/q \bmod 1$. Now, $\mathrm{dist}_{\T}(G,\{0\}) = \|u_n(z)\|/q$ and $\mathrm{dist}_{\T}(G,\{\beta\}) = \|u_n(z)-r\|/q$. In view of \eqref{computation of grid size}, the claim follows. \medskip
		
		Fix $x,y \in \T$, assume $\Delta \coloneq \den N x y \leq L$, and choose a longest common subsequence between $\xi_{[0,N)}(x)$ and $\xi_{[0,N)}(y)$ as in Notation~\ref{notation block for clean and shift}. If $[n,n+q)$ is clean for some $n \in \Z$ and has shift $s$, then $|s| \leq \Delta \leq L$ and $\xi_{[n,n+q)}(x) = \xi_{[n+s,n+s+q)}(y)$, so one has the identities
		\begin{equation}\label{a phase shift}
			F_n(x) \ = \ F_{n+s}(y) \quad \text{ and } \quad u_{n+s}(y) \ \equiv \ u_n(x)+t+s\eta \bmod 1 \, ,
		\end{equation}
		where $t \coloneq q(y-x) \bmod 1$. Set $a \coloneq \min\{\|t\|,\|r\|\} = \min\{\|q(x-y)\|,\|q\beta\|\}$.	Then, writing $\triangle$ for the symmetric difference modulo 1, we have $\leb(J \mathbin{\triangle} (J-t)) =  2a$. Define the set
		\[
		A \ \coloneq \ \left\{u\in J \mathbin{\triangle} (J-t) : \mathrm{dist}_\T(u,\{0,r,-t,r-t \}) > \frac{q+L}{Q} \right\} \, ,
		\]
		which has measure
		\begin{equation}\label{measure of A}
			\leb(A) \ \geq \ 2a - 8\frac{q+L}{Q} \, .
		\end{equation}
		Our eventual goal is to give an upper bound for $a$, which will prove the lemma.
		
		\noindent \textbf{Claim 2.} For all $n \in [0,N-q] \cap \Z$, if $u_n(x) \in A$, then $[n,n+q)$ is not clean. 
		
		Indeed, if $[n,n+q)$ were clean with shift $s$, then \eqref{helpful inequality on q_n alpha} and $|s| \leq L$ would imply $|s\eta| < L/Q$. With $\mathrm{dist}_\T(u_n(x)+t,\{0,r\}) > (q+L)/Q$ and \eqref{a phase shift}, this yields
		\begin{equation} \label{a consequence} 
			\mathbf{1}_J(u_n(x) + t \bmod 1) \ = \ \mathbf{1}_J(u_{n+s}(y)) \quad \text{ and } \quad 
			\mathrm{dist}_\T(u_{n+s}(y),\{0,r\}) \ > \ q/Q \, .
		\end{equation}
		Since also $\mathrm{dist}_\T(u_n(x),\{0,r\}) > q/Q$, it would follow by Claim 1 and \eqref{a phase shift} that
		\begin{equation*}
			\mathbf{1}_J(u_n(x)) \ = \ \mathbf{1}_J(u_{n+s}(y)) \ = \ \mathbf{1}_J(u_n(x) + t \bmod 1) \, ,
		\end{equation*}
		the second equality holding by \eqref{a consequence}, but this contradicts $u_n(x) \in J \mathbin{\triangle} (J-t)$. \medskip
		
		Now, put $B \coloneq \lfloor\frac NQ \rfloor$ and $R_0 \coloneq \lfloor\frac Qq \rfloor$.
		
		\noindent \textbf{Claim 3.} For each $h \in [0,B)\cap \Z$,
		\begin{equation*}
			\#\{ i \in [0,R_0) \cap \Z : u_{hQ+iq}(x) \in A  \} \ \geq \ R_0 \leb(A) - 10 \, .
		\end{equation*}
		For each $i \in [0,R_0) \cap \Z$, we have
		\begin{equation} \label{round phase relation}
			u_{hQ+iq}(x) \ \equiv \ u_{hQ}(x) + iq\eta \bmod 1 \, .
		\end{equation}
		Let $d \coloneq q|\eta|$, which by \eqref{helpful inequality on q_n alpha} satisfies $\frac {q}{Q+q} < d < \frac qQ$, hence
		\begin{equation}\label{how long is the block arc}
			(R_0-1)d \ > \ \left(\frac Qq - 2\right) \frac{q}{Q+q}
			\ = \ \frac{Q-2q}{Q+q} \, .
		\end{equation}
		By \eqref{round phase relation}, there is an arc $I \subset \T$ of length $(R_0-1)d$ from $u_{hQ}(x)$ to $u_{hQ + (R_0-1)q}(x)$ containing all $u_{hQ + iq}(x)$ for $i \in [0,R_0) \cap \Z$. By \eqref{how long is the block arc}, the arc $\T \smallsetminus I$ has length less than $3q/(Q+q) < 3d$. Now apply Lemma~\ref{lem: finite circle progression count} with $M = R_0$ and $\delta = d$ to deduce the claim. \medskip
		
		It follows from Claim 3 that the set $R_\textrm{bad}$ defined by
		\begin{equation*}
			\{ n \in [0,N) \cap \Z : u_n(x) \in A \text{ and } n = hQ + iq \text{ for some } h \in [0,B) \cap \Z \text{ and some } i \in [0,R_0) \cap \Z \}
		\end{equation*}
		has cardinality at least
		\begin{equation} \label{how many bad rounds?}
			B( R_0 \leb(A) - 10) \ \geq \ B \bigl(2R_0a - 8R_0\frac{q+L}{Q} - 10\bigr)
		\end{equation} 
		by \eqref{measure of A}. For each $n \in R_\textrm{bad}$, the interval $[n,n+q)$ is not clean by Claim 2, and the collection $\{ [n,n+q) : n \in R_\textrm{bad} \}$ is pairwise disjoint and therefore, by Lemma~\ref{lem: lcs basics}, has cardinality at most $2\Delta \leq 2L$. Combining this fact with \eqref{how many bad rounds?}, we deduce
		\[
		\min\{\|q(x-y)\|,\|q\beta\|\} \ = \ a \ \leq \ \frac{L}{BR_0} + 4\frac{q+L}{Q} + \frac{10}{2R_0} \, .
		\]
		Because $N \geq Q$ and $Q \geq q$, we have
		$B \geq N/(2Q)$ and $R_0 \geq Q/(2q)$. The result follows.
	\end{proof}
	
	Let $\alpha \in \R\smallsetminus\Q$ with convergents $(p_m/q_m)$. Since multiplication by $q_m$ preserves Haar measure, for every $\eta > 0$,
	\[
	\leb\bigl( \{\beta \in (0,1) :\|q_m\beta\| \leq q_m^{-\eta}\} \bigr) \ 
	\leq \ 2q_m^{-\eta} \, .
	\]
	Since $\sum_{m\in\N} 2q_m^{-\eta} < \infty$, the Borel--Cantelli lemma implies the set
	\[ B_{0,\eta}(\alpha) \ \coloneq \ \bigcap_{M \in \N} \bigcup_{m > M} \{\beta \in (0,1) : \|q_m\beta\| \leq q_m^{-\eta}\} \ = \ \limsup_{m\to\infty} \{\beta \in (0,1) : \|q_m\beta\| \leq q_m^{-\eta}\} \]
	has measure zero. We now define the full-measure set
	\begin{equation*}
		\goodsettypical \ \coloneq \ \bigcap_{\eta > 0, \eta \in \Q} (0,1) \smallsetminus B_{0,\eta}(\alpha) \, .
	\end{equation*}
	
	We now prove the needed lower bound on the liminf exponent for typical orbits.
	
	\begin{theorem}\label{thm: orbitexpoliminf lower bound}
		Let $\alpha \in \R \smallsetminus \Q$ satisfy $\mu(\alpha) > 2$. For all $\beta \in \goodsettypical$,
		\[
		\orbitexpoliminf(\alpha,\beta)
		\ \geq \ \min\left\{\frac 12, \frac{1}{\mu(\alpha)-1} \right\}
		\, ,
		\]
		taking $\frac{1}{\mu(\alpha)-1} = 0$ if $\mu(\alpha) = \infty$.
	\end{theorem}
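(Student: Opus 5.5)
Let $\rho \coloneq \mu(\alpha)-1$ and $\gamma \coloneq \min\{1/2,1/\rho\}$. Fix a small rational $\eps>0$ and $\beta \in \goodsettypical$, so that $\|q_m\beta\| > q_m^{-\eps}$ for all large $m$. The plan is a Borel--Cantelli argument over dyadic scales $N \in [4^n,4^{n+1})$, as in Theorem~\ref{thm: orbitexpoliminf lower bound when mu=2}. Monotonicity of $N\mapsto \den N x y$ reduces matters to the events $A_n \coloneq \{\den{4^n} x y \le L_n\}$ with $L_n \coloneq 4^{n(\gamma-C\eps)}$. It then suffices to show $\sum_n \mathbb{P}(A_n)<\infty$ for independent uniform $x,y$. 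For each $n$, write $N\coloneq 4^n$ and choose $m$ with $q_m \le N^{1/2} < q_{m+1}$. Two regimes need to be treated.

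\emph{Regime 1: $q_m \ge N^{\gamma-\eps/2}$.} Here I would use Lemma~\ref{lem: utility lemma on diameter}(a) with $k=q_m+1$ and $\ell=L_n$. The condition $6k\ell \le N$ holds because $q_m\le N^{1/2}$ and $\gamma\le 1/2$. The lemma then gives $\mathbb{P}(A_n)\le 12L_n/q_m \le 12 N^{-\eps/2}$, which is summable along $N=4^n$.

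\emph{Regime 2: $q_m < N^{\gamma-\eps/2}$.} Here I would use Lemma~\ref{lem: blocks} with $q=q_m$ and $Q=q_{m+1}$. First, $N\ge Q$ must be checked. Since $\rho$ bounds the growth of $q_{m+1}$ up to $o(1)$, we have $Q\le q_m^{\rho+\eps}$. Combined with $q_m<N^{1/\rho-\eps/2}$, this gives $Q\le N^{1-c\eps}$ for large $m$. When $\gamma=1/2$ (i.e.\ $\rho\le2$) the regime is handled the same way, since $q_m<N^{1/2-\eps/2}$ and $Q\le q_m^{2+\eps}$. If $\den N x y\le L_n$, Lemma~\ref{lem: blocks} gives
\[
\min\{\|q(x-y)\|,\|q\beta\|\} \;\le\; 14\Bigl(\frac{L_n q}{N}+\frac{L_n}{Q}+\frac{q}{Q}\Bigr).
\]
The key quantitative check is that the right-hand side is at most $q_m^{-2\eps}$, or some power $N^{-c\eps}$.
\begin{itemize}
\item The term $L_nq/N$ is at most $N^{2\gamma-1-C\eps}$, which is small when $\gamma\le1/2$.
\item The term $q/Q$ is at most $N^{1/2}/Q<1$ but must be genuinely small. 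This holds because $Q>N^{1/2}$ while $q_m<N^{1/2-\eps/2}$.
\item The term $L_n/Q$ is at most $N^{\gamma-C\eps}/N^{1/2}$, which is small.
\end{itemize}
Since $\|q_m\beta\|>q_m^{-\eps}$, the minimum must then be attained by $\|q_m(x-y)\|$. Its probability is at most $2N^{-c\eps}$, which is summable.

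The main obstacle I expect is the exponent bookkeeping relating powers of $N$ and of $q_m$. In particular, the bound $\|q_m\beta\|>q_m^{-\eps}$ must beat a threshold stated in powers of $N$. Since $q_m\le N^{1/2}$, we get $q_m^{-\eps}\ge N^{-\eps/2}$, so I will take $C$ large, say $C=10$, to secure the needed margin. The case $\rho=\infty$ is handled identically, with $\gamma$ replaced by an arbitrary small positive target, or trivially.

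Borel--Cantelli then gives, for almost every $(x,y)$ and all large $n$, that $\den N x y > 4^{n(\gamma-C\eps)}$ for every $N\in[4^n,4^{n+1})$. Hence $\liminf_{N}\log \den N x y/\log N \ge \gamma - 2C\eps$, and letting $\eps\downarrow0$ along the rationals proves the theorem.
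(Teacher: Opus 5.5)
Your proposal follows the paper's proof closely: the same split according to whether $q_m \ge N^{\gamma-\eps/2}$, Lemma~\ref{lem: utility lemma on diameter}(a) in the first regime, Lemma~\ref{lem: blocks} plus the lower bound on $\|q_m\beta\|$ from $\goodsettypical$ in the second, and Borel--Cantelli over geometric scales. Your cosmetic changes all work. These are base-$4$ scales, choosing $m$ via $N^{1/2}$ rather than $N^{\theta}$, and reusing $\eps$ as the slack in $Q\le q_m^{\rho+\eps}$ instead of the paper's separate $\gamma$.

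However, the step you flagged as the crux does not close as you justify it. The bottleneck term in the bound from Lemma~\ref{lem: blocks} is $14\,q/Q$, and it does not involve $L_n$. Taking $C$ large therefore does nothing for it. Your stated estimates give only $14\,q/Q<14N^{-\eps/2}$, to be compared with $\|q_m\beta\|>q_m^{-\eps}\ge N^{-\eps/2}$, and this comparison fails by the factor $14$. Your intermediate claim that the right-hand side is at most $q_m^{-2\eps}$ is also false in general. If $q_m\approx N^{1/2-\eps/2}$ and $Q\approx N^{1/2}$, then $q/Q\approx N^{-\eps/2}$, while $q_m^{-2\eps}\approx N^{-\eps+\eps^2}$.

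Either of two easy repairs works:
\begin{itemize}
\item Use the regime inequality $q_m<N^{\gamma-\eps/2}$ rather than $q_m\le N^{1/2}$. Since $\gamma\le 1/2$, this gives $q_m^{1+\eps}<N^{1/2-\eps^2/2}<Q\,N^{-\eps^2/2}$, so $14\,q/Q<q_m^{-\eps}$ for large $N$. The other two terms are handled by $C\ge1$.
\item Invoke the definition of $\goodsettypical$ with the rational $\eta=\eps/2$. Then $\|q_m\beta\|>q_m^{-\eps/2}\ge N^{-\eps/4}$, which comfortably beats $50N^{-\eps/2}$.
\end{itemize}
The paper does exactly this calibration through $\eta_*=\eps/(2(\theta-\eps/2))$, chosen so that $q^{-\eta_*}>N^{-\eps/2}$ in Case~2, and then takes a rational $\eta<\eta_*$ to gain the constant factor.
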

	\begin{proof}
		The desired inequality is trivial when $\mu(\alpha) = \infty$, so assume $\mu(\alpha) \in (2,\infty)$, write $(p_m/q_m)$ for the convergents of $\alpha$, and let $\beta \in \goodsettypical$.
		
		Put $\theta \coloneq \min \{ \frac 12, \frac{1}{\mu(\alpha)-1} \}$, and fix $\eps \in (0,\theta)$. For a dyadic integer $N$, let $L \coloneq \lfloor N^{\theta-\eps}\rfloor$ and choose $m$ so that $q_m \leq N^\theta < q_{m+1}$. Write $q \coloneq q_m$ and $Q \coloneq q_{m+1}$.
		
		Observe $(\theta-\eps/2)(\mu(\alpha)-1) < 1$ when $\mu(\alpha) \geq 3$ and when $\mu(\alpha) \leq 3$, and let $\gamma > 0$ with
		\begin{equation} \label{gamma choice}
			(\theta-\eps/2)(\mu(\alpha)-1+\gamma) \ < \ 1 \, .
		\end{equation}
		By \eqref{definition of mu}, if $m$ is sufficiently large, then
		\begin{equation} \label{bound on Q for orbitexpoliminf}
			Q \ \leq \ q^{\mu(\alpha)-1+\gamma} \, .
		\end{equation}
		Set
		\begin{equation}\label{definition of eta*}
			\eta_* \ \coloneq \ \frac{\eps}{2(\theta-\eps/2)} \, .
		\end{equation}
		Fix a rational $\eta$ with $0 < \eta < \eta_*$. Since $\beta \in \goodsettypical$, we have $\|q\beta\| > q^{-\eta}$ if $m$ is sufficiently large, hence
		\begin{equation}\label{condition on qbeta}
			\|q\beta\| \ > \ q^{-\eta} \ > \ 50q^{-\eta_*} \, .
		\end{equation}
		
		\noindent As $N \to \infty$, we have $m \to \infty$. Therefore, by taking $N$ large enough, we may assume that $N^{\eps} > 12$ and \eqref{bound on Q for orbitexpoliminf} hold, as well as \eqref{condition on qbeta} and $q+1 > k_0(\alpha,\beta)$, where $k_0$ is as in Lemma~\ref{lem: same code iff same interval}.
		
		First, we show by cases that, for uniform and independent $x,y \in \T$,
		\begin{equation}\label{claim for summability}
			\mathbb{P} \bigl( \den N x y \leq L \bigr) \ \leq \ 100N^{-\eps/2} \, .
		\end{equation}
		
		\medskip
		\noindent\textbf{Case 1:} $q \geq N^{\theta-\eps/2}$. Since $q \leq N^\theta$ and $2\theta \leq 1$,
		\[
		12qL \ \leq \ 12N^{2\theta-\eps} \ \leq \ 12N^{1-\eps} \ < \ N \, ,
		\]
		hence $6(q+1)L < N$. By Lemma~\ref{lem: utility lemma on diameter}(a),
		\begin{equation*}
			\mathbb{P} \bigl ( \den N x y \leq L \bigr) \ \leq \ \frac{12L}{q} \ \leq \ 12N^{-\eps/2} \, ,
		\end{equation*}
		where the second inequality holds by the assumption $q \geq N^{\theta-\eps/2}$.
		
		\medskip
		\noindent\textbf{Case 2:} $q < N^{\theta-\eps/2}$. Then $\frac{Q}{q} > \frac{N^\theta}{q} > N^{\eps/2}$. By \eqref{bound on Q for orbitexpoliminf} and \eqref{gamma choice},
		\[
		Q \ \leq \ N^{(\theta-\eps/2)(\mu(\alpha)-1+\gamma)} \ < \ N \, ,
		\]
		so Lemma~\ref{lem: blocks} applies. Define the quantity
		\[
		H_N \ \coloneq \ 14\left(\frac{Lq}{N} + \frac LQ + \frac qQ \right) \, .
		\]
		Using the three inequalities $q < N^{\theta-\eps/2}$, $Q > N^\theta$, and $\frac{Q}{q} > N^{\eps/2}$, one verifies the first inequality in
		\begin{equation} \label{inequality on H_n}
			H_N \ \leq \ 14\left(N^{2\theta-1-3\eps/2} + N^{-\eps} + N^{-\eps/2}\right) \ \leq \ 50N^{-\eps/2} \, ,                   
		\end{equation}
		the second inequality holding since $2\theta \leq 1$. Now, the inequality $q<N^{\theta-\eps/2}$ implies $N^{-\eps/2}<q^{-\eta_*}$ (see \eqref{definition of eta*}), hence $H_N < \|q\beta\|$ by \eqref{condition on qbeta}. Lemma~\ref{lem: blocks} then implies
		\[
		\mathbb{P} \bigl( \den N x y \leq L \bigr)
		\ \leq \ \mathbb{P} \bigl( \|q(x-y)\| \leq H_N \bigr) \, .
		\]
		The variable $q(x-y) \bmod 1$ is Haar-uniform on $\T$, hence we obtain
		\begin{equation}
			\mathbb{P} \bigl( \den N x y \leq L \bigr) \ \leq \ 2H_N \ \leq \ 100N^{-\eps/2} \, ,
		\end{equation}
		the second inequality holding by \eqref{inequality on H_n}. We have shown \eqref{claim for summability}.
		
		\medskip \noindent Now, we use a Borel--Cantelli argument to finish. For $j \in \N$, define the event
		\[
		A_j \ \coloneq \ \bigl\{\exists N \in [2^j,2^{j+1}) : \den N x y \leq \bigl\lfloor 2^{j(\theta-\eps)} \bigr\rfloor\bigr\} \ =  \ \bigl\{ \den {2^j} x y \ \leq\ \bigl\lfloor 2^{j(\theta-\eps)} \bigr\rfloor\bigr\} \, ,
		\]
		where the equality follows from the monotonicity of edit distance recorded below \eqref{edit distance is subadditive}. For all sufficiently large $j$, \eqref{claim for summability} gives
		\[
		\mathbb P(A_j) \ \leq \ 100 \cdot 2^{-j\eps/2} \, .
		\]
		Thus $\sum_{j\in\N} \mathbb P(A_j) < \infty$. Borel--Cantelli implies that, for almost every $(x,y) \in \T^2$, for all sufficiently large $j$ and every $N \in [2^j,2^{j+1})$,
		\[
		\den N x y \ > \ 2^{j(\theta-\eps)} \, .
		\]
		For such $N$,
		\[
		\frac{\log \den N x y}{\log N} \ > \ \frac{\log 2^{j(\theta-\eps)}}{\log 2^{j+1}} \ = \ \frac{j}{j+1}(\theta-\eps) \, .
		\]
		It follows that
		\[ \orbitexpoliminf(\alpha,\beta) \ \geq \ \theta - \eps \, . \]
		Letting $\eps \downarrow 0$ proves the result.
	\end{proof}
	
	\begin{proof}[Proof of Theorem~\ref{main thm 1} when $\mu(\alpha) > 2$]
		Take $G_\alpha \coloneq (\goodsettypical \cap G_\alpha^*) \smallsetminus B(\alpha)$, and let $\beta \in G_\alpha$.
		
		Then Theorem~\ref{thm: upper bounds on expoliminf and expolimsup} gives $\orbitexpolimsup(\alpha,\beta) \leq \expolimsup(\alpha,\beta) \leq \frac{\mu(\alpha)-1}{\mu(\alpha)}$, and Theorem~\ref{thm: orbitexpolimsup lower bound mu-1/mu} gives the corresponding lower bound.
		
		Next, Theorem~\ref{thm: upper bounds on expoliminf and expolimsup} gives $\expoliminf(\alpha,\beta) \leq 1/2$ and Theorem~\ref{thm: expoliminf lower bound of 1/2} gives $\expoliminf(\alpha,\beta) \geq 1/2$.
		
		Finally, Theorem~\ref{thm: orbitexpoliminf upper bound} gives $\orbitexpoliminf(\alpha,\beta) \leq \min\{1/2,\frac{1}{\mu(\alpha)-1}\}$ and Theorem~\ref{thm: orbitexpoliminf lower bound} gives the corresponding lower bound.
		
		The case $\mu(\alpha) = 2$ of Theorem~\ref{main thm 1} was handled in Section~\ref{sec: the case mu alpha equals 2}.
	\end{proof}

	\section{Lower bounds that hold for all $\beta$}\label{sec: universal lower bounds}
	
	In this section we prove lower bounds that do not exclude any value of the coding parameter $\beta$. The basic input is a block estimate for pairs whose relative position is separated from a short orbit segment. We use it first at selected scales to obtain lower bounds for limsup exponents, and then in an annular form to obtain an almost-everywhere lower bound for pairwise liminf exponents.

	\begin{lemma}\label{lem: application of circle progression count}
		Let $\alpha \in \R\smallsetminus\Q$ have convergents $(p_m/q_m)$. Fix $m\in\N$, and write $q \coloneq q_m$, $Q \coloneq q_{m+1}$, and $R \coloneq \lfloor Q/q\rfloor$. For every arc $J\subset\T$ with $\leb(J)\leq 1/(4q)$, one has
		\[
		\#\left\{ i \in [0,R)\cap\Z : \{(iq+n)\alpha \bmod 1 : n \in [0,q) \cap \Z \} \cap J \neq \varnothing \right\} \ \geq \ Q \leb(J)-8 \, .
		\]
	\end{lemma}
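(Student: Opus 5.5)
The idea is to reduce the statement to Lemma~\ref{lem: finite circle progression count}, by tracking how the $i$-th block of the orbit moves with $i$. Put $p \coloneq p_m$ and $\eta \coloneq q\alpha - p$, and let $d \coloneq |q\eta|$. As in the proof of Lemma~\ref{lem: blocks}, \eqref{helpful inequality on q_n alpha} gives $\frac{q}{Q+q} < d < \frac{q}{Q}$.

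For $i \in [0,R)\cap\Z$ the $i$-th block is $B_i \coloneq \{(iq+n)\alpha \bmod 1 : n \in [0,q)\cap\Z\}$. Modulo 1 we have $(iq+n)\alpha \equiv n\alpha + i\eta$, so $B_i = B_0 + i\eta \bmod 1$. Thus every block is a rigid translate of the fixed set $B_0 = \Lambda$-type orbit segment $\{n\alpha \bmod 1 : 0 \le n < q\}$. Write $B_0 = \{z_0,\dots,z_{q-1}\}$. Then $B_i$ meets $J$ if and only if $i\eta \in \bigcup_{j}(J - z_j)$.

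Set $A \coloneq \bigcup_{j<q}(J-z_j)$. This is a union of $q$ translates of $J$, with the translation points $z_j$ taken mod 1. By Lemma~\ref{lem: ialpha almost uniform}(a), the points of $B_0$ lie within $1/Q$ of the grid $\frac1q\Z$, one point per grid point. Adjacent points are therefore at distance at least $\frac1q - \frac2Q$. Since $\leb(J) \le \frac{1}{4q}$, these translates are disjoint provided $Q \ge 8q$, and then $\leb(A) = q\leb(J)$. If $Q < 8q$, then $R \le 8$ while $Q\leb(J) \le 2$, so the claimed bound is trivial. Even without disjointness, $A$ is a union of at most $q$ arcs.

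The multiples $i\eta$ for $i\in[0,R)$ form a progression $v_i = \sigma i\cdot(\eta)$ on $\T$ with step $|\eta| = d/q$. The step needs rescaling, so instead I multiply by $q$: $i\eta \in A$ if and only if $q\cdot(i\eta)$ lies in $qA$. This fails because the map $z\mapsto qz$ is not injective on $A$. Better: note that $A$ is essentially $\frac1q$-periodic up to errors $1/Q$.

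So I would apply Lemma~\ref{lem: finite circle progression count} directly with $M=R$, $\delta=|\eta|<\frac1{qQ}$, but the progression only covers an arc of length about $(R-1)|\eta| \approx 1/q$, not the whole circle. Hence I would use periodicity. Let $J' \coloneq qJ$ viewed in $\T$, an arc of length $q\leb(J)\le 1/4$. Up to a perturbation of size $q/Q$ at the ends, $i\eta \in A$ is equivalent to $qi\eta = i\,\mathrm{sign}(\eta)d$ lying in $J'$ shifted by the grid phases. Concretely, $z_j = \frac{jp'}{q} + e_j$ with $|e_j|<1/Q$, and $q(J - z_j) \equiv J' - qe_j$. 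Shrink $J'$ by $q/Q$ at each end to $J''$, which is contained in every $q(J-z_j)$ modulo 1. Then whenever $i d\sigma \in J''$, we get $B_i \cap J \neq \varnothing$. (Here I use that if $qu \in q(J-z_j) \bmod 1$, then $u \in J - z_{j'}$ for the appropriate $j'$, because the $z_j$ are near all residues $j/q$.)

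Now apply Lemma~\ref{lem: finite circle progression count} with $M=R$, $\delta=d$, $k=1$, and the arc $J''$ of length $q\leb(J)-2q/Q$. By \eqref{how long is the block arc}, $c/\delta < 3$. The count is at least
$$\frac{q\leb(J)-2q/Q}{d} - 5 \ \ge\ Q\leb(J) - 2\frac{Q+q}{Q} - 5 \ \ge\ Q\leb(J)-9 .$$
Tightening the constant to $8$ uses $d<q/Q$, so that $\frac{q\leb(J)}{d} > Q\leb(J)$, together with $\frac{2q/Q}{d} < \frac{2(Q+q)}{Q} \le 3$ when $Q\ge 2q$; careful bookkeeping yields $Q\leb(J)-8$.

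The main obstacle is the step passing from the scaled condition to membership in $A$, that is, justifying the identification of $q(J-z_j)$ with a single arc up to $1/Q$ errors. This needs $z\mapsto jp\bmod q$ to be a bijection of $[0,q)\cap\Z$, as in Lemma~\ref{lem: ialpha almost uniform}.
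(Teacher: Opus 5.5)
Your final argument is correct and is essentially the paper's. After scaling by $q$, block $i$ is detected by the single point $v_i = iq\eta \bmod 1$ (the paper's $iq^2\alpha \bmod 1$) lying in the arc $qJ$ shrunk by $q/Q$ at each end. Lemma~\ref{lem: finite circle progression count} with $M=R$, $\delta=q|\eta|$, $k=1$ and $c/\delta<3$ then gives $Q\leb(J)-8$ once $Q\ge 2q$; the cases $Q<8q$ and $\leb(J)\le 8/Q$ are trivial.

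The detour through the union of translates is unnecessary, and your parenthetical for the step you flag is loosely worded. It is cleanest as in the paper. Let $g\equiv i\eta+k/q$ be the unique point of $i\eta+\frac1q\Z$ lying in $J$; since $qg\equiv qi\eta$ lies in the shrunk arc, $g$ is more than $1/Q$ from the endpoints of $J$. The orbit point $(iq+n)\alpha$ with $np_m\equiv k \pmod q$ differs from $g$ by $n\eta/q$, which has absolute value less than $1/Q$, so it lies in $J$.
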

	\begin{proof}
		If $\leb(J)\leq 8/Q$, then the asserted lower bound is nonpositive, so we may assume $\leb(J) > 8/Q$. Put $\eta \coloneq \|q\alpha\|$, which by \eqref{helpful inequality on q_n alpha} satisfies $\frac{1}{q+Q} < \eta < \frac{1}{Q}$.

		Fix $i \in [0,R)\cap\Z$. For $n \in [0,q) \cap \Z$, observe that 
		\[
		\|(iq+n)\alpha - (iq\alpha + np_m/q)\| \ = \ \|n\eta/q\| \ < \ 1/Q \, .
		\]
		Thus, to each point $(iq+n)\alpha \bmod 1$ with $n \in [0,q) \cap \Z$, there corresponds a point of
		\[
		G_i \ \coloneq \ \{ iq\alpha+\frac{j}{q} \bmod 1 : j \in [0,q) \cap \Z\}
		\]
		that is $1/Q$-close to it in $\|\cdot\|$-distance, namely the one with $j \equiv np_m \bmod q$.
		
		Let $J'$ be obtained from $J$ by deleting an interval of length $1/Q$ from each end. Then $\leb(J') =  \leb(J) - \frac{2}{Q} > 0$. To obtain the asserted lower bound, it is enough to count the $i \in [0,R) \cap \Z$ for which $G_i \cap J' \neq \varnothing$.
		
		For all $i \in [0,R) \cap \Z$, let $v_i \coloneq iq^2\alpha \bmod 1$. Then there is $\sigma \in \{-1,1\}$ such that $v_i = \sigma i q \eta \bmod 1$. Let $A \coloneq qJ' / \Z$, which is an arc of length $q \leb(J')$ since $\leb(J') < 1/q$. One verifies that $G_i \cap J' \neq \varnothing$ if and only if $v_i \in A$. Let $I$ be the arc traversed from $v_0$ to $v_{R-1}$ in the direction $\sigma$, and note its complement has length
		\begin{equation}\label{bound on complement length}
			1 - (R-1)q\eta \ < \ \frac{4q}{Q} \, ,
		\end{equation}
		the inequality holding since
		\[
		(R-1)\eta \ > \ \left(\frac{Q}{q}-2\right)\frac{1}{Q+q} \ > \ \frac{1}{q}-\frac{4}{Q} \, .
		\]
		Thus, Lemma~\ref{lem: finite circle progression count}, applied with
		$M=R$ and $\delta = q\eta$, gives
		\[
		\#\{i \in [0,R)\cap\Z : G_i \cap J' \neq \varnothing\} \ \geq \  \frac{q\leb(J')-\leb(\T \smallsetminus I)}{q\eta}-2 \ > \ Q\leb(J)-8 \, ,
		\]
		the latter holding by $\leb(J') =  \leb(J) - \frac{2}{Q}$ and the inequalities $\eta < 1/Q$ and \eqref{bound on complement length}.
	\end{proof}
	
	In the next lemma, given a matching between the two orbit segments, we partition blocks of length $Q$ into blocks of length $q$ and assign a sign to every clean $q$-block according to the relative displacement of the matched orbit segments. In each $Q$-block, either both signs occur, forcing a large change in the matching shift, or one sign is absent, in which case visits to a short boundary arc force many $q$-blocks to be non-clean. Lemma~\ref{lem: lcs basics} then yields the stated lower bound on edit distance. Define $(t)_+ = \max\{t,0\}$ for $t \in \R$.
	
	\begin{lemma}\label{lem: variable separated block lower}
		Let $\alpha\in\R\smallsetminus\Q$ have convergents $(p_m/q_m)$, let $\beta\in(0,1)$, and suppose that, for some $\sigma>1$,
		\begin{equation}\label{eventual growth condition}
			q_{k+1} \ \leq \ q_k^\sigma
		\end{equation}
		for all sufficiently large $k$. There exists $m_0=m_0(\alpha,\beta,\sigma)$ such that the following holds. Fix $m \geq m_0$, write $q \coloneq q_m$ and $Q \coloneq q_{m+1}$, let $B\in\N$, and put $N \coloneq BQ$. Suppose $L \geq 0$, $x,y \in \T \smallsetminus \Lambda^*(N)$, and $u \in (0,1/(4q))$ satisfy $\den N x y \leq L$ and
		\begin{equation} \label{separation hypothesis}
			\forall j \in [-L,L] \cap \Z, \quad \|x+j\alpha-y\| \ > \ u  \, .
		\end{equation}
		Then
		\begin{equation*}
			\den N x y \ \geq \ \frac{1}{64} \min\left\{Q, B(uQ-16)_+, Bq^{1/\sigma} \right\} \, .
		\end{equation*}
	\end{lemma}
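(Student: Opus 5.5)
The plan is to fix a longest common subsequence between $\xi_{[0,N)}(x)$ and $\xi_{[0,N)}(y)$ as in Notation~\ref{notation block for clean and shift}, with $\Delta \coloneq \den N x y \le L$. Then cut $[0,N)$ into the $B$ blocks $[hQ,(h+1)Q)$ and each of these into the $R \coloneq \lfloor Q/q\rfloor$ disjoint $q$-blocks $[hQ+iq,hQ+iq+q)$. Lemma~\ref{lem: lcs basics} gives two budgets: at most $2\Delta$ of these $q$-blocks are non-clean, and the total variation of the shift function $s(\cdot)$ is at most $2\Delta$. The goal is to show that each $Q$-block consumes at least $c\min\{(uQ-16)_+,\,q^{1/\sigma}\}$ of one of the two budgets, unless $\Delta$ is already $\gtrsim Q$. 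Summing over the $B$ blocks then gives the stated bound, after adjusting constants to reach $1/64$.

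The first step is to analyse a single clean $q$-block $[n,n+q)$ with shift $s$. Then $|s|\le\Delta\le L$ and $\xi_{[n,n+q)}(x)=\xi_{[n+s,n+s+q)}(y)$. I choose $m_0$ so that $q_{m_0}>k_0(\alpha,\beta)$ and so that \eqref{eventual growth condition} holds from index $m_0-1$ on. Since $x,y\notin\Lambda^*(N)$, Lemma~\ref{lem: same code iff same interval} with $k=q$ gives $I_q(x+n\alpha)=I_q(y+(n+s)\alpha)$. By Lemma~\ref{lem: ialpha almost uniform}(b), applied at level $m-1$, this common interval has length at most $2/q_{m-1}\le 2q^{-1/\sigma}$. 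So the displacement $d \coloneq y+s\alpha-x$, taken in $(-\tfrac12,\tfrac12)$, satisfies $|d|\le 2q^{-1/\sigma}$, and $|d|>u$ by \eqref{separation hypothesis}. Note that $d$ depends only on $s$, not on $n$. I call the block \emph{positive} or \emph{negative} according to the sign of $d$.

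Next, fix a $Q$-block $h$. There are two cases.

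\textbf{Case A: both signs occur.} Then there are clean $q$-blocks in this $Q$-block with shifts $s,s'$ satisfying $0<\|(s-s')\alpha\|\le 4q^{-1/\sigma}$. By the best-approximation property of convergents (the relevant lower bound is $\|j\alpha\|>1/(2q_{k+1})$ for $0<j<q_{k+1}$, from \eqref{helpful inequality on q_n alpha}), this forces $|s-s'|\gtrsim q^{1/\sigma}$ after adjusting which convergent is used. Hence the variation of $s$ inside this $Q$-block is $\gtrsim q^{1/\sigma}$. The alternative is $|s-s'|$ of order $Q$, which is impossible without $\Delta\gtrsim Q$; this alternative is what produces the $Q$ term in the minimum.

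\textbf{Case B: one sign is absent,} say every clean block is positive, so every clean block has $d>u$. Take the arc $J \coloneq (-u,0)$ of length $u<1/(4q)$, ending at the coding boundary $0$. Suppose $x+k\alpha\in J$ for some $k$ in a clean block with shift $s$. Then $y+(k+s)\alpha = x+k\alpha+d$ lies beyond $0$, so $0$ separates these two points. This contradicts $I_q(x+n\alpha)=I_q(y+(n+s)\alpha)$, because $0\in\Lambda^*(q)$ lies in the orbit partition (the index is shifted by $k-n<q$). So every $q$-block whose orbit segment meets $J$ is non-clean. By Lemma~\ref{lem: application of circle progression count}, applied to the translated arc $J-x-hQ\alpha$, there are at least $uQ-8$ such blocks. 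For the negative sign, use the arc $(0,u)$ instead.

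The main obstacle is Case A. There I must check carefully that the two displacements, being within $2q^{-1/\sigma}$ of zero and of opposite sign, force a genuinely large jump in $s$ (rather than, say, $s$ and $s'$ differing by a small $j$ with $\|j\alpha\|$ tiny). I will handle this via the gap structure in Lemma~\ref{lem: description of three gaps by size and count}, together with $|s|,|s'|\le L$. If that bookkeeping fails, I would instead compare with $\|q_{m-1}\alpha\|$ directly. The remaining work is routine: summing the per-$Q$-block contributions, using $2\Delta\ge\#\text{non-clean}$ and $2\Delta\ge\text{variation}$, and adjusting constants to reach $1/64$.
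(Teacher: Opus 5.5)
Your architecture is the paper's: the longest common subsequence, $q$-blocks nested in $Q$-blocks, the sign of the displacement, and trading the variation budget against the non-clean budget of Lemma~\ref{lem: lcs basics}. Your Case B, which uses the cell $I_q$ and the arc $(-u,0)$, is also fine. The gap is in Case A, and it comes from the displacement bound you chose. You bound the cell $I_q(\cdot)$ by $2/q_{m-1}\le 2q^{-1/\sigma}$, so you only know $0<\|(s-s')\alpha\|\le 4q^{-1/\sigma}$. Run best approximation on this with $q_n\le |s-s'|<q_{n+1}$. You get $1/(2q_{n+1})<\|q_n\alpha\|\le 4q^{-1/\sigma}$, hence $q_{n+1}>q^{1/\sigma}/8$, and therefore only $|s-s'|\ge q_n\ge q_{n+1}^{1/\sigma}>8^{-1/\sigma}q^{1/\sigma^2}$. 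That is one factor of $1/\sigma$ too many in the exponent. No ``adjusting which convergent is used'' can repair this from your bound. Take $h=q_{m-2}$: it satisfies $\|h\alpha\|<1/q_{m-1}\le q^{-1/\sigma}$, yet under steady growth $q_{m-2}\approx q^{1/\sigma^2}$. As written, your argument yields only $\Delta\gtrsim\min\{B(uQ-16)_+,\,Bq^{1/\sigma^2}\}$, which is weaker than the statement.

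The fix is to bound the cell at level $m$ instead. Lemma~\ref{lem: ialpha almost uniform}(b) applies with $k=q_m$, since $q_m\le k\le q_{m+1}$, and gives cells of length at most $1/q+1/Q<2/q$. Hence $\|(s-s')\alpha\|<4/q$, so $q_{n+1}>q/8$ and $|s-s'|\ge q_n\ge q_{n+1}^{1/\sigma}>(q/8)^{1/\sigma}$.

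You also need the growth hypothesis at the index $n$, which is tied to $|s-s'|$ and not to $m$. Your condition ``growth from index $m_0-1$ on'' does not give $n\ge k_*$. The paper secures it by choosing $m_0$ with $4/q<\min\{\|j\alpha\|: 0<j<q_{k_*}\}$, which forces $|s-s'|\ge q_{k_*}$.

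Finally, there is no scenario with $|s-s'|$ of order $Q$ that ``produces the $Q$ term''. Your per-block bound $(uQ-8)_+$ (non-clean blocks) is fine, so the corrected argument gives $\Delta\ge\frac{1}{64}\min\{B(uQ-16)_+,\,Bq^{1/\sigma}\}$ directly. The $Q$ in the minimum is never used in the proof.
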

	
	\begin{proof}
		We will assume $m_0$ large enough that $q_{m_0} > \max\{ k_0(\alpha,\beta),4,2/\|\beta\|\}$, where $k_0$ is as in Lemma~\ref{lem: same code iff same interval}. Let $k_* \geq 2$ be such that \eqref{eventual growth condition} holds for every $k \geq k_*$. After (possibly) increasing $m_0$, we may further assume
		\begin{equation}\label{why is h big enough}
			\frac{4}{q} \ < \ \min\{ \|j\alpha\| : j \in (0,q_{k_*}) \cap \Z \} \, .
		\end{equation}
		
		Put $\Delta\coloneq\den {N}xy$, and choose a longest common subsequence between $\xi_{[0,N)}(x)$ and $\xi_{[0,N)}(y)$ as in Notation~\ref{notation block for clean and shift}.
		
		Let $R \coloneq\lfloor Q/q\rfloor$. For $b \in [0,B) \cap \Z$ and $r \in [0,R)\cap \Z$, define
		\[
		I(b,r) \ \coloneq \ [bQ+rq,bQ+(r+1)q) \cap \Z \, ;
		\]
		these intervals form a family $\mathcal{I}$ of pairwise disjoint intervals contained in $[0,N)\cap\Z$. If $I(b,r)$ is clean, let $s$ be its shift and put $\delta_{b,r} \coloneq x-y-s\alpha \bmod 1$. By \eqref{eqn: lcs basics} and \eqref{separation hypothesis}, we have $\|\delta_{b,r}\| > u$. By the definition of clean interval, 
		\begin{equation*}
			\xi_{[bQ+rq,bQ+(r+1)q)}(x) \ = \ \xi_{[bQ+rq+s,bQ+(r+1)q+s)}(y) \, .
		\end{equation*}
		Since $q > k_0(\alpha,\beta)$ and neither $x+(bQ+rq)\alpha \bmod 1$ nor $y+(bQ+rq+s)\alpha \bmod 1$ belong to $\Lambda^*(q)$, Lemma~\ref{lem: same code iff same interval} implies $I_q(x+(bQ+rq)\alpha \bmod 1) = I_q(y+(bQ+rq+s)\alpha \bmod 1)$. The difference between these two points is $\delta_{b,r}$, and they belong to the same $I_q(\cdot)$ cell, which by Lemma~\ref{lem: description of three gaps by size and count}(a) and \eqref{helpful inequality on q_n alpha} has length at most $\|q_{m-1}\alpha\|+\|q_m\alpha\| < \frac{2}{q}$, hence
		\begin{equation}\label{clean round upper separation}
			\|\delta_{b,r}\| \ < \ \frac{2}{q} \,.
		\end{equation}
		We have shown that $\|\delta_{b,r}\| \in (u,2/q)$. We say the interval $I(b,r)$ has a positive sign if $\delta_{b,r} \in (u,2/q)$ and a negative sign if $\delta_{b,r} \in (1-2/q,1-u)$.
		
		\medskip
		\noindent\textbf{Claim 1.} If two clean intervals in $\mathcal{I}$ have opposite signs and shifts $s$ and $s'$, then
		\[
		|s-s'| \ \geq \ 8^{-1/\sigma} q^{1/\sigma} \, .
		\]
		Indeed, $h \coloneq s'-s$ is nonzero, and \eqref{clean round upper separation} gives $\|h\alpha\|<4/q$. Therefore, by \eqref{why is h big enough}, we may choose $n \geq k_*$ so that $q_n \leq |h| < q_{n+1}$. By the best-approximation property of convergents, $\|q_n\alpha\| \leq \|h\alpha\| < \frac{4}{q}$. Together with \eqref{helpful inequality on q_n alpha}, this implies $q_{n+1} > q/8$. Since $n \geq k_*$, \eqref{eventual growth condition} gives
		\[
		|h| \ \geq \ q_n \ \geq \ q_{n+1}^{1/\sigma} \ > \ 8^{-1/\sigma}q^{1/\sigma} \, .
		\]
		This proves the claim.
		
		Now, define $J_+ \coloneq (0,u/2)$ and $J_- \coloneq (1-u/2,1)$. An interval $I \in \mathcal{I}$ is said to visit an interval $J \subset \T$ if there exists $n \in I$ such that $x + n\alpha \bmod 1 \in J$.
		
		\medskip
		\noindent \textbf{Claim 2.} A clean interval with positive (resp. negative) sign does not visit $J_+$ (resp. $J_-$).
		
		Indeed, suppose that $I(b,r)$ is a positive clean interval with shift $s$. Then $\delta_{b,r} \in (u,2/q)$. Suppose to the contrary that $I(b,r)$ visits $J_+$. Then there would exist an integer $n$ in $[bQ+rq,bQ+(r+1)q)$ such that
		\[
		z \ \coloneq \ x+n\alpha \bmod 1 \in (0,u/2) \, .
		\]
		Since $m\geq m_0$, we have $\|\beta\| > 2/q$. Since $u/2 < 2/q < \beta$, we have $\xi_{n}(x) = 1$. We will show the contradiction $\xi_{n+s}(y) = 0$. Let $\tilde{z} \coloneq y + (s+n)\alpha \bmod 1$. Then $\tilde{z} \equiv z - \delta_{b,r} \bmod 1$, and the inequality $z < u/2 <\delta_{b,r}$ implies $\tilde{z} = 1 + z - \delta_{b,r}$, hence the desired contradiction $\tilde{z} > 1- \delta_{b,r} > 1-\frac{2}{q} > \beta$. The argument for $J_-$ is similar.

		\medskip
		\noindent\textbf{Claim 3.} Fix $b \in [0,B) \cap \Z$. If the collection $\mathcal{I}_b \coloneq \{ I(b,r) : r \in [0,R) \cap \Z \}$ does not contain at least one clean interval of each sign, then $\mathcal{I}_b$ contains at least 
		\[
		M \ \coloneq \ \left(\frac{uQ}{2}-8\right)_+
		\]
		non-clean intervals $I(b,r)$.
		
		Indeed, Lemma~\ref{lem: application of circle progression count} shows that at least $M$ intervals in $\mathcal{I}_b$ visit $J_+$ and at least $M$ intervals in $\mathcal{I}_b$ visit $J_-$. By Claim 2, if all clean intervals in the collection $\mathcal{I}_b$ have positive sign, then the intervals that visit $J_-$ are non-clean (and vice versa), so we have the claimed bound. Moreover, if $\mathcal{I}_b$ contains no clean interval, then we also have the claimed bound because $M \leq R$. This proves the claim.
		
		Let $S$ be the number of $b \in [0,B) \cap \Z$ such that $\mathcal{I}_b$ contains at least one clean interval of each sign. By Claim 1, each such $\mathcal{I}_b$ contributes at least $8^{-1/\sigma} q^{1/\sigma}$ to the variation (see \eqref{eqn: lcs basics}), i.e., 
		\[
		S \cdot 8^{-1/\sigma} q^{1/\sigma} \ \leq \ \sum_{i=1}^{N-\Delta-1}|s(i+1)-s(i)| \ \leq \ 2\Delta \, .
		\]
		By Claim 3 and the final assertion of Lemma~\ref{lem: lcs basics} that $\mathcal{I}$ has at most $2\Delta$ non-clean intervals,
		\[
		(B-S)M \ \leq \ 2\Delta \, .
		\]
		Adding the last two inequalities, we deduce the result from $8^{-1-1/\sigma} > 1/64$.
	\end{proof}
	
	This proposition upgrades the previous lemma to another useful form.
	
	\begin{proposition} \label{prop: golden separated block lower bound}
		Let $\alpha\in\R\smallsetminus\Q$ have convergents $(p_m/q_m)$, let $\beta\in(0,1)$, and suppose that, for some $\sigma > 1$, \eqref{eventual growth condition} holds for all sufficiently large $k$. There is an absolute constant $c > 0$ and $m_1=m_1(\alpha,\beta,\sigma)$ such that the following holds. Let $m\geq m_1$, write $q \coloneq q_m$ and $Q \coloneq q_{m+1}$, suppose $Q > 160q$, let $B \in \N$, and put $N \coloneq BQ$. Then, for uniform and independent $x,y \in \T$,
		\begin{equation}
			\mathbb{P}\bigl ( \den N x y \geq c \min\bigl\{Q,\frac{BQ}{q},Bq^{1/\sigma} \bigr\} \bigr) \ \geq \ \frac{1}{10} \, .
		\end{equation}
	\end{proposition}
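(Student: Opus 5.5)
The plan is to apply Lemma~\ref{lem: variable separated block lower} with $u$ of order $1/q$, and to show that the separation hypothesis \eqref{separation hypothesis} fails only on a set of pairs $(x,y)$ of small measure. Set $m_1 \coloneq m_0(\alpha,\beta,\sigma)$ from that lemma, possibly enlarged, and fix $u \coloneq 1/(40q) < 1/(4q)$. Since $Q > 160q$, we get $uQ > 4$. More importantly $uQ - 16 \geq uQ/2$ as soon as $uQ \geq 32$. The regime $uQ<32$ corresponds to $Q<1280q$, and there $BQ/q$ is at most a constant times $B$, which is handled by a trivial bound (see the last paragraph). In the main regime, the quantity $B(uQ-16)_+$ is at least a constant times $BQ/q$. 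The lemma then gives, whenever its hypotheses hold,
\[
\den N x y \ \geq \ \min\Bigl\{L+1,\ \tfrac{1}{64}\min\{Q, BQ/(80q), Bq^{1/\sigma}\}\Bigr\}.
\]
The dichotomy here is that either $\den N x y > L$, or the lemma applies. So the natural choice is $L \coloneq \lfloor c\min\{Q,BQ/q,Bq^{1/\sigma}\}\rfloor$ with $c$ small. In either case this yields $\den N x y \geq c'\min\{\cdots\}$ for the pair in question.

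The work is therefore to show that, for uniform independent $x,y$, the event
\[
E \ \coloneq \ \{\exists j\in[-L,L]\cap\Z:\ \|x+j\alpha-y\|\le u\}
\]
has probability at most $9/10$. The set $\Lambda^*(N)$ is countable, so it is negligible. By a union bound, $\mathbb P(E)\le (2L+1)\cdot 2u \le (2L+1)/(20q)$, which suffices only when $L \lesssim q$. In general $L$ can reach $cQ \gg q$, so the union bound is too weak. This is the main obstacle.

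To overcome it, I will use the structure of the orbit. The points $\{j\alpha\}_{|j|\le L}$ with $L<Q$ form a set of at most $2L+1$ points, and by Lemma~\ref{lem: description of three gaps by size and count} its gaps are at least about $\|q_{n}\alpha\|$ for the appropriate convergent level $n$. Consequently the measure of their $u$-neighbourhood is at most $\min\{1,(2L+1)2u\}$, which is not better on its own. So instead I will exploit $L\le cQ$ together with the clustering described in Lemma~\ref{lem: ialpha almost uniform}(a). For $|j| < Q$, the point $j\alpha$ lies within $|j|/(qQ)\cdot q\le$ about $L/Q\cdot\frac1q \le c/q$ of the grid $\frac1q\Z$; concretely $j\alpha \approx jp/q + j\eta/q$ with $|j\eta/q|\le L/(qQ)$. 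Hence $\{j\alpha\}_{|j|\le L}$ lies in the $(c/q)$-neighbourhood of $\frac1q\Z$, and its $u$-neighbourhood has measure at most $q\cdot 2(c/q+u) = 2c+1/20$. With $c\le 1/4$ this gives $\mathbb P(E)\le 0.55<9/10$. This is the key step.

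It remains to handle the degenerate regime $Q\le 1280q$ and to fix constants. When $Q\le 1280q$, the quantity $BQ/q$ is at most $1280B$, and we need $\den N x y\geq cB$ with probability at least $1/10$. I expect this to follow from the same lemma with $u=1/(40q)$ after shrinking $c$. Alternatively, one can use $(uQ-16)_+$ with a smaller $u$ chosen relative to $Q$, or simply restrict to the $1/(4q)$ cap by taking $u = 1/(4q)\cdot$ a constant so that $uQ\ge 40$ using $Q>160q$. The latter requires $u=1/(4q)$ at the edge, and the clustering bound then gives a measure of $2c+1/2$, still below $9/10$ for $c<1/5$. So I would adopt $u$ just below $1/(4q)$ throughout: then $uQ-16\ge uQ/2$ follows from $Q>160q$ directly, and no case split is needed. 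Finally, $c$ is the product of $1/64$, the constant from $uQ/2$, and the requirement $c\le 1/5$.
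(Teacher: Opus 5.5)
Your proposal is correct and follows essentially the same route as the paper. The paper takes exactly $u=1/(5q)$, and for any $u\ge 1/(5q)$ the hypothesis $Q>160q$ gives $(uQ-16)_+\ge Q/(10q)$. The probability of the separation event is then controlled by clustering each $j\alpha$ within $|j|/(qQ)$ of $jp_m/q$, which is precisely the paper's Claim~2. Beyond that the differences are only bookkeeping: you fix the threshold $L=\lfloor c\min\{Q,BQ/q,Bq^{1/\sigma}\}\rfloor$ in advance, whereas the paper fixes the range $5|j|<Q$ and splits on whether $\den N x y\ge Q/10$. Your detour through $u=1/(40q)$ and the regime $Q\le 1280q$ is unnecessary once $u$ is taken in $[1/(5q),1/(4q))$.
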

	\begin{proof}
		Put $c \coloneq 1/640$, and let $m_1 = m_0(\alpha,\beta,\sigma)$ be as in Lemma~\ref{lem: variable separated block lower}.
		
		\noindent \textbf{Claim 1.} Suppose $\tilde{x},\tilde{y} \in \T \smallsetminus \Lambda^*(N)$ satisfy
		\begin{equation} \label{embedded separation condition}
			\|\tilde{x}+j\alpha-\tilde{y}\| \ > \ \frac{1}{5q} \; \text{ for every } j \in \Z \text{ with } 5|j| < Q \, .
		\end{equation}
		Then
		\begin{equation}\label{golden separated block lower conclusion}
			\den N {\tilde{x}}{\tilde{y}} \ \geq \ c \min\left\{Q,\frac{BQ}{q},Bq^{1/\sigma} \right\} \, .
		\end{equation}
		
		Indeed, write $\Delta \coloneq \den N{\tilde{x}}{\tilde{y}}$. If $\Delta \geq Q/10$, then \eqref{golden separated block lower conclusion} follows. Otherwise the separation hypothesis \eqref{separation hypothesis} holds for every $|j| \leq \Delta$, taking $L = \Delta$ and $u = 1/(5q)$, so Lemma~\ref{lem: variable separated block lower} implies
		\[
		\den N{\tilde{x}}{\tilde{y}} \ \geq \ \frac{1}{64} \min\left\{Q, B\bigl(\frac{Q}{5q}-16\bigr)_+, Bq^{1/\sigma} \right\} \, .
		\]
		Since $Q > 160q$, we have $\bigl(\frac{Q}{5q} - 16\bigr)_+ \geq \frac{Q}{10q}$, so \eqref{golden separated block lower conclusion} follows.
		
		\medskip	
		\noindent \textbf{Claim 2.} The set
		\[
		E_m \ \coloneq \ \left\{t \in \T : \|j\alpha-t\| > \frac{1}{5q} \text{ for every } j \in \Z \text{ with } 5|j| < Q \right\} \, .
		\]
		satisfies $\leb(E_m) \ge 1/10$.
		
		Indeed, for every $j\in\Z$ with $5|j| < Q$, write $j = aq+r$, where $a\in\Z$ and $0 \le r < q$. Then $|a| < Q/(5q)+1$. By \eqref{helpful inequality on q_n alpha}, $\|q\alpha\| < 1/Q$ and $|\alpha-p_m/q| < 1/(qQ)$. Hence
		\[
		\left\|j\alpha-\frac{rp_m}{q}\right\| \ \leq \ |a|\, \|q\alpha\| + r\left|\alpha-\frac{p_m}{q}\right| \ < \ \frac1{5q}+\frac2Q \ < \ \frac{11}{50q} \, .
		\]
		For a fixed residue $r$, all intervals of radius $1/(5q)$ centered at such points $j\alpha$ are contained in a single interval centered at $rp_m/q$ of radius $21/(50q)$. Summing over $r$ gives
		\[
		\leb(\T\smallsetminus E_m) \ \le \ q\cdot \frac{42}{50q} \ = \ \frac{21}{25} \ < \ \frac9{10} \, .
		\]
		Applying Claim 2 to $t=\tilde{y}-\tilde{x}$, we deduce that the set of $(\tilde{x},\tilde{y}) \in (\T \smallsetminus \Lambda^*(N))^2$ for which \eqref{embedded separation condition} holds has measure at least $1/10$, hence Claim 1 gives the proposition.
	\end{proof}
	
	The next lemma records some useful technical bounds for use in the lemma after that.
	
	\begin{lemma}\label{lem: preparation for small edit distance unlikely}
		Let $\alpha\in\R\smallsetminus\Q$ have convergents $(p_m/q_m)$, and let $\sigma>1$ satisfy \eqref{eventual growth condition} for all sufficiently large $k$. Define $\theta_\sigma \coloneq \min\left\{\frac{1}{2},\frac{\sigma}{\sigma^2+\sigma-1}\right\} $, and for every $m\in\N$, put
		\[
		K_m \ \coloneq \ \min\left\{\frac{q_{m+1}}{q_m},q_m^{1/\sigma}\right\} \ \text{ and } \ 
		T_m \ \coloneq \ \frac{q_mq_{m+1}}{K_m} \, .
		\]
		Then $(T_m)_{m=1}^\infty$ is strictly increasing. Moreover, for all sufficiently large $m$, writing $q\coloneq q_m$ and $Q \coloneq q_{m+1}$, one has
		\begin{equation} \label{bound on Tm}
			\max\{q^2,Q\} \ \leq \ T_m \ \leq \ q^{1/\theta_\sigma} \, .
		\end{equation}
		Let $N \in \N$ satisfy $T_m\leq N<T_{m+1}$ and
		$B\coloneq\lfloor N/Q\rfloor$. For all sufficiently large $m$,
		\begin{equation}\label{scale preparation bounds}
			BQ \ \geq \ \frac{N}{2} \, , \qquad Q \ \geq \ N^{\theta_\sigma} \, , \ \text{ and } \ \min\{Q,BK_m\} \ \geq \ \frac{1}{2}N^{\theta_\sigma} \, .
		\end{equation}
	\end{lemma}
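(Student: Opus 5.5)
Write $q=q_m$, $Q=q_{m+1}$, $K_m=\min\{Q/q,q^{1/\sigma}\}$, so that $T_m=\max\{Q^2,\,q^{1-1/\sigma}Q\}$. The plan is to get everything from this closed form, together with the eventual growth bound $Q\le q^\sigma$ and the trivial bound $Q\ge q+1$.

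\emph{Monotonicity.} Since $T_m\ge Q^2=q_{m+1}^2$, it suffices to show $T_{m+1}>q_{m+1}^2$. We have $T_{m+1}=q_{m+1}q_{m+2}/K_{m+1}$ and $K_{m+1}\le q_{m+2}/q_{m+1}$. Hence $T_{m+1}\ge q_{m+1}^2$, with equality only if $K_{m+1}=q_{m+2}/q_{m+1}$. In that equality case $T_{m+1}=q_{m+1}^2$, and we must compare it with $T_m$ directly. Since $T_m=Q^2$ exactly when $Q/q\le q^{1/\sigma}$, a tie $T_m=T_{m+1}$ is possible in principle. I would check whether the intended $T_m$ should instead be $\max\{q^2,\dots\}$, or whether a small perturbation of the definition is meant.

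Recomputing carefully: $T_m=qQ/K_m$ and $K_m\le Q/q$, so $T_m\ge q^2$. Also $K_m\le q^{1/\sigma}\le q$, so $T_m\ge Q$. So the lower bound in \eqref{bound on Tm} is $\max\{q^2,Q\}$, not $Q^2$; I mis-simplified above. The correct closed form is $T_m=\max\{q^2,\,q^{1-1/\sigma}Q\}$.

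\emph{Monotonicity, corrected.} We have
\[
T_{m+1}\ \ge\ q_{m+1}^{1-1/\sigma}q_{m+2}\ >\ q_{m+1}^{2-1/\sigma}.
\]
We also have $T_{m+1}\ge q_{m+1}^2>q_m^2$, and $q_{m+1}^{2-1/\sigma}>q_m^{1-1/\sigma}q_{m+1}$ because $q_{m+1}>q_m$. Thus $T_{m+1}$ exceeds both terms of the maximum defining $T_m$, so $(T_m)$ is strictly increasing, with no largeness needed.

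\emph{Upper bound in \eqref{bound on Tm}.} For large $m$ we have $Q\le q^\sigma$, so
\[
T_m\ \le\ \max\{q^2,\,q^{\sigma+1-1/\sigma}\}.
\]
We need this to be at most $q^{1/\theta_\sigma}=q^{\max\{2,(\sigma^2+\sigma-1)/\sigma\}}$. This holds because $(\sigma^2+\sigma-1)/\sigma=\sigma+1-1/\sigma$. The lower bound in \eqref{bound on Tm} was already checked in the corrected computation.

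\emph{Bounds \eqref{scale preparation bounds}.} Take $N\in[T_m,T_{m+1})$. Since $N\ge T_m\ge Q$, we get $B\ge1$ and $BQ>N-Q$, hence $BQ\ge N/2$.

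For $Q\ge N^{\theta_\sigma}$ it suffices to have $T_{m+1}\le Q^{1/\theta_\sigma}$, which is \eqref{bound on Tm} at index $m+1$. This uses $N<T_{m+1}$. (I had worried about strictness here, but the bound needs only $\le$, so there is no issue.)

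For the last inequality, $BK_m\ge NK_m/(2Q)$, so it suffices to show $NK_m/Q\ge N^{\theta_\sigma}$, that is, $N^{1-\theta_\sigma}\ge Q/K_m=T_m/q$. The left side is increasing in $N$, so it is enough to check this at $N=T_m$, where it becomes $T_m^{\theta_\sigma}\le q$. That is exactly the upper bound in \eqref{bound on Tm}.

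The main potential obstacle was this last inequality, with its two regimes of the minimum defining $K_m$. The closed form for $T_m$ reduces it to \eqref{bound on Tm}, so no genuine difficulty remains.
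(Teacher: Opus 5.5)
Your final argument is correct and follows essentially the same route as the paper. Both use the closed form $T_m=\max\{q^2,q^{1-1/\sigma}Q\}$ (the paper's case split on $K_m$), the growth bound $Q\le q^\sigma$ to get $T_m\le q^{1/\theta_\sigma}$, the index shift $N<T_{m+1}\le Q^{1/\theta_\sigma}$, and a reduction of the last bound to $T_m^{\theta_\sigma}\le q$.

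There are two local differences. The paper gets monotonicity from the sandwich $T_m\le qQ<Q^2\le T_{m+1}$ (using $K_m\ge 1$), and gets $BQ\ge N/2$ from $N\ge T_m\ge 2Q$. You instead compare $T_{m+1}$ with both terms of the maximum, and you need only $N\ge Q$. You should also delete your first \emph{Monotonicity} paragraph: the closed form $\max\{Q^2,\dots\}$ stated there is false, and you retract it anyway.
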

	
	\begin{proof}
		For all $m$, since $K_m \leq Q/q$, one has $T_m \geq q^2$. Fix $m$ large enough that \eqref{eventual growth condition} holds for $k=m$.  Since $K_m \leq q^{1/\sigma}<q$, one has $T_m \geq Q$. Now, if $K_m = Q/q$, then $T_m=q^2$; otherwise $K_m=q^{1/\sigma}$, so the bound $Q \leq q^\sigma$ gives
		$T_m=q^{1-1/\sigma}Q \leq q^{\sigma+1-1/\sigma}$.
		Since $\frac{1}{\theta_\sigma} = \max\left\{2,\sigma+1-\frac{1}{\sigma}\right\}$, it follows that $T_m \leq q^{1/\theta_\sigma}$. We have shown \eqref{bound on Tm}.
		
		Since $K_m \geq 1$, one has $T_m \leq qQ$. We have already shown $T_{m+1} \geq Q^2$. For $m\geq 1$, we conclude $T_m \leq qQ < Q^2 \leq T_{m+1}$, hence $(T_m)$ is strictly increasing.
		
		Now, suppose that $T_m \leq N < T_{m+1}$. For large enough $m$, \eqref{bound on Tm} gives $T_{m+1}\leq Q^{1/\theta_\sigma}$, so $Q \geq N^{\theta_\sigma}$. Moreover,
		\[
		\frac{T_m}{Q} \ = \ \frac{q}{K_m}
		\ \geq \ q^{1-1/\sigma} \longrightarrow \infty \, ,
		\]
		hence we have $N \geq T_m \geq 2Q$. Then $BQ\geq\frac{N}{2}$, so $BK_m \geq \frac{NK_m}{2Q} =  \frac{qN}{2T_m}$. Since $T_m\leq q^{1/\theta_\sigma}$, we have $q\geq T_m^{\theta_\sigma}$. Therefore,
		\[
		\frac{qN}{T_m} \ \geq \ NT_m^{\theta_\sigma-1} \ \geq \ N^{\theta_\sigma} \, ,
		\]
		the second inequality holding by $T_m\leq N$ and $\theta_\sigma \leq 1$. Consequently, $BK_m\geq\frac{1}{2}N^{\theta_\sigma}$. Together with $Q \geq N^{\theta_\sigma}$, this proves \eqref{scale preparation bounds}.
	\end{proof}
	
	The next lemma says that, at every sufficiently large scale $N$, two random orbit codings are very unlikely to have edit distance below $N^{\theta_\sigma-\varepsilon}$. The proof chooses a continued-fraction scale adapted to $N$, handles one range using Lemma~\ref{lem: utility lemma on diameter} and in the remaining range partitions pairs into annuli according to how easy they are to align under rotation by a small multiple of $\alpha$; Lemma~\ref{lem: variable separated block lower} forces every pair with small edit distance into the smallest annuli, whose total measure is $O(N^{-\eps})$.
	
	\begin{lemma}\label{lem: small edit distance unlikely}
		Assume the notation of Lemma~\ref{lem: preparation for small edit distance unlikely}, let $\beta \in (0,1)$, and let $x,y \in \T$ be uniform and independent. For every $\eps \in (0,\theta_\sigma)$, for all sufficiently large $N$,
		\begin{equation}\label{eqn: small edit distance unlikely}
			\mathbb{P} \bigl( \den N x y \leq N^{\theta_\sigma-\eps}
			\bigr) \ \leq \ 60000N^{-\eps} \, .
		\end{equation}
	\end{lemma}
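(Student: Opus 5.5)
Given $N$ large, let $m$ be the index with $T_m \le N < T_{m+1}$, which exists and is unique by Lemma~\ref{lem: preparation for small edit distance unlikely}. Put $q\coloneq q_m$, $Q\coloneq q_{m+1}$, $B\coloneq\lfloor N/Q\rfloor$ and $L\coloneq\lfloor N^{\theta_\sigma-\eps}\rfloor$. By monotonicity of $N\mapsto \den N x y$, we have $\den N x y \ge \den {BQ} x y$, so it suffices to work at length $N'\coloneq BQ \ge N/2$. We split into two cases, according to which term attains $K_m$, or more precisely according to the size of $Q/q$ compared with $N^{\eps}$.

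\textbf{Case 1:} $q$ is large relative to $L$, say $q \ge N^{\theta_\sigma-\eps/2}$. Since $\theta_\sigma\le 1/2$, we have $6(q+1)L \le 12 N^{2\theta_\sigma-\eps}<N$. Lemma~\ref{lem: utility lemma on diameter}(a) then gives $\mathbb P(\den N x y\le L)\le 12L/q\le 12N^{-\eps/2}$. This is weaker than the claimed $N^{-\eps}$, so the threshold needs adjusting. Take the threshold $q \ge N^{\theta_\sigma}$ instead, which yields $12N^{-\eps}$. The remaining range is then $q< N^{\theta_\sigma}\le Q$; the last inequality uses \eqref{scale preparation bounds}.

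\textbf{Case 2:} $q<N^{\theta_\sigma}$. We use the annular decomposition. For $t=y-x$, define $u(t)\coloneq\min\{\|x+j\alpha-y\| : |j|\le L\}$, and partition according to the dyadic annuli $u\in(2^{-i-1}/q,2^{-i}/q]$, together with the event $u>1/(4q)$. Suppose $\den N x y\le L$ and $x,y\notin\Lambda^*(N)$, which is a null condition. Apply Lemma~\ref{lem: variable separated block lower} at length $N'=BQ$ with $u=u(t)$ (capped below $1/(4q)$). This forces
\[
L\ \ge\ \tfrac1{64}\min\{Q,\,B(uQ-16)_+,\,Bq^{1/\sigma}\}.
\]
By \eqref{scale preparation bounds} we have $Q$ and $Bq^{1/\sigma}\ge BK_m$, both at least $\tfrac12N^{\theta_\sigma}$, which exceeds $64L$ for large $N$. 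Hence the minimum must be $B(uQ-16)_+$, so $u\le (64L/B+16)/Q$. Since $BQ\ge N/2$, this gives $u\lesssim L/N + 1/Q$. Now estimate the measure of $\{t: u(t)\le r\}$ using only that at most $2L+1$ points $j\alpha$ are involved: it is at most $2(2L+1)r$. With $r\approx 128L/N+16/Q$, the bound becomes $O(L^2/N+L/Q)$. Here $L^2/N\le N^{2\theta_\sigma-1-2\eps}\le N^{-2\eps}$, and $L/Q\le N^{-\eps}$ because $Q\ge N^{\theta_\sigma}$. Summing these and tracking the constants gives the stated $60000N^{-\eps}$, with the Case~1 bound added.

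The main obstacle is checking that Lemma~\ref{lem: variable separated block lower} is applicable with the small $u$ produced by the annuli. It requires $m\ge m_0$, which holds for large $N$, and $u<1/(4q)$, which we arrange by capping. We also need to confirm that the crude union bound over $|j|\le L$ suffices and that no finer annulus counting is needed; the computation above suggests it does. If it fails near the boundary $q\approx N^{\theta_\sigma}$, we will instead use the sharper count from Lemma~\ref{lem: application of circle progression count}-type equidistribution of $\{j\alpha\}$, which gives measure $\lesssim r\cdot\#\{j\}$ with overlaps controlled by Lemma~\ref{lem: ialpha almost uniform}.
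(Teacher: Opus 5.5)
\textbf{There is a gap in your case split.} In Case~2 you only know $q<N^{\theta_\sigma}\le Q$, and that does not make $Q/q$ large. For example, take $\theta_\sigma=1/2$ and $Q<64q$. Then, for large $m$, $K_m=Q/q$, so $T_m=q^2$. Every $N\in(q^2,T_{m+1})$ then lands in your Case~2, and this range is nonempty because $T_{m+1}\ge Q^2$.

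In this regime Lemma~\ref{lem: variable separated block lower} is vacuous. Every admissible $u<1/(4q)$ has $uQ<Q/(4q)<16$, so $(uQ-16)_+=0$. Because you cap $u$, what you actually derive is $\min\{u(t),1/(5q)\}\le 16/Q+128L/N$. This says nothing when $16/Q\gtrsim 1/(5q)$. The pairs with $u(t)\ge 1/(5q)$ are therefore not controlled at all, and they have probability $1-O(L/q)=1-o(1)$ here, since $q>Q/64\ge N^{\theta_\sigma}/64$.

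Your fallback, a sharper equidistribution count, does not address this. The problem is not the union bound; it is that you obtain no constraint at all.

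\textbf{The fix} is to run your Case~1 one level up, as the paper does.
\begin{itemize}
\item If $6(Q+1)L\le N$, Lemma~\ref{lem: utility lemma on diameter}(a) with $k=Q+1$ gives $12L/Q\le 12N^{-\eps}$, since $Q\ge N^{\theta_\sigma}$.
\item Otherwise $Q>N^{1-\theta_\sigma+\eps}/12$. Combined with $q\le N^{1/2}$ (from $T_m\ge q^2$), this gives $q/Q<12N^{-\eps}$ and $qL/N\le N^{-\eps}$. That is exactly what makes your cap inactive.
\end{itemize}
Alternatively, lower your Case~1 threshold to $q\ge Q/100$ and accept a larger constant. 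Also, your Case~1 estimate $6(q+1)L\le 12N^{2\theta_\sigma-\eps}$ uses $q\le N^{\theta_\sigma}$, which is not available in that case; $q\le N^{1/2}$ suffices.

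\textbf{With the split repaired, your Case~2 is a valid and slightly leaner route than the paper's.} The paper's argument runs as follows:
\begin{itemize}
\item It sorts pairs into dyadic annuli of $d_q(x,y)=\min_{|r|<q}\|x+r\alpha-y\|$.
\item It transfers separation from $|r|<q$ to all $|s|\le L$ via $s=aq+r$ and $\|q\alpha\|<1/Q$.
\item It bounds the measure by a union bound over $2q-1$ translates, getting $O(qL/N+q/Q+L/Q)$.
\end{itemize}
You measure separation over $|j|\le L$ directly. The hypothesis of Lemma~\ref{lem: variable separated block lower} then needs no transfer step, and the union bound over $2L+1$ translates gives $O(L^2/N+L/Q)=O(N^{-\eps})$ with no annuli.

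Two small corrections. Take $u$ strictly below $u(t)$, say $u(t)/2$, since the separation hypothesis is a strict inequality. And the two cases are mutually exclusive for each $N$, so their bounds should not be added.
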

	
	\begin{proof}
		Let $k_0$ be as in Lemma~\ref{lem: same code iff same interval} and $m_0$ as in Lemma~\ref{lem: variable separated block lower}. 
		
		Given $N \in \N$, choose $m$ so that $T_m \leq N < T_{m+1}$, and write
		\[
		q \ \coloneq \ q_m,
		\qquad
		Q \ \coloneq \ q_{m+1},
		\qquad
		K \ \coloneq \ K_m,
		\qquad
		B \ \coloneq \ \left\lfloor\frac{N}{Q}\right\rfloor.
		\]
		If $N$ is sufficiently large, then $Q+1>k_0$ and $m \geq m_0$, and, by Lemma~\ref{lem: preparation for small edit distance unlikely},
		\begin{equation}\label{eqn: basic bounds}
			q \ \leq \ N^{1/2},
			\qquad
			BQ \ \geq \ \frac{N}{2}, 
			\qquad
			Q \ \geq \ N^{\theta_\sigma},
			\qquad
			\min\{Q,BK\} \ \geq \ \frac{1}{2}N^{\theta_\sigma}.
		\end{equation}
		
		Put $L \coloneq \left\lfloor N^{\theta_\sigma-\eps}\right\rfloor$. If $6(Q+1)L\leq N$, then Lemma~\ref{lem: utility lemma on diameter}(a) gives
		\[
		\mathbb{P} \bigl( \den N x y \leq L \bigr) \ \leq \ \frac{12L}{Q} \ \leq \ 12N^{-\eps} \, ,
		\]
		which yields \eqref{eqn: small edit distance unlikely}. Therefore, suppose $6(Q+1)L > N$, so that
		\begin{equation*}
			Q \ > \ \frac{N}{12L} \ \geq \ \frac{N^{1-\theta_\sigma+\eps}}{12} \, .
		\end{equation*}
		Together with $q \leq N^{1/2}$ and $\theta_\sigma \leq 1/2$,
		this gives
		\begin{equation}\label{hard case asymptotics}
			\frac{q}{Q} \ < \ 12N^{-\eps} \, , \qquad \frac{qL}{N} \ \leq \ N^{-\eps} \, , \qquad \frac{L}{Q} \ \leq \ 12\frac{L^2}{N} \ \leq \ 12N^{-2\eps} \, . 
		\end{equation}
		
		Define $d_q(x,y) \coloneq \min\{ \|x+r\alpha-y\| : r \in (-q,q) \cap \Z \}$ and, for all $k \geq 0$,
		\[
		A_k \ \coloneq \ 
		\left\{ (x,y) \in \T^2 : \frac{1}{2^{k+1}q} < d_q(x,y) \leq \frac{1}{2^kq}\right\} \, .
		\]
		A union bound over $|r|<q$ gives
		\begin{equation}\label{measure of distance annuli}
			\leb^2(A_k) \ \leq \ \frac{4}{2^k} \, .
		\end{equation}
		
		\medskip
		\noindent\textbf{Claim.} Fix $k \geq 0$, and let $x,y \in \T \smallsetminus \Lambda^*(BQ)$ satisfy $(x,y)\in A_k$ and $\den Nxy \leq L$. Then
		\begin{equation}\label{forced small annulus}
			2^{-k} \ \leq \ 300 \left(\frac{qL}{N}+\frac{q}{Q}+\frac{L}{Q}\right) \ \leq \ 7500 N^{-\eps} \, .
		\end{equation}
		Indeed, the second inequality in \eqref{forced small annulus} follows from \eqref{hard case asymptotics}, so let us show the first inequality. Let $u_k \coloneq \frac{1}{2^{k+1}q} -\frac{L}{qQ}-\frac{1}{Q}$. If $u_k \leq 0$, then \eqref{forced small annulus} holds. Otherwise, put $v_k \coloneq \min\{u_k, 1/(5q)\}$. By $BQ \leq N$ and the comment below \eqref{edit distance is subadditive}, we have $\den {BQ}xy \leq L$. For every $s \in [-L,L] \cap \Z$, write $s = aq+r$, where $r,a \in \Z$ satisfy
		$|r|<q$ and $|a| \leq \frac{L}{q}+1$. Since $\|q\alpha\|<1/Q$, it follows from $(x,y)\in A_k$ that $\|x+s\alpha-y\| > u_k$, so \eqref{separation hypothesis} holds with $u = v_k$, hence Lemma~\ref{lem: variable separated block lower} gives
		\begin{equation} \label{inequality from lemma application} 
			L \ \geq \ \frac{1}{64} \min\left\{Q, \, B(v_kQ-16)_+, \, Bq^{1/\sigma}
			\right\} \, .
		\end{equation}
		On the one hand, by \eqref{eqn: basic bounds}, we have $Q \geq N^{\theta_\sigma}$ and $Bq^{1/\sigma} \geq BK \geq \frac{1}{2}N^{\theta_\sigma}$. On the other hand, $L = o(N^{\theta_\sigma})$ as $N \to \infty$. Therefore, \eqref{inequality from lemma application} implies, as long as $N$ was sufficiently large,
		\[
		B(v_kQ-16)_+ \ \leq \ 64L \, .
		\]
		Using $BQ\geq N/2$, it follows that
		\begin{equation} \label{inequality on vk}
			v_k \ \leq \ 128 \left(\frac{1}{Q}+\frac{L}{N}\right).
		\end{equation}
		Moreover, by \eqref{hard case asymptotics}, $q\left(\frac{1}{Q}+\frac{L}{N}\right) = o(1)$ as $N \to \infty$. Thus, for all sufficiently large $N$, we have $v_k = u_k$, and the claim then follows from \eqref{inequality on vk}.
		
		Now, by Lemma~\ref{lem: ialpha almost uniform}(c), one has $d_q(x,y) < 1/q$ for every $(x,y) \in \T^2$. Hence the annuli $(A_k)$ cover $\T^2$ up to the null set
		\[
		\{(x,y) : d_q(x,y) = 0\} \cup\bigl(\Lambda^*(BQ)\times\T\bigr) \cup\bigl(\T\times\Lambda^*(BQ)\bigr) \, .
		\]
		By the claim, outside this null set, one has
		\[
		\{\den Nxy\leq L\} \ \subseteq \
		\bigcup_{2^{-k} \leq 7500N^{-\eps}}A_k \, .
		\]
		Hence \eqref{measure of distance annuli} gives \eqref{eqn: small edit distance unlikely}.
	\end{proof}
	
	\subsection{Universal lower bounds on limsup exponent for typical orbits}
	
	\begin{theorem}\label{thm: orbitexpolimsup universal lower bound below golden}
		Let $\alpha\in\R\smallsetminus\Q$ satisfy $2 < \mu(\alpha) \leq 1+\varphi$. For every $\beta\in(0,1)$,
		\[
		\orbitexpolimsup(\alpha,\beta) \ \geq \ \frac{\mu(\alpha)-1}{\mu(\alpha)} \, .
		\]
	\end{theorem}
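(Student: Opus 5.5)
The plan is to apply Proposition~\ref{prop: golden separated block lower bound} at the scales where $q_{m+1}$ is nearly as large as $q_m^{\rho}$, where $\rho \coloneq \mu(\alpha)-1 \in (1,\varphi]$. Fix $\eps>0$ and take $\sigma \coloneq \rho+\eps$. By \eqref{definition of mu}, $q_{k+1}\le q_k^{\sigma}$ for all large $k$, so \eqref{eventual growth condition} holds. Again by \eqref{definition of mu}, there is an infinite set $\mathcal M$ of indices with $Q\coloneq q_{m+1}\ge q^{\rho-\eps}$, where $q \coloneq q_m$. Since $\rho>1$, for small $\eps$ and large $m\in\mathcal M$ we have $Q>160q$ and $m\ge m_1(\alpha,\beta,\sigma)$.

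For such $m$, I would choose $B \coloneq q$, so that $N\coloneq BQ=qQ$; this is exactly the scale where approach-and-follow is worst. Proposition~\ref{prop: golden separated block lower bound} then gives, with probability at least $1/10$,
\[
\den N x y \ \ge \ c\min\{Q,\ Q,\ q^{1+1/\sigma}\}.
\]
The key check is that $q^{1+1/\sigma}\gtrsim Q$ up to $\eps$-losses in the exponent. Since $Q\le q^{\sigma}$, it suffices that $1+1/\sigma\ge\sigma$ approximately, that is, $\sigma^2-\sigma-1\le 0$ up to $O(\eps)$. This holds precisely because $\rho\le\varphi$, and it is the only place where the golden-mean hypothesis enters. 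Concretely, $q^{1+1/\sigma}\ge Q^{(1+1/\sigma)/\sigma}$ and $(1+1/\sigma)/\sigma\to(\rho+1)/\rho^2\ge1$ as $\eps\to0$ when $\rho\le\varphi$. The boundary case $\rho=\varphi$ only costs an $o(1)$ in the exponent. Hence with probability at least $1/10$,
\[
\den N x y\ \ge\ c\,Q^{1-O(\eps)}\ =\ N^{\frac{\rho}{\rho+1}-O(\eps)},
\]
using $N=qQ$ together with $Q\approx q^{\rho}$ on $\mathcal M$ (precisely, $q^{\rho-\eps}\le Q\le q^{\rho+\eps}$). If instead one prefers the cruder bound $\min\{Q,q^{1+1/\sigma}\}$, it suffices that its exponent relative to $\log N$ tends to $\rho/(\rho+1)$.

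To pass from positive probability to an almost-everywhere statement, I would argue as in the proof of Theorem~\ref{thm: orbitexpolimsup lower bound mu-1/mu}. Let $A_m^c$ be the event above, with $N_m=q_mq_{m+1}$ and $m\in\mathcal M$. Fatou's lemma gives $\mathbb P(\limsup_m A_m^c)\ge 1/10$, so
\[
\limsup_N \frac{\log\den N x y}{\log N}\ \ge\ \frac{\rho}{\rho+1}-O(\eps)
\]
holds on a set of positive measure. By the ergodic $\Z^2$-invariance noted after \eqref{orbit defn}, this function is almost everywhere constant, so $\orbitexpolimsup(\alpha,\beta)\ge \frac{\rho}{\rho+1}-O(\eps)$. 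Letting $\eps\downarrow0$ finishes the proof.

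The main obstacle is the exponent bookkeeping in the minimum: confirming that the third term $Bq^{1/\sigma}$, with $B=q$, does not fall below $Q$ in the exponent. When $\rho<\varphi$ there is slack, and at $\rho=\varphi$ one must track the $\eps$-dependence of $\sigma$ and of $\mathcal M$ carefully so that the losses vanish in the limit. A minor point is that $B=q$ must be an admissible integer with $N=BQ$, which it is.
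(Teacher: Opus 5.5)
Your proposal is correct and matches the paper's proof: apply Proposition~\ref{prop: golden separated block lower bound} with $B=q_m$ along indices where $\log q_{m+1}/\log q_m \to \rho$, pass to a positive-measure limsup set via Fatou, and invoke a.e.\ constancy. As in the paper, the golden-mean hypothesis enters only through $1+1/\rho \ge \rho$, which makes the term $q_{m+1}$ govern the minimum as $\sigma \downarrow \rho$.
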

	\begin{proof}
		Let $\sigma > \mu(\alpha)-1$. Let $m_1$ be as in Proposition~\ref{prop: golden separated block lower bound}. By \eqref{definition of mu}, we see that \eqref{eventual growth condition} holds for all sufficiently large $k$. Choose an infinite set $\mathcal{M} \subset \N$ such that
		\[
		\lim_{m \to \infty, \, m \in \mathcal{M}} \frac{\log q_{m+1}}{\log q_{m}} \ = \ \mu(\alpha)-1 \, .
		\]
		Since $\mu(\alpha)>2$, one has $q_{m+1}/q_{m} > 160$ for all large enough $m \in \mathcal{M}$. For such $m$ with $m \geq m_1$, Proposition~\ref{prop: golden separated block lower bound} with $B=q_{m}$ yields, for uniform and independent $x,y \in \T$,
		\begin{equation*}
			\mathbb{P}\bigl ( \den {q_mq_{m+1}} x y \ \geq \ c \min\bigl\{q_{m+1},q_m^{1+1/\sigma}\bigr\} \bigr) \ \geq \ \frac{1}{10} \, ,
		\end{equation*}
		hence Fatou's lemma yields a positive-measure limsup set. It follows that
		\[
		\orbitexpolimsup(\alpha,\beta) \ \geq \ \frac{\min\{\mu(\alpha)-1,1+1/\sigma\}}{\mu(\alpha)} \, .
		\]
		Letting $\sigma \downarrow \mu(\alpha)-1$ gives the result.
	\end{proof}
	The hypothesis $\mu(\alpha) \leq 1 + \varphi$ was not used until the last step of the proof of Theorem~\ref{thm: orbitexpolimsup universal lower bound below golden}. When $\mu(\alpha) > 1 + \varphi$, the next theorem gives a better lower bound.
	
	\begin{theorem}\label{thm: orbitexpolimsup universal lower bound above golden}
		Let $\alpha\in\R\smallsetminus\Q$ satisfy $1+\varphi<\mu(\alpha)$. For every $\beta\in(0,1)$,
		\begin{equation*}
			\orbitexpolimsup(\alpha,\beta) \ \geq \ \frac{\rho^2}{2\rho^2-1} \, ,
		\end{equation*}
		where $\rho \coloneq \mu(\alpha) - 1$ and we understand the right-hand side to be $1/2$ when $\mu(\alpha) = \infty$.
	\end{theorem}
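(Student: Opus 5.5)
The plan is to apply Proposition~\ref{prop: golden separated block lower bound} along the subsequence of scales where $q_{m+1}\approx q_m^{\rho}$, but with a different choice of $B$ than in Theorem~\ref{thm: orbitexpolimsup universal lower bound below golden}. First dispose of $\mu(\alpha)=\infty$. There the claimed bound is $1/2$, which is already given for every $\beta$ by Proposition~\ref{prop: orbitexpolimsup at least 1/2}. So assume $\varphi<\rho<\infty$.

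Fix $\sigma>\rho$. By \eqref{definition of mu}, the growth condition \eqref{eventual growth condition} holds for all large $k$. Choose an infinite set $\mathcal M\subset\N$ along which $\log q_{m+1}/\log q_m\to\rho$. Since $\rho>1$, we have $Q\coloneq q_{m+1}>160q$, where $q\coloneq q_m$, for large $m\in\mathcal M$.

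The key observation is that the middle term $BQ/q$ in the minimum of Proposition~\ref{prop: golden separated block lower bound} is irrelevant once $\rho>\varphi$. Indeed, $Q/q=q^{\rho-1+o(1)}$, while $q^{1/\sigma}\le q^{1/\rho}$, and $\rho-1>1/\rho$ exactly when $\rho^2-\rho-1>0$, i.e.\ $\rho>\varphi$. Hence for large $m\in\mathcal M$ we get $BQ/q\ge Bq^{1/\sigma}$, and the minimum reduces to $\min\{Q,Bq^{1/\sigma}\}$. To balance the two terms, take
\[
B\coloneq\lfloor Q q^{-1/\sigma}\rfloor\ge 1
\quad\text{and}\quad
N_m\coloneq BQ\le Q^2q^{-1/\sigma}.
\]
Proposition~\ref{prop: golden separated block lower bound} then gives, for independent uniform $x,y$,
\[
\mathbb P\bigl(\den{N_m}xy\ge \tfrac c2 Q\bigr)\ \ge\ \tfrac1{10}.
\]

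As in the proof of Theorem~\ref{thm: orbitexpolimsup lower bound mu-1/mu}, Fatou's lemma shows that the event of happening for infinitely many $m\in\mathcal M$ has measure at least $1/10$. The limsup exponent is a.e.\ constant, so
\[
\orbitexpolimsup(\alpha,\beta)\ \ge\ \limsup_{m\in\mathcal M}\frac{\log Q}{\log N_m}\ \ge\ \limsup_{m\in\mathcal M}\frac{\log Q}{2\log Q-\sigma^{-1}\log q}\ =\ \frac{\rho}{2\rho-1/\sigma}.
\]
Letting $\sigma\downarrow\rho$ gives $\rho/(2\rho-1/\rho)=\rho^2/(2\rho^2-1)$.

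The main points needing care are the following. First, $B$ must be at least $1$ and the flooring must only cost constants; this holds since $Qq^{-1/\sigma}\to\infty$. Second, the comparison $BQ/q\ge Bq^{1/\sigma}$ must hold uniformly along $\mathcal M$. This needs $\sigma$ close enough to $\rho$ that $\rho-1-o(1)>1/\sigma$, which is automatic since $1/\sigma<1/\rho<\rho-1$. No exceptional $\beta$ arises, because Proposition~\ref{prop: golden separated block lower bound} holds for every $\beta$.
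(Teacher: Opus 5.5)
Your proposal is correct and matches the paper's proof essentially step for step. Both arguments dispose of $\mu(\alpha)=\infty$ via Proposition~\ref{prop: orbitexpolimsup at least 1/2}, choose $B=\lfloor q_{m+1}q_m^{-1/\sigma}\rfloor$ and $N_m=Bq_{m+1}$ along a subsequence with $\log q_{m+1}/\log q_m\to\rho$, use $\rho-1>1/\sigma$ so that the middle term of Proposition~\ref{prop: golden separated block lower bound} is dominated (the paper checks $Bq_{m+1}/q_m\ge q_{m+1}$ directly, you compare $BQ/q\ge Bq^{1/\sigma}$), and conclude with Fatou's lemma and $\sigma\downarrow\rho$, where ``large $m$'' must also ensure $m\ge m_1$.
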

	\begin{proof}
		If $\mu(\alpha)=\infty$, the inequality $\orbitexpolimsup(\alpha,\beta) \geq 1/2$ follows directly from Proposition~\ref{prop: orbitexpolimsup at least 1/2}, so assume $\mu(\alpha)<\infty$.
		
		Let $\sigma>\rho$. Since $\rho > \varphi$, we have $\rho-1 > \frac{1}{\rho} > \frac{1}{\sigma}$. Let $m_1$ be as in Proposition~\ref{prop: golden separated block lower bound}. By \eqref{definition of mu}, we see \eqref{eventual growth condition} holds for all sufficiently large $k$. Choose an infinite set $\mathcal{M}\subset\N$ such that
		\[
		\lim_{m\to\infty,\,m\in\mathcal{M}}
		\frac{\log q_{m+1}}{\log q_m}
		\ = \ \rho \, .
		\]
		For $m\in\mathcal{M}$, put
		\[
		B_m \ \coloneq \ \left\lfloor\frac{q_{m+1}}{q_m^{1/\sigma}}\right\rfloor,
		\qquad
		N_m \ \coloneq \ B_mq_{m+1} \, .
		\]
		Since $\rho-1>1/\sigma$, for all sufficiently large $m\in\mathcal{M}$ with $m\geq m_1$, one has
		\[
		\frac{q_{m+1}}{q_m} \ > \ 160 \, ,
		\qquad
		B_mq_m^{1/\sigma} \ \geq \ \frac{q_{m+1}}{2} \, ,
		\qquad
		\frac{B_mq_{m+1}}{q_m} \ \geq \ q_{m+1} \, .
		\]
		Proposition~\ref{prop: golden separated block lower bound} therefore gives, for uniform and independent $x,y\in\T$,
		\[
		\mathbb{P}\left( \den {N_m}xy \geq \frac{c}{2}q_{m+1} \right) \ \geq \ \frac{1}{10} \, ,
		\]
		hence Fatou's lemma yields a positive-measure limsup set. Since
		\[
		\lim_{m\to\infty,\,m\in\mathcal{M}} \frac{\log N_m}{\log q_m} \ = \ 2\rho-\frac{1}{\sigma} \, ,
		\]
		it follows that
		\[
		\orbitexpolimsup(\alpha,\beta) \ \geq \ \frac{\rho}{2\rho-1/\sigma} \, .
		\]
		Letting $\sigma\downarrow\rho$ proves the result.
	\end{proof}

	\subsection{Universal lower bound on liminf exponent for typical orbits}

	\begin{theorem}\label{thm: orbitexpoliminf universal lower bound}
		Let $\alpha\in\R\smallsetminus\Q$, let $\beta\in(0,1)$, and put $\rho\coloneq\mu(\alpha)-1$. Then
		\begin{equation*}
			\orbitexpoliminf(\alpha,\beta)
			\ \geq \
			\min\left\{\frac{1}{2},\frac{\rho}{\rho^2+\rho-1}\right\},
		\end{equation*}
		where the second term is interpreted as $0$ when $\rho=\infty$.
	\end{theorem}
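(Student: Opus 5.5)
The plan is to combine Lemma~\ref{lem: small edit distance unlikely} with a Borel--Cantelli argument over dyadic scales, exactly as in the proofs of Theorems~\ref{thm: orbitexpoliminf lower bound when mu=2} and \ref{thm: orbitexpoliminf lower bound}, and then let $\sigma\downarrow\rho$. First dispose of trivial cases. If $\rho=\infty$ the right-hand side is $0$ and there is nothing to prove. If $\rho=1$ (i.e.\ $\mu(\alpha)=2$), then $\frac{\rho}{\rho^2+\rho-1}=1$ and the minimum is $1/2$, which is Theorem~\ref{thm: orbitexpoliminf lower bound when mu=2}. So assume $1<\rho<\infty$; in fact the argument below also covers $\rho=1$ uniformly, by taking $\sigma>1$.

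Fix $\sigma>\rho$ with $\sigma>1$. By \eqref{definition of mu}, $q_{k+1}\le q_k^\sigma$ for all sufficiently large $k$, so \eqref{eventual growth condition} holds and Lemmas~\ref{lem: preparation for small edit distance unlikely} and \ref{lem: small edit distance unlikely} apply with $\theta_\sigma=\min\{\frac12,\frac{\sigma}{\sigma^2+\sigma-1}\}$. Fix $\eps\in(0,\theta_\sigma)$. For $j\in\N$ define the event
\[
A_j \ \coloneq\ \bigl\{\exists N\in[2^j,2^{j+1}) : \den Nxy\le 2^{j(\theta_\sigma-\eps)}\bigr\}\ =\ \bigl\{\den{2^j}xy\le 2^{j(\theta_\sigma-\eps)}\bigr\},
\]
where the equality uses monotonicity of $N\mapsto\den Nxy$. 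Since $2^{j(\theta_\sigma-\eps)}\le N^{\theta_\sigma-\eps}$ for $N=2^j$, Lemma~\ref{lem: small edit distance unlikely} gives $\mathbb P(A_j)\le 60000\cdot 2^{-j\eps}$ for large $j$. This is summable, so by Borel--Cantelli, for $\leb^2$-a.e.\ $(x,y)$ and all large $j$, every $N\in[2^j,2^{j+1})$ has $\den Nxy>2^{j(\theta_\sigma-\eps)}$. Hence $\liminf_N \log\den Nxy/\log N\ge\theta_\sigma-\eps$ almost everywhere, and $\orbitexpoliminf(\alpha,\beta)\ge\theta_\sigma$ after $\eps\downarrow0$ along a countable sequence.

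Finally, let $\sigma\downarrow\rho$ along a countable sequence. The function $\sigma\mapsto\frac{\sigma}{\sigma^2+\sigma-1}$ is continuous on $(1,\infty)$, and at $\sigma=1$ equals $1$, so $\theta_\sigma\to\min\{\frac12,\frac{\rho}{\rho^2+\rho-1}\}$, which gives the theorem.

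The main obstacle is entirely contained in Lemma~\ref{lem: small edit distance unlikely}, which is already established. The only care needed here is checking that the lemma's ``sufficiently large $N$'' threshold is compatible with the dyadic scales, and handling the endpoint $\rho=1$, where \eqref{eventual growth condition} requires $\sigma>1$ strictly; this is resolved by taking $\sigma\downarrow1$.
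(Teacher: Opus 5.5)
Your proposal is correct and follows the paper's proof essentially step for step. You fix $\sigma>\rho$, apply Lemma~\ref{lem: small edit distance unlikely} at the dyadic scales $N=2^j$, and use monotonicity of $N\mapsto \den{N}{x}{y}$ together with Borel--Cantelli to get $\orbitexpoliminf(\alpha,\beta)\ge\theta_\sigma-\eps$; you then let $\eps\downarrow 0$ and $\sigma\downarrow\rho$. Your separate treatment of $\rho=1$ is harmless but unnecessary, since $\rho\ge 1$ always, so $\sigma>\rho$ already forces $\sigma>1$.
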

	
	\begin{proof}
		The asserted inequality is immediate when $\rho=\infty$, so assume $\rho<\infty$. Fix $\sigma>\rho$. Then \eqref{eventual growth condition} holds for all sufficiently large indices. Put
		\[
		\theta_\sigma \ \coloneq \ \min\left\{\frac{1}{2},\frac{\sigma}{\sigma^2+\sigma-1}\right\},
		\]
		and fix $\eps\in(0,\theta_\sigma)$.
		
		Let $x,y\in\T$ be uniform and independent. For $j\in\N$, define the event
		\[
		A_j \ \coloneq \ \left\{ \exists N\in[2^j,2^{j+1}) : \den N x y \leq \left\lfloor 2^{j(\theta_\sigma-\eps)}\right\rfloor \right\}
		\ = \
		\left\{ \den {2^j} x y \leq \left\lfloor 2^{j(\theta_\sigma-\eps)}\right\rfloor
		\right\},
		\]
		where the equality follows from the monotonicity of edit distance recorded below \eqref{edit distance is subadditive}. For all sufficiently large $j$, Lemma~\ref{lem: small edit distance unlikely} gives $\mathbb{P}(A_j) \leq 60000\cdot 2^{-j\eps}$, so Borel--Cantelli implies that, for almost every $(x,y) \in \T^2$, for all sufficiently large $j$ and every $N \in [2^j,2^{j+1})$,
		\[
		\den N x y \ > \ 2^{j(\theta_\sigma-\eps)}.
		\]
		For such $N$,
		\[
		\frac{\log \den N x y}{\log N} \ > \ \frac{\log 2^{j(\theta_\sigma-\eps)}}{\log 2^{j+1}} \ = \ \frac{j}{j+1}(\theta_\sigma-\eps) \, .
		\]
		It follows that $\orbitexpoliminf(\alpha,\beta) \geq \theta_\sigma-\eps$. Letting $\eps \downarrow 0$ and then $\sigma \downarrow \rho$ gives the required
		\[
		\orbitexpoliminf(\alpha,\beta) \ \geq \ \min\left\{\frac{1}{2},\frac{\rho}{\rho^2+\rho-1}\right\} \, . \qedhere
		\]
	\end{proof}

	\begin{proof}[Proof of Theorem~\ref{main thm golden less}]
		The case $\mu(\alpha)=2$ was shown in Section~\ref{sec: the case mu alpha equals 2}, so suppose $\mu(\alpha) \in (2, 1+\varphi]$. Theorem~\ref{thm: upper bounds on expoliminf and expolimsup} gives $\orbitexpolimsup(\alpha,\beta)  \leq \expolimsup(\alpha,\beta) \leq  \frac{\mu(\alpha)-1}{\mu(\alpha)}$, and Theorem~\ref{thm: orbitexpolimsup universal lower bound below golden} gives the corresponding lower bound. Theorem~\ref{thm: upper bounds on expoliminf and expolimsup} gives $\orbitexpoliminf(\alpha,\beta) \leq \expoliminf(\alpha,\beta) \leq \frac{1}{2}$ and Theorem~\ref{thm: orbitexpoliminf universal lower bound} gives the corresponding lower bound.
	\end{proof}

	\section{Upper bounds via round-and-synchronize}\label{sec: exceptional sets}
	We develop in Lemma~\ref{lem: general bound on diameter in terms of previous q_m} an alternative method for bounding edit distance from above by ``rounding and synchronizing''. It is used to prove Theorem~\ref{main thm golden more} and Proposition~\ref{main prop high mu}. The proof of Lemma~\ref{lem: general bound on diameter in terms of previous q_m} first approximates $\alpha$ by $p_m/q_m$ and $\beta$ by a multiple of $1/q_m$ (the ``rounding''), then estimates the edit distance blockwise, correcting accumulated drift by deleting $q_{m-1}$ symbols per block (the ``synchronizing''); the resulting words are close in edit distance because they are shifts of a common $q_m$-periodic word.
	
	\begin{lemma}\label{lem: general bound on diameter in terms of previous q_m} 
		Let $\alpha \in \R\smallsetminus\Q$ with convergents $(p_m/q_m)$ and let $\beta \in (0,1)$. Then there exists an absolute constant $C > 0$ with the following property: If $b, m \in \N$ satisfy
		\begin{equation}\label{assumption on beta distance to lambda qm} 
			\mathrm{dist}_\T(\beta,\Lambda(q_m)\smallsetminus \{0\}) \ \leq \  \frac{b}{q_{m+1}} \, ,
		\end{equation}
		then, for all $N \in \N$,
		\begin{equation*}
			\mathrm{diam}_E(\mathcal{W}_{N}) \ \leq \ q_{m}+  C(q_{m-1}+b) \Bigl\lceil\frac{N}{q_{m+1}}\Bigr\rceil \, .
		\end{equation*}
	\end{lemma}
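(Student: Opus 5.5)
The plan is to compare every orbit coding with a single $q_m$-periodic word, and then use the triangle inequality. Write $q\coloneq q_m$, $p\coloneq p_m$, $Q\coloneq q_{m+1}$ and $\eta\coloneq q\alpha-p$, so that $|\eta|<1/Q$ by \eqref{helpful inequality on q_n alpha}.

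\textbf{Rounding $\beta$.} By hypothesis there is $n_0\in(0,q)\cap\Z$ with $\|\beta+n_0\alpha\|\le b/Q$. By \eqref{kalpha approximation}, $n_0\alpha$ lies within $1/Q$ of $n_0p/q$. Hence $\beta=c/q+e \bmod 1$ for some integer $c\in[1,q)$ and some $|e|\le (b+1)/Q$. This uses $n_0\neq 0$, so that $c\not\equiv 0$.

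\textbf{The periodic word.} Let $P\in\{0,1\}^{\Z}$ be the $q$-periodic coding of the rational rotation by $p/q$ with respect to $[0,c/q)$ on the grid $\frac1q\Z$: set $P_n=1$ iff $(np \bmod q)<c$. For $z\in\T$, write $qz=a+u$ with $a\in\Z$ and $u\in[0,1)$. Then
\[
q(z+n\alpha)=a+np+u+n\eta .
\]
As long as the phase $u+n\eta$ stays in $(0,1)$, the point $z+n\alpha$ lies in the grid cell $\frac{a+np \bmod q}{q}+[0,\frac1q)$. Therefore $\xi_n(z)=P_{n+\ell}$, where $\ell$ is chosen with $\ell p\equiv a \bmod q$, except when $z+n\alpha$ lies between $c/q$ and $\beta$. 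The rotation is isometric and the orbit over a window of length $\lesssim Q$ is $\gtrsim 1/Q$-separated (Lemma~\ref{lem: description of three gaps by size and count}). So in each time block of length $Q$ the number of such exceptional $n$ is $O(Q|e|+1)=O(b+1)$, as in the proof of Lemma~\ref{lem: frequent following}.

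\textbf{Synchronizing.} Since $|\eta|<1/Q$, the phase $u+n\eta$ changes by less than $1$ over a block of length $Q$. So in each block it crosses an integer at most once. At a crossing, $a$ effectively changes by $\pm1$. By \eqref{pq identity} we have $\pm1\equiv \pm q_{m-1}p \bmod q$, so the matching shift $\ell$ changes by $q_{m-1}$ modulo $q$. Thus after a crossing the coding again agrees with a shift of $P$, and the new alignment is reached by deleting (or inserting) $q_{m-1}$ symbols, possibly adjusting by a full period. Near the crossing there may also be $O(1)$ extra mismatches at the boundary point $0$, which are handled like the $\beta$-exceptions.

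\textbf{Assembling the bound.} Using subadditivity \eqref{edit distance is subadditive} block by block, $\xi_{[0,N)}(z)$ lies within edit distance $C(q_{m-1}+b)\lceil N/Q\rceil$ of $P_{[\ell_z,\ell_z+N)}$ for some $\ell_z\in[0,q)$. Any two such shifted windows of $P$ are within edit distance $\le q$: delete a prefix of length $<q$ from one and a suffix from the other. The triangle inequality applied to $x$ and $y$ then gives
\[
\mathrm{diam}_E(\mathcal W_N)\le q+2C(q_{m-1}+b)\lceil N/Q\rceil .
\]

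The main obstacle is the synchronization bookkeeping. One must show that a single phase crossing costs only $O(q_{m-1})$ edits rather than $O(q)$. This requires choosing the sign correctly: delete $q_{m-1}$ or $q-q_{m-1}$ symbols according to the parity of $m$ and the sign of $\eta$, or insert symbols in the word being compared. One must also check that the cumulative shift never needs to exceed $q$ plus the per-block corrections. I would handle this by tracking the shift modulo $q$ and always realizing it by the cheaper of deletions in $x$'s word or in $P$.
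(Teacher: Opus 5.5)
Your proposal is correct and follows essentially the same route as the paper. Both round $\beta$ to a grid point $j_0/q_m$, compare each orbit coding with a window of the $q_m$-periodic coding of the rotation by $p_m/q_m$, pay $O(b)$ Hamming errors per $q_{m+1}$-block plus $q_{m-1}$ deletions per phase crossing via \eqref{pq identity}, and finish with the triangle inequality and the bound $q_m$ between periodic windows.

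The bookkeeping you flag as the main obstacle is milder than you fear. The paper checks that, in both parities, after $j$ crossings the true orbit point at time $n$ lies in the grid cell of the periodic orbit point at time $n-jq_{m-1}$. So one always deletes the $q_{m-1}$ repeated symbols at the start of each new block, which builds a single common subsequence rather than comparing equal-length aligned blocks, and no $q_m-q_{m-1}$ or full-period adjustment is ever needed.
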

	\begin{remark}
		One may take $C = 16$; no attempt has been made to optimize it.
	\end{remark}
	\begin{proof}
		Lemma~\ref{lem: ialpha almost uniform}(a) implies there exists $j_0 \in (0,q_m) \cap \Z$ such that
		\begin{equation} \label{approximating beta} \Bigl | \beta - \frac{j_0}{q_m} \Bigr | \ \leq \ \frac{b+1}{q_{m+1}} \ \leq \ \frac{2b}{q_{m+1}} \,.
		\end{equation}
		
		Given $x \in [0,1)$ and $n \in \N \cup \{0\}$, let
		\[
		x_n \ \coloneq \ x + n\alpha  \quad \text{ and } \quad \hat{x}_n \ \coloneq \ \frac{\lfloor{q_m x}\rfloor}{q_m} + n\frac{p_m}{q_m} \, .
		\]
		
		Define $W(x) \coloneq (\mathbf{1}_{[0,j_0/q_m)}(x_{n} \bmod 1))_{n=0}^{N-1}$ and $P(x) \coloneq (\mathbf{1}_{[0,j_0/q_m)}(\hat{x}_{n} \bmod 1))_{n=0}^{N-1}$. Observe that, for $x,y \in [0,1)$, the words $P(x)$ and $P(y)$ are subwords of the same $q_m$-periodic infinite word, hence
		\begin{equation} \label{periodic difference} \max_{x,y\in [0,1)} \editfull (P(x),P(y)) \ \leq \ q_m \, .
		\end{equation}
		
		Let $N^* \coloneq \lceil N/q_{m+1} \rceil$. 
		To prove the lemma, we apply \eqref{periodic difference} and the following two claims:  
		\begin{align}
			\textbf{Claim 1.} \text{ For all } x \in [0,1)\, ,& \quad \editfull(\xi_{[0,N)}(x),W(x)) \ \leq \ 4bN^* \, . \label{claim 1} \\
			\textbf{Claim 2.} \text{ For all } x \in [0,1)\, , & \quad \editfull ( W(x), P(x)) \ \leq \ q_{m-1}N^* \, . \label{claim 2}
		\end{align}
		
		\medskip
		\noindent To show \eqref{claim 1}, let $x \in [0,1)$. By Lemma~\ref{lem: description of three gaps by size and count}(a), the set $[0,1) \smallsetminus\{x_n \bmod 1 : n \in [0,q_{m+1}) \}$ is a disjoint union of arcs, each with length greater than $\frac{1}{2q_{m+1}}$. 
		Thus,  $x_n \bmod 1$ can belong to a given interval of length $2b/q_{m+1}$ for at most $4b$ values of $n \in [0,q_{m+1})$. Therefore, in view of \eqref{approximating beta}, for each $i \in [0,N/q_{m+1} ] \cap \Z$, 
		\begin{equation}
			\xi_{iq_{m+1}+n}(x) \ = \ \mathbf{1}_{[0,j_0/q_m)}(x_{iq_{m+1}+n} \bmod 1) 
		\end{equation}
		for all but at most $4b$ values of $n \in [0,q_{m+1})$, yielding \eqref{claim 1}. \medskip
		
		Next, we will prove \eqref{claim 2}. First, assume $\alpha > p_m/q_m$ (so $m$ is even). 
		
		For each $j \in \N \cup \{0\}$, set $n_j \coloneq \min\{ n \geq 0 : x_{n} - \hat{x}_{n} \geq j/q_m \}$. Since $m$ is even, the function $n \mapsto x_n - \hat{x}_n$ is increasing, and at each step it increases by $\alpha - p_m/q_m < \frac{1}{q_mq_{m+1}}$ by \eqref{helpful inequality on q_n alpha}, so $n_{j+1}-n_j \geq q_{m+1}$ for all $j \geq 1$.
		
		Let $i \in \N \cup \{0\}$ be such that $N-1 \in [n_{i},n_{i+1})$. Then $i \leq N^*$.
		
		For $n \in [n_j,n_{j+1})$, set $x_n^* \coloneq \hat{x}_n + \frac{j}{q_m}$. Then $x_n - x^*_n \in [0,\frac{1}{q_m})$, so the word
		\[
		W^*(x) \ \coloneq \ (\mathbf{1}_{[0,j_0/q_m)}(x^*_{n} \bmod 1))_{n=0}^{N-1}
		\]
		satisfies $W^*(x) = W(x)$. We now exhibit a common increasing subsequence of $W^*(x)$ and $P(x)$ with length at least $N - q_{m-1}N^*$. If $i = 0$, then $W^*(x) = P(x)$, so suppose $i > 0$. Observe that, for $n \in [n_j,n_{j+1})$, since $m$ is even, the identity $p_{m-1}q_m - p_mq_{m-1} = (-1)^m$ (see \eqref{pq identity}) implies
		\begin{equation}\label{xstar}
			x_n^* \ = \ \hat{x}_{n-jq_{m-1}} + jp_{m-1} \ \equiv \ \hat{x}_{n-jq_{m-1}} \bmod 1 \, .
		\end{equation}
		
		Therefore, writing $\zeta{[c,d)} \coloneq (\mathbf{1}_{[0,j_0/q_m)}(\hat{x}_n \bmod 1))_{n=c}^{d-1}$ for integers $c < d$, we see that
		\[
		W^*(x) \ = \ B_0\dots B_{i-1}B_i \, ,
		\]
		where $B_i \coloneq \zeta[n_i-iq_{m-1},N-iq_{m-1})$ and $B_j \coloneq \zeta[n_j-jq_{m-1},n_{j+1}-jq_{m-1})$ for $j < i$.
		If $N-n_i > q_{m-1}$, the common subsequence $\zeta{[0,N-iq_{m-1})}$ is found by taking $B_0$, followed by all but the first $q_{m-1}$ symbols of each $B_j$ with $j > 0$. Since $i \leq N^*$, the observation $N - iq_{m-1} \geq N - q_{m-1}N^*$ shows \eqref{claim 2}. If instead $N - n_i \leq q_{m-1}$, then we take the same parts of each block as above, except that we instead discard all of $B_i$. This yields the common subsequence $\zeta[0,n_i - (i-1)q_{m-1})$, which has length at least $N - iq_{m-1} \geq N - q_{m-1}N^*$.
		
		This completes the proof of \eqref{claim 2} under the assumption $\alpha > p_m/q_m$. When $\alpha < p_m/q_m$, follow the same argument with the definitions $n_j \coloneq \min \{n \geq 0 : \hat{x}_n - x_n > \frac{j-1}{q_m} \}$ and $x_n^* \coloneq \hat{x}_n - \frac{j}{q_m}$ for $n \in [n_j,n_{j+1})$. Then \eqref{xstar} again will hold and hence \eqref{claim 2}.  
	\end{proof}
	
	\subsection{Upper bounds for $\alpha$ with steady convergent growth}
	This section proves the sharpness claims in Theorem~\ref{main thm golden more}, which hold for uncountably many $\alpha$ and require an exceptional set of $\beta$. We are interested in $\alpha$ with $\mu(\alpha) > \varphi + 1$ such that the convergents $(p_m/q_m)$ satisfy the ``steady convergent growth'' condition 
	\begin{equation}\label{steady convergent growth}
		q_{m+1} \ = \ q_m^{\mu(\alpha)-1+o(1)} \qquad \text{ as } \ m \to \infty \,.
	\end{equation}
	
	Fix $\rho > \varphi$ and any finite initial string of partial quotients $a_1,a_2,\ldots, a_{m_0}$. Then $q_{m_0}$ is determined. For every sufficiently large $m$, choose $a_{m+1} \in \N$ such that
	\[
	\left\lceil q_m^{\rho-1}\right\rceil \ \leq \ a_{m+1} \ \leq \ \left\lfloor 2q_m^{\rho-1} \right\rfloor .
	\]
	There are at least two choices for $a_{m+1}$ if $m$ is large enough. Since $q_{m+1} = a_{m+1}q_m + q_{m-1}$, the inequality $q_m^\rho \leq q_{m+1} \leq 3q_m^\rho$ holds for all large $m$, hence every irrational with partial quotients chosen by this procedure satisfies the steady convergent growth condition with $\mu(\alpha) - 1 = \rho$ (see \eqref{definition of mu}). The infinitely many independent choices of $a_{m+1}$ produce an uncountable set of such $\alpha$.
	
	Given $\alpha$, define
	\begin{equation} \label{definition of B upper star}
		B^*(\alpha) \ \coloneq \ \bigcup_{M\in\N} \bigcap_{m \geq M} \left\{ \beta \in (0,1) : \mathrm{dist}_\T(\beta,\Lambda(q_m)) \ \leq \ \frac{2}{q_{m+1}} \right\}. 
	\end{equation}
	The next proposition shows that $B^*(\alpha)$ is topologically large.
	\begin{proposition}\label{prop: dense perfect}
		Let $\alpha \in \R\smallsetminus\Q$ with convergents $(p_m/q_m)$ and assume $\mu(\alpha) > 2$. The set $B^*(\alpha)$ defined in \eqref{definition of B upper star} is dense and contains a nonempty perfect set.
	\end{proposition}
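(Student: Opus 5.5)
Since $B^*(\alpha)$ is a countable union over $M$ of the closed sets
\[
F_M \coloneq \bigcap_{m\ge M}\Bigl\{\beta : \mathrm{dist}_\T(\beta,\Lambda(q_m))\le \tfrac{2}{q_{m+1}}\Bigr\},
\]
it suffices to show two things. First, for every open interval $U\subset(0,1)$ some $F_M$ contains a nonempty perfect subset of $U$; this gives density and the perfect set simultaneously. The natural approach is a Cantor-type nested construction. We use the points of the orbit $\Lambda(q_m)$ as centers and build a tree of closed intervals $J_m$ of radius $2/q_{m+1}$ around points $z_m\in\Lambda(q_m)$. The goal is that each level-$m$ interval contains at least two disjoint level-$(m+1)$ intervals, and that the choice of $z_{m+1}$ forces $\mathrm{dist}(\beta,\Lambda(q_{m+1}))$ small for every $\beta$ in the child.

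The key observation is the nesting mechanism. Since $\Lambda(q_m)\subset\Lambda(q_{m+1})$, any $z\in\Lambda(q_m)$ also lies in $\Lambda(q_{m+1})$. Fix $z=-n\alpha \bmod 1$ with $n<q_m$. We look for children of the form $z'=-(n+jq_m)\alpha \bmod 1$ with $n+jq_m<q_{m+1}$, so that $z'\in\Lambda(q_{m+1})$. These points lie at distances $j\|q_m\alpha\|$ from $z$, all on one side, and by \eqref{helpful inequality on q_n alpha} we have $\|q_m\alpha\|\in(\frac1{2q_{m+1}},\frac1{q_{m+1}})$. Hence for $j=0,1$ (and more choices if $q_{m+1}\ge 3q_m$) the children lie within $1/q_{m+1}$ of $z$, and a child interval of radius $2/q_{m+2}$ around $z'$ sits inside $J_m$ once $q_{m+2}$ is large. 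Two distinct children are separated by at least $\|q_m\alpha\|>\frac1{2q_{m+1}}$, which exceeds $4/q_{m+2}$ as soon as $q_{m+2}>8q_{m+1}$. The role of $\mu(\alpha)>2$ is this step: it gives infinitely many $m$ with $q_{m+1}/q_m$ huge. For the remaining $m$ I would use a single child, $z'=z$. This requires $J_{m+1}\subset J_m$, i.e.\ $2/q_{m+2}\le 2/q_{m+1}$, which is automatic. Every $\beta\in J_{m+1}$ then satisfies both the level-$m$ condition, since $J_{m+1}\subset J_m$, and the level-$(m+1)$ condition, since its center lies in $\Lambda(q_{m+1})$. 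Thus the intersection over all levels lies in $F_M$.

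The plan is as follows. Given $U$, take $M$ large and choose $z_M\in\Lambda(q_M)\cap U$ with $J_M\subset U$; this is possible by Lemma~\ref{lem: ialpha almost uniform}(b), since $\Lambda(q_M)$ is $2/q_M$-dense. Next, check the containment $J_{m+1}\subset J_m$ in the branching step: with $|z'-z|\le\|q_m\alpha\|<1/q_{m+1}$ and radius $2/q_{m+2}\le 1/q_{m+1}$ whenever $q_{m+2}\ge 2q_{m+1}$, and otherwise do not branch. Branch at the infinitely many $m$ where $q_{m+2}>8q_{m+1}$ and $q_{m+1}\ge 2q_m$; the latter guarantees that $n+q_m<q_{m+1}$. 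The resulting set is the set of limits of branches of a tree that is binary infinitely often, with diameters tending to $0$. It is therefore a nonempty perfect compact set (homeomorphic to a Cantor set) contained in $F_M\cap U$.

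The main obstacle is the bookkeeping that children stay inside parents while remaining disjoint, including the endpoint and modulo-$1$ wraparound issues and the requirement that $J_M\subset(0,1)$. I would handle this by choosing $U$ away from $0$ and $1$, taking $M$ large enough that all radii are tiny, and restricting branching to indices with a large ratio $q_{m+2}/q_{m+1}$, whose infinitude follows from \eqref{definition of mu} and $\mu(\alpha)>2$.
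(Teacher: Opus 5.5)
Your Cantor-type construction is the same as the paper's: nested closed arcs of radius $2/q_{m+1}$ centered at points of $\Lambda(q_m)$, one child at most levels, and two children where $q_{m+2}/q_{m+1}$ is large. Your nesting, disjointness and membership checks are correct. The gap is in the branching rule. You branch at $m$ only if both $q_{m+2}>8q_{m+1}$ and $q_{m+1}\ge 2q_m$, and you assert that infinitely many such $m$ exist. However, $\mu(\alpha)>2$ only gives infinitely many $m$ with the first property. For $m\ge 2$, the first property forces $a_{m+2}\ge 8$ and the second forces $a_{m+1}\ge 2$. So you need infinitely many pairs of consecutive partial quotients that are both at least $2$, and this can fail. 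Take $a_j=1$ except for $j=2^k$ with $k\ge 2$, where $a_{2^k}\ge q_{2^k-1}$. Then $q_{2^k}\ge q_{2^k-1}^2$, so $\mu(\alpha)\ge 3$ by \eqref{definition of mu}, yet your rule never branches for $m\ge 2$ and the construction yields a single point. The condition $q_{m+1}\ge 2q_m$ is not a removable technicality in your version. It is what guarantees that your specific second child $-(n+q_m)\alpha$ lies in $\Lambda(q_{m+1})$, and when $a_{m+1}=1$ and $q_{m-1}\le n<q_m$ that child does not lie in $\Lambda(q_{m+1})$.

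Either of two repairs closes the gap. (i) Keep your explicit child, but choose branching indices from the infinite set $\{m : q_{m+2}>8q_{m+1}\}$ that exceed $M$ and are pairwise at least $2$ apart. The initial center $-n\alpha$ has $n<q_M\le q_{m-1}$ for the first branching index $m\ge M+1$. Inductively, if the center index before a branch at $m'$ is below $q_{m'-1}$, then afterwards it is below $q_{m'}+q_{m'-1}\le q_{m'+1}\le q_{m-1}$ for the next branching index $m\ge m'+2$. Hence $n+q_m<q_m+q_{m-1}\le q_{m+1}$, with no condition on $a_{m+1}$. (ii) As the paper does, do not insist on a particular second child. If $q_{m+2}\ge 7q_{m+1}$, each of the two arcs $\{t : 4/q_{m+2}<\|t-z\|<2/q_{m+1}-2/q_{m+2}\}$ has length $2/q_{m+1}-6/q_{m+2}\ge 1/q_{m+1}+1/q_{m+2}$. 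This exceeds the largest gap $\|q_m\alpha\|+\|q_{m+1}\alpha\|$ of $\Lambda(q_{m+1})$ from Lemma~\ref{lem: description of three gaps by size and count}(a). So each arc contains a point of $\Lambda(q_{m+1})$, which can serve as the second center. With either fix your argument is complete. Obtaining density from the local Cantor sets is fine, though the paper gets it more directly from $\Lambda(q_M)\smallsetminus\{0\}\subset F_M$.
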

	\begin{proof}
		Density follows from the fact that, for each $M \in \N$,
		\[ \Lambda(q_{M}) \smallsetminus \{0\} \subset\bigcap_{m \geq M} \left\{ \beta \in (0,1) : \mathrm{dist}_\T(\beta,\Lambda(q_m)) \ \leq \ \frac{2}{q_{m+1}} \right\}  \, .
		\]
		
		Since $\mu(\alpha) > 2$, by \eqref{definition of mu}, there exists an increasing sequence $A = (m(k))_{k=1}^\infty \subset \N$ such that $q_{m(k)+2} \geq 7 q_{m(k)+1}$ for all $k \ge 1$.
		
		For $x \in [0,1)$ and $\delta \in [0,1]$, we write $B[x, \delta] \coloneq \{y \in [0,1): \| x - y\| \leq \delta \}$.
		
		Choose $n_0$ so large that $x_0 \coloneq -\alpha \bmod 1$ belongs to $\Lambda(q_{n_0})\smallsetminus\{0\}$ and $B[x_0,2/q_{n_0+1}]$ does not contain $0$, and put $S_{n_0}:=\{x_0\}$. Suppose, for some $n \geq n_0$, there exists nonempty $S_{n} \subseteq \Lambda(q_{n})$ such that the sets $B[x,2/q_{n+1}]$ for $x \in S_{n}$ are pairwise disjoint. We claim there exists $S_{n+1} \subseteq \Lambda(q_{n+1})$ such that the sets $B[x,2/q_{n+2}]$ for $x \in S_{n+1}$ are pairwise disjoint,
		\begin{equation*}
			|S_{n+1}| \ = \ \begin{cases} |S_n| & \text{ if } n \not\in A, \\
				2|S_n| & \text{ otherwise,} \end{cases} 
		\end{equation*}
		and $\cup_{x \in S_{n+1}} B[x,2/q_{n+2}] \subseteq \cup_{x \in S_n} B[x,2/q_{n+1}]$.
		
		Indeed, if $n \not\in A$, then $S_{n+1} \coloneq S_n$ satisfies the desired properties. Otherwise, $n = m(k)$ for some $k \in \N$. Given $x \in S_{m(k)}$, choose $y(x) \in \Lambda(q_{m(k)+1})$ so that
		\begin{equation}\label{choosing a second point}
			\frac{4}{q_{m(k)+2}} \ < \ \| x - y(x) \| \ < \ \frac{2}{q_{m(k)+1}} - \frac{2}{q_{m(k)+2}} \, ;
		\end{equation}
		this is possible because $\{z \in\T : \frac{4}{q_{m(k)+2}} < \|x - z\| < \frac{2}{q_{m(k)+1}} - \frac{2}{q_{m(k)+2}} \}$ is a union of two intervals, each of length
		\begin{equation*}
			\frac{2}{q_{m(k)+1}} - \frac{6}{q_{m(k)+2}} \ \geq \ \frac{1}{q_{m(k)+1}} + \frac{1}{q_{m(k)+2}} \ > \ \| q_{m(k)}\alpha\| + \|q_{m(k)+1}\alpha\|,   
		\end{equation*}
		and thus each interval contains a point of $\Lambda(q_{m(k)+1})$ in view of Lemma~\ref{lem: description of three gaps by size and count}(a).
		
		Moreover, \eqref{choosing a second point}  implies that $B[x,2/q_{m(k)+2}]$ and $B[y(x),2/q_{m(k)+2}]$ are disjoint and contained in $B[x,2/q_{m(k)+1}]$. Thus take $S_{n+1} \coloneq \cup_{x \in S_n} \{x,y(x)\}$. By construction, the sets $B[x,2/q_{n+2}]$ for $x \in S_{n+1}$ are pairwise disjoint and $|S_{n+1}| = 2|S_n|$. This completes the proof of the claim. Finally, we observe that
		\[
		\bigcap_{n \geq n_0} \bigcup_{x \in S_n} B[x,2/q_{n+1}]
		\]
		is a nonempty perfect set contained in $B^*(\alpha)$.
	\end{proof}
	
	Before proving Theorem~\ref{main thm golden more}, we need the following two propositions.
	
	\begin{proposition}\label{prop: steady convergent expolimsup upper}  Let $\alpha \in \R \smallsetminus \Q$ have convergents $(p_m/q_m)$ and suppose that they satisfy  $q_{m+1} = q_m^{\rho + o(1)}$ as $m \to \infty$ for some $\rho > \varphi$. Then, for all $\beta \in B^*(\alpha)$,
		\begin{equation*}
			\expolimsup(\alpha,\beta) \ \leq \ \frac{\rho^2}{2\rho^2-1} \, .
		\end{equation*}
	\end{proposition}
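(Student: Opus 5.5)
The plan is to combine the two upper bounds available for every $N$: approach-and-follow (Lemma~\ref{lem: q + N/q bound}), which gives $\mathrm{diam}_E(\mathcal{W}_N)\le q_j+11N/q_j+11$ for every $j$, and round-and-synchronize (Lemma~\ref{lem: general bound on diameter in terms of previous q_m}). Since $\beta\in B^*(\alpha)$, there is $M$ with $\mathrm{dist}_\T(\beta,\Lambda(q_m))\le 2/q_{m+1}$ for all $m\ge M$. For large $m$ we have $\beta\neq 0$ bounded away from $0$, so the nearest point is not $0$ and hypothesis \eqref{assumption on beta distance to lambda qm} holds with $b=2$. Lemma~\ref{lem: general bound on diameter in terms of previous q_m} then gives, for every $m\ge M$ and all $N$,
\[
\mathrm{diam}_E(\mathcal{W}_N)\ \le\ q_m+C(q_{m-1}+2)\Bigl\lceil \frac{N}{q_{m+1}}\Bigr\rceil .
\]

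Next I fix $\delta>0$ and use $q_{m+1}=q_m^{\rho+o(1)}$ to write everything as powers of a single parameter. Write $N=q_m^{t}$, where $m$ is chosen with $q_m^{2}\le N<q_{m+1}^2$, so $t\in[2,2\rho+o(1))$. Up to $q^{o(1)}$ factors, $q_{m-1}\approx q_m^{1/\rho}$ and $q_{m+1}\approx q_m^{\rho}$. The logarithm base $q_m$ of each candidate bound is then:
\begin{itemize}
\item approach-and-follow with $j=m$: $\max\{1,t-1\}$;
\item approach-and-follow with $j=m+1$: $\max\{\rho,t-\rho\}$;
\item round-and-synchronize at level $m$: $\max\{1,1/\rho+t-\rho\}$, and for $t\le\rho$ this is just $\max\{1,1/\rho\}=1$;
\item round-and-synchronize at level $m+1$: $\max\{\rho,1+t-\rho^2\}=\rho$, since $t<2\rho\le\rho^2+\rho-1$.
\end{itemize}

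So the exponent is at most $\min\{t-1,\ \max(1,t+1/\rho-\rho),\ \rho\}/t$, and I maximize this over $t$. For $t\le\rho-1/\rho+1$ the round-and-synchronize-$m$ term is $1$, giving ratio at most $1/2$. Beyond that, the minimum of $t+1/\rho-\rho$ and $\rho$ switches at $t=2\rho-1/\rho$. The ratio $(t+1/\rho-\rho)/t$ increases in $t$, while $\rho/t$ decreases, so the worst case is $t=2\rho-1/\rho$, giving
\[
\frac{\rho}{2\rho-1/\rho}\ =\ \frac{\rho^2}{2\rho^2-1}.
\]
The term $t-1$ only helps here: it is smaller than $t+1/\rho-\rho$ precisely when $\rho-1/\rho<1$, i.e.\ $\rho<\varphi$, which is excluded. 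Also $\rho^2/(2\rho^2-1)\ge 1/2$, so the small-$t$ range is dominated.

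Finally, the $o(1)$ errors in the exponents are absorbed by letting $\delta\downarrow0$. The additive constants and the ceiling $\lceil N/q_{m+1}\rceil\le N/q_{m+1}+1$ contribute at most $O(q_{m-1})$, which is negligible. The main obstacle is not conceptual but bookkeeping: making sure the $q^{o(1)}$ slack in $q_{m-1}$, $q_m$, $q_{m+1}$ is uniform enough that one can choose, for every large $N$, the appropriate $m$ and method. I would handle this by fixing $\rho_\pm=\rho\pm\delta$ and doing the case analysis with inequalities $q_m^{\rho_-}\le q_{m+1}\le q_m^{\rho_+}$ throughout, then letting $\delta\to0$.
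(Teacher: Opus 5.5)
Your proposal is correct and is essentially the paper's argument. The paper likewise relies only on the round-and-synchronize bound $\mathrm{diam}_E(\mathcal{W}_N)\le C'\max\{q_m,\,q_{m-1}N/q_{m+1}\}$ at consecutive levels and optimizes $\inf_{\lambda>0}\max_{\theta\in[\lambda,\rho\lambda]} f(\theta)$ with $f(\theta)=\max\{\theta,(1/\rho-\rho)\theta+1\}$. That is exactly your min-over-levels-$m$-and-$m+1$ computation in the variable $\theta=1/t$, with the same extremal scale $N\approx q_m^{2\rho-1/\rho}$.

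Your approach-and-follow terms are harmless but redundant. At level $m+1$ they reproduce the $q_{m+1}$ bound that round-and-synchronize at level $m+1$ already gives, and at level $m$ they never help when $\rho>\varphi$. Your explicit handling of the excluded point $0$ in \eqref{assumption on beta distance to lambda qm} and of the ceiling fills in details the paper leaves implicit.
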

	\begin{remark}
		The inequality $\frac{\rho^2}{2\rho^2-1} < \frac{\rho}{\rho+1} = \frac{\mu(\alpha)-1}{\mu(\alpha)}$ holds if and only if $\rho > \varphi$. Thus, this proposition improves the upper bound \eqref{expolimsup upper bound of mu-1/mu} in Theorem~\ref{thm: upper bounds on expoliminf and expolimsup} for uncountably many $\alpha$ with $\mu(\alpha) > 1+\varphi$. 
	\end{remark}
	\begin{proof}[Proof of Proposition~\ref{prop: steady convergent expolimsup upper}]
		Given $\beta \in B^*(\alpha)$, let $m_0 \in \N$ such that, for all $m \geq m_0$, \eqref{assumption on beta distance to lambda qm} holds for $b = 2$. 
		For $m \geq m_0$, by Lemma~\ref{lem: general bound on diameter in terms of previous q_m}, choose an absolute constant $C'$ such that 
		\begin{equation*}
			\mathrm{diam}_E(\mathcal{W}_{N}) \ \leq \ C'\max\{q_{m},q_{m-1}\frac{N}{q_{m+1}}\} \, .
		\end{equation*}
		Write $q_m = N^{\theta_m}$ for $\theta_m \in [0,1]$, so $q_{m-1} = N^{{\theta_m}/\rho + o(1)}$ and $q_{m+1} = N^{\rho{\theta_m}+o(1)}$, hence
		\begin{equation*}
			\frac{\log \mathrm{diam}_E(\mathcal{W}_{N})}{\log N} \ \leq \ f(\theta_m) + o(1) \, \text{ as } m \to \infty\, ,
		\end{equation*}
		where $f(\theta) \coloneq  \max\{\theta,(1/\rho-\rho)\theta+1\}$. 
		Since we can minimize over $m$, we deduce from the assumption $q_{m+1} = q_m^{\rho + o(1)}$ that 
		\begin{equation*}
			\limsup_{N \to \infty} \frac{\log \mathrm{diam}_E(\mathcal{W}_{N})}{\log N} \ \leq \ \inf_{\lambda > 0} \max_{\theta \in [\lambda, \rho \lambda]} f(\theta)  \,.
		\end{equation*}
		The function $f$ achieves its minimum value on $[0,\infty)$ at $\theta_0 = \frac{\rho}{\rho^2+\rho-1}$, and it follows that the minimizing $\lambda$ is  $\lambda \coloneq\frac{\rho}{2\rho^2-1}$, which satisfies $\theta_0 \in [\lambda, \rho \lambda]$.   
		Finally, 
		\[ \max_{\theta \in [\lambda, \rho \lambda]} f(\theta) \ = \  f(\lambda) \ = \ f(\rho \lambda) \ = \ \frac{\rho^2}{2\rho^2-1} \, . \qedhere \] 
	\end{proof}

	\begin{proposition}\label{prop: steady convergent expoliminf upper}
		Let $\alpha \in \R \smallsetminus \Q$ have convergents $(p_m/q_m)$ and suppose that
		\[
		q_{m+1}=q_m^{\rho+o(1)}
		\]
		as $m \to \infty$ for some $\rho > \varphi$. Then, for all	$\beta \in B^*(\alpha)$,
		\begin{equation*}
			\expoliminf(\alpha,\beta) \ \le \ \frac{\rho}{\rho^2+\rho-1} \,.
		\end{equation*}
	\end{proposition}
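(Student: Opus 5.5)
We plan to reuse the estimate from the proof of Proposition~\ref{prop: steady convergent expolimsup upper}, but instead of maximizing over all $N$ we select, for each large $m$, one favorable value of $N$. Fix $\beta \in B^*(\alpha)$ and take $m_0$ so that \eqref{assumption on beta distance to lambda qm} holds with $b=2$ for all $m\ge m_0$. For such $m$, Lemma~\ref{lem: general bound on diameter in terms of previous q_m} gives
\[
\mathrm{diam}_E(\mathcal{W}_N) \ \le \ q_m + 3C q_{m-1}\Bigl\lceil \frac{N}{q_{m+1}}\Bigr\rceil
\]
for every $N$. Here we use $q_{m-1}+2 \le 3q_{m-1}$ for $m\ge 2$.

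We then choose $N$ to balance the two terms. Set $N_m \coloneq \lfloor q_mq_{m+1}/q_{m-1}\rfloor$. Because $q_{m+1}/q_{m-1}\ge q_{m+1}/q_m\to\infty$, we have $N_m\ge q_{m+1}$. Hence $\lceil N_m/q_{m+1}\rceil\le 2N_m/q_{m+1}\le 2q_m/q_{m-1}$, and the bound becomes $\mathrm{diam}_E(\mathcal{W}_{N_m})\le (1+6C)q_m$. Since $N_m\to\infty$, the liminf is at most the liminf along $(N_m)$:
\[
\expoliminf(\alpha,\beta) \ \le \ \liminf_{m\to\infty}\frac{\log q_m}{\log N_m}.
\]

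It remains to compute the exponent using steady growth. From $q_{m+1}=q_m^{\rho+o(1)}$ we get $q_{m-1}=q_m^{1/\rho+o(1)}$. Therefore $\log N_m = (1+\rho-1/\rho+o(1))\log q_m$, and the ratio tends to
\[
\frac{1}{1+\rho-1/\rho} \ = \ \frac{\rho}{\rho^2+\rho-1},
\]
which is the claim. This value is $\theta_0$, the minimizer of $f$ in the previous proof, as expected. Note that $\rho>\varphi$ is not needed for the computation itself; it only ensures the bound improves on $1/2$.

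The argument is short, and the only step needing care is the rounding and ceiling bookkeeping. We must check that $N_m \ge q_{m+1}$, so that the ceiling costs at most a factor of $2$. We must also check that the $o(1)$ terms in the exponents of $q_{m\pm1}$ relative to $q_m$ are uniform as $m\to\infty$. Both follow directly from the hypothesis $q_{m+1}=q_m^{\rho+o(1)}$ applied at indices $m$ and $m-1$.
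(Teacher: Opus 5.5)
Your proposal is correct and follows the paper's proof essentially verbatim: both apply Lemma~\ref{lem: general bound on diameter in terms of previous q_m} with $b=2$ at $N_m=\lfloor q_mq_{m+1}/q_{m-1}\rfloor$ to get $\mathrm{diam}_E(\mathcal W_{N_m})\le C'q_m$, and then compute $\log N_m=(\rho+1-1/\rho+o(1))\log q_m$. One small fix: $N_m\ge q_{m+1}$ holds simply because $q_m\ge q_{m-1}$ and $q_{m+1}\in\N$, not because $q_{m+1}/q_{m-1}\to\infty$. Your ceiling bookkeeping is otherwise fine.
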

	
	\begin{proof}
		Fix $\beta \in B^*(\alpha)$. By the definition of $B^*(\alpha)$, for all large $m$ the hypothesis of Lemma~\ref{lem: general bound on diameter in terms of previous q_m} holds with $b=2$. Hence, for every $N$,
		\[
		\mathrm{diam}_E(\mathcal W_N) \ \le \ q_m + C(q_{m-1}+2) \left\lceil \frac{N}{q_{m+1}}\right\rceil \, .
		\]
		Take
		\[
		N_m \ \coloneq \ \left\lfloor\frac{q_mq_{m+1}}{q_{m-1}}\right\rfloor \, .
		\]
		Since $q_{m+1} = q_m^{\rho+o(1)}$ and also $q_m=q_{m-1}^{\rho+o(1)}$, we have $q_{m-1} = q_m^{1/\rho+o(1)}$ and
		\[
		\log N_m \ = \ \bigl(\rho+1-1/\rho+o(1)\bigr)\log q_m \, .
		\]
		Substitution gives $\mathrm{diam}_E(\mathcal W_{N_m}) \le C' q_m$, and therefore
		\[
		\expoliminf(\alpha,\beta) \ \le \ \frac{1}{\rho+1-1/\rho} \ = \ \frac{\rho}{\rho^2+\rho-1} \, . \qedhere
		\]
	\end{proof}
	
	\begin{proof}[Proof of Theorem~\ref{main thm golden more}]
		Consider part (a). If $\rho = \infty$, then Proposition~\ref{prop: orbitexpolimsup at least 1/2} shows, for all $\beta \in (0,1)$, that $\expolimsup(\alpha,\beta) \geq \orbitexpolimsup(\alpha,\beta) \geq 1/2$, and trivially $\expoliminf(\alpha,\beta) \geq \orbitexpoliminf(\alpha,\beta) \geq 0$. If $\rho < \infty$, then the lower bounds follow from Theorem~\ref{thm: orbitexpolimsup universal lower bound above golden} and Theorem~\ref{thm: orbitexpoliminf universal lower bound}.
		
		It remains to prove part (b). Let $\alpha$ satisfy the steady convergent growth condition \eqref{steady convergent growth} for some $\rho := \mu(\alpha) - 1 > \varphi$. For each such $\alpha$ and all $\beta \in B^*(\alpha)$, Propositions~\ref{prop: steady convergent expolimsup upper} and \ref{prop: steady convergent expoliminf upper} give the matching upper bounds to \eqref{goldenexcept}.
	\end{proof}
	
	\subsection{Upper bounds on diameter exponents when $\mu(\alpha) > 3$} 
	
	The following proposition shows, when $\mu(\alpha) > 3$, that the upper bound \eqref{expoliminf upper bound of 1/2} in Theorem~\ref{thm: upper bounds on expoliminf and expolimsup} may be improved and that a dense $G_\delta$ set of exceptions is required in the statement of Theorem~\ref{thm: expoliminf lower bound of 1/2}.
	
	Given an infinite set $A \subset \N$, define
	\[
	B_A(\alpha) \ \coloneq  \ \bigcap_{M\in \N} \bigcup_{\substack{m \in A \\ m > M}} \left\{\beta \in (0,1) : \mathrm{dist}_\T(\beta,\Lambda(q_m)) \ < \ \frac{2}{q_{m+1}} \right\} \, .
	\]
	(The choice of the numerator 2 is arbitrary.) This definition expresses $B_A(\alpha)$ as a countable intersection of dense open sets in $(0,1)$, so it is a dense $G_\delta$ set.
	\begin{proposition}\label{prop: expoliminf special upper bound}  Let $\alpha \in \R\smallsetminus\Q$ with convergents $(p_m/q_m)$ and assume that $\mu \coloneq \mu(\alpha) > 3$. Given $1 < \rho < \mu - 1$, let $A(\rho) \coloneq \{ m \in \N : q_{m+1} \geq q_m^{\rho} \}$. Then, for all $\beta \in B_{A(\rho)}(\alpha)$,
		\begin{equation}\label{liminf small}
			\expoliminf(\alpha,\beta) \ \leq \ \frac{1}{\rho} \, .
		\end{equation}
		Consequently, for all $\beta \in  \displaystyle B_*(\alpha) \coloneq \bigcap_{\rho \in (1, \mu - 1)} B_{A(\rho)}(\alpha)$, we have $\expoliminf(\alpha,\beta) = 0$ if $\mu = \infty$ and
		\begin{equation*}
			\expoliminf(\alpha,\beta)  \ \leq \ \frac{1}{\mu - 1} \ < \ \frac{1}{2} \quad \text{ otherwise.} 
		\end{equation*}
	\end{proposition}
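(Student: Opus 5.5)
The plan is to apply the round-and-synchronize bound, Lemma~\ref{lem: general bound on diameter in terms of previous q_m}, along the subsequence of indices $m \in A(\rho)$ for which $\beta$ is close to $\Lambda(q_m)$. Fix $\beta \in B_{A(\rho)}(\alpha)$. By definition there is an infinite set $A_\beta \subset A(\rho)$ such that $\mathrm{dist}_\T(\beta,\Lambda(q_m)) < 2/q_{m+1}$ for all $m \in A_\beta$.

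First we need the hypothesis \eqref{assumption on beta distance to lambda qm}, which concerns $\Lambda(q_m)\smallsetminus\{0\}$ rather than $\Lambda(q_m)$. Since $\beta \in (0,1)$ is fixed and $2/q_{m+1} \to 0$, for large $m \in A_\beta$ the nearest point of $\Lambda(q_m)$ to $\beta$ cannot be $0$. Hence \eqref{assumption on beta distance to lambda qm} holds with $b=2$. Lemma~\ref{lem: general bound on diameter in terms of previous q_m} then gives, for every $N$,
\[
\mathrm{diam}_E(\mathcal{W}_N) \ \le \ q_m + C(q_{m-1}+2)\Bigl\lceil \frac{N}{q_{m+1}}\Bigr\rceil .
\]

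The main choice is the scale $N$, and I would take $N \coloneq q_{m+1}$. This gives $\mathrm{diam}_E(\mathcal{W}_{q_{m+1}}) \le q_m + C(q_{m-1}+2) \le C' q_m$. Since $m \in A(\rho)$, we have $q_m \le q_{m+1}^{1/\rho} = N^{1/\rho}$. Therefore $\log \mathrm{diam}_E(\mathcal{W}_N)/\log N \le 1/\rho + O(1/\log N)$ along the sequence $N = q_{m+1}$, $m \in A_\beta$, and this sequence tends to infinity. This proves \eqref{liminf small}.

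For the consequence, the quantifier over real $\rho$ is handled by monotonicity. If $\rho \le \rho'$, then $A(\rho') \subseteq A(\rho)$, so $B_{A(\rho')}(\alpha) \subseteq B_{A(\rho)}(\alpha)$. For $\beta \in B_*(\alpha)$ we have $\expoliminf(\alpha,\beta) \le 1/\rho$ for every $\rho \in (1,\mu-1)$. Letting $\rho \uparrow \mu - 1$ gives the bound $1/(\mu-1)$, which is less than $1/2$ because $\mu > 3$. Letting $\rho \to \infty$ when $\mu = \infty$ gives $0$.

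A remark is needed on $B_*(\alpha)$, since it is an uncountable intersection. Each $A(\rho)$ is infinite when $\rho < \mu-1$, by \eqref{definition of mu}, so each $B_{A(\rho)}(\alpha)$ is a dense $G_\delta$. By monotonicity, $B_*(\alpha)$ equals the countable intersection over rationals $\rho \in (1,\mu-1)$, hence it is still a dense $G_\delta$ by Baire's theorem. This is not needed for the inequality itself, though.

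I expect no real obstacle. The only care points are excluding the nearest point $0$ and checking that the chosen $N$ makes the $N/q_{m+1}$ term harmless. Even the alternative choice $N = q_m q_{m+1}/q_{m-1}$ would work, but $N = q_{m+1}$ is simplest and already gives the exponent $1/\rho$.
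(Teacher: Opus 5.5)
Your proposal is correct and matches the paper's proof: you apply Lemma~\ref{lem: general bound on diameter in terms of previous q_m} with $b=2$ at the scale $N=q_{m+1}$ for infinitely many $m\in A(\rho)$, use $q_m\le q_{m+1}^{1/\rho}$, and then let $\rho\uparrow\mu-1$. Your explicit check that the point of $\Lambda(q_m)$ nearest to $\beta$ is not $0$ for large $m$ supplies a step the paper leaves implicit.
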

	\begin{remark}
		Only the fact that $\mu(\alpha) > 2$ is used in the proof. When $\mu(\alpha) \leq 3$, Theorem~\ref{thm: upper bounds on expoliminf and expolimsup} provides a better upper bound. The intersection defining $B_*(\alpha)$ is unchanged when restricted to all rational $\rho \in (1,\mu-1)$, so $B_*(\alpha)$ is a dense $G_\delta$ set. 
		Note that $B_*(\alpha)$ is a subset of $B(\alpha)$, defined in \eqref{B alpha definition}. 
	\end{remark}
	\begin{proof} Let $\rho < \mu -1$. Since $\expoliminf(\alpha,\beta) \leq 1$, we may assume $\rho > 1$. By \eqref{definition of mu}, $A(\rho)$ is infinite. Let us now fix $\beta \in B_{A(\rho)}(\alpha)$ and show  \eqref{liminf small}.
		
		Let $A \subseteq A(\rho)$ be an infinite set such that $\mathrm{dist}_\T(\beta,\Lambda(q_m) \smallsetminus \{0\}) < \frac{2}{q_{m+1}}$  for all $m \in A$. For each $m \in A$, the condition \eqref{assumption on beta distance to lambda qm} holds for the choice $b = 2$. Therefore, by Lemma~\ref{lem: general bound on diameter in terms of previous q_m}, there exists an absolute constant $C > 0$ such that, for all $m \in A$,
		\begin{equation*}
			\mathrm{diam}_E(\mathcal{W}_{q_{m+1} }) \ \leq \ Cq_{m} \ \leq \ Cq_{m+1}^{1/\rho} \, ,
		\end{equation*}
		where the second inequality holds since $m \in A(\rho)$, hence
		\begin{equation*}
			\frac{	\log \mathrm{diam}_E(\mathcal{W}_{q_{m+1} }) }{\log q_{m+1}} \ \leq \ \frac{1}{\rho} + \frac{\log C}{\log q_{m+1}}  \, . 
		\end{equation*}
		We have shown \eqref{liminf small}. The result follows.
	\end{proof}
	
	The next proposition shows that when $\mu(\alpha) > 3$, the upper bound \eqref{expolimsup upper bound of mu-1/mu} in Theorem~\ref{thm: upper bounds on expoliminf and expolimsup} may be improved and an exceptional set is required in  Theorem~\ref{thm: expolimsup lower bound of mu-1/mu}.
	
	\begin{proposition}\label{prop: expolimsup special upper bound}
		Let $\alpha \in \R\smallsetminus\Q$ with convergents $(p_m/q_m)$ and assume $\mu(\alpha) > 3$. Then $\expolimsup(\alpha,\beta) \leq \frac{2}{3}$ for all $\beta \in B^*(\alpha)$, defined in \eqref{definition of B upper star}.
	\end{proposition}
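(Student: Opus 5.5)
The plan is to combine the round-and-synchronize bound of Lemma~\ref{lem: general bound on diameter in terms of previous q_m} with the approach-and-follow bound of Lemma~\ref{lem: q + N/q bound}, choosing between them according to the size of $q_{m-1}$. The target is $\mathrm{diam}_E(\mathcal{W}_N) \le C'' N^{2/3}$ for all large $N$, with $C''$ absolute.

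\textbf{Setup.} Fix $\beta \in B^*(\alpha)$. By \eqref{definition of B upper star} there is $M$ such that $\mathrm{dist}_\T(\beta,\Lambda(q_m)) \le 2/q_{m+1}$ for all $m \ge M$. Lemma~\ref{lem: general bound on diameter in terms of previous q_m} needs the distance to $\Lambda(q_m)\smallsetminus\{0\}$ rather than to $\Lambda(q_m)$. Since $\beta \in (0,1)$ is a fixed positive distance from $0$ and $2/q_{m+1}\to 0$, the nearest point is not $0$ once $m$ is large. So, after enlarging $M$, hypothesis \eqref{assumption on beta distance to lambda qm} holds with $b=2$ for every $m \ge M$, and for all $N$,
\[
\mathrm{diam}_E(\mathcal{W}_N) \ \le \ q_m + C(q_{m-1}+2)\Bigl\lceil \frac{N}{q_{m+1}}\Bigr\rceil .
\]

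\textbf{Choice of scale.} Given a large $N$, let $m$ be the index with $q_m \le N^{2/3} < q_{m+1}$. Taking $N$ large ensures $m-1 \ge M$. Split into two cases according to $q_{m-1}$.

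\textbf{Case 1: $q_{m-1} \le N^{1/3}$.} Apply the displayed round-and-synchronize bound at level $m$. The term $q_m$ is at most $N^{2/3}$. For the synchronization cost, if $N \ge q_{m+1}$ then $\lceil N/q_{m+1}\rceil \le 2N/q_{m+1}$, so
\[
C(q_{m-1}+2)\Bigl\lceil \frac{N}{q_{m+1}}\Bigr\rceil \ \le \ 6C\, q_{m-1}\frac{N}{q_{m+1}} \ \le \ 6C\, N^{1/3}\cdot N\cdot N^{-2/3} \ = \ 6C\,N^{2/3}.
\]
If instead $N < q_{m+1}$, the ceiling equals $1$, and the cost is $C(q_{m-1}+2) = O(N^{1/3})$.

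\textbf{Case 2: $q_{m-1} > N^{1/3}$.} Use Lemma~\ref{lem: q + N/q bound} with $q_{m-1}$:
\[
\mathrm{diam}_E(\mathcal{W}_N) \ \le \ q_{m-1} + 11\frac{N}{q_{m-1}} + 11 \ \le \ N^{2/3} + 11N^{2/3} + 11 .
\]
Here $q_{m-1} \le q_m \le N^{2/3}$ bounds the first term, and $q_{m-1} > N^{1/3}$ bounds the second.

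\textbf{Conclusion.} In both cases $\mathrm{diam}_E(\mathcal{W}_N) \le C''N^{2/3}$ for all sufficiently large $N$. Taking logarithms and a $\limsup$ gives $\expolimsup(\alpha,\beta) \le 2/3$.

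\textbf{Expected obstacle.} The main point needing care is justifying \eqref{assumption on beta distance to lambda qm} with $\Lambda(q_m)\smallsetminus\{0\}$ and a uniform $b=2$ for all large $m$ simultaneously, which the setup handles. The hypothesis $\mu(\alpha)>3$ is not used in this argument. It only makes the conclusion meaningful, since $2/3 < \frac{\mu(\alpha)-1}{\mu(\alpha)}$ exactly when $\mu(\alpha)>3$. Combined with Proposition~\ref{prop: dense perfect}, this yields Proposition~\ref{main prop high mu}(a).
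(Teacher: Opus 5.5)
Your proof is correct and is essentially the paper's argument. Both combine approach-and-follow (Lemma~\ref{lem: q + N/q bound}) with round-and-synchronize (Lemma~\ref{lem: general bound on diameter in terms of previous q_m} with $b=2$); only the bookkeeping of the case split differs. The paper fixes $q_m < N^{1/3} \le q_{m+1} = N^\theta$ and bounds by $\min\{N^\theta, N^{4/3-\theta}\}$, while you fix $q_m \le N^{2/3} < q_{m+1}$ and split on $q_{m-1}$ versus $N^{1/3}$. You also check that the nearest point of $\Lambda(q_m)$ to $\beta$ is not $0$ for large $m$, which the paper leaves implicit.
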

	\begin{remark} The hypothesis $\mu(\alpha) > 3$ is not used in the proof. However, when $\mu(\alpha) \leq 3$, Theorem~\ref{thm: upper bounds on expoliminf and expolimsup} provides a better upper bound.
	\end{remark}
	\begin{proof}
		Given $\beta \in B^*(\alpha)$, fix $m_0 \in \N$ such that \eqref{assumption on beta distance to lambda qm} holds for $b = 2$ and  all $m \geq m_0$.
		
		Let $N \in \N$ satisfy $N > q_{m_0}^3$, and choose $m$ such that
		\begin{equation*}
			q_m \ < \ N^{\frac{1}{3}} \ \leq \ q_{m+1} \, . 
		\end{equation*}
		Write $q_{m+1} = N^{\theta}$ for some $\theta \in [1/3,1]$.
		
		Suppose $\theta \geq 1/2$. Then $q_{m+1} + 11 \frac{N}{q_{m+1}} + 11 \leq 23N^\theta,$ so Lemmas~\ref{lem: q + N/q bound} and \ref{lem: general bound on diameter in terms of previous q_m} imply
		\begin{equation*}
			\mathrm{diam}_E(\mathcal{W}_{N}) \ \leq \ \min\{23N^{\theta},50q_mN^{1-\theta}\} \ \leq \ 50 \cdot \min \{N^{\theta},N^{4/3-\theta}\} \, .
		\end{equation*}
		Now, on the given range of $\theta$, one optimizes to find
		$\min \{N^{\theta},N^{4/3-\theta}\} \leq N^{2/3}$.
		
		Therefore
		\begin{equation}\label{bound by 2/3}
			\frac{\log \mathrm{diam}_E(\mathcal{W}_{N})}{\log N} \ \leq \ \frac{2}{3} + \frac{\log 50}{\log N} \, .
		\end{equation}
		
		Now, suppose instead that $1/3 \leq \theta < 1/2$. Then Lemma~\ref{lem: q + N/q bound} implies
		\begin{equation*}
			\mathrm{diam}_E(\mathcal{W}_{N}) \ \leq \ 23N^{1-\theta} \ \leq \ 23N^{2/3} \, ,
		\end{equation*}
		yielding \eqref{bound by 2/3} and hence the result.
	\end{proof}

	\begin{proof}[Proof of Proposition~\ref{main prop high mu}] Part (a) follows from Propositions~\ref{prop: expolimsup special upper bound}~and~\ref{prop: dense perfect}. Part (b) follows from Proposition~\ref{prop: expoliminf special upper bound} and its remark.
	\end{proof}

	\section{Applications}
	\subsection{Circle maps}\label{sec: circle maps} 
	
	Let $\mathrm{Diff}_{r}^+(\T)$ denote, when $r = 0$, the group of orientation-preserving homeomorphisms of $\T$ and, when $r \in [1,\infty)$, the group of orientation-preserving $C^{\lfloor r\rfloor }$-diffeomorphisms of $\T$ whose $\lfloor r \rfloor$-th derivative satisfies the H\"{o}lder condition with exponent $r - \lfloor r \rfloor$. A circle map $f : \T \to \T$ is \textbf{$C^r$-conjugate} to another circle map $g : \T \to \T$ if there exists $h \in \mathrm{Diff}_{r}^+(\T)$ such that $f = h^{-1}\circ g\circ h$.
	
	We continue to identify $\T$ with $[0,1)$, with topology inherited from $\R/\Z$. Fix a homeomorphism $f \in \mathrm{Diff}_0^+(\T)$. A \textbf{lifting} of $f$ is a homeomorphism $L : \R \to \R$ such that $L(x) \bmod 1 = f(x \bmod 1)$ for all $x \in \R$. It follows from the Poincar\'{e} theorem that the limit $\rho(L) \coloneq \lim_{n \to \infty} \frac{L^n(x_0)-x_0}{n}$ exists and does not depend on $x_0 \in \R$, where the exponent in $L^n$ denotes iterated composition. The \textbf{rotation number} of $f$ is $\omega(f) \coloneq \rho(L) \bmod 1$, where $L$ is any lifting of $f$, which is well defined because any two liftings of $f$ differ by an integer constant. It is well known that $\omega(f) \in [0,1) \smallsetminus \Q$ if and only if $f$ is \textbf{aperiodic} (i.e., has no periodic orbits). When $f$ is aperiodic, $f$ is $C^0$-conjugate to an irrational rotation if and only if $f$ has a dense orbit. For details, see the discussion before Theorem 1.2 in \cite{katznelsonornstein}.
	
	A classical theorem of Denjoy \cite{denjoy} says that every aperiodic $f \in \mathrm{Diff}_1^+(\T)$ whose derivative is of bounded variation is $C^0$-conjugate to $R_{\omega(f)}$.
	
	Let $\leb$ denote Lebesgue measure on $\T$. If $f \in \mathrm{Diff}_0^+(\T)$ is $C^0$-conjugate to an irrational rotation $R_{\alpha}$, i.e., $f = h^{-1}\circ R_{\alpha} \circ h$ with $h \in \mathrm{Diff}_0^+(\T)$, then $\nu \coloneq \leb \circ h$ is the unique $f$-invariant Borel probability measure on $\T$. Moreover, if $h \in \mathrm{Diff}_1^+(\T)$, then $\nu$ and $\leb$ are mutually absolutely continuous.

	Arnol'd~\cite{arnold} gave examples of analytic diffeomorphisms $f$ with Liouville rotation number (i.e., $\mu(\omega(f)) = \infty$) where there is no smooth conjugation to a rotation. Herman~\cite{herman} showed that, for all $r \geq 3$, every $f \in \mathrm{Diff}_r^+(\T)$ with $\mu(\omega(f)) = 2$ is $C^{r-2}$-conjugate to $R_{\omega(f)}$. 
	Later results of Katznelson and Ornstein \cite{katznelsonornstein} and Khanin and Sinai 
	\cite{sinaikhanin}  (see, e.g., Theorem 4.12 in \cite{katznelsonornstein}) imply that if   $f \in \mathrm{Diff}_{r}^+(\T)$   is aperiodic and $r > \mu(\omega(f))$,   then $f$ is $C^{1}$-conjugate to $R_{\omega(f)}$.
	With the above facts in hand, we show how to extend our results to circle maps.
	
	\begin{proof}[Proof of Corollary~\ref{intro corollary for circle maps}] Suppose $f \in \mathrm{Diff}_{0}^+(\T)$ is aperiodic and has a dense orbit. Then there exists $h \in \mathrm{Diff}_0^+(\T)$ with $f = h^{-1} \circ R_{\omega(f)} \circ h$. Replacing $h$ by $R_\theta \circ h$ for some $\theta \in [0,1)$ if necessary, assume $h(0) = 0$. For fixed $\eta \in (0,1)$, $x \in [0,1)$, and $n \geq 0$, we have
		\begin{equation*}
			\mathbf{1}_{[0,\eta)}(f^nx) \ = \ \mathbf{1}_{[0,h(\eta))}\bigl(R_{\omega(f)}^n (h(x))\bigr) \,. 
		\end{equation*}
		We deduce that $\mathcal{W}_N^f = \mathcal{W}_N$ for all $N \in \N$, where for $\mathcal{W}_N$ we take $\alpha = \omega(f)$ and $\beta = h(\eta)$.
		
		By Theorem~\ref{main thm 1}, let $E(\alpha)$ be a Lebesgue null set such that \eqref{display for thm 1} and \eqref{display for thm 2} hold for all $\beta \in (0,1) \smallsetminus E(\alpha)$.
		
		We conclude that \eqref{result for circle maps} holds for all $\eta \in (0,1) \smallsetminus  h^{-1}E(\alpha)$,  where $\nu(h^{-1}E(\alpha))=0$.
		
		Moreover, if $f \in \mathrm{Diff}_r^+(\T)$ for some    $r > \mu(\omega(f))$, then either the Katznelson--Ornstein or the Khanin--Sinai theorem lets us take $h \in \mathrm{Diff}_1^{+}(\T)$, hence $h^{-1}E(\alpha)$ is Lebesgue null. 
	\end{proof}
	
	\subsection{Sturmian sequences}\label{sec: sturmian}  
	Given an infinite word $\omega\in \mathcal{A}^\N$, let $L_n(\omega)$ be the collection of length $n$ subwords (contiguous subsequences) of $\omega$. 
	A sequence $\omega \in \{0,1\}^\N$ is called \textbf{Sturmian} if $|L_n(\omega)| = n+1$ for all $n \in \N$; for background, see Chapter 2 in \cite{lothaire}.
	These sequences can be  characterized using irrational rotations.
	Let $\alpha \in (0,1)$ and $x \in [0,1)$. Define, for $n \in \{0,1,\ldots\}$,
	\begin{equation*}
		\underline{s}_{\alpha,x}(n) \coloneq  \mathbf{1}_{[0,1-\alpha)}(x + n\alpha \bmod 1) \qquad \text{ and } \qquad \overline{s}_{\alpha,x}(n) \coloneq  \mathbf{1}_{(0,1-\alpha]}(x + n\alpha \bmod 1) \, .
	\end{equation*}
	The sequence $\omega$ is Sturmian if and only if    $\omega=\underline{s}_{\alpha,x}$ or $\omega=\overline{s}_{\alpha,x}$
	for some irrational $\alpha$ and some $x \in [0,1)$ (see \cite[Theorem 2.1.13]{lothaire}).
	
	Observe that $L_N(\omega) = \mathcal{W}_N$ with $\beta = 1 - \alpha$. If $\mu(\alpha) = 2$, Theorem~\ref{main thm golden less} then implies 
	\begin{equation}
		\lim_{N \to \infty} \frac{\log \max_{w,\tilde{w} \in L_N(\omega)} \editfull(w,\tilde{w})}{\log N} \ = \ \frac{1}{2} \, , 
	\end{equation}
	and, if $\alpha$ is badly approximable, then Proposition~\ref{prop: badly approximable edit distance} yields more precise bounds. 
	In \cite[Section 5.1]{bpsubstitution}, these bounds are applied to Sturmian sequences that are fixed points of substitutions. When $\mu(\alpha) > 3$, Propositions~\ref{prop: expoliminf special upper bound}~and~\ref{prop: expolimsup special upper bound} yield upper bounds for liminf and limsup exponents (valid since $\beta = 1-\alpha$ belongs to $B_*(\alpha)$ and $B^*(\alpha)$).

	\section{Edit exponents for toral rotations in dimension $d>1$}\label{sec: multidim}
	
	Theorem~\ref{main thm 4} follows from Theorems~\ref{thm: multidim expolimsup upper bound} and \ref{thm: ddimorbitexpoliminf lower bound}, proved below. Both of these proofs make use of the discrepancy theory of the Kronecker sequence $( \modone{n\alpha})_{n\ge 1}$. We prove the remark following Theorem~\ref{main thm 4} in Section~\ref{subsec: nonsing}.
	
	\subsection{Notation conventions}
	
	Let $x = (x_1,\ldots, x_d) \in \R^d$ or $\T^d$ and $r \geq 0$. Write $\leb^d$ for $d$-dimensional Lebesgue measure and $\modone{x}$ for the unique element of $x + \Z^d$ in $[0,1)^d$. Define $\|x\|_\infty \coloneq  \max_{1\leq i \leq d} \|x_i\|$, where $\|x_i\| = \min_{k\in \Z} |x_i - k|$, and also define the cubes
	\[ B_\infty(x,r) \ \coloneq \ \{z \in \T^d : \|z-x\|_\infty < r\} \; \text{ and } \; \bar{B}_\infty(x,r) \ \coloneq \ \{z \in \T^d : \|z-x\|_\infty \leq r\} \, . \] 
	
	Some of the notation below depends on $\alpha$ and $\beta$, but this dependence will be suppressed:
	
	\begin{notation} \label{notation block for multidim}
		Let $d \in \N$ and $\alpha \in \R^d$.
		Let $\beta \in (0,1)^d$ and write $\square_\beta \coloneq \prod_{i=1}^d [0,\beta_i)$. 
		
		Given $x \in \T^d$, put, for all $n \in \Z$,
		\begin{equation*}
			\xi_n(x)  \ \coloneq \ \mathbf{1}_{\square_\beta}(\modone{x + n\alpha}) \ = \ \prod_{i=1}^d \mathbf{1}_{[0,\beta_i)}(x_i + n\alpha_i \bmod 1) \, .
		\end{equation*}
		Write $\xi_{[i,j)}(x) \coloneq (\xi_\nu(x))_{\nu=i}^{j-1} \in \{0,1\}^{j-i}$. For each $N \in \N$, put $\mathcal{W}_N \coloneq \{ \xi_{[0,N)}(x) : x \in \T^d \}$. We want to compute
		\begin{equation*}
			\expolimsup(\alpha,\beta) \ \coloneq \ \limsup_{N\to\infty}  \frac{\log \mathrm{diam}_E(\mathcal{W}_N)}{\log N} \quad \text{ and } \quad  \expoliminf(\alpha,\beta) \ \coloneq \ \liminf_{N\to\infty}  \frac{\log \mathrm{diam}_E(\mathcal{W}_N)}{\log N} \, .
		\end{equation*}			
		Now, assume $(1,\alpha_1,\ldots,\alpha_d)$ is linearly independent over $\Q$, so that $x \mapsto \modone{x + \alpha}$ is ergodic. As in the one-dimensional case, the function $\displaystyle \T^d \times \T^d \ni (x,y) \mapsto \limsup_{N\to\infty} \frac{\log \den N x y}{\log N}$ is invariant under the ergodic $\Z^{2}$-action defined, for each $(n,n') \in \Z^{2}$, by $(x,y) \mapsto (\modone{x+n\alpha} , \modone{y + n'\alpha} )$, and this function is thus almost everywhere equal to a constant $\ddimorbitexpolimsup (\alpha,\beta)$. We similarly define $\ddimorbitexpoliminf(\alpha,\beta)$ using $\liminf$ instead of $\limsup$.
		
		\noindent Given $x,y \in \R^d$ and $\ell,k\in \N$, say $(x,y)$ are $(\ell; k)$-\textbf{concordant} if there exists $s \in [-\ell,\ell]$  such that $ \xi_{[0,k)}(\modone{x}) = \xi_{[s,s+k)}(\modone{y})$.
		Otherwise, we say that $(x,y)$ are $(\ell; k)$-\textbf{discordant}.
	\end{notation}  
	
	\subsection{Upper bound on limsup exponent via approach-and-follow}
	
	Let $J = \prod_{i=1}^d [a_i,b_i) \subseteq \R^d$ with $0 < b_i - a_i \leq 1$ for $1 \leq i \leq d$. Then $I = \modone{J}$ is called a \textbf{box} of $\T^d$ and $\leb^d(I) = \prod_{i=1}^d (b_i - a_i)$. Note $I$ may wrap around the boundary of $\R^d / \Z^d$. 
	
	\begin{definition} Let $N \in \N$ and let $(\mathbf{x}_n)_{n\geq 1}$ be a sequence in $\R^d$. The \textbf{discrepancy} of $(\mathbf{x}_n)$ is
		\[
		D_N((\mathbf{x}_n)_{n\ge1}) \ \coloneq \ \sup_{I \subseteq \T^d} \left| \frac{1}{N} \sum_{n=1}^N \mathbf{1}_I(\modone{\mathbf{x}_n}) - \leb^d(I) \right|,
		\]
		where the supremum is taken over boxes $I \subseteq \T^d$.
	\end{definition}

	Given $\alpha \in \R^d$, let $\Delta_N=\Delta_N(\alpha)  \coloneq N D_{N}((n\alpha)_{n\geq 1})$. Define
	\begin{equation} \label{eq:dlow}
		\lowdisc \ \coloneq \ \{ \alpha \,: \Delta_N(\alpha) \ = \ N^{o(1)} \} \,.
	\end{equation}
	Schmidt in \cite{schmidt} showed that for almost all $\alpha$, we have $\Delta_N(\alpha) = O(\log^{d+1+\eps}(N))$ for every $\eps > 0$, and Beck improved this result in \cite[Theorem 1]{beck}. Therefore $\lowdisc$ has full measure.
	
	It is straightforward to show that, for every cube $B \coloneq \bar{B}_\infty(x,r)$, we have
	\begin{equation}\label{disc upper}
		\sum_{n=1}^N \mathbf{1}_B(\modone{n\alpha}) \ \leq \ N\leb^d(B) + \Delta_N \, ,
	\end{equation}
	and, for every box $I$ (and hence every cube),
	\begin{equation}\label{disc lower}
		N\leb^d(I) \ > \ \Delta_N \quad \Rightarrow \quad \sum_{n=1}^N \mathbf{1}_I(\modone{n\alpha}) \ > \ 0 \, .
	\end{equation}
	
	\begin{lemma}\label{lem: multidim orbit cubes cover}
		Let $\alpha \in \R^d$. For all $q \in \N$, 
		\[
		\bigcup_{\ell =1}^{q} \bar{B}_\infty\Bigl(\ell\alpha, \frac{\lambda(q)}{q^{1/d}}\Bigr) \ = \ \T^d \, ,
		\]
		where $\lambda(q) \coloneq \frac{1}{2} \Delta_q^{1/d}$. 
	\end{lemma}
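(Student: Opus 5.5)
We argue by contradiction, using the consequence \eqref{disc lower} of the definition of discrepancy. Set $r \coloneq \lambda(q) q^{-1/d} = \frac12 \Delta_q^{1/d} q^{-1/d}$. Suppose some $z \in \T^d$ is not covered, i.e.\ $\|z - \ell\alpha\|_\infty > r$ for every $\ell \in [1,q]$. Then the points $\modone{\ell\alpha}$, $1 \le \ell \le q$, all avoid the closed cube $\bar{B}_\infty(z,r)$. We will show this cube is large enough that \eqref{disc lower} forces it to contain one of these points.

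First dispose of the degenerate case $2r \ge 1$, that is, $\Delta_q \ge q$. Then $\bar{B}_\infty(z,r)$ is all of $\T^d$, since every coordinate distance $\|\cdot\|$ is at most $1/2 \le r$. Every point is then covered, including by the single cube with $\ell = 1$.

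Now assume $2r < 1$. Then $\bar{B}_\infty(z,r)$ is a box of side $2r$, and
\[
\leb^d\bigl(\bar{B}_\infty(z,r)\bigr) = (2r)^d = \frac{\Delta_q}{q} \, .
\]
This gives $q \leb^d(\bar{B}_\infty(z,r)) = \Delta_q$ exactly, so the strict inequality required in \eqref{disc lower} fails at the boundary. To fix this, use the open cube $B_\infty(z,r')$ for a slightly larger $r' > r$ with $2r' < 1$. This is a box with $q(2r')^d > \Delta_q$, so \eqref{disc lower} yields some $\ell \in [1,q]$ with $\|\modone{\ell\alpha} - z\|_\infty < r'$. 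Since $\ell$ ranges over a finite set, letting $r' \downarrow r$ and passing to a subsequence gives a single $\ell$ with $\|\ell\alpha - z\|_\infty \le r$. Hence $z \in \bar{B}_\infty(\ell\alpha,r)$, a contradiction.

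A second way to handle the boundary is to use that the supremum in the discrepancy, taken over half-open boxes, also controls closed cubes. A closed cube is a decreasing limit of half-open boxes, so the discrepancy inequality passes to it and gives $\sum_{n=1}^q \mathbf{1}_{\bar B}(\modone{n\alpha}) \ge q\leb^d(\bar B) - \Delta_q \ge 0$. This alone only gives $\ge 0$, so the limiting argument above is what is actually needed. The main (mild) obstacle is therefore this boundary case of \eqref{disc lower}, and the approach taken is the $r' \downarrow r$ compactness step over the finitely many $\ell$. The remaining points to check are routine: the wrap-around boxes are allowed in the definition of discrepancy, and $\Delta_q > 0$.
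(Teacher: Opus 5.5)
Your proof is correct and follows the paper's argument. The paper likewise applies \eqref{disc lower} to the slightly enlarged cube $\bar{B}_\infty(x, \lambda(q)q^{-1/d}+\delta)$ and lets $\delta \downarrow 0$, leaving implicit both the use of the finiteness of $\{1,\ldots,q\}$ and the degenerate case $\Delta_q \ge q$; your write-up fills in exactly those details.
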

	\begin{proof}
		Fix $q \in \N$. Given $x \in \T^d$ and $\delta > 0$, let $B(x,\delta) \coloneq \bar{B}_\infty(x, \frac{\lambda(q)}{q^{1/d}}+\delta)$. The result follows from \eqref{disc lower} applied to $B(x,\delta)$.
	\end{proof}

	The next pair of lemmas plays the same role in the multidimensional case that Lemma~\ref{lem: frequent following} does in the one-dimensional case. Namely, the number of coding disagreements of a close pair of points is bounded above by estimating the frequency of visits to certain danger zones.
	
	\begin{lemma}\label{lem: multidim frequent following}
		Let $\alpha \in \R^d$ and $\beta \in (0,1)^d$. There exists $C_1=C_1(d)>0$ such that, if $0 < \eps \leq 1/2$ and $x,y \in \T^d$ satisfy $\|x-y\|_\infty \leq \eps$, then $\xi_n(x) \neq \xi_n(y)$ only if $\modone{x+n\alpha}$ belongs to a set that can be covered by at most $C_1\eps^{1-d}$ cubes of the form $\bar B_\infty(\zeta,\eps)$.
	\end{lemma}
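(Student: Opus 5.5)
The plan is to reduce the question to a purely geometric statement about the boundary of the coding box. Write $x' \coloneq \modone{x+n\alpha}$ and $y' \coloneq \modone{y+n\alpha}$. Since rotation is an isometry for $\|\cdot\|_\infty$, we have $\|x'-y'\|_\infty \le \eps$. Because $\xi_n(x)=\mathbf{1}_{\square_\beta}(x')$, a disagreement $\xi_n(x)\neq\xi_n(y)$ means that exactly one of $x',y'$ lies in $\square_\beta$ (viewed in $\T^d$).

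The first step shows that this forces some coordinate $i$ to have $x'_i$ within $\eps$ of $0$ or of $\beta_i$ on the circle. The argument is as follows. Since $\square_\beta=\prod_i[0,\beta_i)$, the two points lie on opposite sides in some coordinate $i$: one of $x'_i,y'_i$ is in $[0,\beta_i)$ and the other is not. On $\T$, the closed arc of length $\|x'_i-y'_i\|\le\eps$ joining them must then contain a boundary point $0$ or $\beta_i$. Hence $\|x'_i-0\|\le\eps$ or $\|x'_i-\beta_i\|\le\eps$. It follows that $x'$ lies in the danger set
\[
S \ \coloneq \ \bigcup_{i=1}^d \bigl\{z\in\T^d : \min\{\|z_i\|,\|z_i-\beta_i\|\}\le\eps\bigr\},
\]
which depends only on $\beta$ and $\eps$ and not on $n$, $x$, or $y$.

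The second step covers $S$ by cubes. For fixed $i$ and a fixed boundary value $c\in\{0,\beta_i\}$, the slab $\{z:\|z_i-c\|\le\eps\}$ is covered as follows:
\begin{itemize}
\item in coordinate $i$, use the single centre $\zeta_i=c$, which covers radius $\eps$;
\item in each other coordinate $j\neq i$, use centres on the grid $\{2\eps k: 0\le k<\lceil 1/(2\eps)\rceil\}$, whose closed $\eps$-balls cover $\T$ because consecutive centres are $2\eps$ apart.
\end{itemize}
The number of cubes $\bar B_\infty(\zeta,\eps)$ per slab is $\lceil 1/(2\eps)\rceil^{d-1}\le \eps^{1-d}$, using $\eps\le 1/2$ so that $\lceil 1/(2\eps)\rceil\le 1/\eps$. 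Summing over the $2d$ slabs gives at most $2d\,\eps^{1-d}$ cubes, so we may take $C_1=2d$.

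No step is a serious obstacle. The only care needed is the wrap-around at $0\equiv1$ when reading $[0,\beta_i)$ as an arc of $\T$; this is handled by measuring distances to both boundary points with $\|\cdot\|$.
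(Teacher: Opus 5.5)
Your proposal is correct and follows essentially the same route as the paper. In both, a disagreement forces $\modone{x+n\alpha}$ into the $\eps$-neighbourhood of one of the $2d$ faces of $\square_\beta$, and each such slab is covered by $\lceil 1/(2\eps)\rceil^{d-1}\le\eps^{1-d}$ cubes, giving $C_1=2d$. Your write-up simply makes explicit the circle-arc argument and the covering grid that the paper leaves implicit.
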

	
	\begin{proof}
		If $\xi_n(x) \neq \xi_n(y)$, then $\modone{x+n\alpha}$ is within $\|\cdot\|_\infty$-distance $\eps$ of the boundary of $\square_\beta$. The boundary of $\square_\beta$ has $2d$ faces. The $\eps$-neighborhood of each face is a box with one side of length at most $2\eps$ and all other side lengths at most $1$. Since $\eps\leq1/2$, each such neighborhood can be covered by at most $\eps^{1-d}$ cubes of radius $\eps$. Thus $C_1 = 2d$ does it.
	\end{proof}
	
	\begin{lemma}\label{lem: multidim disagreement count}
		Let $\alpha \in \R^d$ and $\beta \in (0,1)^d$. Let $q,N\in\N$ and let $x,y \in \T^d$ satisfy $\|x-y\|_\infty \leq \eps$ with $0 < \eps \leq 1/2$. Then
		\begin{equation*}
			\#\bigl\{n\in[0,N)\cap\Z:\xi_n(x)\neq\xi_n(y)\bigr\} \ \leq \ C_1\left(1+\frac Nq\right) \left(2^dq\eps+\Delta_q\eps^{1-d}\right) \, .
		\end{equation*}
	\end{lemma}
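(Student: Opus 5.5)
Break $[0,N)$ into at most $\lceil N/q\rceil \le 1+N/q$ consecutive blocks of length at most $q$, and bound the number of disagreements in each block by $C_1(2^dq\eps+\Delta_q\eps^{1-d})$. Summing over blocks then gives the stated inequality.

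Fix a block $[jq,jq+q)\cap[0,N)$. Put $x'\coloneq\modone{x+jq\alpha}$ and $y'\coloneq\modone{y+jq\alpha}$. Since rotation is an isometry, $\|x'-y'\|_\infty\le\eps$, and the disagreements in this block are exactly the $n\in[0,q)$ (restricted to the block) with $\xi_n(x')\neq\xi_n(y')$. Lemma~\ref{lem: multidim frequent following} shows that each such $n$ has $\modone{x'+n\alpha}$ in a union of at most $C_1\eps^{1-d}$ cubes $\bar B_\infty(\zeta,\eps)$. It therefore suffices to show that for every cube $B=\bar B_\infty(\zeta,\eps)$,
\[
\#\{n\in[0,q) : \modone{x'+n\alpha}\in B\}\ \le\ 2^dq\eps+\Delta_q .
\]

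The condition $\modone{x'+n\alpha}\in B$ is equivalent to $\modone{n\alpha}\in \bar B_\infty(\zeta-x',\eps)$, which is again a cube of side $2\eps$ and hence of measure $(2\eps)^d$ (for $\eps\le1/2$). The index range $n\in[0,q)$ differs from the range $n\in[1,q]$ used in the definition of $\Delta_q$ by a shift of one step. I handle this by translating the cube by $\alpha$: the count over $n\in[0,q)$ becomes a count over $n\in[1,q]$ of visits of $\modone{n\alpha}$ to a translated cube of the same size. Then \eqref{disc upper} gives the count $\le q(2\eps)^d+\Delta_q$.

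This is at most $2^dq\eps+\Delta_q$, using $(2\eps)^d=2^d\eps^d\le 2^d\eps$ because $\eps\le1/2<1$. Multiplying by the number of cubes, $C_1\eps^{1-d}$, bounds the per-block disagreements by $C_1(2^dq\eps^{2-d}\cdot\eps^{d-1}+\Delta_q\eps^{1-d})$. Since $\eps^{1-d}\cdot 2^dq\eps^d=2^dq\eps$, this equals $C_1(2^dq\eps+\Delta_q\eps^{1-d})$, as claimed.

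Only two points need care: the index shift between $[0,q)$ and $[1,q]$, handled by the translation above, and the final block, which may be shorter than $q$. Monotonicity of the count in the range handles the latter. The proof is routine and has no real obstacle.
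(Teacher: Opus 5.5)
Your proof is correct and follows the paper's argument: partition $[0,N)$ into at most $1+N/q$ runs of $q$ consecutive integers, bound the orbit's visits to each of the $C_1\eps^{1-d}$ danger cubes per run by $2^dq\eps^d+\Delta_q$ via \eqref{disc upper}, and multiply (your translation by $\alpha$ makes explicit the index shift that the paper leaves implicit). One slip to fix: drop the weakening $(2\eps)^d\le 2^d\eps$, because multiplying $2^dq\eps+\Delta_q$ by $C_1\eps^{1-d}$ gives a term $2^dq\eps^{2-d}$, which is too large for $d\ge 2$; your final line correctly uses the unweakened $2^dq\eps^d$, so the conclusion stands.
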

	
	\begin{proof}
		By Lemma~\ref{lem: multidim frequent following}, every disagreement occurs when $\modone{x+n\alpha}$ belongs to one of at most $C_1\eps^{1-d}$ cubes of radius $\eps$. Hence \eqref{disc upper} implies that, on any interval of $q$ consecutive integers, the orbit visits such a cube at most $2^dq\eps^d+\Delta_q$ times. Partition $[0,N)\cap\Z$ into at most $1+N/q$ intervals, each contained in an interval of $q$ consecutive integers. Multiplying the preceding bound by $C_1\eps^{1-d}$ proves the lemma.
	\end{proof}
	
	The following proposition is key for the approach-and-follow argument below.
	
	\begin{proposition}\label{prop: multidim direct diameter bound}
		Let $\alpha \in \R^d$ and $\beta \in (0,1)^d$. There exists $C_2 = C_2(d) > 0$ such that, for every $N,q \in \N$,
		\begin{equation*}
			\mathrm{diam}_E(\mathcal W_N)
			\ \leq \
			q+C_2\left(1+\frac Nq\right)q^{(d-1)/d}\Delta_q^{1/d} \, .
		\end{equation*}
	\end{proposition}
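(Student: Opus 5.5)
The argument is the $d$-dimensional analogue of Lemma~\ref{lem: q + N/q bound}: approach with Lemma~\ref{lem: multidim orbit cubes cover}, then follow with Lemma~\ref{lem: multidim disagreement count}.

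Fix $N,q \in \N$ and $x,y \in \T^d$. By Lemma~\ref{lem: multidim orbit cubes cover}, there is $\ell \in [1,q]$ with $\|\modone{x+\ell\alpha} - y\|_\infty \leq \eps \coloneq \lambda(q)q^{-1/d} = \frac12 \Delta_q^{1/d}q^{-1/d}$. Since $x$ is arbitrary, the $(\ell-1)$-shift of $x+\alpha$ reaches within $\eps$ of $y$. The cleanest statement is that some $\ell \in [0,q)$ works up to replacing $\ell\alpha$ by $(\ell+1)\alpha$; this costs at most one extra unit, which is absorbed into the constant. Put $\tilde x \coloneq \modone{x+\ell\alpha}$. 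The $d$-dimensional version of \eqref{hamming bound} holds by the same argument (triangle inequality plus Hamming bound):
\[
\den N x y \ \leq \ \ell + \#\{n \in [0,N) : \xi_n(\tilde x) \neq \xi_n(y)\} \, .
\]

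First suppose $\eps \leq 1/2$. Lemma~\ref{lem: multidim disagreement count} bounds the count by
\[
C_1\Bigl(1+\frac Nq\Bigr)\bigl(2^d q\eps + \Delta_q \eps^{1-d}\bigr) \, .
\]
Substituting $\eps = \frac12\Delta_q^{1/d}q^{-1/d}$ gives
\[
q\eps = \tfrac12\, q^{(d-1)/d}\Delta_q^{1/d} \quad \text{and} \quad \Delta_q\eps^{1-d} = 2^{d-1}\, q^{(d-1)/d}\Delta_q^{1/d} \, .
\]
Both terms therefore have the same order, and we may take $C_2 = C_1(2^{d-1}+2^{d-1}) + 1$, adjusting the constant for the $\ell$ versus $\ell+1$ issue.

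The only obstacle is the case $\eps > 1/2$, where Lemma~\ref{lem: multidim disagreement count} does not apply. In that case $\Delta_q > q$, so
\[
q^{(d-1)/d}\Delta_q^{1/d} > q \quad \text{and hence} \quad \Bigl(1+\frac Nq\Bigr)q^{(d-1)/d}\Delta_q^{1/d} > q+N \, .
\]
The trivial bound $\mathrm{diam}_E(\mathcal W_N) \leq N$ then suffices once $C_2 \geq 1$. Taking the maximum over $x,y$ completes the argument.
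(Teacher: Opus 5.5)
Your argument is correct and is essentially the paper's proof: approach via Lemma~\ref{lem: multidim orbit cubes cover} with some $\ell\in[1,q]$, then follow via \eqref{hamming bound} and Lemma~\ref{lem: multidim disagreement count} with $\eps=\frac12(\Delta_q/q)^{1/d}$, which gives $C_2=2^dC_1$. Two of your detours are unnecessary, though harmless: since $\ell\le q$ the shift already costs at most $q$, so no $\ell$ versus $\ell+1$ adjustment is needed, and the case $\eps>1/2$ cannot occur because the discrepancy is at most $1$, which forces $\Delta_q\le q$ (your handling of that case is nonetheless valid).
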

	
	\begin{proof}
		Fix $x,y\in\T^d$ and put
		\[
		\eps_q \ \coloneq \ \frac{\lambda(q)}{q^{1/d}}
		\ = \ \frac12 \left(\frac{\Delta_q}{q}\right)^{1/d} \, .
		\]
		By Lemma~\ref{lem: multidim orbit cubes cover}, there exists $\ell\in[1,q]\cap\Z$ such that $\|x+\ell\alpha-y\|_\infty \leq \eps_q$.
		Using \eqref{hamming bound} and then Lemma~\ref{lem: multidim disagreement count}, we obtain
		\begin{equation*}
			\den N x y \ \leq \ q+C_1\left(1+\frac Nq\right) \left(2^dq\eps_q+\Delta_q\eps_q^{1-d}\right) \ = \ q+2^dC_1\left(1+\frac Nq\right)q^{(d-1)/d}\Delta_q^{1/d} \, .
		\end{equation*}
		Letting $C_2 \coloneq 2^dC_1$ and taking the maximum over $x,y\in\T^d$ proves the proposition.
	\end{proof}
	
	The next theorem gives an upper bound, via approach-and-follow, in terms of the growth rate of the discrepancy of $\alpha$.
	
	\begin{theorem}\label{thm: multidim expolimsup upper bound}
		Let $\alpha \in \R^d$ and $\beta \in (0,1)^d$. Let
		\[
		\delta(\alpha) \ \coloneq \ \limsup_{q\to\infty} \frac{\log \Delta_q}{\log q} \, .
		\]
		Then $\expolimsup(\alpha,\beta) \leq \frac{d}{d+1-\delta(\alpha)}$.
	\end{theorem}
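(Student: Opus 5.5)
The plan is to apply Proposition~\ref{prop: multidim direct diameter bound} with a choice of $q$ depending on $N$, balancing the two terms. Fix $\delta' > \delta(\alpha)$. If $\delta(\alpha) \ge 1$, the claimed bound $\frac{d}{d+1-\delta(\alpha)} \ge 1$ is trivial since $\mathrm{diam}_E(\mathcal{W}_N) \le N$. So assume $\delta(\alpha) < 1$ and take $\delta' < 1$. By the definition of $\delta(\alpha)$, there is $q_0$ such that $\Delta_q \le q^{\delta'}$ for all $q \ge q_0$.

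Given large $N$, we choose $q \coloneq \lceil N^{\gamma}\rceil$ with $\gamma \in (0,1)$ to be optimized; then $q \ge q_0$ and $q \le N$, so $1 + N/q \le 2N/q$. Proposition~\ref{prop: multidim direct diameter bound} gives
\[
\mathrm{diam}_E(\mathcal{W}_N) \ \le \ q + 2C_2 N q^{-1/d} \Delta_q^{1/d} \ \le \ q + 2C_2 N q^{(\delta'-1)/d} .
\]
The first term is at most $2N^{\gamma}$. The second is at most $C N^{1+\gamma(\delta'-1)/d}$, where the constant $C$ absorbs the ceiling (note that $q^{(\delta'-1)/d}$ is decreasing in $q$ because $\delta' < 1$).

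The two exponents are equal when $\gamma = 1 - \gamma(1-\delta')/d$, that is, when $\gamma = \frac{d}{d+1-\delta'}$, which lies in $(0,1)$. With this choice,
\[
\frac{\log \mathrm{diam}_E(\mathcal{W}_N)}{\log N} \ \le \ \frac{d}{d+1-\delta'} + \frac{O(1)}{\log N} .
\]
Taking the $\limsup$ in $N$ and then letting $\delta' \downarrow \delta(\alpha)$ gives the claim.

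There is no serious obstacle. The only points needing care are the case split at $\delta(\alpha) \ge 1$, including $\delta(\alpha) = \infty$ (which is handled trivially), and checking that the eventual bound $\Delta_q \le q^{\delta'}$ applies to the chosen $q$. The latter holds because $q \to \infty$ with $N$.
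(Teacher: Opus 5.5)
Your proof is correct and follows the paper's argument essentially verbatim. Both apply Proposition~\ref{prop: multidim direct diameter bound} with $q=\lceil N^{\theta}\rceil$ and $\theta=d/(d+1-\delta')$ for some $\delta'$ slightly above $\delta(\alpha)$, use the eventual bound $\Delta_q\le q^{\delta'}$ to balance the terms $q$ and $Nq^{(\delta'-1)/d}$, and then let $\delta'\downarrow\delta(\alpha)$. Your case $\delta(\alpha)>1$ (including $\infty$) is vacuous, since $D_q\le 1$ gives $\Delta_q\le q$, but including it does no harm.
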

	
	\begin{proof}
		If $\delta(\alpha) = 1$, then the claimed bound is trivial, so assume $\delta(\alpha) \in [0,1)$. Fix $\eps > 0$ such that $\delta(\alpha)+\eps < 1$ and let $q_0\in\N$ such that
		$\Delta_q\leq q^{\delta(\alpha)+\eps}$ for every $q\geq q_0$. Put
		\[
		a \ \coloneq \ \frac{1-\delta(\alpha)-\eps}{d} \ > \ 0
		\qquad\text{and}\qquad
		\theta \ \coloneq \ \frac{1}{1+a} \ = \ \frac{d}{d+1-\delta(\alpha)-\eps} \, .
		\]
		For sufficiently large $N$, let $q\coloneq\lceil N^\theta\rceil$, so that
		$q\geq q_0$. By Proposition~\ref{prop: multidim direct diameter bound} and then $\Delta_q\leq q^{\delta(\alpha)+\eps}$, we see $\mathrm{diam}_E(\mathcal W_N)$ is at most
		\begin{equation*}
			q+C_2\left(1+\frac Nq\right)q^{(d-1)/d}\Delta_q^{1/d} \ \leq \ q+C_2\left(1+\frac Nq\right)q^{1-a} \ \leq \  (1+C_2)q+C_2Nq^{-a} \, .
		\end{equation*}
		Since $N^\theta \leq q \leq 2N^\theta$ and $1-a\theta=\theta$, it follows that
		\[
		\mathrm{diam}_E(\mathcal W_N) \ \leq \ \bigl(2+3C_2\bigr)N^\theta
		\]
		for all sufficiently large $N$, hence
		\[
		\expolimsup(\alpha,\beta) \ \leq \ \theta \ = \ \frac{d}{d+1-\delta(\alpha)-\eps} \, .
		\]
		Letting $\eps \downarrow 0$ proves the result.
	\end{proof}

	\subsection{Lower bound for the liminf typical orbit exponent}
	
	The following lemma bounds the probability that two random points share a coding in terms of the discrepancy of $\alpha$.
	\begin{lemma}\label{lem: multidim same coding bound}
		Let $\alpha \in \R^d$ and $\beta \in (0,1)^d$, and put $C_3 \coloneq \prod_{j=1}^d \beta_j^{-1}$. Suppose $k\in\N$ satisfies
		\begin{equation}\label{eqn: small discrepancy condition}
			\frac{C_3\Delta_k(\alpha)}{k} \ < \ \min\{\|\beta_1\|,\ldots,\|\beta_d\|\} \, .
		\end{equation}
		Then, for all $x,y\in\T^d$,
		\begin{equation}\label{eqn: same code implies close}
			\xi_{[0,k)}(x) = \xi_{[0,k)}(y)
			\quad\Longrightarrow\quad
			\|x-y\|_\infty \ \leq \ \frac{C_3\Delta_k}{k} \, .
		\end{equation}
		Consequently, if $x,y$ are independent and uniform in $\T^d$, then
		\begin{equation*}
			\mathbb{P}\bigl(\xi_{[0,k)}(x)=\xi_{[0,k)}(y)\bigr) \ \leq \ C_4\left(\frac{\Delta_k}{k}\right)^d,
		\end{equation*}
		where $C_4 = C_4(d,\beta) \coloneq 2^d C_3^d$.
	\end{lemma}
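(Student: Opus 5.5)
The plan is to prove \eqref{eqn: same code implies close} by contradiction, one coordinate at a time, using the discrepancy lower bound \eqref{disc lower} to force an orbit point into a thin box that separates $x$ from $y$. Then the probability bound follows by integrating over a cube.

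Suppose $\xi_{[0,k)}(x)=\xi_{[0,k)}(y)$ but $\|x_i-y_i\| = r > \rho \coloneq C_3\Delta_k/k$ for some coordinate $i$. Let $J\subset\T$ be the shorter arc from $x_i$ to $y_i$, of length $r$. The aim is to find $n\in[0,k)$ such that:
\begin{itemize}
\item[] $\modone{x+n\alpha}$ lies in $\square_\beta$, and $\modone{y+n\alpha}$ does not;
\item[] or the same with $x$ and $y$ exchanged.
\end{itemize}
For coordinates $j\neq i$, require $x_j+n\alpha_j$ and $y_j+n\alpha_j$ to lie in $[0,\beta_j)$ together. It suffices to require $x_j+n\alpha_j \in [0,\beta_j - \|x_j-y_j\|)$ when $y_j$ is to the right, with the symmetric choice otherwise. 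This may fail if $\|x_j-y_j\|$ is large, so I will first pick the coordinate $i$ maximizing $\|x_i-y_i\|$.

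A cleaner route avoids this issue. Consider the box $I \subset \T^d$ of points $z$ such that:
\begin{itemize}
\item[] $z_i+ (x_i - \text{endpoint})$ lands just so that the $i$-th coordinate of the $x$-orbit is in $[0,\beta_i)$ while that of the $y$-orbit has crossed $0$ (i.e.\ is in $[1-r,1)$ or just past $\beta_i$);
\item[] the other coordinates of both orbits stay in $[0,\beta_j)$.
\end{itemize}
Concretely, the condition is $x+n\alpha \in I_x$, where $I_x$ has $i$-th side an arc of length $\min(r,\beta_i,1-\beta_i)$ adjacent to $0$ (or to $\beta_i$) on the appropriate side. The other sides are $[0,\beta_j)$ intersected with the translate needed for $y$. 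Since $\|x_j-y_j\|\le r$ and we may assume $r<\min\|\beta_j\|$ (else shrink $r$ to $\rho$ and argue the same way), each such side has length at least $\beta_j - r$.

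The main obstacle is arranging the volume bound $k\,\leb^d(I_x) > \Delta_k$ with exactly the constant $C_3=\prod\beta_j^{-1}$. The $i$-th side gives a factor of roughly $r$, and the others give roughly $\prod_{j\ne i}\beta_j$ after sacrificing a little. I would try the simpler box in which the other sides are $[0,\beta_j)$ and $y$'s coordinates are handled by translating. If the constants misbehave, I would accept a harmless constant change (the statement's $C_3$ presumably is tuned to make $r\prod_j\beta_j \cdot C_3 \ge r$ work). Applying \eqref{disc lower} to the box $I_x - x$ for the sequence $(\modone{n\alpha})$ gives a visit with $n\in[1,k]$. A shift by one index, or using $n\in[0,k)$ after reindexing, yields a coding disagreement and hence the contradiction. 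The hypothesis \eqref{eqn: small discrepancy condition} guarantees $\rho<\|\beta_i\|$, so the arc adjacent to $0$ of length about $\rho$ stays inside $[0,\beta_i)$ and its reflection stays outside.

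For the probability: given $x$, the event forces $y\in\bar B_\infty(x,\rho)$, which has measure at most $(2\rho)^d = 2^dC_3^d(\Delta_k/k)^d$. Integrate over $x$.
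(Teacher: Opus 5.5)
There is a genuine gap, and it sits exactly where you flag ``the main obstacle.'' You require that for every $j\neq i$ \emph{both} $x_j+n\alpha_j$ and $y_j+n\alpha_j$ lie in $[0,\beta_j)$. This shrinks the $j$-th side of your box to $\beta_j-\|x_j-y_j\|$. For example, if all separations $\|x_j-y_j\|$ are just above $\rho = C_3\Delta_k/k$, then $k\,\leb^d$ of your box is at most about $\Delta_k\,\beta_i^{-1}\prod_{j\neq i}(1-\rho/\beta_j)$. The hypothesis only gives $\rho<\min_j\|\beta_j\|$, so the factors $1-\rho/\beta_j$ can be close to $0$, and this need not exceed $\Delta_k$. ``Accepting a harmless constant change'' therefore does not prove the lemma as stated.

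Worse, the construction breaks down entirely when $\|x_j-y_j\|\ge\beta_j$ for some $j\neq i$ (e.g.\ $\beta=(0.3,0.3)$, $x-y=(0.4,0.4)$), because that side is then empty. Your reduction ``we may assume $r<\min_j\|\beta_j\|$, else shrink $r$ to $\rho$'' does not address this. Shrinking the $i$-th side does nothing to the separations in the other coordinates.

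The missing observation is that $\xi_n$ is a \emph{product} of coordinate indicators. To get $\xi_n(x)=1\neq 0=\xi_n(y)$ you only need $\modone{x+n\alpha}\in\square_\beta$ together with $y_i+n\alpha_i\bmod 1\notin[0,\beta_i)$ in the single coordinate $i$ where $\|x_i-y_i\|>\rho$. Any such $i$ works, no maximality is needed, and the other coordinates of $y$ are irrelevant.

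So take $B=\prod_j J_j$ with $J_j=[0,\beta_j)$ for $j\neq i$ and $J_i$ an arc of length exactly $\rho$. Representing $x_i,y_i$ in $[0,1)$ with, say, $x_i>y_i$, the paper takes $J_i=\min\{x_i-y_i,\beta_i\}+[-\rho,0)$. Since $\rho<\|\beta_i\|$ and $\rho<\|x_i-y_i\|=\min\{x_i-y_i,\,1-(x_i-y_i)\}$, one has $J_i\subset(0,\beta_i)$. Moreover $J_i-(x_i-y_i) \bmod 1$ lands in $[1-\rho,1)\subset[\beta_i,1)$ when $x_i-y_i<\beta_i$, and in $[\beta_i,1)$ when $x_i-y_i\ge\beta_i$.

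Then $k\,\leb^d(B)=k\rho\prod_{j\neq i}\beta_j=\Delta_k/\beta_i>\Delta_k$ because $\beta_i<1$; this is precisely where $C_3=\prod_j\beta_j^{-1}$ comes from. The discrepancy bound, applied to the translated box with your one-step index shift, then supplies the required $n$. Your final integration over $\bar B_\infty(x,\rho)$ is fine.
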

	
	\begin{proof}
		Put $\eta \coloneq \frac{C_3\Delta_k}{k}$. We first prove \eqref{eqn: same code implies close}. Suppose that $\|x-y\|_\infty>\eta$. Then there is some $1\leq i\leq d$ such that
		\[
		\eta \ < \ \|x_i-y_i\| \ \leq \ \frac 12 \, .
		\]
		Without loss of generality, assume that $x_i>y_i$.
		
		Define $B\coloneq\prod_{j=1}^dJ_j$, where
		\[
		J_j \ \coloneq \ [0,\beta_j) \quad \text{ for } j \neq i,
		\qquad J_i \ \coloneq \ \min\{x_i-y_i,\beta_i\}+[-\eta,0) \, .
		\]
		By \eqref{eqn: small discrepancy condition}, $\eta < \|\beta_i\|\leq\beta_i$, and since $x_i-y_i\geq\|x_i-y_i\|>\eta$, we have $J_i\subset(0,\beta_i)$. Moreover, $k\leb^d(B) =  \Delta_k/\beta_i > \Delta_k$. Hence, by \eqref{disc lower}, there exists $n \in [0,k) \cap \Z$ such that
		\[
		\modone{x+n\alpha} \in B \subset \square_\beta \, .
		\]
		We claim that $\modone{y+n\alpha}\notin\square_\beta$. Indeed, since $x_i+n\alpha_i\bmod 1 \in J_i$, for some $\gamma\in(0,\eta]$,
		\[
		y_i+n\alpha_i\bmod 1 \ \equiv \ \min\{x_i-y_i,\beta_i\}-\gamma+1+y_i-x_i \bmod 1 \, .
		\]
		If $\min\{x_i-y_i,\beta_i\}=x_i-y_i$, then $\gamma \leq \eta < \|\beta_i\| \leq 1-\beta_i$, and hence
		\[
		y_i+n\alpha_i\bmod1 \ = \ 1-\gamma \ \geq \ \beta_i \, .
		\]
		If instead $\min\{x_i-y_i,\beta_i\} = \beta_i$, then $\beta_i+y_i-x_i \leq 0$, so
		\[
		(\beta_i-\gamma)+(1+y_i-x_i) \ < \ 1 \, .
		\]
		Moreover, $\gamma \leq \eta < \|x_i-y_i\| \leq 1+y_i-x_i$, and therefore
		\[
		y_i+n\alpha_i \bmod1 \ = \ (\beta_i-\gamma)+(1+y_i-x_i) \ \geq \ \beta_i \, .
		\]
		Thus $\xi_n(x)\neq\xi_n(y)$, proving \eqref{eqn: same code implies close}. By \eqref{eqn: small discrepancy condition}, $\eta < 1/2$, so for every $x \in \T^d$, the set of $y\in\T^d$ satisfying $\|x-y\|_\infty\leq\eta$ has Lebesgue measure $(2\eta)^d = C_4(\frac{\Delta_k}{k})^d$. Integrating with respect to $x$ and using \eqref{eqn: same code implies close} gives the final result.
	\end{proof}
	
	The next lemma converts this estimate routinely into a bound on the probability of small edit distance.
	
	\begin{lemma}\label{lem: multidim same coding bound sequel}
		Let $\alpha \in \R^d$ and $\beta \in (0,1)^d$. Suppose $N,k,\ell\in\N$ satisfy $6k\ell \leq N$ and \eqref{eqn: small discrepancy condition}. Then, for independent and uniform $x,y\in\T^d$,
		\begin{equation*}
			\mathbb{P}(\den N x y \leq \ell) \ \leq \ 2(2\ell+1)C_4\left(\frac{\Delta_k}{k}\right)^d \, .
		\end{equation*}
	\end{lemma}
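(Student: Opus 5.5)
The plan is to copy the proof of Lemma~\ref{lem: utility lemma on diameter}(a), replacing Lemma~\ref{lem: bound on concordance} by the $d$-dimensional coding estimate of Lemma~\ref{lem: multidim same coding bound}. Let $\mathcal C(\ell;k)\subset\T^d\times\T^d$ denote the set of $(\ell;k)$-concordant pairs in the sense of Notation~\ref{notation block for multidim}.

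\textbf{Step 1: most shifts of a close pair are concordant.} Lemma~\ref{lem: bound on number of l,k-bad indices}(a) is a purely combinatorial statement about words. Apply it to the bi-infinite words $(\xi_n(x))_{n\in\Z}$ and $(\xi_n(y))_{n\in\Z}$. If $\den N x y\le \ell$, then the number of $n\in[0,N)$ for which $(\modone{x+n\alpha},\modone{y+n\alpha})$ is $(\ell;k)$-discordant is at most $3k\ell\le N/2$, since $6k\ell\le N$. Hence
\[
S_N \ \coloneq \ \sum_{n=0}^{N-1}\mathbf 1_{\mathcal C(\ell;k)}\bigl(\modone{x+n\alpha},\modone{y+n\alpha}\bigr) \ \ge \ N/2 .
\]

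\textbf{Step 2: Markov's inequality.} The diagonal rotation $(x,y)\mapsto(\modone{x+\alpha},\modone{y+\alpha})$ preserves $\leb^d\times\leb^d$, so $\mathbb E S_N = N\,\mathbb P(\mathcal C(\ell;k))$. Markov's inequality then gives
\[
\mathbb P(\den N x y\le\ell) \ \le \ \mathbb P(S_N\ge N/2) \ \le \ 2\,\mathbb P(\mathcal C(\ell;k)).
\]

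\textbf{Step 3: bounding concordance.} By a union bound over $s\in[-\ell,\ell]\cap\Z$,
\[
\mathbb P(\mathcal C(\ell;k)) \ \le \ \sum_{|s|\le\ell}\mathbb P\bigl(\xi_{[0,k)}(x)=\xi_{[s,s+k)}(y)\bigr).
\]
Note that $\xi_{[s,s+k)}(y)=\xi_{[0,k)}(\modone{y+s\alpha})$. For each fixed $s$, the point $\modone{y+s\alpha}$ is uniform on $\T^d$ and independent of $x$. Hypothesis \eqref{eqn: small discrepancy condition} is exactly what Lemma~\ref{lem: multidim same coding bound} requires, so each summand is at most $C_4(\Delta_k/k)^d$. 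There are $2\ell+1$ summands, and combining with Step 2 gives the claimed bound $2(2\ell+1)C_4(\Delta_k/k)^d$.

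No step looks like a real obstacle. The one point to check is that the combinatorial Lemma~\ref{lem: bound on number of l,k-bad indices}(a) transfers verbatim to the $d$-dimensional codings, including the convention that concordance uses shifted codings $\xi_{[s,s+k)}$ with possibly negative $s$. This holds because the bi-infinite coding sequences are defined for all $n\in\Z$.
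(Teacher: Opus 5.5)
Your proposal is correct and matches the paper's proof. The paper also reruns the Markov-inequality argument from Lemma~\ref{lem: utility lemma on diameter}(a), via Lemma~\ref{lem: bound on number of l,k-bad indices}(a), to get $\mathbb{P}(\den N x y \leq \ell) \leq 2\,\mathbb{P}\bigl((x,y)\in\mathcal C(\ell;k)\bigr)$, then uses a union bound over $s\in[-\ell,\ell]\cap\Z$ with translation invariance and finishes with Lemma~\ref{lem: multidim same coding bound}; your Steps~1--3 spell out exactly these steps in more detail.
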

	
	\begin{proof}
		Let $\mathcal C(\ell;k)$ denote the set of $(\ell;k)$-concordant pairs. By Lemma~\ref{lem: bound on number of l,k-bad indices}(a), the argument in the proof of Lemma~\ref{lem: utility lemma on diameter}(a) applies verbatim and gives
		\[
		\mathbb{P}(\den N x y \leq \ell) \ \leq \ 2\mathbb{P}\bigl((x,y)\in\mathcal C(\ell;k)\bigr) \, .
		\]
		A union bound over $s\in[-\ell,\ell]\cap\Z$ and translation invariance of Lebesgue measure give
		\[
		\mathbb{P}\bigl((x,y)\in\mathcal C(\ell;k)\bigr) \ \leq \ (2\ell+1)\mathbb{P}\bigl(\xi_{[0,k)}(x)=\xi_{[0,k)}(y)\bigr) \, .
		\]
		The result now follows from Lemma~\ref{lem: multidim same coding bound}.
	\end{proof}
	
	We now prove the lower bound.
	
	\begin{theorem}\label{thm: ddimorbitexpoliminf lower bound}
		Let $\alpha \in \R^d$ and $\beta \in (0,1)^d$. Let
		\[
		\delta(\alpha) \ \coloneq \ \limsup_{q\to\infty} \frac{\log \Delta_q}{\log q} \ < \ 1 \, .
		\]
		Then \[ \ddimorbitexpoliminf(\alpha,\beta) \ \geq \ \frac{d(1-\delta(\alpha))}{1+d(1-\delta(\alpha))} \, . \]
	\end{theorem}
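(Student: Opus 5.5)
We combine Lemma~\ref{lem: multidim same coding bound sequel} with a Borel--Cantelli argument over dyadic scales, as in the proof of Theorem~\ref{thm: orbitexpoliminf lower bound when mu=2}.

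Write $\delta \coloneq \delta(\alpha) < 1$ and put $\theta \coloneq \frac{d(1-\delta)}{1+d(1-\delta)}$. Fix a small $\eps>0$ and choose $\delta' > \delta$ close to $\delta$, with $\delta' < 1$. Then $\Delta_k \leq k^{\delta'}$ for all large $k$, so
\[
\Bigl(\frac{\Delta_k}{k}\Bigr)^d \ \leq \ k^{-d(1-\delta')} .
\]
In particular, the smallness condition \eqref{eqn: small discrepancy condition} holds for all large $k$.

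Given a large $N$, set $\ell \coloneq \lfloor N^{\theta'-\eps}\rfloor$, where
\[
\theta' \ \coloneq \ \frac{d(1-\delta')}{1+d(1-\delta')} ,
\]
and take $k \coloneq \lfloor N/(6\ell)\rfloor$, so that $6k\ell\le N$ and $k \asymp N^{1-\theta'+\eps}$. Lemma~\ref{lem: multidim same coding bound sequel} then gives
\[
\mathbb{P}(\den N x y\le \ell) \ \lesssim \ \ell\, k^{-d(1-\delta')} \ \approx \ N^{\theta'-\eps-(1-\theta'+\eps)d(1-\delta')}.
\]
Write $a \coloneq d(1-\delta')$. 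Since $\theta'(1+a) = a$, the exponent equals $\theta'-a(1-\theta') - \eps(1+a) = -\eps(1+a)$. Hence the probability is $O(N^{-\eps})$, and this is summable along $N=2^j$.

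By the monotonicity of $N\mapsto \den N x y$ (noted below \eqref{edit distance is subadditive}), the event that $\den N x y \le \lfloor 2^{j(\theta'-\eps)}\rfloor$ for some $N\in[2^j,2^{j+1})$ reduces to the event at $N=2^j$. Borel--Cantelli then yields, for almost every $(x,y)$ and all large $N$, the bound $\den N x y \gtrsim N^{\theta'-\eps}$. It follows that $\ddimorbitexpoliminf(\alpha,\beta)\ge \theta'-\eps$. Letting $\eps\downarrow 0$ and $\delta'\downarrow\delta$ gives the claim, since $\theta'$ is continuous in $\delta'$.

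The main care needed is the bookkeeping. The condition \eqref{eqn: small discrepancy condition} must hold uniformly once $N$ is large, which follows from $\Delta_k/k\le k^{\delta'-1}\to0$ together with $k\to\infty$. We must also ensure $\ell\ge1$ and $k\ge1$, and these hold because $0<\theta'<1$. There is no real obstacle beyond choosing $k$ so that $6k\ell\le N$ is balanced against the decay $(\Delta_k/k)^d$; the choice $k\approx N/\ell$ is optimal.
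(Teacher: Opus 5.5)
Your proof is correct and follows essentially the same route as the paper: Lemma~\ref{lem: multidim same coding bound sequel} at dyadic scales $N=2^j$, then Borel--Cantelli, then monotonicity of $N\mapsto \den N x y$. The only differences are in the details. The paper fixes $k_j\approx N_j^{1/(1+\kappa)}$ and $\ell_j\approx N_j^{\kappa/(1+\kappa)-\eta}$, so that $6k_j\ell_j\le 6N_j^{1-\eta}$, whereas you take $k$ as large as $6k\ell\le N$ allows. The paper also notes at the outset that $\delta(\alpha)<1$ forces $(1,\alpha_1,\ldots,\alpha_d)$ to be linearly independent over $\Q$, so that $\ddimorbitexpoliminf(\alpha,\beta)$ is well defined; you should add this point.
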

	
	\begin{proof}
		First, note that $\delta(\alpha) < 1$ implies $(1,\alpha_1,\ldots,\alpha_d)$ is linearly independent over $\Q$, so $\ddimorbitexpoliminf(\alpha,\beta)$ is defined.
		Fix $\eps > 0$ such that $\delta(\alpha)+\eps<1$. Then there exists $q_0=q_0(\alpha,\eps)\in\N$ such that $\Delta_q\leq q^{\delta(\alpha)+\eps}$ for every $q\geq q_0$. Put
		\[
		\kappa \ \coloneq \ d(1-\delta(\alpha)-\eps) \ > \ 0 \, .
		\]
		For every $k \geq q_0$, it follows that $C_3\Delta_k/k \leq C_3k^{\delta(\alpha)+\eps-1}$, so there exists $q_1 \geq q_0$ such that $C_3\Delta_k/k < \min\{\|\beta_1\|,\ldots,\|\beta_d\|\}$ for every $k \geq q_1$.
		
		Fix $0 < \eta < \frac{\kappa}{1+\kappa}$ and, for $j\in\N$, put
		\[
		N_j \ \coloneq \ 2^j,
		\qquad
		k_j \ \coloneq \ \left\lfloor N_j^{1/(1+\kappa)}\right\rfloor,
		\qquad
		\ell_j \ \coloneq \ \left\lfloor N_j^{\kappa/(1+\kappa)-\eta}\right\rfloor.
		\]
		For all sufficiently large $j$, we have $k_j\geq q_1$, along with
		\[
		6k_j\ell_j \ \leq \ 6N_j^{1-\eta} \ \leq \ N_j \quad \text{ and } \quad  k_j \ \geq \ \frac 12 N_j^{1/(1+\kappa)} \, .
		\]
		For such $j$, applying Lemma~\ref{lem: multidim same coding bound sequel} gives
		\[
		\mathbb{P}(\den {N_j} x y \leq \ell_j) \ \leq \  6\ell_j C_4\left(\frac{\Delta_{k_j}}{k_j}\right)^d \ \leq \ 6\ell_j C_4k_j^{-\kappa} \ \leq \ 6\cdot 2^\kappa C_4N_j^{-\eta} \ = \ 6\cdot 2^\kappa C_4 2^{-j\eta}.
		\] 
		The last expression is summable in $j$. By the Borel--Cantelli lemma, for $\leb^{2d}$-almost every $(x,y) \in \T^d \times \T^d$, there exists $j_0=j_0(x,y)$ such that $\den {N_j} x y > \ell_j$ for every $j\geq j_0$.
		
		Let
		\[
		b \ \coloneq \ \frac{\kappa}{1+\kappa}-\eta \ > \ 0 \, .
		\]
		For all sufficiently large $j$, we have $\ell_j\geq \frac12N_j^b$. For $N \in [N_j,N_{j+1}) \cap \Z$, the remark below \eqref{edit distance is subadditive} gives
		\[ 
		\den N x y \ \geq \ \den {N_j} x y \ > \ \ell_j \ \geq \ \frac 12 N_j^b \ > \ 2^{-1-b}N^b \, .
		\]
		Therefore, for $\leb^{2d}$-almost every $(x,y)$, \[ \liminf_{N\to\infty} \frac{\log \den N x y}{\log N} \ \geq \ b \ = \ \frac{\kappa}{1+\kappa}-\eta \, .
		\]
		Letting $\eta \downarrow 0$ gives
		\[
		\ddimorbitexpoliminf(\alpha,\beta) \ \geq \ \frac{\kappa}{1+\kappa} \ = \ \frac{d(1-\delta(\alpha)-\eps)}{1+d(1-\delta(\alpha)-\eps)} \, .
		\]
		Finally, letting $\eps \downarrow 0$ proves the theorem.
	\end{proof}

	\begin{proof}[Proof of Theorem~\ref{main thm 4}]
		Note $\alpha \in \lowdisc$ implies $\delta(\alpha) = 0$, and apply Theorems~\ref{thm: multidim expolimsup upper bound} and \ref{thm: ddimorbitexpoliminf lower bound}.
	\end{proof}

	\subsection{Bounds for non-singular rotations}\label{subsec: nonsing}

	Given $\alpha \in \R^d$, consider the irrationality measure function
	\begin{equation*}
		\psi_\alpha(q) \ \coloneq \ \min_{n \in [1,q] \cap \Z}  \| n\alpha \|_\infty \, .
	\end{equation*}
	
	First, for each $q \in \N$ and open cube $B \coloneq B_\infty(z,\psi_\alpha(q)/2)$, we have $\sum_{n=0}^q \mathbf{1}_{B}(\modone{ n\alpha}) \leq 1$.

	Second, by Dirichlet's theorem, $\psi_{\alpha}(q) \leq q^{-1/d}$ for all $q \geq 1$.
	We say that $\alpha \in \R^d$ is \textbf{non-singular} if
	\begin{equation*}
		\upsilon(\alpha) \ \coloneq \ \limsup_{q \to \infty} q^{1/d}\psi_\alpha(q) \ > \ 0 \, .
	\end{equation*}
	
	Khintchine \cite{khintchine26} proved that the set of singular $\alpha$ is null, and, when $d > 1$, Cheung and Chevallier \cite{cheuchev} showed it has Hausdorff dimension $\frac{d^2}{d+1}$. Using a transference theorem relating homogeneous forms and inhomogeneous problems, we derive the following lemma.
	
	\begin{lemma}\label{lem: nonsing condition}
		Let $\alpha \in \R^d$ be non-singular, and define $A \coloneq \{q \in \N : \psi_\alpha(q) > \tfrac 12 \upsilon(\alpha) q^{-1/d} \}$, which is infinite. There exist constants $C_5 > 0$ and $C_6 \in \N$ depending only on $\upsilon(\alpha)$ and $d$ such that, for all $Q \in C_6A$,
		\begin{equation}\label{nonsing condition}
			\bigcup_{n = -Q}^Q \bar{B}_\infty\Bigl(n\alpha,\frac{C_5}{Q^{1/d}}\Bigr) \ = \ \T^d \, . 
		\end{equation}
	\end{lemma}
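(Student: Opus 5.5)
The plan is to prove \eqref{nonsing condition} by transference. We first turn the non-singularity at scale $q\in A$ into a lattice statement. Then we use Minkowski's second theorem to control the last successive minimum of the dual lattice. Finally, an inhomogeneous transference bound gives covering at scale $Q^{-1/d}$ with $Q\asymp q$.

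\textbf{Step 1: the lattice.} Fix $q\in A$ and write $\upsilon\coloneq\upsilon(\alpha)$. Consider the unimodular lattice
\[
\Gamma_q \ \coloneq \ g_q u_\alpha \Z^{d+1} \, .
\]
Here $u_\alpha$ maps $(n,m)\mapsto (n, n\alpha - m)$, and $g_q=\mathrm{diag}(q^{-1},q^{1/d},\ldots,q^{1/d})$. The condition $\psi_\alpha(q)>\tfrac12\upsilon q^{-1/d}$ says the following: every nonzero $(n,m)$ with $1\le |n|\le q$ has $\|n\alpha-m\|_\infty q^{1/d}>\tfrac12\upsilon$. Vectors with $n=0$ have sup-norm at least $q^{1/d}$. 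Hence the first minimum satisfies $\lambda_1(\Gamma_q)\ge c(\upsilon,d)>0$ in the sup norm. Minkowski's second theorem, $\lambda_1\cdots\lambda_{d+1}\asymp_d 1$, then gives $\lambda_{d+1}(\Gamma_q)\le C(\upsilon,d)$. By the standard relation $\lambda_i(\Gamma^*)\lambda_{d+2-i}(\Gamma)\asymp_d 1$, the dual lattice also has bounded covering radius. In fact the covering radius of $\Gamma_q$ itself is at most $\tfrac{d+1}{2}\lambda_{d+1}(\Gamma_q)\le C'(\upsilon,d)$.

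\textbf{Step 2: inhomogeneous covering.} Let $z\in\T^d$ and lift the point $(0,-z)$ to $\R^{d+1}$. Apply $g_q$, and pick a lattice point of $\Gamma_q$ within sup-distance $C'$ of $g_q(0,-z)$. Undoing the scaling gives integers $(n,m)$ with
\[
|n|\le C'q \quad\text{and}\quad \|n\alpha - z\|_\infty \le \|n\alpha-m-z\|_\infty \le C' q^{-1/d} \, .
\]
Choose $C_6\coloneq\lceil C'\rceil+1$ and $Q\coloneq C_6 q$. Then $|n|\le Q$ and $\|n\alpha-z\|_\infty\le C' C_6^{1/d}Q^{-1/d}$, so \eqref{nonsing condition} holds with $C_5\coloneq C'C_6^{1/d}$. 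Both constants depend only on $\upsilon$ and $d$.

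\textbf{Step 3: infinitude of $A$.} This is immediate from the definition of $\limsup$ and $\upsilon>0$. (If $\upsilon=\infty$ were possible it would still hold, but Dirichlet's bound gives $\upsilon\le1$.)

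\textbf{Main obstacle.} The delicate point is Step 1: extracting a \emph{lower} bound on $\lambda_1$ from a condition that only controls $1\le n\le q$. Negative $n$ follow by symmetry, and $n=0$ is handled by the $q^{1/d}$ scaling. The passage from $\lambda_1$ to the covering radius must use the correct form of Minkowski's second theorem together with the covering-radius bound via $\lambda_{d+1}$. I would cite a standard reference (Cassels, \emph{Geometry of Numbers}, or the transference theorem of Cassels, Ch.~V) rather than reprove it. I would also take care that the norms used (sup versus Euclidean) only change constants depending on $d$.
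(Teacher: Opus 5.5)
Your argument is correct and is essentially the paper's: the paper cites Cassels' inhomogeneous transference theorem (Ch.~V, Theorem~VI) at scale $q\in A$ and then rescales with $C_6=\lceil\tilde C_6\rceil$ and $C_5=\tilde C_5C_6^{1/d}$. You instead unpack that theorem directly, via the lower bound on $\lambda_1(g_qu_\alpha\Z^{d+1})$, Minkowski's second theorem, and the covering bound $\tfrac12\sum_i\lambda_i$. Two trivial slips should be fixed: vectors with $|n|>q$ must also be excluded in the $\lambda_1$ bound, which the first coordinate $n/q$ does; and aiming at $g_q(0,-z)$ makes $\|n\alpha+z\|_\infty$ small, so aim at $g_q(0,z)$ or replace $n$ by $-n$.
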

	\begin{proof}
		For $q \in A$, the inequality
		$\| n \alpha\|_{\infty} \leq \tfrac 12 \upsilon(\alpha) q^{-1/d}$ 
		has no nonzero solution $n \in (-q,q) \cap \Z$, so by Theorem VI and its corollary in \cite[Chapter V]{cassels}, there exist $\tilde{C}_5,\tilde{C}_6 > 0$ depending only on $\upsilon(\alpha)$ and $d$ such that, for all $z \in \T^d$, 
		\begin{equation*}
			\|n\alpha - z\|_\infty \ \leq \  \tilde{C}_5 q^{-1/d}
		\end{equation*}
		for some integer $n$ with $|n| \leq \tilde{C}_6q$. Then take $C_6 \coloneq \lceil \tilde{C}_6 \rceil$ and $C_5 \coloneq \tilde{C}_5 C_6^{1/d}$.
	\end{proof}
	
	The next lemma plays the same role for the non-singular case that Lemma~\ref{lem: multidim frequent following} does for handling almost every $\alpha$ in the torus. Namely, the number of coding disagreements of a close pair of points is bounded above by estimating the frequency of visits to certain danger zones. The argument uses the non-singular property instead of appealing to discrepancy.
	
	\begin{lemma}\label{lem: multidim nonsing frequent following} 
		Let $\alpha \in \R^d$ be non-singular. Suppose $C_7,C_8 > 0$ are constants. Then there exists a constant $C_{10} > 0$ depending only on $d$, $C_7$, and $C_8$ such that, for all $x,y \in \T^d$ and $q \in \N$ such that $\| x - y\|_\infty \leq C_7q^{-1/d}$ and $q^{-1/d} \leq C_8\psi_\alpha(q)$,
		\begin{equation*}
			\#\bigl\{ n \in [0,q] : \xi_n(x) \neq \xi_n(y) \} \ \leq \ C_{10} q^{(d-1)/d} \, .
		\end{equation*}
	\end{lemma}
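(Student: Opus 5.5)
Our approach mirrors Lemma~\ref{lem: multidim frequent following} combined with a packing argument in place of the discrepancy estimate \eqref{disc upper}. Put $\eps \coloneq \|x-y\|_\infty \le C_7 q^{-1/d}$. As in the proof of Lemma~\ref{lem: multidim frequent following}, if $\xi_n(x)\neq\xi_n(y)$ then $\modone{x+n\alpha}$ lies within $\|\cdot\|_\infty$-distance $\eps$ of the boundary of $\square_\beta$. That boundary consists of $2d$ faces, and the $\eps$-neighbourhood of each face is a box with one side of length at most $2\eps$ and all other sides of length at most $1$. Hence the whole danger zone is covered by the union of these $2d$ slabs.

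Instead of covering by cubes of radius $\eps$, we cover each slab by cubes of side comparable to the separation scale. Set $r\coloneq \psi_\alpha(q)/2 \ge q^{-1/d}/(2C_8)$. By the first observation in Section~\ref{subsec: nonsing}, each open cube $B_\infty(z,r)$ contains at most one point $\modone{n\alpha}$ with $n\in[0,q]$. Translating by $x$ preserves this, so it contains at most one point $\modone{x+n\alpha}$. We then count how many such cubes are needed to cover one slab. The $d-1$ long directions need about $(1/r+1)^{d-1}\lesssim (2C_8)^{d-1}q^{(d-1)/d}$ cubes. The thin direction, of width $2\eps\le 2C_7q^{-1/d}$, needs at most $\lceil 2\eps/r\rceil+1 \le 4C_7C_8+2$ cubes. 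A slab of width exceeding $1$ (when $\eps$ is not small) can be handled identically, since the bound $\eps\le C_7 q^{-1/d}$ keeps the ratio $\eps/r$ bounded.

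Multiplying these counts gives at most $C_9 q^{(d-1)/d}$ cubes per slab, with $C_9$ depending only on $d,C_7,C_8$. Each cube receives at most one orbit point from $[0,q]$. Summing over the $2d$ faces therefore yields
\[\#\{n\in[0,q]:\xi_n(x)\ne\xi_n(y)\}\le 2dC_9\,q^{(d-1)/d} \eqqcolon C_{10}q^{(d-1)/d}.\]

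The only delicate step is the bookkeeping for the cube covering: we must use half-open or slightly shrunk cubes of radius below $r$ so that the open-cube packing statement applies, and we must absorb the "$+1$" rounding terms into constants. Both adjustments are routine because $r\le q^{-1/d}/2\le 1/2$ by Dirichlet's theorem. Non-singularity of $\alpha$ itself enters only through the hypothesis $q^{-1/d}\le C_8\psi_\alpha(q)$, so no further input from Lemma~\ref{lem: nonsing condition} is needed here.
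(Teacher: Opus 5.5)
Your proposal is correct and is essentially the paper's argument. You cover the danger zone near the boundary of $\square_\beta$ by $O_{d,C_7,C_8}(q^{(d-1)/d})$ open cubes of radius $\psi_\alpha(q)/2$, each containing at most one point $\modone{x+n\alpha}$ with $n\in[0,q]$. The only difference is that the paper gets there in two steps: first $C_1\eps^{1-d}$ cubes of radius $\eps=C_7q^{-1/d}$ via Lemma~\ref{lem: multidim frequent following}, then $C_9$ small cubes per such cube. You instead cover the $2d$ slabs directly, which also sidesteps the restriction $\eps\le 1/2$ in that lemma, since your $\eps=\|x-y\|_\infty$ is automatically at most $1/2$.
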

	\begin{proof}
		By Lemma~\ref{lem: multidim frequent following}, there exists $C_1 = C_1(d) > 0$ such that, for all $x,y\in \T^d$ with $\|x-y\|_\infty \leq \eps$ and all $n \in \Z$, we have $\xi_n(x) \neq \xi_n(y)$ only if $\modone{x + n\alpha} \in S$ for some set $S \subset \T^d$ that may be covered by a union of at most $C_1(\frac{1}{\eps})^{d-1}$ cubes $\bar{B}_\infty(\zeta,\eps)$.
		
		Set $\eps \coloneq C_7q^{-1/d}$. Since $\eps \leq C_7C_8\psi_\alpha(q)$, there exists $C_{9}$ depending only on $d$, $C_7$, and $C_8$ so that each cube $\bar{B}_\infty(\zeta,\eps)$ can be covered by a union of $C_{9}$ open cubes $B_\infty(z,\psi_\alpha(q)/2)$ for some $z \in \T^d$. Each such open cube satisfies
		\[
		\sum_{n=0}^q \mathbf{1}_{B_\infty(z,\psi_\alpha(q)/2)}(\modone{x + n\alpha}) \ \leq \ 1 \, .
		\]
		The result follows on taking $C_{10} \coloneq C_1C_7^{1-d}C_{9}$.
	\end{proof}
	
	We are now ready to derive an upper bound.
	
	\begin{proposition}\label{prop: nonsing expoliminf upper bound}
		Let $\alpha \in \R^d$ be non-singular. For all $\beta \in (0,1)^d$,
		\begin{equation*}
			\expoliminf (\alpha,\beta) \ \leq \ \frac{d}{d+1} \, .
		\end{equation*}  
	\end{proposition}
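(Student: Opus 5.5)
We run the approach-and-follow argument at the special scales supplied by non-singularity. Let $A$ be the infinite set from Lemma~\ref{lem: nonsing condition}. We fix $q\in A$ and set $Q\coloneq C_6q$ and $N\coloneq \lfloor Q^{(d+1)/d}\rfloor$. We aim to show $\mathrm{diam}_E(\mathcal W_N)\le C\,Q = O(N^{d/(d+1)})$ with $C$ independent of $q$; letting $q\to\infty$ along $A$ then gives $\expoliminf(\alpha,\beta)\le \frac{d}{d+1}$.

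\textbf{Approach.} Given $x,y\in\T^d$, apply \eqref{nonsing condition} to $z=y-x$. This gives $n\in[-Q,Q]$ with $\|x+n\alpha-y\|_\infty\le C_5Q^{-1/d}$. If $n<0$, we swap the roles of $x$ and $y$ (using $\|y+|n|\alpha - x\|_\infty$, which is the same quantity). So we may take $\ell\in[0,Q]$ and points $\tilde x=x+\ell\alpha$ close to $y$, or the symmetric version. By the triangle inequality, as in \eqref{hamming bound}, $\den N x y\le \ell+\#\{n<N:\xi_n(\tilde x)\ne\xi_n(y)\}$.

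\textbf{Follow.} We split $[0,N)$ into $\lceil N/(q+1)\rceil = O(Q^{1/d})$ consecutive blocks of length $q+1$. On each block we apply Lemma~\ref{lem: multidim nonsing frequent following} to the pair $(\tilde x+iq'\alpha,\ y+iq'\alpha)$, where $q'=q+1$. Translation preserves $\|\cdot\|_\infty$-distance, so the hypotheses are checked once for all blocks:
\[
\|\tilde x-y\|_\infty\le C_5Q^{-1/d}=C_5C_6^{-1/d}q^{-1/d},
\]
so we take $C_7\coloneq C_5C_6^{-1/d}$. Since $q\in A$, we have $q^{-1/d}<\frac{2}{\upsilon(\alpha)}\psi_\alpha(q)$, so we take $C_8\coloneq 2/\upsilon(\alpha)$. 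Hence each block contributes at most $C_{10}q^{(d-1)/d}$ mismatches. Summing over the blocks gives a total of $O(Q^{1/d}q^{(d-1)/d})=O(Q)$.

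Combining the two steps, $\den N x y\le Q+O(Q)=O(N^{d/(d+1)})$ uniformly in $x,y$. The main point to check is the consistency of the scales: Lemma~\ref{lem: nonsing condition} provides covering radius $Q^{-1/d}$ with $Q=C_6q$, while the non-singular separation is controlled at $q$. Because $C_6$ is a fixed constant, the constants $C_7$ and $C_8$ above are independent of $q$, so this is harmless. Also, $\xi_n(\tilde x)=\xi_{n+\ell}(x)$ requires $N+\ell$ symbols of the orbit of $x$; this is exactly what the edit cost $\ell$ in \eqref{hamming bound} accounts for, as in the one-dimensional case.
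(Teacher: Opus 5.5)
Your proposal is correct and follows essentially the same route as the paper's proof. Both arguments approach within $C_5Q^{-1/d}$ via Lemma~\ref{lem: nonsing condition} (swapping $x$ and $y$ when the shift is negative) and then follow with Lemma~\ref{lem: multidim nonsing frequent following} on blocks of length $q+1$. The only difference is that you take $N=\lfloor Q^{(d+1)/d}\rfloor$ where the paper takes $N=\lfloor q^{(d+1)/d}\rfloor$, and since $Q=C_6q$ this changes only the constants.
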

	\begin{proof}
		Let $A$ be as in Lemma~\ref{lem: nonsing condition}. Then there exist constants $C_5 > 0$ and $C_6 \in \N$ depending only on $\upsilon(\alpha)$ and $d$ such that \eqref{nonsing condition} holds for all $Q \in C_6A$.
		
		Let $x, y \in \T^d$ and $q \in A$, and write $Q \coloneq C_6q$. Let $N \coloneq \lfloor q^{(d+1)/d} \rfloor$. By \eqref{nonsing condition}, and switching the role of $x$ and $y$ if necessary, choose $n_0 \in [0,Q] \cap \Z$ so that $\tilde{x} \coloneq x + n_0\alpha$ satisfies
		\begin{equation*}
			\|\tilde{x} - y\|_\infty \ \leq \ \frac{C_5}{Q^{1/d}} \ = \ C_7q^{-1/d}\, ,
		\end{equation*}
		where $C_7 \coloneq C_5C_6^{-1/d}$. 
		Since $q \in A$, we have $q^{-1/d} \leq C_8\psi_\alpha(q)$ where $C_8 \coloneq (\tfrac 12 \upsilon)^{-1}$. Let $k \coloneq q + 1$. Hence, by Lemma~\ref{lem: multidim nonsing frequent following} applied to the points $\tilde{x} + ik\alpha$ and $y+ik\alpha$ modulo $\Z^d$ for $i \in [0,\lceil \frac{N}{k}\rceil - 1] \cap \Z$, there exists a constant $C_{10} = C_{10}(\upsilon,d) > 0$ such that
		\begin{equation*}
			\#\bigl\{ n \in [0,N) : \xi_{n}(\tilde{x}) \neq \xi_n(y) \} \ \leq \ C_{10}\left\lceil \frac{N}{k} \right\rceil q^{(d-1)/d}\, ,
		\end{equation*}
		which is bounded by $C_{11} N q^{-1/d} \leq C_{11}q$ for some $C_{11}(\upsilon,d) > 0$. By \eqref{hamming bound},
		\begin{equation*}
			\den N x y \ \leq \ C_6q + C_{11} q \, .
		\end{equation*}
		
		This bound is independent of $x$ and $y$. Therefore,
		\begin{equation*}
			\frac{\log \mathrm{diam}_E(\mathcal{W}_N)}{\log N} \ \leq \ \frac{\log(C_6 +C_{11})}{\log N} + \frac{\log q}{\log N} \,. 
		\end{equation*}
		The result follows since $A$ is infinite. 
	\end{proof}
	
	Note that non-singularity implies that $(1,\alpha_1,\ldots,\alpha_d)$ is linearly independent over $\Q$, so the typical orbit exponent in the following proposition is well defined.
	
	\begin{proposition}\label{prop: nonsing typical limsup lower bound}
		Let $\alpha \in \R^d$ be non-singular and let $\beta \in (0,1)^d$. Then
		\[
		\ddimorbitexpolimsup(\alpha,\beta) \ \geq \ \frac 12 \, .
		\]
	\end{proposition}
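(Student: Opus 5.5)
Following Proposition~\ref{prop: orbitexpolimsup at least 1/2}, I would combine a concordance lower bound with a Borel--Cantelli/Fatou argument, taking the scales from the set $A$ of Lemma~\ref{lem: nonsing condition}. For $q\in A$ set $k\coloneq q$, $N_q\coloneq q^2$, and $\ell_q\coloneq\lfloor q^{1-\eps}\rfloor$, so that $6k\ell_q\le N_q$ for large $q$. As in Lemma~\ref{lem: utility lemma on diameter}(a), whose proof uses only Lemma~\ref{lem: bound on number of l,k-bad indices}(a) and rotation invariance and so applies verbatim in $\T^d$, we have
\[
\mathbb{P}(\den{N_q}xy\le \ell_q)\ \le\ 2\,\mathbb{P}\bigl((x,y)\ \text{are $(\ell_q;k)$-concordant}\bigr)\ \le\ 2(2\ell_q+1)\,\mathbb{P}\bigl(\xi_{[0,k)}(x)=\xi_{[0,k)}(y)\bigr),
\]
using a union bound over shifts $s$. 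It therefore suffices to show $\mathbb{P}(\xi_{[0,q)}(x)=\xi_{[0,q)}(y))\le C/q$ for $q\in A$, where $C=C(d,\beta,\upsilon)$.

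This is the multidimensional analogue of Lemma~\ref{lem: same code iff same interval}, with non-singularity replacing discrepancy. I would show that if the length-$q$ codings agree, then $\|x-y\|_\infty\le C' q^{-1/d}$; the cubes of that radius then have measure $O(q^{-1})$. To prove this, suppose $\|x_i-y_i\|>\eta\coloneq C'q^{-1/d}$ for some coordinate $i$, and mimic the proof of Lemma~\ref{lem: multidim same coding bound}. Define a box $B=\prod_j J_j$ with $J_j=[0,\beta_j)$ for $j\ne i$ and $J_i$ an interval of length $\eta$ placed just inside $(0,\beta_i)$ so that a visit to $B$ forces a coding mismatch. I then need a visit $\modone{x+n\alpha}\in B$ with $0\le n<q$. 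The box $B$ contains a cube of side comparable to $\eta$, since the $\beta_j$ are fixed, and by Lemma~\ref{lem: nonsing condition} the orbit $\{n\alpha: |n|\le Q\}$ with $Q=C_6q$ is $C_5Q^{-1/d}$-dense. Choosing $C'$ large enough that the cube has radius exceeding $C_5Q^{-1/d}$ gives a visit with $|n|\le C_6q$. Negative $n$ and the range up to $C_6q$ are harmless: I replace $k$ by $2C_6q+1$ and start the coding window at $-C_6q$ (after a time shift of both points, which preserves Lebesgue measure). This changes only constants.

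With $\mathbb{P}(\den{N_q}xy\le\ell_q)\le C''q^{-\eps}$ in hand, Fatou's lemma shows that with probability at least $1-o(1)$, indeed almost surely along a lacunary subsequence of $A$ by Borel--Cantelli, we have $\den{N_q}xy>\ell_q\ge N_q^{(1-\eps)/2}$ for infinitely many $q$. This gives $\limsup_N\log\den Nxy/\log N\ge (1-\eps)/2$ on a set of positive measure, hence almost everywhere by ergodicity, and letting $\eps\downarrow0$ gives $\ddimorbitexpolimsup(\alpha,\beta)\ge 1/2$.

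I expect the main obstacle to be the "same coding implies close" step. One must place $J_i$ correctly in every wrap-around case, and check that covering by $\{n\alpha\}$ with two-sided $n$ up to $C_6q$ really produces a window of length $O(q)$ whose coding distinguishes $x$ from $y$.
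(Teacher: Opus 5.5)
Your proposal is correct and is essentially the paper's argument. The paper likewise substitutes the covering from Lemma~\ref{lem: nonsing condition} for discrepancy in the proof of Lemma~\ref{lem: multidim same coding bound}; it shifts the cover to indices $n\in[0,2Q]$ with window $k=2Q+1$, which is your time-shift. This gives a concordance bound of order $\ell/k$, after which the paper runs the Markov argument of Lemma~\ref{lem: utility lemma on diameter}(a) at $N=k^2$, $\ell=\lfloor k^{1-\eps}\rfloor$ and concludes from $\mathbb{P}(\den{k^2}xy>k^{1-\eps})\to 1$. Your choices $N=q^2$, $\ell=\lfloor q^{1-\eps}\rfloor$ with $k\asymp q$, and your optional Borel--Cantelli along a sparse subsequence, are cosmetic differences.
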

	
	\begin{proof}
		By Lemma~\ref{lem: nonsing condition}, there is an infinite set $A \subset \N$ and constants $C_5 > 0$ and $C_6 \in \N$ depending only on $\alpha$ such that, for every cube $B = \bar{B}_\infty(z, C_5/Q^{1/d})$ with $Q \in C_6A$, there exists an integer $n \in [0,2Q]$ such that $\modone{n\alpha} \in B$. Let $k \coloneq 2Q + 1$.
		
		Using this fact in place of the assumption on discrepancy in the proof of Lemma~\ref{lem: multidim same coding bound}, we deduce that there is a constant $C_{12}$ depending only on $\alpha$ and $\beta$ such that, for uniform and independent $x,y \in \T^d$, all sufficiently large $q \in A$, and all $\ell \in \N$,
		\begin{equation*}
			\mathbb P\bigl((x,y)\text{ are $(\ell;k)$-concordant}\bigr) \ \leq \ (2\ell+1) \mathbb P\bigl(\xi_{[0,k)}(x)=\xi_{[0,k)}(y)\bigr) \ \leq \ C_{12} \frac{\ell}{k}\, .
		\end{equation*}
		
		Fix $\eps\in(0,1)$, put $N\coloneq k^2$, and let $\ell \coloneq \lfloor k^{1-\eps}\rfloor$. By Lemma~\ref{lem: bound on number of l,k-bad indices}(a), if $\den N x y \leq \ell$, then at least $N-3k\ell\geq N/2$ of the pairs $(x+n\alpha,y+n\alpha)$ with $n \in [0,N) \cap \Z$ are $(\ell;k)$-concordant, provided $q$ is sufficiently large. As in the proof of Lemma~\ref{lem: utility lemma on diameter}(a), Markov's inequality and invariance of $\leb^{2d}$ under simultaneous translation therefore give
		\begin{equation}
			\mathbb P( \den N x y \leq \ell) \ \leq \ 2\mathbb P\bigl((x,y)\text{ are $(\ell;k)$-concordant} \bigr) \ \leq \ 2C_{12} \frac{\ell}{k} \ \leq \ 2C_{12} k^{-\eps} \, .
		\end{equation} 
		Thus, along the infinite sequence of $k$ obtained from $q\in A$,
		\[
		\mathbb{P} \bigl(\den {k^2} x y > k^{1-\eps}\bigr) \longrightarrow 1 \, .
		\]
		The limsup of these events therefore has full $\leb^{2d}$-measure, hence
		\[
		\limsup_{N\to\infty} \frac{\log \den N x y}{\log N} \ \geq \ \frac{1-\eps}{2} \qquad \text{for $\leb^{2d}$-almost every $(x,y)$.}
		\]
		Taking $\eps \downarrow 0$ proves the proposition.
	\end{proof}
	
	\section{Open problems} \label{sec: questions}
	
	\noindent\textbf{A variational principle for edit-distance exponents.}
	Let $\mathcal{A}$ be a finite alphabet, let $T$ be a homeomorphism of a compact metric space $X$, and let $\xi : X \to \mathcal{A}$ define a Borel partition. Given $x \in X$ and $N \in \N$, define the $\xi$-name of length $N$ associated to $x$ by
	\[
	\xi_{[0,N)}(x) \ \coloneq \ \bigl(\xi(T^nx) \bigr)_{n=0}^{N-1} \, , \quad \text{ and put } \; \, d_N^{\xi}(x,y) \ \coloneq \ \editfull\bigl(\xi_{[0,N)}(x), \xi_{[0,N)}(y)\bigr) \, .
	\]
	Let $\nu \in \mathcal M_e(T)$, the set of ergodic $T$-invariant probability measures, and define  
	\begin{equation*}
		\overline{\Gamma}(T,\xi)
		\ \coloneq \ \limsup_{N\to\infty} \frac{\log \max_{x,y\in X}d_N^{\xi}(x,y) }{\log N} \quad \text{ and } \quad
		\overline{\Gamma}_{\nu}(T,\xi) \ \coloneq \ \limsup_{N\to\infty} \frac{\log d_N^{\xi}(x,y)}{\log N}  \text{ a.e.}
	\end{equation*}
	Clearly,  
	\begin{equation}\label{easy half of edit exponent variational principle}
		\sup_{\nu\in\mathcal M_e(T)} \overline{\Gamma}_{\nu}(T,\xi) \ \leq \ \overline{\Gamma}(T,\xi) \, .
	\end{equation}
	When does equality hold in \eqref{easy half of edit exponent variational principle}? Theorems~\ref{main thm golden less}~and~\ref{main thm 4} give a positive answer for all irrational rotations on $\T$ with $\mu(\alpha) \leq 1 + \varphi$ and for almost every rotation of a torus, provided $\xi$ is the indicator of a box. Some assumptions on the boundaries of the partition determined by $\xi$ are certainly needed.
	In view of Theorem~\ref{main thm 1}, the corresponding question for liminf edit exponents has a negative answer for some irrational rotations on $\T$.
	
	\medskip
	\noindent\textbf{Which values can edit-distance exponents attain?} For limsup exponents, we know by Theorem~\ref{main thm 1} that every value in $[1/2,1]$ is attained. Across all infinite minimal systems, not just irrational rotations, is it true that the limsup exponent is always at least 1/2? For specific classes of systems, such as diffeomorphisms of tori or nilrotations, which values can be attained? If the edit-distance exponent exists as a limit, which values can be attained? (By Theorem \ref{main thm 4}, this set contains $\{d/(d+1) : d \in \N\}$.) These questions should be restricted to a suitable class of generating Borel partitions, to prevent trivial counterexamples.
	
	\medskip
	\noindent\textbf{Non-singular toral rotations.}
	For a non-singular $\alpha \in \R^d$, Section~\ref{sec: multidim} proves, for every $\beta \in (0,1)^d$,
	\[
	\expoliminf(\alpha,\beta) \ \leq \ \frac{d}{d+1} \qquad \text{and} \qquad \ddimorbitexpolimsup(\alpha,\beta) \ \geq \ \frac 12 \, .
	\]
	When $\alpha$ has low discrepancy, all four diameter and typical-orbit exponents equal $d/(d+1)$. What are the four exponents for an arbitrary non-singular vector $\alpha$? Since one coordinate may be Liouville, non-singularity alone cannot force the common value $d/(d+1)$. As a starting point, is it true that $\expoliminf(\alpha,\beta) \geq 1/2$ for every non-singular $\alpha \in \R^d$ and almost every $\beta \in (0,1)^d$?

	\medskip 
	\noindent \textbf{Acknowledgment.} \quad We are grateful to Bohan Yang for helpful discussions. The research of Y. Peres was supported by National Natural Science Foundation of China grant RFIS-W2531011. This project was started in January 2024, when the authors participated in a workshop at the Tsinghua-Sanya International Mathematics Forum. The proof of Theorem~\ref{main thm golden more} was obtained in part through interaction with ChatGPT 5.6.

\end{document}